\begin{document}

\begin{center}

  \textbf{\LARGE{Computationally efficient reductions \\ between some statistical models}}

\vspace*{.2in}

{\large{
\begin{tabular}{ccc}
Mengqi Lou$^{\star}$, Guy Bresler$^{\ddagger}$, Ashwin Pananjady$^{\star, \dagger}$
\end{tabular}
}}
\vspace*{.2in}

\begin{tabular}{c}
Schools of $^\star$Industrial and Systems Engineering and
$^\dagger$Electrical and Computer Engineering, \\
Georgia Tech \\
$^\ddagger$Department of Electrical Engineering and Computer Science, MIT
\end{tabular}

\vspace*{.2in}

\today

\vspace*{.2in}

\begin{abstract}
We study the problem of approximately transforming a sample from a source statistical model to a sample from a target statistical model without knowing the parameters of the source model, and construct several computationally efficient such reductions between canonical statistical experiments. In particular, we provide computationally efficient procedures that approximately reduce uniform, Erlang, and Laplace location models to general target families. 
We illustrate our methodology by establishing nonasymptotic reductions between some canonical high-dimensional problems, spanning mixtures of experts, phase retrieval, and signal denoising. Notably, the reductions are structure-preserving and can accommodate missing data. 
We also point to a possible application in transforming one differentially private mechanism to another.
\end{abstract}
\end{center}


\section{Introduction} \label{sec:intro}

A statistical model or experiment is a collection of probability measures defined on a common sample space. The notation $(\mathbb{X}, \{ \mathcal{P}_{\theta} \}_{\theta \in \Theta})$ is typically used to denote a parametric statistical model on the sample space $\mathbb{X}$, where $\theta$ is an unknown parameter and $\mathcal{P}_{\theta}$ is the corresponding probability model from which a random sample of observations is drawn.
The following classical question, first asked by~\citet{bohnenblust1949reconnaissance}, forms the starting point of this paper:
\begin{center}
\qquad \qquad \quad  \emph{When is one statistical experiment more informative than another?} \hfill (Q1)
\end{center}

 The question can be reframed colloquially for two parametric statistical models given by $\mathcal{U} = (\mathbb{X}, \{ \mathcal{P}_{\theta} \}_{\theta \in \Theta})$ (the source) and $\mathcal{V} = (\mathbb{Y}, \{ \mathcal{Q}_{\theta} \}_{\theta \in \Theta})$ (the target), defined on possibly different sample spaces $\mathbb{X}$ and $\mathbb{Y}$, respectively. For every $\theta \in \Theta$ and \emph{without knowledge of $\theta$}, can a sample from the source distribution $\mathcal{P}_{\theta}$ be transformed into a sample from the distribution $\mathcal{Q}_{\theta}$? In other words, does there exist some randomized procedure $\textsc{K}$ (a \emph{reduction}) such that for every value of $\theta \in \Theta$, if $X \sim \mathcal{P}_{\theta}$, then the random variable $\textsc{K}(X)$ has the distribution $\mathcal{Q}_{\theta}$. If so, the problem of recovering the unknown parameter $\theta$ from a sample of the source statistical model $\mathcal{U}$ can now be solved by a recovery procedure that uses a sample from the target statistical model $\mathcal{V}$.

The concept was further built on by Blackwell in a series of papers~\citep{blackwell1951comparison,blackwell1953equivalent}, where he formalized so-called comparison criteria between experiments in the context of the minimum risk attainable by statistical procedures. There followed a beautiful line of follow-up work (see, e.g.,~\citet{boll1955comparison,lindley1956measure,strassen1965existence,hansen1974comparison} and \citet{le1996comparison} for a survey). 
Somewhat unfortunately, however, it was found that most experiments were incomparable (see, e.g.,~\citet{stone1961non} for some examples). In fact, very few exact comparisons between experiments are known---and all explicit examples of such exact reductions (amounting to less than five families of procedures) can be found in the books by~\citet{torgersen1991comparison} and~\citet{shiryaev2000statistical}.

Accordingly, various approximate notions of comparisons between experiments were introduced (e.g.~\citet{karlin1956theory,brown1976complete,goel1979comparison,lehmann2011comparing}). The most popular among these is the notion of approximate sufficiency formalized by the \emph{total variation deficiency} between experiments~\citep{le1964sufficiency}. Instead of asking when one could exactly reduce the source experiment to the target, Le Cam asked a relaxations of this question, asking when one could produce an \emph{inexact} reduction that is accurate up to some total variation distance. In other words, he was interested in a reduction attaining small total variation deficiency, thereby relaxing the above question (Q1) to:
\begin{center}
\qquad \qquad \qquad \qquad \emph{How far is one statistical experiment from another?} \hfill (Q2)
\end{center}
Equivalently, what is the smallest value $\delta$ so that there exists a randomized procedure $\textsc{K}$ (an \emph{approximate reduction}) with the property that for all $\theta \in \Theta$, if $X \sim \mathcal{P}_{\theta}$, then the distribution of $\textsc{K}(X)$ is $\delta$-close in total variation to $\mathcal{Q}_{\theta}$? Similarly to its exact analog, an approximate reduction allows us to transfer procedures and risk guarantees from the source to taget model (see Eq.~\eqref{eq:pointwise-risk-transfer} to follow).
Le Cam then used the notion of deficiency to define a distance between experiments, which has come to be known as the Le Cam distance. Note that the question (Q2)---as with the original question (Q1) of exact comparisons---remains reasonable for any pair of statistical experiments, even those with a single observation. A specific form of this question forms the focus of the present paper, since it is an important consideration in high dimensions.

To see this more clearly in an example, consider the problem of producing approximate reductions between experiments when there is a \emph{high-dimensional} parameter and independent noise across coordinates. A prototypical example is the structured mean estimation problem, where $\Theta \subseteq \mathbb{R}^n$ is some structured set, and we obtain a single noisy observation of a vector $\theta \in \Theta$ from the source model $\mathcal{P}_{\theta} = \mathcal{P}^1_{\theta_1} \times \cdots \mathcal{P}^n_{\theta_n}$, where $\theta_k$ denotes the $k$-th entry of $\theta$ and the noisy observations are independent across coordinates given the parameter. Other isomorphic examples are those in which the parameter set is a class of matrices or tensors. Now suppose we are interested in reducing our observation to some target model $\mathcal{Q}_{\theta} = \mathcal{Q}^1_{\theta_1} \times \cdots \mathcal{Q}^n_{\theta_n}$, effectively changing the noise distribution while preserving the underlying parameter. Then one conceptually robust way to do so is to produce \emph{entry-by-entry} approximate reductions that are agnostic to the structure in the set $\Theta$, i.e., to approximately reduce source $\mathcal{P}^k_{\theta_k}$ to target  $\mathcal{Q}^k_{\theta_k}$ for each $k = 1, \ldots, n$, thereby producing $n$ \emph{scalar} approximate reductions. An important feature of this problem that is worth emphasizing is that, owing to the high-dimensional setting, we obtain just a \emph{single observation} per coordinate, and so our reduction\footnote{In the sequel, we will largely deal with approximate reductions, but refer to these as reductions for convenience.} must be accurate in a regime where the parameter itself cannot be accurately estimated.

%


While single-sample reductions of the above flavor were the original focus of classical papers (e.g.~\cite{hansen1974comparison,lehmann2011comparing}), they have seen renewed interest in the context of high-dimensional problems in statistics and applied probability, especially for transferring \emph{average-case} computational hardness of estimation and testing tasks so as to provide lower bounds on the (computationally) constrained minimax risk~\citep{berthet2013complexity,wainwright2014constrained,brennan2020reducibility,wu2021statistical}. The idea of transferring computational hardness via reduction has been foundational in theoretical computer science since the start of the field, and in our setting goes as follows: Suppose we believe that estimating $\theta$ from the statistical model $\mathcal{U}$ 
is computationally difficult (in that a computationally efficient algorithm does not exist), and also that we have a computationally efficient, approximate reduction from the source  $\mathcal{U}$ to target $\mathcal{V}$ that attains arbitrarily small total variation (TV) deficiency. Then estimating $\theta$ from the statistical model $\mathcal{V}$ must also be difficult. To see this, suppose for the sake of contradiction that we indeed have a computationally efficient estimation algorithm in the statistical model $\mathcal{V} = (\mathbb{Y}, \{ \mathcal{Q}_{\theta} \}_{\theta \in \Theta})$. Then we could perform estimation in $\mathcal{U} = (\mathbb{X}, \{ \mathcal{P}_{\theta} \}_{\theta \in \Theta})$ by drawing our source sample, performing the reduction, and using our estimation algorithm for $\mathcal{V} = (\mathbb{Y}, \{ \mathcal{Q}_{\theta} \}_{\theta \in \Theta})$ to recover $\theta$. Note that this yields a computationally efficient procedure for estimating $\theta$ using the source experiment $\mathcal{U}$, which is a contradiction. Similar ideas apply to the testing problem. A key desideratum that these reductions must satisfy---in addition to the TV closeness property alluded to above---is that they must be implementable in a computationally efficient manner.
This property distinguishes this line of work from (Q2) above. Indeed, there exists source/target experiments such that their Le Cam distance is small but the reduction procedure cannot be computed in polynomial time under natural conjectures in average-case complexity. We provide such an example in Appendix~\ref{sec:examples-inefficient}.

With this setup in place, let us now discuss a few lines of related work to provide context, followed by our contributions.

\subsection{Related work}

We split our discussion of related work into a few verticals.

\paragraph{Single-sample reductions:} With the motivation sketched above, \citet{hajek2015computational,ma2015computational} provided computationally efficient reductions from ``two-point" source experiments---where both $\mathbb{X}$ and $\Theta$ are sets containing two discrete values---to a family of target experiments having two distinct means. Both papers took the approach of entry-by-entry, single-sample reductions to transforming the planted clique model, which is believed to be computationally hard from an average-case complexity standpoint, to variants of the planted submatrix detection problem. Specifically, \citet{hajek2015computational} provided a single-sample reduction from a pair of Bernoulli models with different means to a pair of Binomial models with different means, and~\cite{ma2015computational} gave a single-sample reduction from a pair of Bernoulli models with different means to a pair of unit-variance Gaussian models with different means. Following up on this work, \citet{brennan2018reducibility} gave generalizations of these reductions 
to univariate target distributions along with a number of other results. \citet{brennan2019universality} also considered two-point sources and general, multivariate target distributions, showing a universality property for computational hardness in submatrix detection problems. Subsequently,~\citep{brennan2020reducibility} demonstrated a universality result for estimation in sparse mixture problems by devising a reduction from \emph{three-point} source distributions---where both $\mathbb{X}$ and $\Theta$ are sets containing three discrete values. 

The aforementioned reductions between statistical models have been used as ingredients---sometimes alongside 
modified conjectures in average-case complexity and 
careful transformations of structure between problem instances~\citep{brennan2020reducibility}---to show computational hardness for a host of high-dimensional statistical problems with combinatorial structure, including (but not limited to) principal component analysis with structure~\citep[e.g.,][]{brennan2019optimal,wang2023algorithms}, location and mixture models with adversarial corruptions~\citep{brennan2020reducibility}, block models in both matrix~\citep{brennan2018reducibility} and tensor~\citep[e.g.,][]{luo2022tensor,han2022exact} settings, tensor PCA~\citep{brennan2020reducibility}, and adaptation lower bounds in shape-constrained regression problems~\citep[e.g.,][]{pananjady2022isotonic}. 
However, the reductions between statistical models are limited to those with either two or three unknown parameters. In this paper, we will handle several cases where the parameter set is \emph{continuous}.

\paragraph{Asymptotic perspectives:} A complementary literature to the non-asymptotic perspective sketched above---with several classical results---is one that views this problem through an asymptotic lens. Indeed, after asking question (Q2) above, Le Cam
 considered asymptotic convergence, in his Le Cam distance, of a \emph{sequence} of experiments indexed by the size of an i.i.d. sample. He asked if such a sequence converges to a limit, 
and in a set of landmark results, showed that under some regularity conditions and with appropriate rescaling, one has convergence to a Gaussian experiment, i.e., a remarkable strengthening of the central limit theorem. This result was a cornerstone for the theory of local asymptotic normality, which along with its counterpart theory of local asymptotic minimaxity has resulted in several groundbreaking discoveries in mathematical statistics over the years; see~\citet{le2000asymptotics,van2000asymptotic} for a more thorough exposition.
Notably, Le Cam's asymptotic theory has an analog for nonparametric models\footnote{See \citet{mariucci2016cam} for a modern survey of this literature.}, as was proved in a line of influential work relating nonparametric regression in the white noise and fixed design models, as well as nonparametric density estimation~\citep{nussbaum1996asymptotic,brown1996asymptotic,brown2002asymptotic,carter2002deficiency,brown2004equivalence}.

In the context of the present paper, it is important to note that this line of work has a complementary focus and is not applicable to the structured high-dimensional settings that motivate our work. First, asymptotic equivalence of experiments has been used mainly in the \emph{analysis} of natural estimators---for example, to show that the MLE in a quadratic mean differentiable model is asymptotically Gaussian~\citep{van2000asymptotic}. 
Second, and on a related note, the reductions inherently rely on large sample sizes and some form of convergence in distribution guaranteed by the central limit theorem. In particular, they only succeed in regimes where the parameter itself is estimable from the data, and so
the reductions used in these techniques typically 
do not perform any sophisticated processing of the source sample, even in complex models~\citep{shiryaev2000statistical}.
Consequently, while they are useful for analytical purposes, these reductions do not provide a path to characterizing the deficiency between general statistical models (as framed in question (Q2)) when we have a single (or fixed-size) observation, which, as alluded to before, is the key issue in high-dimensional problems.

\paragraph{Other related papers:} 
Besides these two lines of literature dealing explictly with reductions, we also mention some related investigations. See~\citet{huber1965robust} for a classical two-point reduction inspired by robust hypothesis testing. The theory of channel comparisons is a related concept, and has seen a line of important work~\citep{shannon1958note,raginsky2011shannon,polyanskiy2017strong,makur2018comparison}; see \citet{polyanskiy2023book} for an introduction. The problem of approximate reductions between statistical models is also related to the recently introduced concept of simultaneous optimal transport~\citep{wang2022simultaneous}.

\subsection{Contributions, motivating examples, and organization}

In this paper, we revisit the problem of constructing non-asymptotic, computationally efficient, approximate reductions between families of source and target statistical models. In doing so, we consider source distributions with continuous parameter spaces, going well beyond the two and three-point cases considered in prior work. In particular, we introduce two general-purpose techniques to construct these reductions, and showcase them by giving explicit reductions between Laplace, Erlang, and Uniform location models and general families of target models, all of which are new. In the process, we show some exact reductions (in the sense of~\cite{blackwell1951comparison}) that, to our knowledge, had not been pointed out in the literature. 

Our techniques demonstrate that it is possible to achieve non-asymptotic approximate reductions for several natural statistical models with continuous parameters.
These can in turn show near-equivalence results for several problems in high-dimensional statistics.
As motivation, we next give some examples of high-dimensional problems for which a formal reduction algorithm would be meaningful. These examples serve as an informal preview: They are formally presented again in Section~\ref{sec:consequences} along with explicit guarantees that result from our general theory. 

\paragraph{Motivation 1} The mixture of experts model has been influential in studying heterogeneous regression problems with so-called ``gating" functions. In a canonical version of the model, the scalar response $Y \in \real$ and high-dimensional covariates $X \in \real^d$ are related via the model $\mathbb{E}[Y | X] = \sgn( g(X) ) \cdot f(X)$ for some unknown regression function $f \in \mathcal{F}$ and gating function $g \in \mathcal{G}$. Variants of this model have been extensively studied under several, often parametric, assumptions on the noise~\citep[e.g.,][]{jordan1994hierarchical,song2014robust}. A related model is one of signless regression, in which we have $\EE[Y | X] = | f(X) |$, which has also been studied under various models for the regression function and noise process~\citep[e.g.,][]{cai2015optimal,duchi2019solving}. When $\mathcal{F}$ is the class of linear functions, the two statistical models go by the names of ``mixture of linear regressions" and ``phase retrieval", respectively. 
Given the similarity between the models, it is natural to ask: Is there a reduction that would allow us to solve a noisy mixture of experts problem through a phase retrieval algorithm when the dimension $d$ is large?

\paragraph{Motivation 2} In signal denoising problems of the form $Y = \theta^* + \epsilon$, we often have sharp characterizations of the error of estimators when the noise $\epsilon$ is Gaussian. For example, if the signal $\theta^*$ is known to belong to a closed convex set in $\real^n$, we can exactly characterize (with both upper and lower bounds that match up to a universal constant) the $\ell_2$ risk of the least squares estimator under zero-mean Gaussian noise~\citep{chatterjee2014new}. While upper bounds of this type can sometimes be proved for other noise distributions (e.g., when $\epsilon$ is drawn from a zero-mean sub-Gaussian), it is not clear if such bounds are sharp in the same way. With the goal of producing an estimator with sharp error guarantees, can we design a reduction from a given statistical model of interest---where $\theta^*$ is known to obey some structure and the noise is known to be drawn from some distribution~\citep[e.g.,][]{sambasivan2018minimax,mcrae2021low}---to one that is arbitrarily close to a Gaussian model? Can we do so in natural settings with structured signals and missing data?

\paragraph{Motivation 3} There are several privacy-preserving mechanisms for data analysis, and many rely on the notion of differential privacy, which is often guaranteed by adding noise from a certain distribution to the output of a database query~\citep{dwork2006differential}. Several such noise addition mechanisms have been proposed and studied in the literature, and each comes with its own set of operational advantages. A natural question is whether one can transform between these mechanisms in a downstream fashion, that is, without needing to de-privatize the query itself. For two popular privacy preserving mechanisms, this natural question is equivalent to one covered by our setting: Can a Laplace random variable (or vector) of unknown mean can be transformed into a Gaussian random variable (or vector) of the same mean? How does the resulting mechanism compare with a direct Gaussian mechanism in terms of its privacy and accuracy guarantee? 

\medskip

The rest of this paper is organized as follows. In Section~\ref{sec:setup}, we formally present the problem of constructing reductions with minimal total variation deficiency, and discuss why a natural reduction may be a poor candidate for this problem. In Section~\ref{sec:rej}, we make the oracle assumption that we are given some candidate ``signed kernel" between the source and target experiments, and provide a computationally efficient algorithm for generating a sample that resembles one from the target experiment. 
In Section~\ref{sec:reductions}, we use this result alongside explicit constructions of signed kernels for source models given by the multivariate Laplace and univariate Erlang and Uniform location models, and provide explicit reductions for general target models. We present applications to high-dimensional problems in Section~\ref{sec:consequences}, including answers to the questions posed in Motivations 1-3 above. 
We conclude with a discussion of open problems in Section~\ref{sec:discussion}. Proofs of all our results can be found in Section~\ref{sec:proofs}.


\section{Background and preliminary observations} \label{sec:setup}

We begin by defining the reduction problem for general statistical models for completeness, and note simplifications of our setup when we have standard probability mass and density functions. 

\subsection{Definitions}
To begin, note that all the spaces considered in this paper are standard Borel, which are Borel subspaces of Polish spaces. This means any such space $\mathbb{U}$ will be endowed with its Borel sigma-algebra $\mathcal{B}(\mathbb{U})$. The space of all probability measures on $\mathbb{U}$ will be denoted by $\Delta(\mathbb{U})$ and the space of all signed measures on $\mathbb{U}$ by $\Sigma(\mathbb{U})$.
Following the notation in~\cite{raginsky2011shannon}, we use $\| \mu - \nu \|_{\mathsf{TV}}$ to denote the total variation distance between two measures $(\mu, \nu)$ defined on the same space. If $X \sim \mu$ and $Y \sim \nu$, then we use the notation $\mathsf{d_{TV}}(X, Y) := \| \mu - \nu \|_{\mathsf{TV}}$. A Markov kernel between any two spaces $\mathbb{X}$ and $\mathbb{Y}$ is mapping $\mathcal{T}: \mathcal{B}(\mathbb{Y}) \times \mathbb{X} \to [0, 1]$ such that $\mathcal{T}(\cdot | x) \in \Delta(\mathbb{Y})$ for all $x \in \mathbb{X}$ and $\mathcal{T}(B | \cdot)$ is a measurable function on $\mathbb{X}$ for any $B \in \mathcal{B}(\mathbb{Y})$. When $\mathbb{X}$ and $\mathbb{Y}$ are both finite, $\mathcal{T}$ is a stochastic matrix. Let $\mathsf{M}(\mathbb{Y} | \mathbb{X})$ denote the space of all such Markov kernels $\mathcal{T}$. Analogously, define a signed kernel between $\mathbb{X}$ and $\mathbb{Y}$ as a mapping $\mathcal{S}: \mathcal{B}(\mathbb{Y}) \times \mathbb{X} \to \real$ such that $\mathcal{S}(\cdot | x) \in \Sigma(\mathbb{Y})$ and $\mathcal{S}(B | \cdot)$ is a measurable function on $\mathbb{X}$ for any $B \in \mathcal{B}(\mathbb{Y})$. Let $\mathsf{S}(\mathbb{Y} | \mathbb{X})$ denote the space of all such signed kernels.

From this point onward, we reserve the notation $\mathbb{X}$ for the input sample space and $\mathbb{Y}$ for the output sample space. We use $\Theta$ to denote the space of (unknown) parameters. With this notation, one can consider some special Markov kernels. The source kernel (or experiment) is denoted by $\mathcal{U} : \mathcal{B}(\mathbb{X}) \times \Theta \to [0, 1]$---thus, $\mathsf{M}(\mathbb{X} | \Theta)$ denotes the family of all possible source (or input) experiments. Analogously, the target experiment is given by $\mathcal{V} :  \mathcal{B}(\mathbb{Y}) \times \Theta \to [0, 1]$ and the family of all possible target (or output) experiments is $\mathsf{M}(\mathbb{Y} | \Theta)$.
In statistics, it is common to use $u( \cdot ; \theta) \in \Delta(\mathbb{X})$ for $\mathcal{U}(\cdot | \theta)$ and $v( \cdot ; \theta) \in \Delta(\mathbb{Y})$ for $\mathcal{V}(\cdot | \theta)$, and we often use this notation.

It is natural to define a (maximal) TV distance between two Markov kernels $\mathcal{T}, \mathcal{T}' \in \mathsf{M}(\mathbb{Y} | \mathbb{X})$:
$
	\| \mathcal{T} - \mathcal{T}' \|_\infty := \sup_{x \in \mathbb{X}}\| \mathcal{T}( \cdot | x ) -  \mathcal{T}'( \cdot | x ) \|_{\mathsf{TV}}.
$
	This definition also extends to signed kernels via transformation to the $\ell_1$ norm. For signed kernels $\mathcal{S}, \mathcal{S}' \in \mathsf{S}(\mathbb{Y} | \mathbb{X})$, we write 
$
	\| \mathcal{S} - \mathcal{S}' \|_\infty := \sup_{x \in \mathbb{X}} \; \frac{1}{2} \| \mathcal{S}( \cdot | x ) -  \mathcal{S}'( \cdot | x ) \|_{1}.
$
It will be useful to define compositions of Markov kernels and signed kernels. For two Markov kernels $\mathcal{U} \in \mathsf{M}(\mathbb{X} | \Theta)$ and $\mathcal{T} \in \mathsf{M}(\mathbb{Y} | \mathbb{X})$, the object $\mathcal{T} \circ \mathcal{U} \in \mathsf{M}(\mathbb{Y} | \Theta)$ also a Markov kernel. In particular, for $C \in \mathcal{B}(\mathbb{Y})$ and $\theta \in \Theta$, we have
$
	(\mathcal{T} \circ \mathcal{U}) (C | \theta) = \int_{\mathbb{X}} T(C | x) \cdot \mathcal{U}( \mathrm{d} x| \theta).
$
Similarly, one can define the signed kernel $\mathcal{S} \circ \mathcal{W} \in \mathsf{S}(\mathbb{Y} | \Theta)$ for $\mathcal{S} \in \mathsf{S}(\mathbb{Y}| \mathbb{X})$ and $\mathcal{W} \in \mathsf{S}(\mathbb{X} | \Theta)$.

For a Markov kernel $\mathcal{T} \in \mathsf{M}(\mathbb{Y} | \mathbb{X})$, let
\begin{subequations}
\begin{align} \label{eq:one-way-fixed}
	\delta( \mathcal{U}, \mathcal{V}; \mathcal{T}) := \sup_{\theta \in \Theta} \; \left\| 
	\mathcal{V}( \cdot | \theta) - \int_{\mathbb{X}}  \mathcal{T}( \cdot| x) \cdot \mathcal{U}( \mathrm{d} x | \theta)
	\right\|_{\mathsf{TV}} = \| \mathcal{V} - (\mathcal{T} \circ \mathcal{U}) \|_\infty
	\end{align}
	be the deficiency attained by $\mathcal{T}$ when mapping the source experiment $\mathcal{U}$ to the target experiment $\mathcal{V}$. 
	The experiment $\mathcal{U}$ is said to be $\epsilon$-deficient relative to $\mathcal{V}$ if there exists a Markov kernel $\mathcal{T}$ such that $\delta( \mathcal{U}, \mathcal{V}; \mathcal{T}) \leq \epsilon$. Markov kernel $\mathcal{T}$ is then said to \emph{witness} $\epsilon$-deficiency between $\mathcal{U}$ and $\mathcal{V}$. The global (one-way) $\delta$-deficiency between the source and target experiments is given by
	\begin{align} \label{eq:one-way-def}
	\delta( \mathcal{U}, \mathcal{V}) &:= \inf_{\mathcal{T} \in \mathsf{M}(\mathbb{Y} | \mathbb{X})} \delta( \mathcal{U}, \mathcal{V}; \mathcal{T}) = \inf_{\mathcal{T} \in \mathsf{M}(\mathbb{Y} | \mathbb{X})} \; \| \mathcal{V} - (\mathcal{T} \circ \mathcal{U}) \|_\infty.
	\end{align}
\end{subequations}
The above infimum is achieved; this result is known as the Le Cam randomization theorem.

One can define an analogous notion over signed kernels. 
For each $\mathcal{S} \in \mathsf{S}(\mathbb{Y} | \mathbb{X})$, define
\begin{subequations}
	\begin{align} \label{eq:one-way-sign-fixed}
	\SignedDef( \mathcal{U}, \mathcal{V}; \mathcal{S}) := \sup_{\theta \in \Theta} \; \frac{1}{2} \left\| 
	\mathcal{V}( \cdot | \theta) - \int_{\mathbb{X}}  \mathcal{S}( \cdot| x) \cdot \mathcal{U}( \mathrm{d} x | \theta)
	\right\|_{1} = \| \mathcal{V} - (\mathcal{S} \circ \mathcal{U}) \|_\infty
	\end{align}
	and the global version
	\begin{align} \label{eq:one-way-sign}
	\SignedDef( \mathcal{U}, \mathcal{V}) := \inf_{\mathcal{S} \in \mathsf{S}(\mathbb{Y} | \mathbb{X})} \; \SignedDef( \mathcal{U}, \mathcal{V}; \mathcal{S}) = \inf_{\mathcal{S} \in \mathsf{S}(\mathbb{Y} | \mathbb{X})} \; \| \mathcal{V} - (\mathcal{S} \circ \mathcal{U}) \|_\infty.
	\end{align}
\end{subequations}
	This just amounts to dropping a set of linear constraints (positivity and summation to one) from the optimization problem defining~\eqref{eq:one-way-def}. In contrast to the program~\eqref{eq:one-way-def}, there is no guarantee that the infimum is achieved in Eq.~\eqref{eq:one-way-sign} since the set $\mathsf{S}(\mathbb{Y} | \mathbb{X})$ is not compact. As a consequence of the inclusion $\mathsf{M}(\mathbb{Y} | \mathbb{X}) \subseteq \mathsf{S}(\mathbb{Y} | \mathbb{X})$, we have the immediate inequality
	\begin{align} \label{eq:domination}
	\SignedDef( \mathcal{U}, \mathcal{V}) \leq \delta ( \mathcal{U}, \mathcal{V}).
	\end{align}

When the space $\mathbb{X}$ is a measurable subset of $\mathbb{R}^d$ (as will be in all the particular cases to follow), we use the standard notation $u(x; \theta)$ to denote the source density with respect to Lebesgue measure. Similar conventions will apply---when $\mathbb{Y}$ is a measurable subset of $\mathbb{R}^d$---for the output model $v(y; \theta)$, Markov kernel $\mathcal{T}(y|x)$, and signed kernel $\mathcal{S}(y|x)$. This notation will be used from Section~\ref{sec:reductions} onward.

\subsection{Objective}

Our overall goal is to, in polynomial time, design and sample from a Markov kernel witnessing that $\mathcal{U}$ is $\epsilon$-deficient relative to $\mathcal{V}$. Operationally, there is some unknown $\theta \in \Theta$ fixed by Nature. We draw a sample from the source channel $\mathcal{U}(\cdot | \theta)$; let us denote this by the random variable $X_{\theta}$. We would like to transform this source sample to a sample over the target sample space and obtain via an algorithm \textsc{K} the random variable $\textsc{K}(X_\theta)$, with the following twofold objective:
\begin{itemize}
\item[(a)] \textbf{Statistical:} Uniformly over $\theta \in \Theta$, the random variable $\textsc{K}(X_\theta)$ must be $\epsilon$-close in total variation distance to an output random variable $Y_{\theta} \sim \mathcal{V}(\cdot| \theta)$, i.e.,
\[
\sup_{\theta \in \Theta} \; \mathsf{d_{TV}}(\textsc{K}(X_\theta), Y_\theta) \leq \epsilon, \text{ and}
\] 
\item[(b)] \textbf{Computational:} The transformation algorithm $\textsc{K}$ is ``computationally efficient", in that it must run in time\footnote{Throughout, we assume access to real arithmetic and ignore issues associated with discretization via dyadic expansions; a detailed treatment of this step is presented by~\cite{ma2015computational}.} polynomial in $1/\epsilon$ and a reasonable notion\footnote{The notion of ``size'' should be intuitive for natural source, target, and parameter spaces. For example, if these spaces are given by subsets of $\real^d$, then we would like our reduction to run in time polynomial in $d$.} of ``size'' of $(\Theta, \mathbb{X}, \mathbb{Y})$. 
\end{itemize}
There are several consequences of having such a reduction algorithm $\textsc{K}$. The most classical is the consequence for risk of estimation.  Concretely, suppose that $L: \Theta \times \Theta \to [0, 1]$ is some bounded loss function. Now suppose we have an estimator $\widehat{\theta}: \mathbb{Y} \to \Theta$ for the target model. 
Then the estimator $\widetilde{\theta} := \widehat{\theta} \circ \textsc{K}: \mathbb{X} \to \Theta$ is an estimator for the source model. \cite{le1964sufficiency} showed that if the statistical property (a) above holds, then for all $\theta \in \Theta$, the estimator $\widetilde{\theta}$ obeys 
\begin{align} \label{eq:pointwise-risk-transfer}
\mathbb{E}[L(\theta, \widetilde{\theta})] \leq \EE[L(\theta, \widehat{\theta})] + \epsilon.
\end{align}
While such a property is true for the pointwise risk at parameter $\theta$, similar statements can be made about the source and target minimax risks, given respectively by
\begin{align} \label{eq:minimax-def}
\mathfrak{M}_{\mathcal{U}} :\overset{\1}{=} \inf_{\widehat{\theta} \in \Theta} \;  \sup_{\theta \in \Theta} \; \EE[L(\theta, \widehat{\theta})] \quad \text{ and } \quad \mathfrak{M}_{\mathcal{V}} :\overset{\2}{=} \inf_{\widetilde{\theta} \in \Theta} \; \sup_{\theta \in \Theta} \; \EE[L(\theta, \widetilde{\theta})].
\end{align}
In case $\1$, the expectation is taken over $X_{\theta} \sim \mathcal{U}(\cdot | \theta)$, and the infimum over $\widehat{\theta}$ over all measurable functions of the observation $X_{\theta}$. In case $\2$, the expectation is taken over $Y_{\theta} \sim \mathcal{V}(\cdot | \theta)$, and the infimum over $\widetilde{\theta}$ is taken over all measurable functions of the observation $Y_{\theta}$. If there is a Markov kernel $\textsc{K}$ obeying the statistical property (a) above, then it is straightforward to show that
\begin{align} \label{eq:minimax-transfer}
\mathfrak{M}_{\mathcal{V}} \geq \mathfrak{M}_{\mathcal{U}} - \epsilon.
\end{align}
Furthermore, define $\mathfrak{M}^{\mathsf{comp}}_{\mathcal{U}}$ and $\mathfrak{M}^{\mathsf{comp}}_{\mathcal{V}}$ as the respective \emph{constrained} minimax risks~\citep{wainwright2014constrained} in which we take infima in Eq.~\eqref{eq:minimax-def} only over computationally efficient estimators, in the heuristic sense of property (b) above. Then if $\textsc{K}$ additionally obeys the computational property (b), we have the same relation between the constrained minimax risks
$
\mathfrak{M}^{\mathsf{comp}}_{\mathcal{V}} \geq \mathfrak{M}^{\mathsf{comp}}_{\mathcal{U}} - \epsilon.
$
Similar statements to the above can also be made for unbounded losses; see Appendix~\ref{sec:loss-unbounded}.

\subsection{Additional notation} 

We use $\mathcal{L}[X]$ to denote the law of a random variable $X$. For two (possibly signed) measures $\mu$ and $\nu$, we use $\frac{\mathrm{d} \nu}{\mathrm{d} \mu}$ to denote the Radon--Nikodym derivative of $\nu$ with respect to $\mu$. We use $\mathsf{Unif}(S)$ to denote the uniform distribution on a set $S$ and $\mathsf{Exp}(\lambda)$ to denote an exponential distribution of rate $\lambda$. The notation $\mathsf{Lap}(\theta, b)$ is used to denote a Laplace distribution with mean $\theta$ and scale $b$, and $\NORMAL(\theta, \sigma^2)$ to denote a normal distribution with mean $\theta$ and variance $\sigma^2$. For a function of two variables $v(y; \theta)$, we write $\nabla_{\theta} v(y; \theta) \vert_{\theta = \widetilde{\theta}}$ to denote the gradient of $v$ with respect to its second argument $\theta$, evaluated at $\theta = \widetilde{\theta}$. Similar conventions apply to $k$-th order partial derivatives $\nabla^{(k)}_\theta$ for $k \geq 2$. Occasionally, we will use operator notation. For example, $I + \nabla_{\theta}$ should be viewed as an operator, with $\left((I + \nabla_{\theta}) v(y; \theta) \right)\mid_{\theta = \widetilde{\theta}} \; := v(y; \theta) \mid_{\theta = \widetilde{\theta}} + \nabla_{\theta} v(y; \theta) \mid_{\theta = \widetilde{\theta}}$. With this notation, one can define the composition of operators, e.g., $\nabla_{\theta_1} \circ \nabla_{\theta_2}$ denotes the successive partial derivative operation with respect to the second entry and then the first entry of~$\theta$.

We use $(c, c', C, C')$ to denote universal positive constants that may take different values in each instantiation. 
For two nonnegative sequences $f_n$ and $g_n$, we say that $f_n = \mathcal{O}(g_n)$ if there is a universal constant $C > 0$ such that $f_n \leq C g_n$ for all $n \in \mathbb{N}$. We say that $f_n = \Omega(g_n)$ if $g_n = \mathcal{O}(f_n)$. Logarithms are taken to the base $e$. We write $\sgn(z) = 1$ if $z \geq 0$ and $-1$ otherwise. We use $\ind{\cdot}$ to denote the indicator function.



\subsection{How good is a natural plug-in approach?} \label{sec:eq-plugin}

Having set up the reduction problem formally, we first address a basic question that will set the stage for our development. In particular, 
we use the specific example of mapping a source location model to a Gaussian location model in dimension $1$ to highlight issues with a natural approach to reductions.
%
To be concrete, consider a specific case in which the output space is given by the real line, i.e., $\mathbb{Y} = \mathbb{R}$. Suppose we are interested in implementing a reduction from a source statistical model $\mathcal{U} = \left( \mathbb{X}, \{ \mathcal{P}_{\theta} \}_{\theta \in \Theta} \right)$ to the target model $\mathcal{V} = \left( \mathbb{Y}, \{\mathcal{Q}_{\theta} \}_{\theta \in \Theta} \right)$ when both models are location models, i.e., the parameter $\theta$ denotes the unknown mean, and the parameter set is a subset of real values, i.e., we have $\Theta \subseteq \mathbb{R}$. 
Furthermore, suppose that the target probability models of interest are Gaussian with variance $\sigma^2$, i.e., we have $\mathcal{Q}_{\theta} = \mathcal{N}(\theta, \sigma^2)$.

A natural approach\footnote{Another natural plug-in approach is to add to $X_{\theta}$ a zero-mean Gaussian of variance $\widetilde{\sigma}^2 \leq \sigma^2$, so as to match the variance of the target experiment. A similar lower bound to Proposition~\ref{prop:plugin} can also be established for this approach.} to performing a reduction to a Gaussian random variable of mean $\theta$ and variance $\sigma^2$ is to ``plug in'' $X_{\theta}$ as a surrogate for the mean $\theta$, and output the target random variable 
\begin{align} \label{eq:reduction-plugin}
\textsc{K}_{\mathsf{plugin}}(X_{\theta}) = X_{\theta} + \sigma Z, \text{ where } Z \sim \mathcal{N}(0, 1).
\end{align}
Note that this reduction is very robust---indeed, it does not rely on any distributional properties of the source experiment! However, the following proposition provides a lower bound on its TV deficiency as a function of $\sigma$ for some canonical source experiments. 

\begin{proposition} \label{prop:plugin}
There is a universal constant $c > 0$ such that the following is true. Suppose $\theta \in \Theta$ and the source location model generates the random variable $X_{\theta} = \theta + W$. Consider any of the following cases:
\begin{enumerate}
	\item[(i)] Laplace: $\Theta = \mathbb{R}$ and $W \sim \mathsf{Lap}(0, 1)$,
	\item[(ii)] Shifted exponential: $\Theta = \mathbb{R}$ and $W + 1 \sim \mathsf{Exp}(1)$
	\item[(iii)] Uniform: $\Theta = [-1/2, 1/2]$ and $W \sim \mathsf{Unif}([-1/2, 1/2])$.
\end{enumerate}
Recall the plugin reduction from Eq.~\eqref{eq:reduction-plugin}, and let $Y_{\theta} \sim \mathcal{N}(\theta, \sigma^2)$ denote the desired target for some $\sigma \geq 1$. Then in cases (i) and (iii) for the source model, we have
$\mathsf{d_{TV}}(\textsc{K}_{\mathsf{plugin}}(X_{\theta}), Y_{\theta}) \geq c \sigma^{-2}$,
and in case (ii), we have $\mathsf{d_{TV}}(\textsc{K}_{\mathsf{plugin}}(X_{\theta}), Y_{\theta}) \geq c \sigma^{-1}$.
\end{proposition}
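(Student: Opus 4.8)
The plan is to reduce to a scalar Gaussian-convolution comparison and then lower bound total variation via characteristic functions.

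\emph{Reduction.} Total variation distance is invariant under a common deterministic shift, and $\textsc{K}_{\mathsf{plugin}}(X_\theta) = \theta + W + \sigma Z$ while $Y_\theta = \theta + \sigma Z'$ for independent standard normals $Z, Z'$. Hence it suffices to lower bound the $\theta$-free quantity $\mathsf{d_{TV}}(W + \sigma Z,\, \sigma Z')$. I would use the elementary inequality that, for probability measures $P, Q$ on $\mathbb{R}$ with characteristic functions $\widehat P, \widehat Q$ and any $t \in \mathbb{R}$, one has $\mathsf{d_{TV}}(P, Q) \geq c\,|\widehat P(t) - \widehat Q(t)|$ — with $c = \tfrac12$ when both characteristic functions are real-valued — which holds because $x \mapsto \cos(tx)$ and $x \mapsto \sin(tx)$ are bounded by $1$. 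Writing $\varphi_W$ for the characteristic function of $W$, the law of $W + \sigma Z$ has characteristic function $\varphi_W(t)\,e^{-\sigma^2 t^2/2}$ and that of $\sigma Z'$ is $e^{-\sigma^2 t^2/2}$, which yields
\[
\mathsf{d_{TV}}(W + \sigma Z,\, \sigma Z') \;\geq\; c\,|\varphi_W(t) - 1|\,e^{-\sigma^2 t^2/2} \qquad \text{for all } t \in \mathbb{R}.
\]

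\emph{Cases (i) and (iii).} Here $W$ is symmetric, so $\varphi_W$ is real. Choosing $t = 1/\sigma$ makes $e^{-\sigma^2 t^2/2} = e^{-1/2}$ a universal constant, and it remains only to lower bound $1 - \varphi_W(1/\sigma)$ by a constant multiple of $\sigma^{-2}$. For $W \sim \mathsf{Lap}(0,1)$ we have $\varphi_W(t) = (1+t^2)^{-1}$, hence $1 - \varphi_W(1/\sigma) = \sigma^{-2}/(1+\sigma^{-2}) \geq \tfrac12\sigma^{-2}$ for $\sigma \geq 1$. For $W \sim \mathsf{Unif}([-\tfrac12,\tfrac12])$ we have $\varphi_W(t) = \sin(t/2)/(t/2)$, and the alternating-series estimate $1 - \sin(u)/u \geq u^2/7$ valid for $|u| \leq \tfrac12$ (so in particular at $u = 1/(2\sigma)$) gives $1 - \varphi_W(1/\sigma) \geq \tfrac{1}{28}\sigma^{-2}$. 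In both cases $\mathsf{d_{TV}} \geq c\,\sigma^{-2}$.

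\emph{Case (ii) --- the main obstacle.} For the shifted exponential the generic characteristic-function estimate at $t \asymp 1/\sigma$ yields only the weaker rate $\sigma^{-2}$, since $\varphi_W(t) - 1 = -\tfrac12\mathbb{E}[W^2]\,t^2 + O(t^3)$ for any noise with two moments, irrespective of skewness. Obtaining the stronger claimed bound $\sigma^{-1}$ requires a sharper analysis specific to the exponential's one-sided support and heavy right tail. I would approach it in one of two equivalent ways: (a) write the plug-in density in closed form through the Gaussian CDF, namely $x \mapsto e^{\sigma^2/2 - x - 1}\,\Phi\!\big(\tfrac{x+1-\sigma^2}{\sigma}\big)$, and estimate its $L^1$ distance to $\phi_\sigma$ on a carefully chosen region; or (b) exhibit a single bounded test function --- an indicator of a half-line, or a smoothed version of the identity --- on which $W + \sigma Z$ and $\sigma Z'$ differ by $\Omega(\sigma^{-1})$, and then invoke the variational characterization of $\mathsf{d_{TV}}$. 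Identifying the precise region (equivalently, the precise bounded functional) that realizes the $\sigma^{-1}$ rate for the exponential source, and verifying there is no cancellation, is the step I expect to be the most delicate; the symmetric cases above are routine by comparison.
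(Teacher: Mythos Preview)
Your characteristic-function approach is exactly what the paper does, and cases~(i) and~(iii) match line for line: reduce to $\mathsf{d_{TV}}(W+\sigma Z,\sigma Z)$, bound below by $e^{-\sigma^2t^2/2}|\varphi_W(t)-1|$, and take $t=1/\sigma$. For case~(ii) the paper does \emph{not} switch methods either: it simply records $G_W(t)=(1-it)^{-1}$, notes $|G_W(t)-1|\gtrsim |t|$ for $|t|\le 1$, and reads off the $\sigma^{-1}$ rate directly.

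You have, however, caught a genuine slip. The function $(1-it)^{-1}$ is the characteristic function of the \emph{unshifted} exponential $E=W+1$, not of the mean-zero $W$ specified in the proposition; for the centered variable $\varphi_W(t)=e^{-it}(1-it)^{-1}$ and, as you compute, $|\varphi_W(t)-1|=|e^{-it}-1+it|/\sqrt{1+t^2}\sim t^2/2$. Your proposed workarounds (closed-form density, half-line indicator, smoothed identity) cannot rescue the $\sigma^{-1}$ claim for the centered $W$, and in fact no argument can: for any zero-mean $W$ with finite second moment a second-order Taylor expansion with integral remainder gives the matching \emph{upper} bound
\[
\mathsf{d_{TV}}(W+\sigma Z,\sigma Z)\;\le\;\tfrac12\,\mathbb{E}[W^2]\,\|\phi_\sigma''\|_1\;=\;O(\sigma^{-2}),
\]
and skewness enters only at order $\sigma^{-3}$. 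So the $\sigma^{-1}$ rate in case~(ii) holds only under the reading that $W$ is the uncentered $\mathsf{Exp}(1)$, in which case the same characteristic-function bound already finishes the job and the ``delicate'' step you anticipate does not exist.
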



Proposition~\ref{prop:plugin} is proved in Section~\ref{sec:pf-prop-plugin}. It shows that for several source experiments, the plug-in reduction attains TV deficiency that is \emph{lower-bounded} by a \emph{polynomially} scaling function in $\sigma^2$. Thus, one would need $\sigma^2$ to scale at least polynomially in $1/\epsilon$ to be able to attain TV deficiency $\epsilon$ with the plugin approach. 
In the following sections, we build up to a much better reduction procedure that attains TV deficiency decaying \emph{exponentially} in $\sigma^2$.

\section{An oracle rejection sampling algorithm} \label{sec:rej}

In this section, we prove an algorithmic lemma, showing that if we have oracle access to some $\mathcal{S}^*$ that is a feasible solution to the optimization problem~\eqref{eq:one-way-sign} over signed kernels,
then we can efficiently sample from a nearby Markov kernel and explicitly bound its deficiency.

\begin{assumption} \label{assptn:exact}
We can evaluate some $\mathcal{S}^* \in \mathsf{S}(\mathbb{Y} | \mathbb{X})$ in time $T_{eval}$ for each pair $(x, C) \in \mathbb{X} \times \mathcal{B}(\mathbb{Y})$.
\end{assumption}

For notational convenience, define for each $(x, C) \in \mathbb{X} \times \mathcal{B}(\mathbb{Y})$ the truncation $\overline{\mathcal{S}}(C|x) := \mathcal{S}^*(C|x) \vee 0$.
By Assumption~\ref{assptn:exact}, we have oracle access to $\mathcal{S}^*$ and thus to $\overline{\mathcal{S}}$.

\begin{assumption} \label{assptn:RND}
There exists $M > 0$ and a family of base probability measures $\{ \mathcal{P}(\cdot | x) \}_{x \in \mathbb{X}}$ s.t. \\
\noindent (a) For any $x \in \mathbb{X}$, we can sample from $\mathcal{P}(\cdot | x) \in \Delta(\mathbb{Y})$ in time $T_{samp}$. \\
\noindent (b) For all $x \in \mathbb{X}, y\in \mathbb{Y}$, the Radon--Nikodym derivative $\frac{\mathrm{d} \overline{\mathcal{S}}}{\mathrm{d} \mathcal{P}}( y |  x) \leq M$.
\end{assumption}

We now present our randomized reduction algorithm under the two assumptions above.
%

\noindent \hrulefill

\noindent \underline{{\bf Algorithm: General rejection kernel} $\textsc{rk}(x, N, M, y_0)$}

\smallskip

\noindent \textit{Parameters}: Input $x \in \mathbb{X}$, number of iterations $N$, parameter $M$ satisfying Assumption~\ref{assptn:RND}(b), initialization $y_0 \in \mathbb{Y}$. 
\begin{itemize} \label{algo:rk}
\item Initialize $t = 0$, $Y = y_{0}$, and mark ``unset''.
\item Until $Y$ is set or $t\geq N+1$ iterations have elapsed:
\begin{itemize}
\item[(1)] Sample $U_{t} \sim \mathsf{Unif}([0, 1])$ and $Y_{t} \sim \mathcal{P}(\cdot | x)$ mutually independently from the past.
\item[(2)] If $U_{t} \leq \frac{1}{M} \frac{\mathrm{d} \overline{\mathcal{S}}}{ \mathrm{d} \mathcal{P}} ( Y_{t} | x)$, then set $Y \gets Y_{t}$. 
\item[(3)] Set $ t \gets t+1$.
\end{itemize}
\end{itemize}
\noindent \emph{Output} $Y$.

\noindent \hrulefill

The goal of Algorithm \textsc{rk} is to sample from the Markov kernel $\widehat{\mathcal{T}}$ defined via
\begin{align} \label{eq:MK-thresh}
\widehat{\mathcal{T}}( C | x) = \frac{\mathcal{S}^*( C | x) \vee 0}{\int_{\mathbb{Y}} (\mathcal{S}^*( \mathrm{d} y' | x) \vee 0)} \quad \text{for each } (x, C) \in \mathbb{X} \times \mathcal{B}(\mathbb{Y}).
\end{align}
Algorithm \textsc{rk} uses rejection sampling to accomplish this goal. Our bound on its total variation deficiency requires some definitions.
Define the functions $p, q: \mathbb{X} \to [0, \infty)$ via 
\begin{align} \label{eq:pq}
p(x) := \int_{\mathbb{Y}} [\mathcal{S}^* (\mathrm{d} y|x) \vee 0] \quad \text{ and } \quad q(x) := \int_{\mathbb{Y}} - [\mathcal{S}^* (\mathrm{d} y|x) \land 0].
\end{align}
In principle, the functions $p$ and $q$ ought to also be indexed by the kernel $\mathcal{S}^*$, but we suppress this notation for ease of exposition. Recall that $X_{\theta} \sim u(\cdot ;\theta)$ denotes the input random variable when the (unknown) true parameter is $\theta$. 
\begin{lemma} \label{lem:rej-sampling}
Suppose Assumptions~\ref{assptn:exact} and~\ref{assptn:RND} hold. Then for any $\epsilon \in (0, 1)$ and any $x \in \mathbb{X}, y_0 \in \mathbb{Y}$, the randomized algorithm $x \mapsto \textsc{rk}(x, N, M, y_0)$ runs in time $O\big(N (T_{eval} + T_{samp}) \big)$ and has deficiency bounded as
\begin{align} \label{eq:def-ub}
&\sup_{\theta \in \Theta} \; \| \mathcal{L} \left[ \textsc{rk}(X_{\theta}, N, M, y_0) \right] - v( \cdot ; \theta) \|_{\mathsf{TV}} \nonumber \\
&\qquad \qquad  \leq 2 \exp \left\{ -\frac{N}{M} \cdot \inf_{x\in \mathbb{X}} p(x) \right\} + \SignedDef(\mathcal{U}, \mathcal{V}; \mathcal{S}^*) +  \| \mathcal{S}^* \circ \mathcal{U} - \widehat{\mathcal{T}} \circ \mathcal{U} \|_{\infty} \nonumber \\
&\qquad \qquad  \leq 2 \exp \left\{ -\frac{N}{M}\cdot \inf_{x\in \mathbb{X}} p(x)  \right\} + \SignedDef(\mathcal{U}, \mathcal{V}; \mathcal{S}^*) + \frac{1}{2} \sup_{\theta \in \Theta} \; \int_{\mathbb{X}} \left( |p(x) - 1| + q(x) \right) \cdot u(\mathrm{d} x; \theta).
\end{align}
\end{lemma}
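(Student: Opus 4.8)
\textbf{Proof plan for Lemma~\ref{lem:rej-sampling}.}

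The plan is to decompose the target error into three pieces using the triangle inequality, bound each one separately, and then merge the two upper bounds at the end. Fix $\theta \in \Theta$. The natural decomposition is
\[
\| \mathcal{L}[\textsc{rk}(X_\theta, N, M, y_0)] - v(\cdot;\theta) \|_{\mathsf{TV}} \leq \underbrace{\| \mathcal{L}[\textsc{rk}(X_\theta, N, M, y_0)] - (\widehat{\mathcal{T}} \circ \mathcal{U})(\cdot|\theta) \|_{\mathsf{TV}}}_{\text{(A): truncation of the rejection loop}} + \underbrace{\| (\widehat{\mathcal{T}} \circ \mathcal{U})(\cdot|\theta) - (\mathcal{S}^* \circ \mathcal{U})(\cdot|\theta) \|_{\mathsf{TV}}}_{\text{(B)}} + \underbrace{\| (\mathcal{S}^* \circ \mathcal{U})(\cdot|\theta) - v(\cdot;\theta) \|_{\mathsf{TV}}}_{\text{(C)}}.
\]
Term (C), after taking the supremum over $\theta$, is exactly $\SignedDef(\mathcal{U}, \mathcal{V}; \mathcal{S}^*)$ by definition~\eqref{eq:one-way-sign-fixed}; term (B) after the supremum is $\| \mathcal{S}^* \circ \mathcal{U} - \widehat{\mathcal{T}} \circ \mathcal{U}\|_\infty$, which gives the second summand of the first (tighter) bound directly. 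So the real work is in (A) and in subsequently loosening (B).

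For term (A): I would first verify that \emph{conditioned on $Y$ being set}, the output of $\textsc{rk}(x, N, M, y_0)$ is exactly distributed according to $\widehat{\mathcal{T}}(\cdot|x)$. This is the standard rejection-sampling identity: at each iteration the accepted value $Y_t$ has law proportional to $\frac{\mathrm{d}\overline{\mathcal{S}}}{\mathrm{d}\mathcal{P}}(\cdot|x)\,\mathcal{P}(\cdot|x) = \overline{\mathcal{S}}(\cdot|x)$, which normalizes to $\widehat{\mathcal{T}}(\cdot|x)$ since $\int_{\mathbb{Y}} \overline{\mathcal{S}}(\mathrm{d}y|x) = p(x)$ (here Assumption~\ref{assptn:RND}(b) guarantees the acceptance probability $\tfrac{1}{M}\tfrac{\mathrm{d}\overline{\mathcal{S}}}{\mathrm{d}\mathcal{P}}$ is a valid probability). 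The per-iteration acceptance probability equals $\int_{\mathbb{Y}} \tfrac{1}{M}\tfrac{\mathrm{d}\overline{\mathcal{S}}}{\mathrm{d}\mathcal{P}}(y|x)\,\mathcal{P}(\mathrm{d}y|x) = \tfrac{p(x)}{M}$. Hence over $N$ fresh iterations (the loop runs while $t \leq N+1$ with $t$ starting at $0$, so there are effectively $N+1$ draws, but $\geq N$ suffices), the probability that $Y$ is never set is $\big(1 - \tfrac{p(x)}{M}\big)^{N} \leq \exp\{-\tfrac{N p(x)}{M}\} \leq \exp\{-\tfrac{N}{M}\inf_{x'} p(x')\}$. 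On the ``unset'' event the output is the arbitrary $y_0$; coupling the two laws by agreeing on the ``set'' event shows the TV distance between $\mathcal{L}[\textsc{rk}(x,N,M,y_0)]$ and $\widehat{\mathcal{T}}(\cdot|x)$ is at most this failure probability. Integrating over $X_\theta \sim u(\cdot;\theta)$ and using convexity of the TV distance bounds (A) by $\exp\{-\tfrac{N}{M}\inf_x p(x)\}$ uniformly in $\theta$. I would keep the factor of $2$ as a safe buffer (in fact the clean coupling gives $1$, but the statement's $2$ in front of the exponential is consistent with absorbing a factor from a looser coupling argument).

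For the final inequality in~\eqref{eq:def-ub}, I would bound term (B) more explicitly. Writing things pointwise in $x$: $(\mathcal{S}^* \circ \mathcal{U})(\cdot|\theta) = \int_{\mathbb{X}} \mathcal{S}^*(\cdot|x)\,u(\mathrm{d}x;\theta)$ and $(\widehat{\mathcal{T}}\circ\mathcal{U})(\cdot|\theta) = \int_{\mathbb{X}} \widehat{\mathcal{T}}(\cdot|x)\,u(\mathrm{d}x;\theta) = \int_{\mathbb{X}} \tfrac{\overline{\mathcal{S}}(\cdot|x)}{p(x)}\,u(\mathrm{d}x;\theta)$. For each fixed $x$, the signed measure $\mathcal{S}^*(\cdot|x)$ has positive part with total mass $p(x)$ and negative part with total mass $q(x)$, while $\widehat{\mathcal{T}}(\cdot|x)$ is the positive part rescaled to unit mass; so
\[
\tfrac12\| \mathcal{S}^*(\cdot|x) - \widehat{\mathcal{T}}(\cdot|x) \|_1 \;\leq\; \tfrac12\Big( |p(x) - 1|\cdot\|\widehat{\mathcal{T}}(\cdot|x)\|_1 + \|\mathcal{S}^*(\cdot|x) \wedge 0\|_1 \Big) \;=\; \tfrac12\big( |p(x)-1| + q(x)\big),
\]
using $\|\widehat{\mathcal{T}}(\cdot|x)\|_1 = 1$ and $\|\mathcal{S}^*(\cdot|x)\wedge 0\|_1 = q(x)$. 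Then by the triangle inequality / Jensen for the integral over $x$, $\tfrac12\|(\mathcal{S}^*\circ\mathcal{U})(\cdot|\theta) - (\widehat{\mathcal{T}}\circ\mathcal{U})(\cdot|\theta)\|_1 \leq \tfrac12\int_{\mathbb{X}} (|p(x)-1| + q(x))\,u(\mathrm{d}x;\theta)$, and taking $\sup_\theta$ yields the third summand of the last line. The running time claim $O(N(T_{eval}+T_{samp}))$ is immediate: each of the $\leq N+1$ iterations makes one call to sample $\mathcal{P}(\cdot|x)$ (cost $T_{samp}$) and a constant number of evaluations of $\mathcal{S}^*$ to form the Radon--Nikodym ratio (cost $O(T_{eval})$), plus $O(1)$ arithmetic.

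The main obstacle I anticipate is term (A): making the rejection-sampling-with-truncation argument fully rigorous at the level of general standard Borel spaces and signed kernels — in particular justifying that $\tfrac{\mathrm{d}\overline{\mathcal{S}}}{\mathrm{d}\mathcal{P}}(\cdot|x)$ is well-defined and measurable jointly, that the accept step produces exactly $\widehat{\mathcal{T}}(\cdot|x)$ (the normalization constant being $p(x)$, which must be shown finite and the degenerate case $p(x)=0$ handled — there the bound is vacuous since $\inf_x p(x)=0$ makes the exponential term $\geq 1$), and that the coupling on the ``set'' event is valid. The bounds for (B) and (C) are essentially bookkeeping with the Jordan decomposition of signed measures and the definitions already in place.
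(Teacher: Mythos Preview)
Your proposal is correct and follows essentially the same route as the paper: the identical three-term triangle-inequality decomposition, term (C) by definition, term (B) bounded pointwise in $x$ via the Jordan decomposition of $\mathcal{S}^*(\cdot|x)$ giving $\tfrac12(|p(x)-1|+q(x))$ and then integrated, and term (A) via the rejection-sampling acceptance probability $p(x)/M$ and the geometric failure bound $(1-p(x)/M)^N\le e^{-Np(x)/M}$. The only cosmetic difference is that for (A) the paper writes out the exact law of the output (their Lemma~\ref{lem:tech-prob}) and bounds the TV from that formula, whereas you argue directly by coupling on the ``set'' event; your coupling in fact yields the slightly sharper constant $1$ rather than $2$ in front of the exponential, which of course still implies the stated bound.
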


Lemma~\ref{lem:rej-sampling} is proved in Section~\ref{sec:pf-lemma1} and decouples the rejection sampling algorithm from the quality of the kernels involved. In particular, the first term on the RHS of Eq.~\eqref{eq:def-ub} bounds the distortion introduced due to the sampling step, and is small provided $N \gg M (\inf_x p(x))^{-1}$. Since the algorithm has runtime linear in $N$, we would like $M$ and $(\inf_x p(x))^{-1}$ to be small in order for the reduction to run computationally efficiently. Now considering the remaining terms, the overall distortion is small provided we have access to a ``good'' signed kernel $\mathcal{S}^*$. 
In particular, we require a signed kernel $\mathcal{S}^*$ that (a) has small deficiency with low $\SignedDef(\mathcal{U}, \mathcal{V}; \mathcal{S}^*)$ and (b) is minimally distorted under the transformation~\eqref{eq:MK-thresh} in that $\| \mathcal{S}^* \circ \mathcal{U} - \widehat{\mathcal{T}} \circ \mathcal{U} \|_{\infty}$ is small. The latter condition can in turn be ensured if $|p(x) -1|$ and $q(x)$ are small.

Algorithm~\textsc{rk} itself is similar in spirit to the rejection kernels introduced in~\citet{brennan2018reducibility,brennan2019universality,brennan2020reducibility}, in that it uses a form of rejection sampling in order to approximately sample from a distribution. However, it is distinct in two crucial ways: (a)~It accommodates a possibly infinite input sample space $\mathbb{X}$ whereas existing rejection kernels are specifically designed for the binary (Bernoulli)~\citep{brennan2018reducibility,brennan2019universality} or ternary~\citep{brennan2020reducibility} case; and (b)~It accommodates a general family of base measures $\{ \mathcal{P}(\cdot | x) \}_{x \in \mathbb{X}}$, and its running time depends explicitly on the parameter $M$ from Assumption~\ref{assptn:RND}(b). In that sense, one can choose a family of base measures that is computationally easy to sample from per iteration (i.e., having a small $T_{samp}$) at the cost of a larger parameter $M$.

\section{Concrete reductions} \label{sec:reductions}

In this section, we provide a few concrete reductions for some source and target pairs of statistical experiments. In view of Lemma~\ref{lem:rej-sampling}, it suffices to show instances in which signed kernels $\mathcal{S}^*$ can be constructed, and to evaluate the bound~\eqref{eq:def-ub}. We will consider cases where the source and target are densities on $\real^d$, and our notation will reflect this (see Section~\ref{sec:setup}). 

We will frame our goal as one of constructing some signed kernel $\mathcal{S}^*$ such that $\SignedDef(\mathcal{U}, \mathcal{V}; \mathcal{S}^*) = 0$, i.e., finding an exact solution to the infinite dimensional optimization problem over signed kernels~\eqref{eq:one-way-sign}. In particular, suppose the source experiment is a location model, in which case we have $u(x; \theta) = f(x - \theta)$ for some density $f$. If $\SignedDef(\mathcal{U}, \mathcal{V}; \mathcal{S}^*) = 0$, then the following (infinite) system of equations must hold for all $\theta \in \Theta$ and $y \in \mathbb{Y}$:
\begin{align} \label{eq:infinite-deconvolution}
\int_{\real^d} f(x - \theta) \cdot \mathcal{S}^*(y| x) \mathrm{d} x = v(y; \theta).
\end{align}
We demonstrate two techniques to compute such a kernel $\mathcal{S}^*$, both of which are based on viewing Eq.~\eqref{eq:infinite-deconvolution} as an infinite family of deconvolution problems---one for each $y \in \mathbb{Y}$. The first, natural technique performs deconvolution via Fourier inversion, which we rigorously verify using a delicate integration by parts argument. This technique is showcased on the multivariate Laplace and univariate Erlang location models as the source, with $\Theta$ given by $\real^d$ and $\real$, respectively. The second technique is to perform deconvolution via differentiation, akin to \citet{masry1992gaussian}. We showcase this latter technique on the uniform location model as the source, with $\Theta = [-1/2,1/2]$.

\subsection{Laplace location source experiment}

Suppose the source statistical model $\mathcal{U}(\cdot | \theta)$ is given by a $d$-dimensional Laplace distribution
\begin{align} \label{eq:Laplace-location}
u(x; \theta) = \prod_{j = 1}^d \frac{1}{2b_{j}} \exp \left\{ - \frac{| x_j - \theta_j | }{b_{j}} \right\},
\end{align} 
so that the source sample space is given by $\mathbb{X} = \real^d$. Here $\theta_j$, $x_j$ denote the $j$-th entry of $\theta, x\in \real^d$. Consider any $\Theta \subseteq \real^d$; our goal is to map to a general statistical model $\mathcal{V}$ defined on the sample space $\mathbb{Y}$.  

Recall that we interpret $\left(I - b_{1}^{2}\nabla^{(2)}_{\theta_1} \right) \circ \cdots \circ \left(I - b_{d}^{2}\nabla^{(2)}_{\theta_d} \right)$ as a composition of operators, where $\theta_{j}$ denotes the $j$-th entry of $\theta \in \real^{d}$. Let $v(\cdot;\theta)$ be the target model. For each $1\leq k \leq d$ and $y\in \mathbb{Y}$, we define a function $v_{k}(y;\cdot): \real^{d} \rightarrow \real$ via
\begin{subequations}\label{eq:v-funcs-laplace}
\begin{align}
	&v_{k}(y;x) = \left( \left(I - b_{k+1}^{2}\nabla^{(2)}_{\theta_{k+1}} \right) \circ \cdots \circ \left(I - b_{d}^{2}\nabla^{(2)}_{\theta_d} \right) \; v(y; \theta) \right) \Big \vert_{\theta = x} \text{ for all} \quad  x \in \real^{d}\\
	& \text{ with the convention that } \quad v_{d}(y;x) = v(y;\theta) \mid_{\theta = x} \quad \text{for all} \quad  x \in \real^{d}.
\end{align} 
\end{subequations}
For each $1\leq k \leq d$, $y\in \mathbb{Y}$, and $x \in \real^{d}$, we assume that
\begin{subequations}\label{assump-target-laplace}
\begin{align}\label{assump1-target-laplace}
	&\lim_{x_{k} \rightarrow -\infty} \exp \{x_{k}/b_k \} \cdot v_{k}(y;x) = 0; \qquad  &&\lim_{x_{k} \rightarrow -\infty} \exp \{x_{k}/b_k \} \cdot \frac{\partial v_{k}(y;x)}{\partial x_{k}} = 0;  \\
	&\lim_{x_{k} \rightarrow +\infty} \exp \{-x_{k}/b_k \} \cdot v_{k}(y;x) = 0; \qquad  &&\lim_{x_{k} \rightarrow +\infty} \exp \{-x_{k}/b_{k} \} \cdot \frac{\partial v_{k}(y;x)}{\partial x_{k}} = 0. \label{assump2-target-laplace}
\end{align}
\end{subequations}
Note that we only require pointwise convergence (not uniform convergence) in Eq.~\eqref{assump-target-laplace} and there exists target density $v(y|\theta)$ satisfying Eq.~\eqref{assump-target-laplace}, e.g., $v(y|\theta) = \NORMAL(\theta,\sigma^2 I)$.
We are now ready to state the result.
\begin{proposition} \label{prop:Laplace}
Consider the source model $u(\cdot; \theta)$~\eqref{eq:Laplace-location} and a general target model $v(\cdot; \theta)$ satisfying the assumptions given in Eq.~\eqref{eq:v-funcs-laplace}-\eqref{assump-target-laplace}. Then the signed kernel
\begin{align} \label{S-star-Laplace}
\mathcal{S}^*(y | x ) =  \left( \left(I - b_{1}^{2}\nabla^{(2)}_{\theta_1} \right) \circ \cdots \circ \left(I - b_{d}^{2}\nabla^{(2)}_{\theta_d} \right) \; v(y ; \theta) \right)\Big \vert_{\theta = x}
\end{align}
attains $\SignedDef(\mathcal{U}, \mathcal{V}; \mathcal{S}^*) = 0$ for any arbitrary parameter set $\Theta \subseteq \real^d$.
\end{proposition}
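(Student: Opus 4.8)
The plan is to verify directly that the signed kernel $\mathcal{S}^*$ in Eq.~\eqref{S-star-Laplace} satisfies the system of deconvolution equations~\eqref{eq:infinite-deconvolution}. That is, I want to show that for every $\theta \in \real^d$ and every $y \in \mathbb{Y}$,
\begin{align*}
\int_{\real^d} u(x; \theta) \cdot \mathcal{S}^*(y \mid x) \, \mathrm{d} x = v(y; \theta),
\end{align*}
where $u(x;\theta) = \prod_{j=1}^d \frac{1}{2b_j}\exp\{-|x_j - \theta_j|/b_j\}$. Since both the source density and $\mathcal{S}^*$ factor coordinate-by-coordinate in a nested way (the Laplace density is a product, and the operator $\bigl(I - b_1^2 \nabla^{(2)}_{\theta_1}\bigr)\circ\cdots\circ\bigl(I - b_d^2\nabla^{(2)}_{\theta_d}\bigr)$ is a product of commuting single-variable operators), the natural strategy is to peel off one coordinate at a time via Fubini, reducing the claim to the following one-dimensional lemma: if $g:\real\to\real$ is smooth with the decay conditions~\eqref{assump-target-laplace}, then
\begin{align*}
\int_{\real} \frac{1}{2b}\exp\{-|s - t|/b\}\cdot\Bigl(g(s) - b^2 g''(s)\Bigr)\,\mathrm{d} s = g(t).
\end{align*}
Applying this identity successively with $g = v_d(y;\cdot)$ in the $x_d$-variable (which produces $v_{d-1}$ evaluated appropriately), then $v_{d-1}$ in the $x_{d-1}$-variable, and so on, collapses the $d$-fold integral down to $v(y;\theta)$. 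The definitions in Eq.~\eqref{eq:v-funcs-laplace} are precisely set up so that this telescoping works.

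The core one-dimensional identity is where the ``delicate integration by parts'' comes in. The key fact is that the Laplace kernel $\phi_b(s) := \frac{1}{2b}e^{-|s|/b}$ is the Green's function for the operator $I - b^2 \tfrac{d^2}{ds^2}$: one checks that $\phi_b$ solves $\phi_b - b^2\phi_b'' = \delta_0$ in the distributional sense. To make this rigorous without distribution theory, I would split the integral $\int_\real \phi_b(s-t)\bigl(g(s) - b^2 g''(s)\bigr)\,\mathrm{d}s$ at $s = t$ into $\int_{-\infty}^t$ and $\int_t^\infty$, on each of which $\phi_b(s-t)$ is a genuine $C^\infty$ exponential, and integrate the $g''$ term by parts twice on each piece. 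The boundary terms at $s = \pm\infty$ vanish by exactly the hypotheses~\eqref{assump1-target-laplace}–\eqref{assump2-target-laplace} (applied to $g = v_k(y;\cdot)$ in the $k$-th slot, with the other coordinates held fixed), the boundary terms at $s = t^\pm$ combine using the jump in $\phi_b'$ across $0$ (namely $\phi_b'(0^+) - \phi_b'(0^-) = -1/b$, after accounting for signs), and the remaining bulk integrals cancel because $\phi_b$ itself satisfies $\phi_b - b^2\phi_b'' = 0$ away from the origin. What survives is exactly $g(t)$.

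The main obstacle — and the reason the argument requires care rather than being a one-liner — is twofold. First, one must justify all the Fubini interchanges and the differentiation-under-the-integral steps, which is where the pointwise decay assumptions~\eqref{assump-target-laplace} are genuinely needed (and why the proposition states them as hypotheses rather than deriving them); I would verify integrability coordinate-by-coordinate, using that $\phi_b$ has exponential tails and that each $v_k(y;\cdot)$ is controlled against $e^{\pm x_k/b_k}$. Second, bookkeeping the nested operator structure: one has to confirm that after integrating out $x_d$ the result is genuinely $v_{d-1}(y;\cdot)$ evaluated at $(x_1,\dots,x_{d-1},\theta_d)$ wait — more precisely, that the partial application of the remaining operators commutes correctly with the $x_d$-integration, so that the induction hypothesis for $d-1$ coordinates applies cleanly. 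Once the one-dimensional Green's-function identity is established and the inductive peeling is set up carefully, the conclusion $\SignedDef(\mathcal{U},\mathcal{V};\mathcal{S}^*) = 0$ for arbitrary $\Theta \subseteq \real^d$ is immediate, since the deconvolution identity holds for every $\theta \in \real^d$ and the supremum in~\eqref{eq:one-way-sign-fixed} is then over a vacuous discrepancy.
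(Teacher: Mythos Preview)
Your proposal is correct and matches the paper's approach essentially exactly: the paper also reduces to the one-dimensional Green's-function identity (its Lemma~\ref{lemma:aux-pf-prop-laplace}), proved by splitting the Laplace integral at the kink and integrating by parts twice with the boundary terms killed by~\eqref{assump-target-laplace}, and then peels off coordinates one at a time. The only cosmetic difference is ordering---the paper integrates in $x_1$ first (writing $\mathcal{S}^* = (I - b_1^2\partial_{x_1}^2)v_1$ and applying the lemma with $g = v_1$), then $x_2$, and so on up to $x_d$, which is the direction in which the assumptions on $v_k$ in the $x_k$-variable are stated; your $x_d$-first peeling would require either reversing the indexing of the $v_k$'s or commuting the remaining operators through the integral, exactly the bookkeeping issue you already flag.
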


The proof can be found in Section~\ref{sec:pf-propLap}. We first provide a short proof sketch under stronger conditions required to apply the Fourier inversion theorem, and 
then prove the result under the weaker conditions~\eqref{assump-target-laplace} via a delicate integration by parts argument. 
Condition~\eqref{assump-target-laplace} places various differentiability assumptions on $v(y; \cdot)$ and also quantitative conditions on its behavior at (plus and minus) infinity. It disallows any function for which $v_k(y; \cdot)$ or its partial derivative with respect a particular coordinate grows super-exponentially in the value of that coordinate. Operationally, such a bad situation will only occur if the density is parameterized in some ``unstable'' way, which will not be the case in most applications of interest. Aside from Eq.~\eqref{assump-target-laplace}, note that in defining the signed kernel $\mathcal{S}^*$, we have implicitly assumed that the target experiment is twice differentiable in each entry $\theta_j$. Besides these requirements, there are no other constraints---the signed kernel attains zero deficiency provided Eq.~\eqref{assump-target-laplace} holds and the RHS of Eq.~\eqref{S-star-Laplace} is well-defined.
In order for this signed kernel to be useful however, we need to bound its associated functionals $p(x)$ and $q(x)$, and show that they are close to $1$ and $0$, respectively (see Lemma~\ref{lem:rej-sampling}).

\subsubsection{A deficiency bound for Gaussian target families}

Proposition~\ref{prop:Laplace} is stated for the multivariate Laplace source and general targets, but for simplicity we illustrate it for mapping from a univariate Laplace location model $\mathsf{Lap}(\theta,b)$, i.e., with $d = 1$. In this case, the signed kernel~\eqref{S-star-Laplace} takes the simplified form
\begin{align} \label{S-star-Laplace-scalar} 
\mathcal{S}^*(y | x) = \left( v(y ; \theta) - b^2\nabla^{(2)}_{\theta} v(y ; \theta) \right)\mid_{\theta = x}
\end{align}
and yields the following theorem for a univariate Gaussian location target model.

\begin{theorem} \label{thm:Lap-Gaussian}
Suppose the source experiment $\mathcal{U}(\cdot | \theta)$ is given by the Laplace location model $\mathsf{Lap}(\theta,b)$ and the target experiment $v( \cdot ; \theta)$ is given by the pdf of a normal distribution $\NORMAL(\theta, \sigma^2)$ for some $\sigma \geq b$. Choose $\mathcal{P}(\cdot | x)$ to be the pdf of the normal distribution $\NORMAL(x, \sigma^2)$ for each $x \in \real$. Then for each $\epsilon \in (0,1)$ and $\theta \in \real$, Algorithm $\textsc{rk}$ using $\mathcal{S}^*$~\eqref{S-star-Laplace-scalar}, with the parameter settings $M = 2$ and $N \geq 2 \log (4/\epsilon)$ and any $y_0 \in \real$ satisfies
\begin{align} \label{eq:Lap-Gaussian-TV}
\| \mathcal{L} \left[ \textsc{rk}(X_{\theta}, N, M, y_0) \right] - \NORMAL(\theta, \sigma^2) \|_{\mathsf{TV}} \leq \frac{\epsilon}{2} + 6 \exp \left\{ - \frac{\sigma^2}{2b^2} \right\}.
\end{align}

Consequently, for any $\Theta \subseteq \real$ and provided $\sigma^2 \geq 2b^2 \log \left( \frac{12}{\epsilon} \right)$, the reduction algorithm $\textsc{rk}$ runs in time $\mathcal{O}(\log(4/\epsilon)(T_{samp} + T_{eval}))$ and satisfies
\begin{align} \label{eq:final-Lap}
\delta(\mathcal{U}, \mathcal{V}; \textsc{rk}) \leq \epsilon.
\end{align}
\end{theorem}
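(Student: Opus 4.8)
The plan is to instantiate Lemma~\ref{lem:rej-sampling} with the signed kernel supplied by Proposition~\ref{prop:Laplace} in the case $d=1$ and the Gaussian base family, and then to bound the three terms on the right-hand side of Eq.~\eqref{eq:def-ub}. First I would check that the Gaussian target $v(y;\theta) = (2\pi\sigma^2)^{-1/2}\exp\{-(y-\theta)^2/(2\sigma^2)\}$ satisfies conditions~\eqref{assump-target-laplace} (as noted right after that display), so that Proposition~\ref{prop:Laplace} applies and the middle term $\SignedDef(\mathcal{U},\mathcal{V};\mathcal{S}^*)$ vanishes. Using $\nabla_\theta v = \frac{y-\theta}{\sigma^2}v$ and $\nabla^{(2)}_\theta v = \big(\frac{(y-\theta)^2}{\sigma^4}-\frac1{\sigma^2}\big)v$ in Eq.~\eqref{S-star-Laplace-scalar} gives the closed form $\mathcal{S}^*(y\mid x) = v(y;x)\,\rho(y-x)$ with $\rho(z) := 1+\frac{b^2}{\sigma^2}-\frac{b^2 z^2}{\sigma^4}$. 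Since the chosen base measure $\mathcal{P}(\cdot\mid x) = \NORMAL(x,\sigma^2)$ has density $v(y;x)$, the relevant Radon--Nikodym derivative equals $\rho(y-x)$ and its positive truncation is at most $1+\frac{b^2}{\sigma^2}\le 2$ because $\sigma\ge b$; this verifies Assumption~\ref{assptn:RND} with $M=2$, while Assumption~\ref{assptn:exact} is immediate as $\mathcal{S}^*$ is an explicit elementary function.

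The crux is controlling the functionals $p,q$ from Eq.~\eqref{eq:pq}. Since $\mathcal{S}^*(y\mid x)$ depends on $(x,y)$ only through $y-x$, both $p$ and $q$ are constants, and a short computation using the second moment of the Gaussian shows $\int \mathcal{S}^*(y\mid x)\,\mathrm{d}y = 1$, whence $p = 1+q\ge 1$ and $|p-1| = q$; the third term of Eq.~\eqref{eq:def-ub} therefore collapses to exactly $q$. To bound $q$ I would write it as the tail integral of $v(y;x)\cdot(-\rho(y-x))$ over the set $\{\,|y-x|>z_0\,\}$ on which $\rho$ is negative, where $z_0^2 = \sigma^4/b^2+\sigma^2$; the substitution $w=(y-x)/\sigma$ turns this into $\frac{2b^2}{\sigma^2}\int_a^\infty (2\pi)^{-1/2}e^{-w^2/2}(w^2-a^2)\,\mathrm{d}w$ with $a^2 = \sigma^2/b^2+1\ge 2$. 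One integration by parts ($\int_a^\infty w^2e^{-w^2/2}\,\mathrm{d}w = \int_a^\infty e^{-w^2/2}\,\mathrm{d}w + a e^{-a^2/2}$) together with the Mills-ratio lower bound $\overline\Phi(a)\ge(\tfrac1a-\tfrac1{a^3})(2\pi)^{-1/2}e^{-a^2/2}$ bounds this integral by $\frac{2}{a}(2\pi)^{-1/2}e^{-a^2/2}$, so that $q\le \frac{4b^2}{\sqrt{2\pi}\,\sigma^2 a}e^{-a^2/2}$; finally, $\sigma\ge b$, $a\ge\sqrt2$ and $a^2/2=\sigma^2/(2b^2)+\tfrac12$ give $q\le 6\exp\{-\sigma^2/(2b^2)\}$ (in fact with a much smaller constant).

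Assembling the pieces: with $M=2$ and $\inf_x p(x)\ge1$, taking $N\ge 2\log(4/\epsilon)$ makes the first term $2\exp\{-N/M\}\le\epsilon/2$, so Lemma~\ref{lem:rej-sampling} yields Eq.~\eqref{eq:Lap-Gaussian-TV}; the runtime claim is the $O\big(N(T_{eval}+T_{samp})\big)$ bound of that lemma with $N=\Theta(\log(1/\epsilon))$. For the corollary, the hypothesis $\sigma^2\ge 2b^2\log(12/\epsilon)$ forces $6\exp\{-\sigma^2/(2b^2)\}\le\epsilon/2$, so the total TV bound is at most $\epsilon$ uniformly in $\theta$; since $x\mapsto\mathcal{L}[\textsc{rk}(x,N,M,y_0)]$ is a Markov kernel, this is precisely $\delta(\mathcal{U},\mathcal{V};\textsc{rk})\le\epsilon$. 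I expect the main obstacle to be the bound on $q$: one must track signs carefully in the tail integral (the weight $w^2-a^2$ is negative near $w=a$ while the full integrand is positive there) and pick the Mills-ratio estimates tightly enough that the $b^2/\sigma^2$ prefactor and the Gaussian normalizing constants combine into the stated exponential form. Everything else is bookkeeping layered on Lemma~\ref{lem:rej-sampling} and Proposition~\ref{prop:Laplace}.
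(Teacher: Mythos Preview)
Your proposal is correct and follows essentially the same route as the paper: both instantiate Lemma~\ref{lem:rej-sampling} with the signed kernel from Proposition~\ref{prop:Laplace}, compute $\mathcal{S}^*(y\mid x)=\phi_\sigma(y-x)\big(1+b^2/\sigma^2-b^2(y-x)^2/\sigma^4\big)$, bound the Radon--Nikodym ratio by $2$, observe $p=1+q\ge1$, and then reduce the last term to a Gaussian tail estimate. The only cosmetic difference is in that tail estimate---the paper uses integration by parts on $\int_\Delta^\infty t^2\phi_\sigma(t)\,\mathrm{d}t$ together with the crude bound $\int_\Delta^\infty\phi_\sigma\le\tfrac12 e^{-\Delta^2/(2\sigma^2)}$, whereas you use the Mills-ratio lower bound to control $(1-a^2)\overline\Phi(a)+a\phi(a)$; both yield the same exponential. (Your cautionary remark that ``$w^2-a^2$ is negative near $w=a$'' is off: on the integration region $w\ge a$ the weight is nonnegative, so no sign issue actually arises.)
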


\begin{figure}[] 
	\centering
	\begin{subfigure}[b]{0.4\textwidth}
		\centering
		\includegraphics[width=\textwidth]{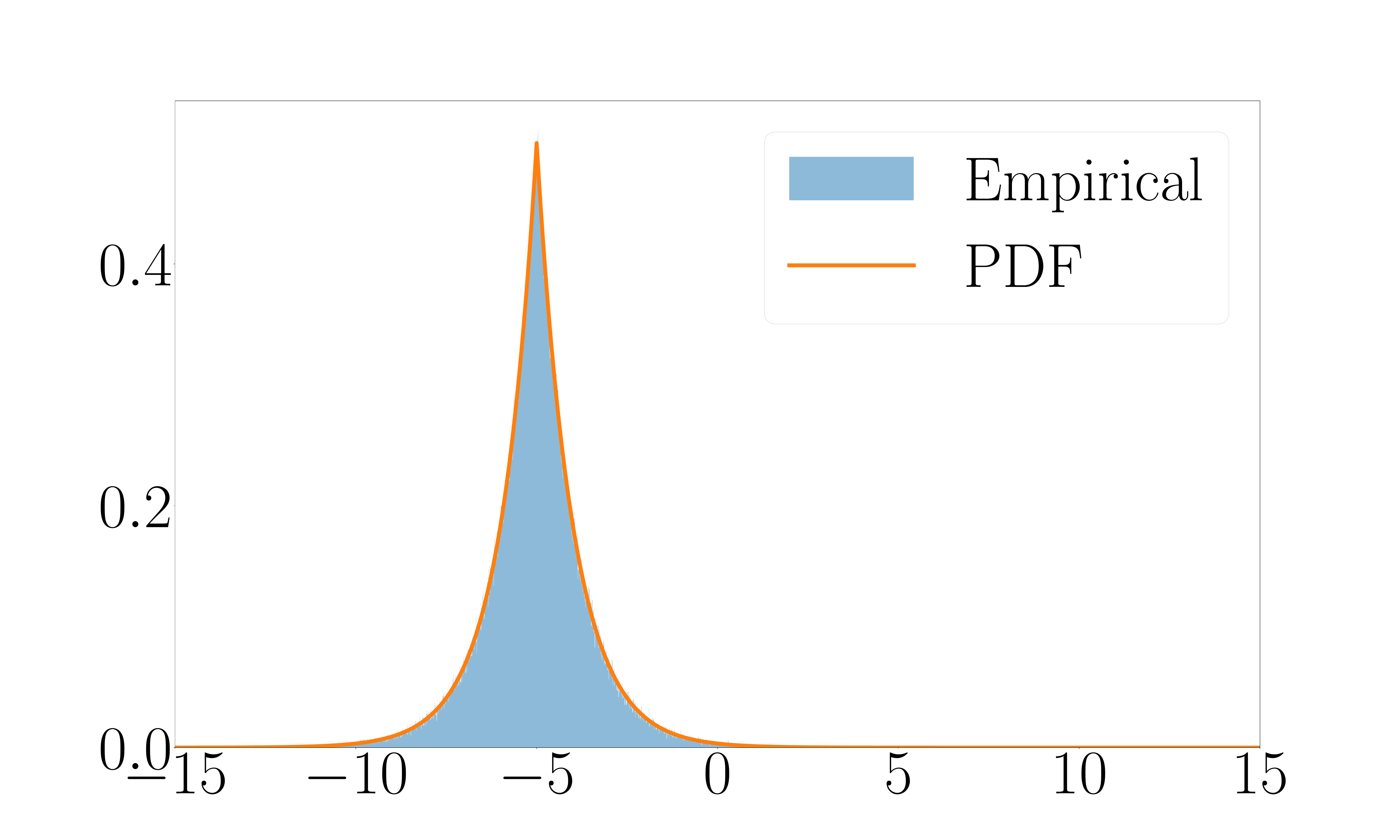}
		\caption{Source pdf and histogram, $\theta = -5$} 
		\label{fig:lap-gaussian1}
	\end{subfigure}
	\hfill
	\begin{subfigure}[b]{0.4\textwidth}  
		\centering 
		\includegraphics[width=\textwidth]{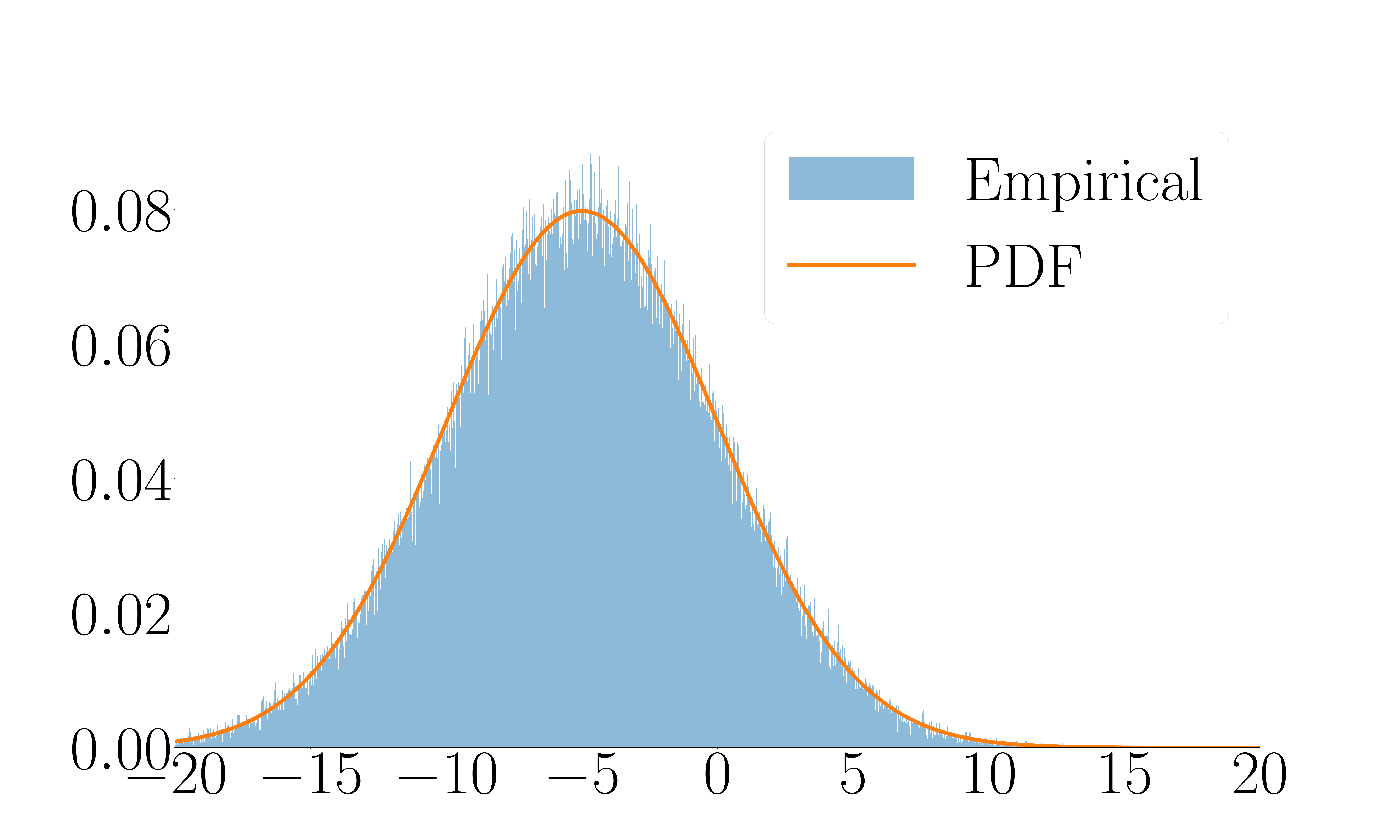}
		\caption{Target pdf and histogram, $\theta = -5$}
		\label{fig:lap-gaussian2}
	\end{subfigure}
	\vskip\baselineskip
	\vspace{-0.56cm}
	\begin{subfigure}[b]{0.4\textwidth}
		\centering
		\includegraphics[width=\textwidth]{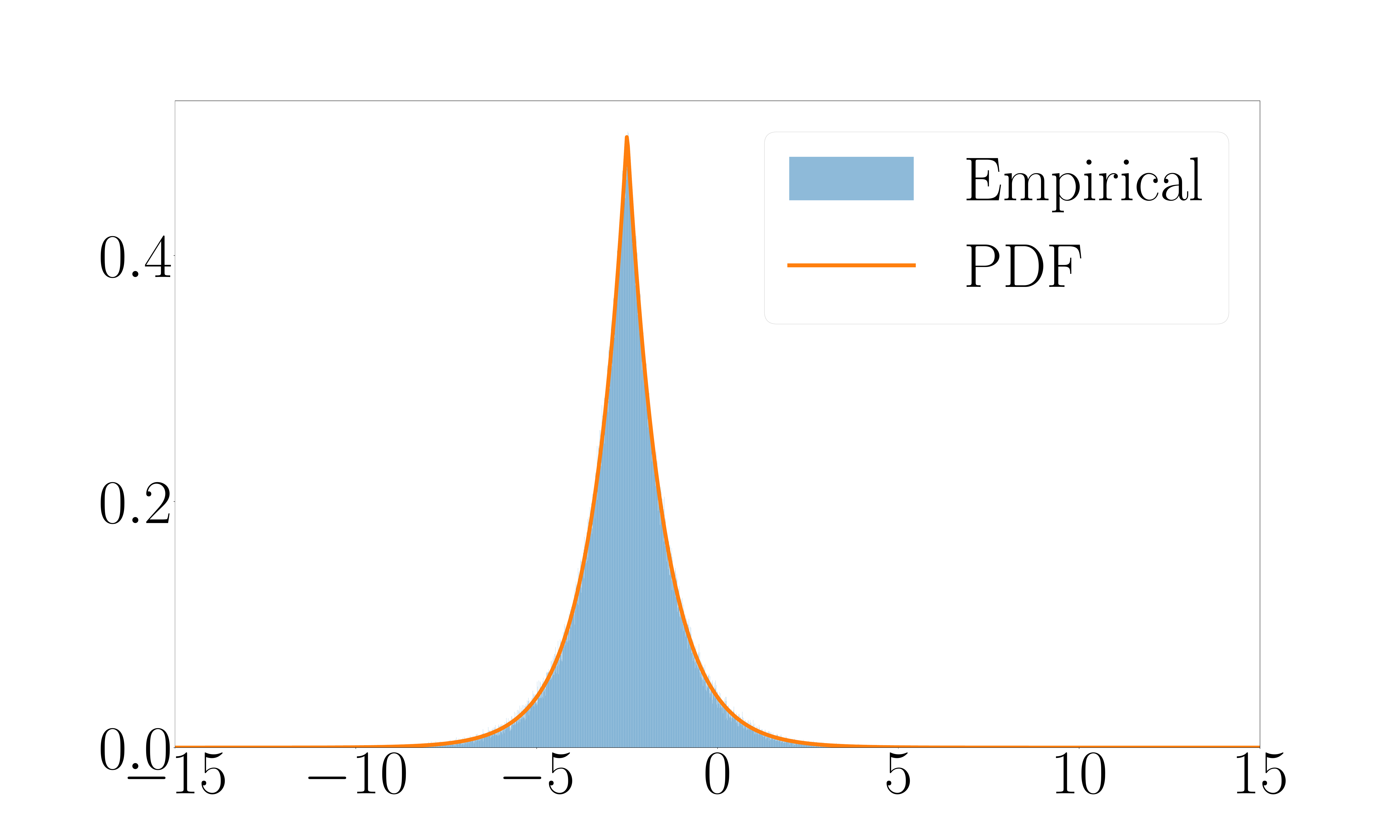}
		\caption{Source pdf and histogram, $\theta = -2.5$} 
		\label{fig:lap-gaussian3}
	\end{subfigure}
	\hfill
	\begin{subfigure}[b]{0.4\textwidth}  
		\centering 
		\includegraphics[width=\textwidth]{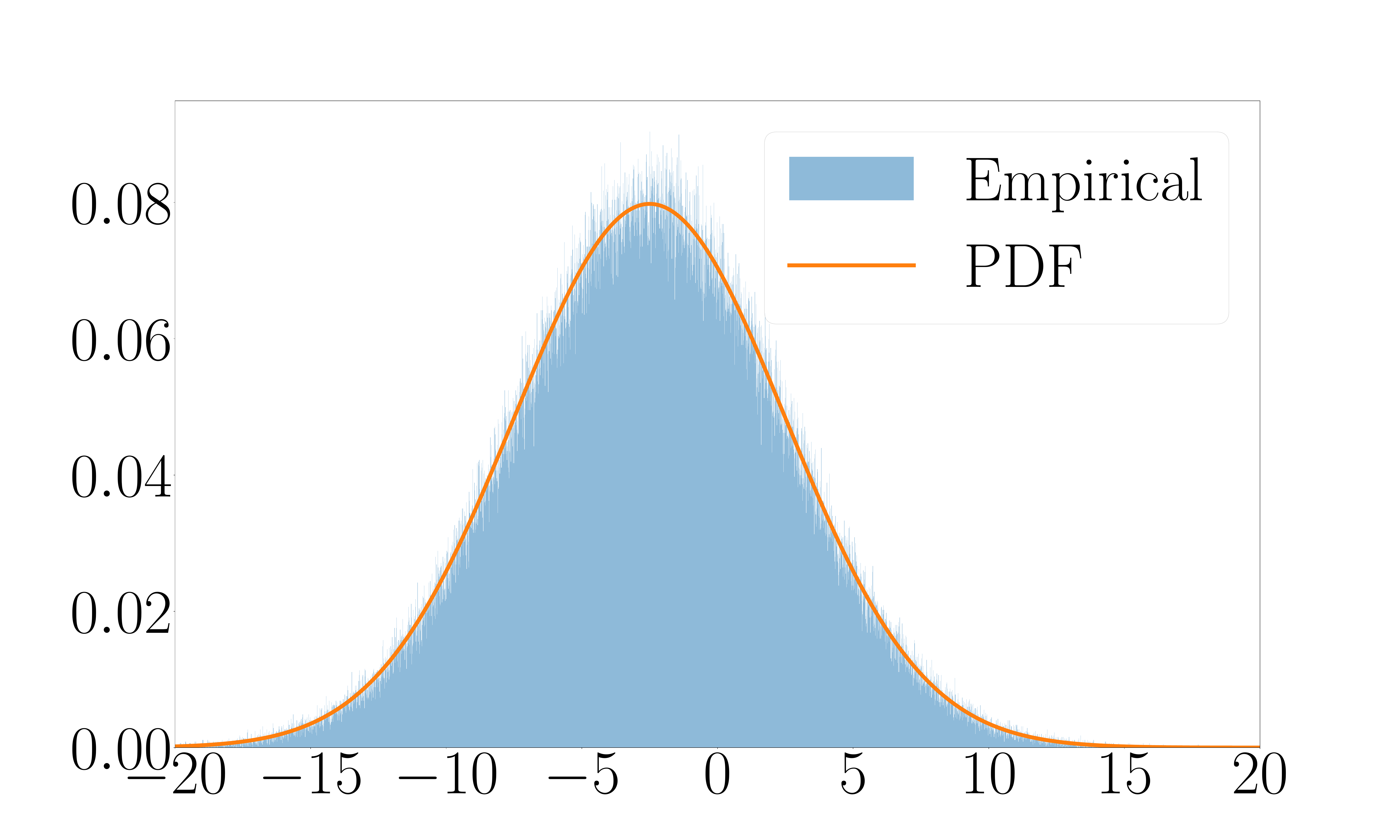}
		\caption{Target pdf and histogram, $\theta = -2.5$}
		\label{fig:lap-gaussian4}
	\end{subfigure}
	\vskip\baselineskip
	\vspace{-0.56cm}
	\begin{subfigure}[b]{0.4\textwidth}
		\centering
		\includegraphics[width=\textwidth]{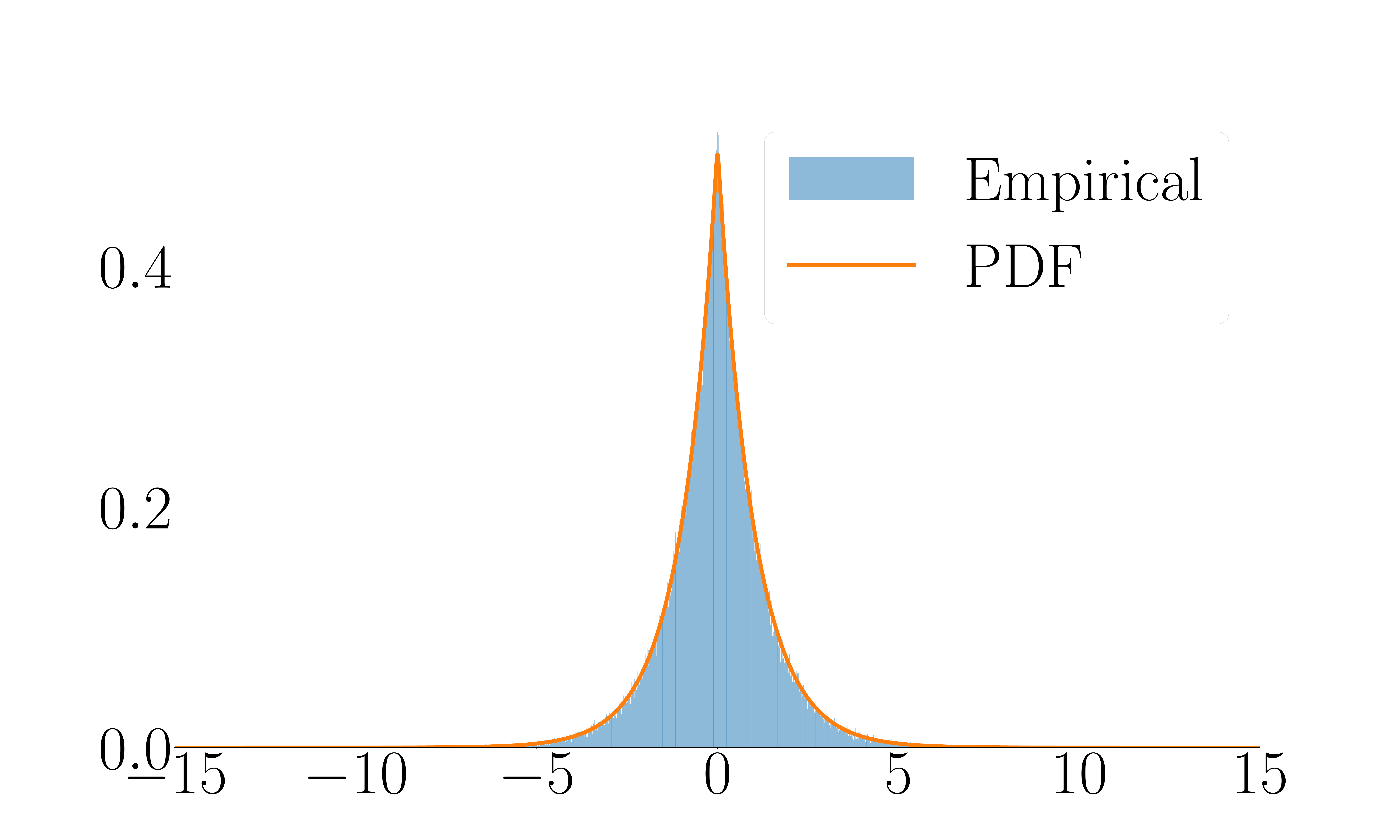}
		\caption{Source pdf and histogram, $\theta = 0$} 
		\label{fig:lap-gaussian5}
	\end{subfigure}
	\hfill
	\begin{subfigure}[b]{0.4\textwidth}  
		\centering 
		\includegraphics[width=\textwidth]{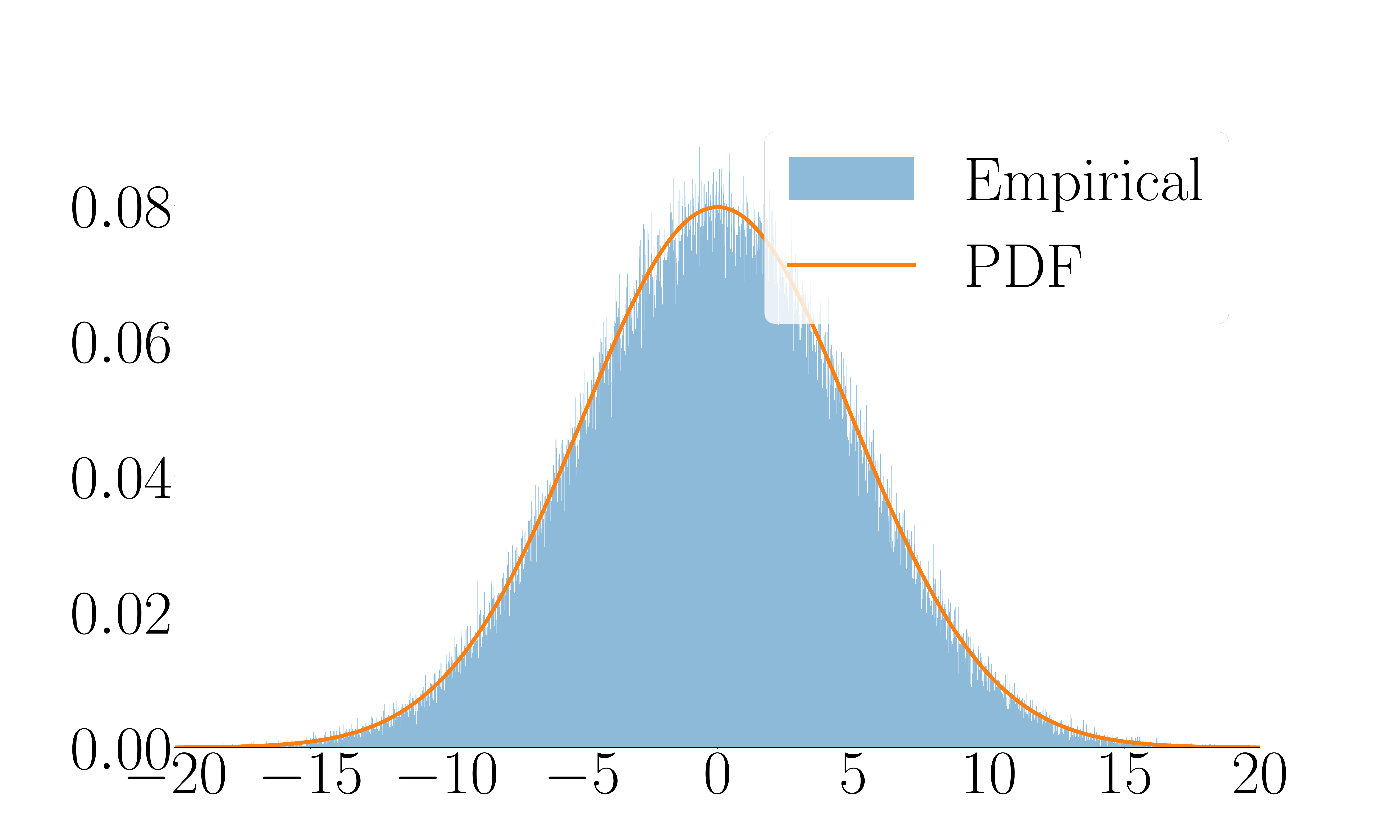}
		\caption{Target pdf and histogram, $\theta = 0$}
		\label{fig:lap-gaussian6}
	\end{subfigure}
	\vskip\baselineskip
	\vspace{-0.56cm}
	\begin{subfigure}[b]{0.4\textwidth}
		\centering
		\includegraphics[width=\textwidth]{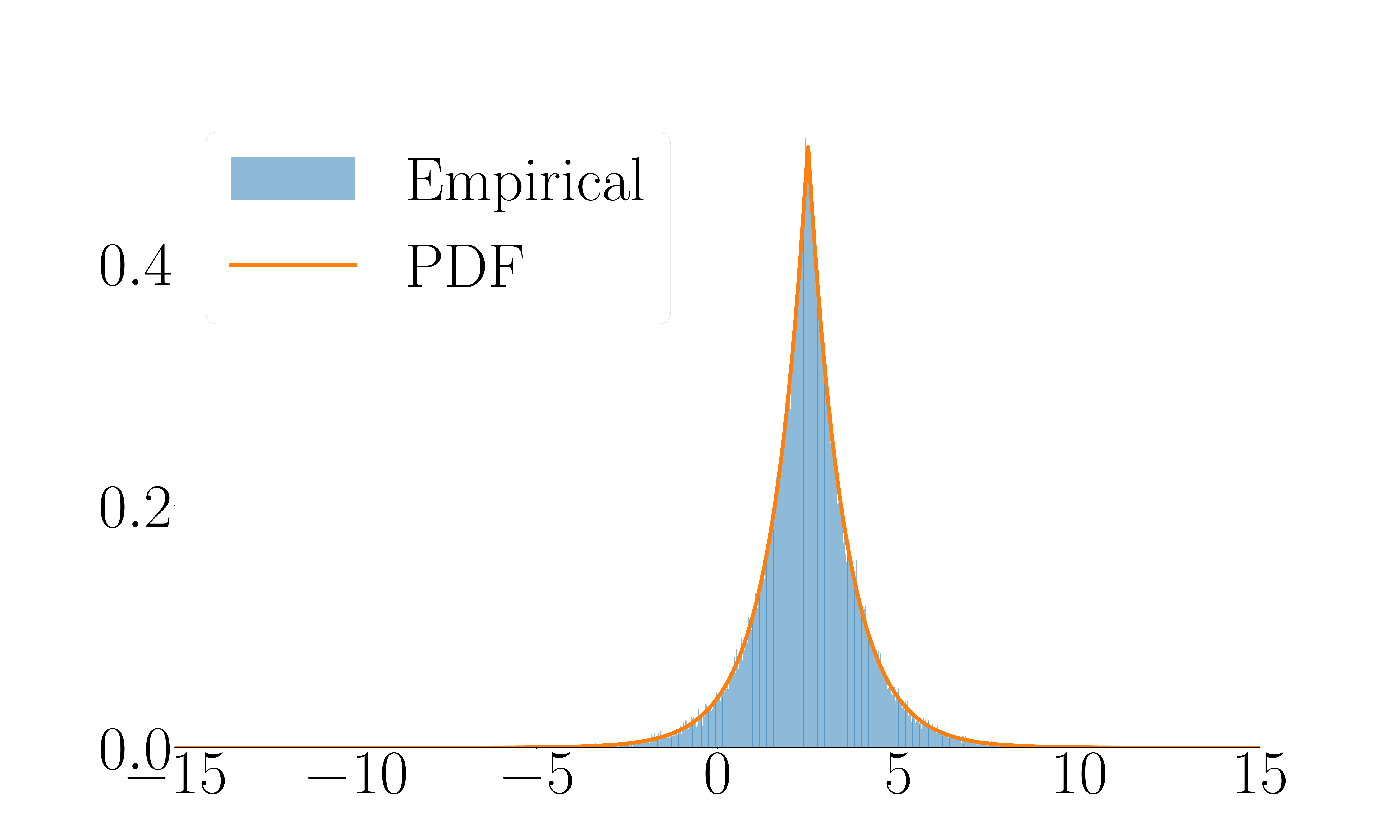}
		\caption{Source pdf and histogram, $\theta = 2.5$} 
		\label{fig:lap-gaussian7}
	\end{subfigure}
	\hfill
	\begin{subfigure}[b]{0.4\textwidth}  
		\centering 
		\includegraphics[width=\textwidth]{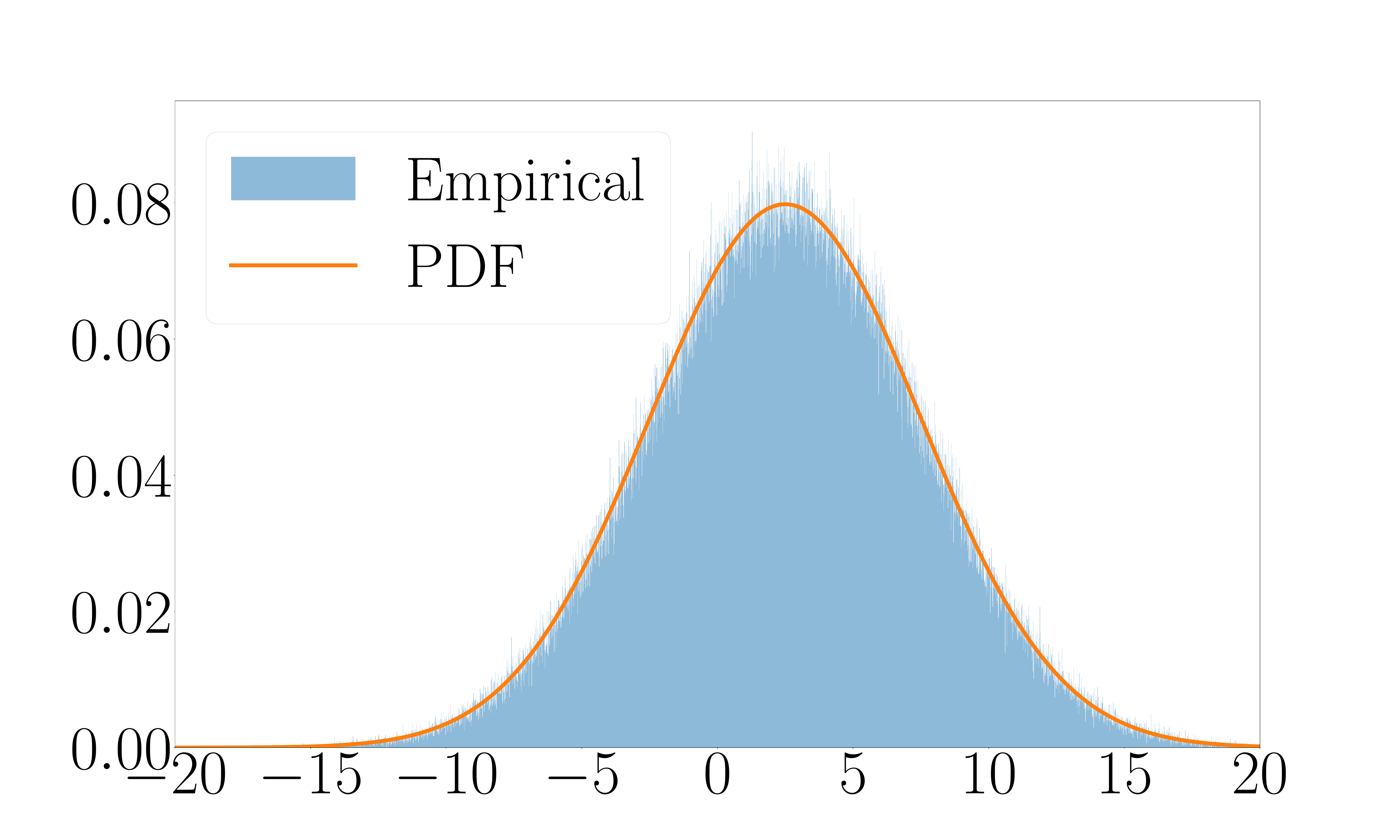}
		\caption{Target pdf and histogram, $\theta = 2.5$}
		\label{fig:lap-gaussian8}
	\end{subfigure}
	\vskip\baselineskip
	\vspace{-0.56cm}
	\begin{subfigure}[b]{0.4\textwidth}
		\centering
		\includegraphics[width=\textwidth]{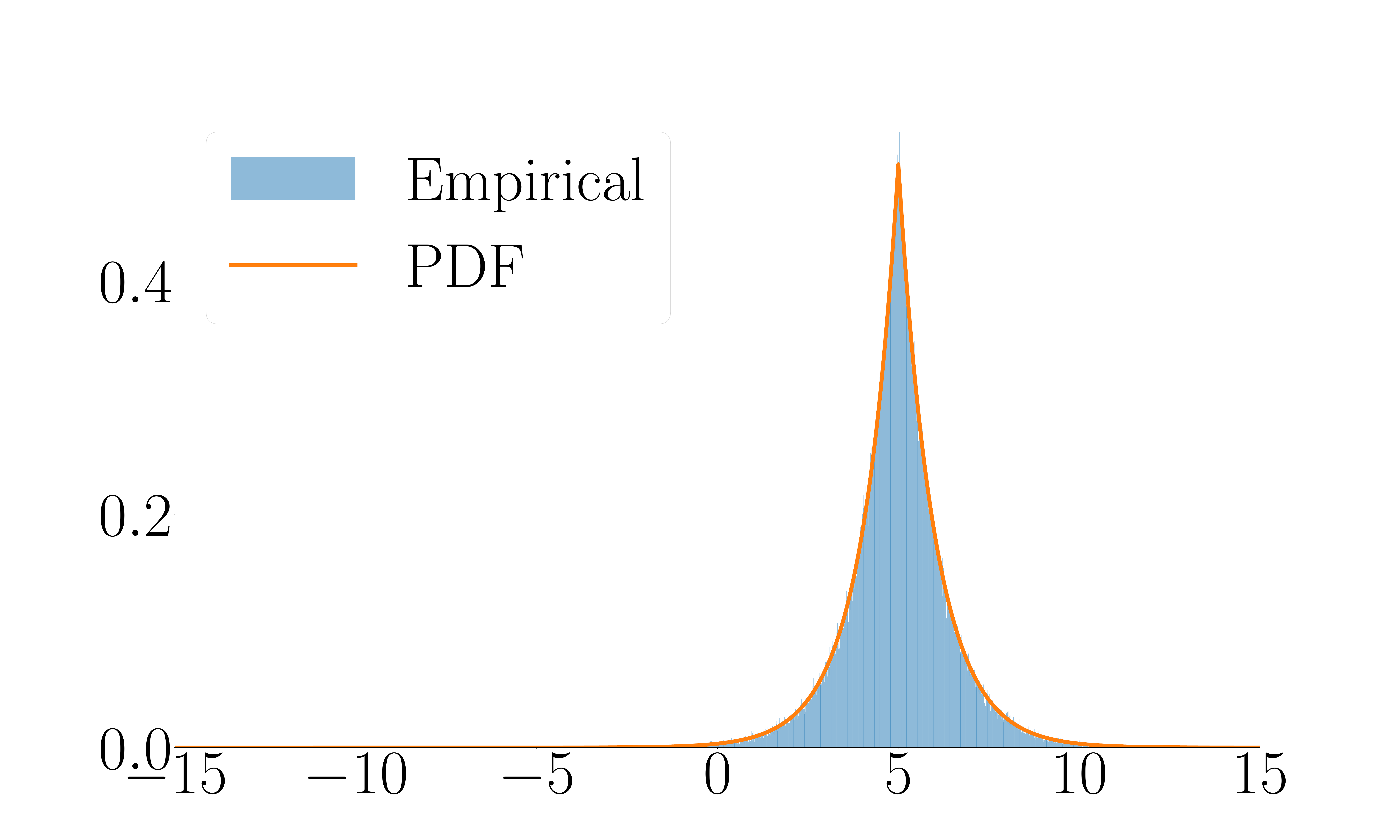}
		\caption{Source pdf and histogram, $\theta = 5$} 
		\label{fig:lap-gaussian9}
	\end{subfigure}
	\hfill
	\begin{subfigure}[b]{0.4\textwidth}  
		\centering 
		\includegraphics[width=\textwidth]{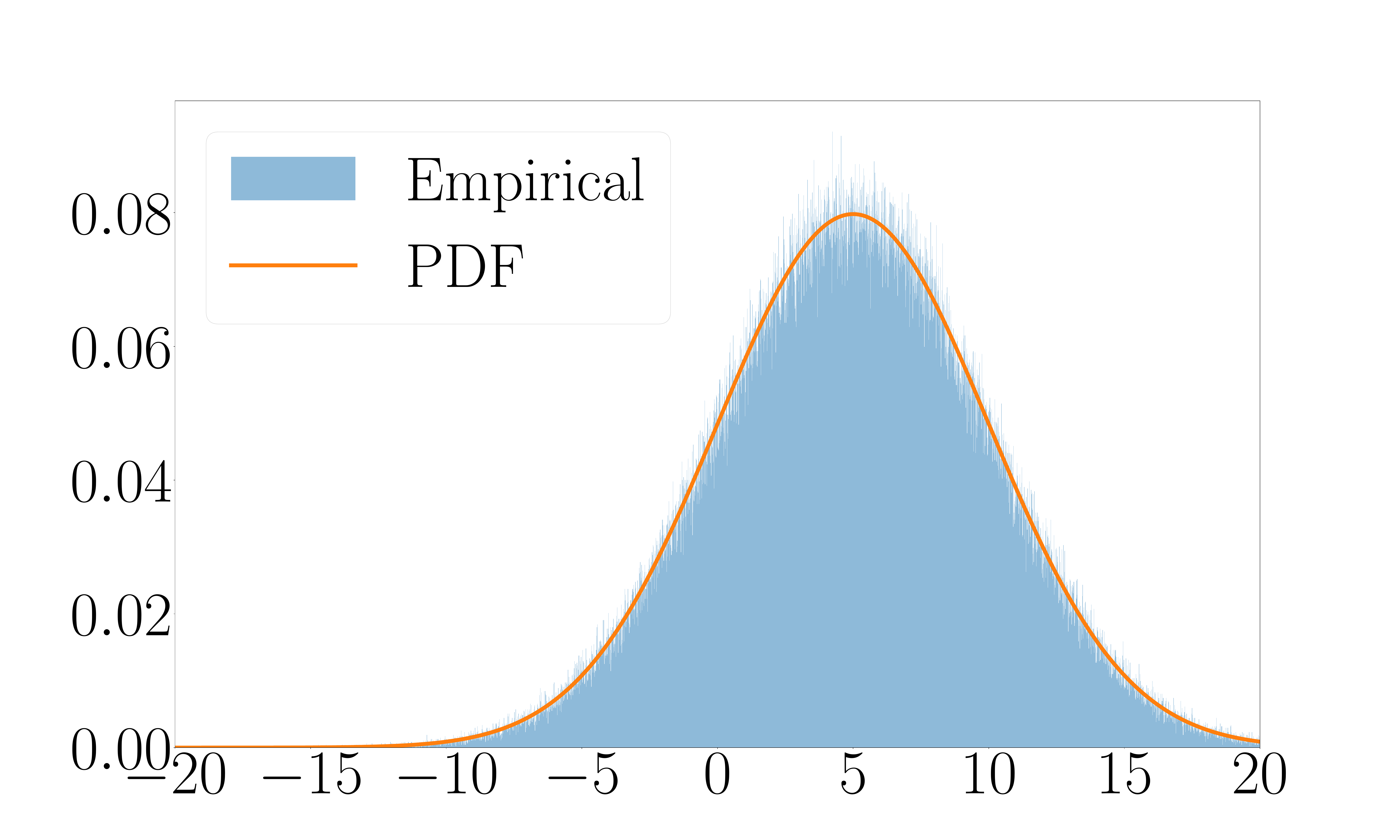}
		\caption{Target pdf and histogram, $\theta = 5$}
		\label{fig:lap-gaussian10}
	\end{subfigure}
	\caption{Simulation of $\mathsf{Lap}(\theta, 1)$ to $\mathcal{N}(\theta, \sigma^2)$ reduction, plotted for $\theta \in \{-5, -2.5, 0, 2.5, 5\}$ and $\sigma = 5$.}
	\label{fig:lap-gaussian}
\end{figure}

The proof of Theorem~\ref{thm:Lap-Gaussian} can be found in Section~\ref{sec:pf-thmLapGauss}. A few comments are in order. First, in order to satisfy the total variation guarantee in Eq.~\eqref{eq:final-Lap}, the variance of the Gaussian must grow logarithmically in $1/\epsilon$, i.e., we only have a reduction to a Gaussian experiment whose variance is $\sigma^2 = \Omega\left(\log(1/\epsilon)\right)$. Furthermore, the amount of computational resource required by the algorithm grows logarithmically in $1/\epsilon$. In that sense, there is a cost associated with transforming a Laplace experiment into a Gaussian one. On the other hand, this is only a logarithmic inflation of both the variance and running time, and so in both statistical and computational terms, this blow-up should not affect the efficiency of any procedure significantly. Contrast this with the plug-in reduction, c.f. Proposition~\ref{prop:Laplace}. Second, note that the base measure $\mathcal{P}(\cdot | x)$ is simply a Gaussian measure centered at $x$, which is very easy to obtain a sample from. We also see that Assumption~\ref{assptn:RND} is satisfied with $M = 2$, and Algorithm~\textsc{RK} thus runs very efficiently.
Third, we note that while showing that Laplace experiments are $\epsilon$-deficient relative to appropriately defined Gaussian experiments is of conceptual interest in its own right, Theorem~\ref{thm:Lap-Gaussian} can be potentially applied to differential privacy---we present this concrete consequence in Corollary~\ref{cor:privacy} to follow.
Finally, note that a multivariate version of the reduction can be constructed via similar techniques by using the signed kernel in Eq.~\eqref{S-star-Laplace}.


Figure~\ref{fig:lap-gaussian} provides a numerical illustration of the reduction. We set parameters $M = 2$, $N = 20$, $\sigma = 5$, $y_{0} = 0$, and set the base measure $\mathcal{P}(y | x)$ and the kernel $\mathcal{S}^{*}(y|x)$ as in Theorem~\ref{thm:Lap-Gaussian}. For each $\theta \in \{-5,-2.5,0,2.5,5\}$, we generate $K = 5\times 10^{5}$ source samples $\{X^{i}\}_{i=1}^{K} \overset{\mathsf{i.i.d.}}{\sim} \mathsf{Lap}(\theta,1)$ and generate target samples $Y^{i} = \textsc{rk}(X^{i}, N, M, y_0)$ for each $i\in \{1,2,\dots,K\}$. We emphasize that algorithm $\textsc{rk}$ has no access to the true parameter $\theta$ and that a large $K$ is chosen so as to plot a reasonable histogram (even though Theorem~\ref{thm:Lap-Gaussian} holds for $K  = 1$). In Figure~\ref{fig:lap-gaussian}, we plot the histograms of $\{X^{i}\}_{i=1}^{K}$ and $\{Y^{i}\}_{i=1}^{K}$ in color blue and plot the probability density functions in color orange. We see that the target distribution almost follows the pdf $\NORMAL(\theta, \sigma^2)$, i.e., the mean $\theta$ is perserved.

\subsection{Erlang location source experiment} 

An Erlang distribution with parameters $(\lambda, k)$ models the time taken for $k$ events of a Poisson process with rate $\lambda$, i.e., it is the distribution of the sum of $k$ i.i.d. $\mathsf{Exp}(\lambda)$ random variables. Consider the shifted $(1, k)$ Erlang with a mean parameter, i.e., a location model, with density
\begin{align}\label{eq:Erlang-location}
u(x; \theta) = \frac{\lambda^k}{(k - 1)!} \cdot (x - \theta)^{k - 1} e^{-\lambda (x - \theta)}.
\end{align}
A natural question is whether such a family of distribution with unknown mean $\theta$ can be mapped to a general target model. For example, the centered exponential $Y = X - 1$ for $X \sim \mathsf{Exp}(1)$ is a canonical zero-mean, unit variance, log-concave random variable that has been studied extensively in nonparametric statistics~\citep{dumbgen2011approximation}, and it is natural to ask if its location model can be transformed to another target model.

Once again, we will consider the parameter set $\Theta = \real$, so that the source sample space is given by $\mathbb{X} = \real$. Suppose that our goal is to map to a general statistical model $\mathcal{V}$ having output sample space $\mathbb{Y}$. As before, we require a technical assumption. Suppose that for each $y \in \mathbb{Y}$, $w \in \real$, and $0\leq j \leq k-1$, 
\begin{subequations}\label{eq:assump-prop-Erlang}
\begin{align}
	\label{eq1:assump-prop-Erlang}
	&\lim_{x \rightarrow +\infty}\; x^{k-1} e^{-\lambda x} \cdot \nabla_{\theta}^{(j)} v(y;\theta) \big \vert_{\theta=x+w} = 0,
\end{align}
and additionally, if $k \geq 2$, assume that
\begin{align}
	\label{eq2:assump-prop-Erlang}
	&\lim_{x \rightarrow 0}\; x \cdot v(y;\theta) \big \vert_{\theta=x + w} = 0, \\
	\label{eq3:assump-prop-Erlang}
	&\lim_{x \rightarrow 0}\; x^{k-1} \cdot \nabla_{\theta}^{(j)} v(y;\theta) \big \vert_{\theta=x + w} = 0.
\end{align}
\end{subequations}
Note that we only require pointwise convergence (not uniform convergence) in Eq.~\eqref{eq:assump-prop-Erlang} and there exists target density $v(y|\theta)$ satisfying Eq.~\eqref{eq:assump-prop-Erlang}, e.g., $v(y|\theta) = \NORMAL(\theta,\sigma^2)$. Then the following result holds.

\begin{proposition} \label{prop:Erlang}
Consider the Erlang source model $u(\cdot;\theta)$~\eqref{eq:Erlang-location} and a general target model $v(\cdot; \theta)$ satisfying the assumption given in Eq.\eqref{eq:assump-prop-Erlang}.
Then the signed kernel
\begin{align}\label{eq:signed-kernel-Erlang}
\mathcal{S}^*(y | x ) =  \sum_{j = 0}^k \binom{k}{j} \lambda^{-j} \cdot \nabla^{(j)}_t v(y; -t) \Big|_{t = -x} 
\end{align}
attains $\SignedDef(\mathcal{U}, \mathcal{V}; \mathcal{S}^*) = 0$ for any arbitrary parameter set $\Theta \subseteq \real$.
\end{proposition}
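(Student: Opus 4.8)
The plan is to reduce Proposition~\ref{prop:Erlang} to the \emph{exact deconvolution identity}
\[
\int_\real u(x;\theta)\,\mathcal{S}^*(y\mid x)\,\mathrm{d}x \;=\; v(y;\theta)\qquad\text{for all }\theta\in\real\text{ and a.e.\ }y\in\mathbb{Y},
\]
which, exactly as in Proposition~\ref{prop:Laplace}, is what the definition~\eqref{eq:one-way-sign-fixed} requires in order to conclude $\SignedDef(\mathcal{U},\mathcal{V};\mathcal{S}^*)=0$; since the source~\eqref{eq:Erlang-location} is a location model, this is the $d=1$ instance of~\eqref{eq:infinite-deconvolution}. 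The first step is to put the kernel~\eqref{eq:signed-kernel-Erlang} in operator form: since $\nabla_t^{(j)}v(y;-t)\big|_{t=-x}=(-1)^j\nabla_\theta^{(j)}v(y;\theta)\big|_{\theta=x}$, the binomial theorem gives
\[
\mathcal{S}^*(y\mid x)=\Bigl(\bigl(I-\lambda^{-1}\nabla_\theta\bigr)^{k}v(y;\theta)\Bigr)\Big|_{\theta=x},
\]
the Erlang analogue of the Laplace kernel~\eqref{S-star-Laplace}. Using the differentiability assumptions implicit in~\eqref{eq:assump-prop-Erlang}, one also checks here that $\mathcal{S}^*(\cdot\mid x)$ is a bona fide signed measure and that $\int_\real|\mathcal{S}^*(y\mid x)|\,u(x;\theta)\,\mathrm{d}x<\infty$, so that the displayed integral makes sense.

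Next I would record the Fourier heuristic that motivates the kernel. The shifted Erlang~\eqref{eq:Erlang-location} is the law of $\theta+Z_1+\cdots+Z_k$ with $Z_i\overset{\mathrm{i.i.d.}}{\sim}\mathsf{Exp}(\lambda)$, so the characteristic function of $X_\theta-\theta$ is $(1-i\omega/\lambda)^{-k}$. Hence, in the Fourier domain in the variable $\theta$, deconvolving against the Erlang density is multiplication by $(1-i\omega/\lambda)^{k}$, which, matching the Fourier symbol of $\nabla_\theta$ and expanding by the binomial theorem, is precisely the operator $(I-\lambda^{-1}\nabla_\theta)^{k}$. This argument can be made rigorous under strong integrability hypotheses on $v(y;\cdot)$ and its first $k$ derivatives, and already covers standard targets such as $v(y\mid\theta)=\NORMAL(\theta,\sigma^2)$, but it is not valid under the weaker pointwise hypotheses~\eqref{eq:assump-prop-Erlang}, which say nothing about Fourier transforms.

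To prove the identity under~\eqref{eq:assump-prop-Erlang} I would use a delicate integration-by-parts argument, exploiting the convolution structure $u(\cdot;\theta)=h_\lambda^{*k}(\cdot-\theta)$ with $h_\lambda(w)=\lambda e^{-\lambda w}$ for $w>0$, and inducting on $k$. The base case $k=1$ amounts to a single integration by parts, after the change of variables $w=x-\theta$, moving the derivative off $v'$; the sole boundary term occurs at $w=+\infty$ and vanishes by~\eqref{eq1:assump-prop-Erlang} with $j=0$, leaving $\int_0^\infty\lambda e^{-\lambda w}(I-\lambda^{-1}\nabla_\theta)v(y;\theta+w)\,\mathrm{d}w=v(y;\theta)$. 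For the inductive step one writes $u(\cdot;\theta)=h_\lambda*\bigl(h_\lambda^{*(k-1)}\bigr)(\cdot-\theta)$, applies Fubini to split the integral, uses the $k=1$ identity on the inner integral with the test function $(I-\lambda^{-1}\nabla_\theta)^{k-1}v(y;\cdot)$ --- a fixed linear combination of $v(y;\cdot),\dots,\nabla_\theta^{(k-1)}v(y;\cdot)$, whose required boundary decay $e^{-\lambda w}\nabla_\theta^{(j)}v(y;\theta_0+w)\to0$ for $0\le j\le k-1$ follows from~\eqref{eq1:assump-prop-Erlang}, since $w^{k-1}e^{-\lambda w}\phi(w)\to0$ forces $e^{-\lambda w}\phi(w)\to0$ --- and then invokes the inductive hypothesis. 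The remaining conditions~\eqref{eq2:assump-prop-Erlang}--\eqref{eq3:assump-prop-Erlang}, imposed only for $k\ge 2$, are exactly what is needed to annihilate the boundary contributions at the left endpoint $w=0$ of the support that arise if one instead integrates by parts directly against the Erlang density $\tfrac{\lambda^{k}}{(k-1)!}w^{k-1}e^{-\lambda w}$, whose successive derivatives reintroduce the lower powers $w^{k-2},w^{k-3},\dots$.

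The main obstacle is this last step. Under only pointwise (not uniform) decay one must: justify every interchange of the order of integration (Fubini for each convolution split, and differentiation under the integral where it is used); track all of the $O(k)$ boundary terms generated by the iterated integration by parts and match each to exactly one clause of~\eqref{eq:assump-prop-Erlang}; and verify that the intermediate objects $(I-\lambda^{-1}\nabla_\theta)^{j}v(y;\cdot)$ retain enough regularity and decay for the induction to close. By contrast, the algebraic content --- checking that the binomial expansion of $(1-i\omega/\lambda)^k$ indeed inverts the Erlang characteristic function --- is routine; all the real work is in the boundary analysis, just as for the Laplace source in Proposition~\ref{prop:Laplace}.
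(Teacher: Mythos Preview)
Your proposal is correct and captures the same two-part structure as the paper: a Fourier heuristic followed by a rigorous integration-by-parts argument under the pointwise hypotheses~\eqref{eq:assump-prop-Erlang}. The operator identity $\mathcal{S}^*(y\mid x)=\bigl((I-\lambda^{-1}\nabla_\theta)^{k}v(y;\theta)\bigr)\big|_{\theta=x}$ is exactly what the paper uses (it is the first display of Section~\ref{sec:pf-Erlang}, integration-by-parts part).

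Where you genuinely diverge is in the rigorous step. You induct on $k$ using the factorization $u(\cdot;\theta)=h_\lambda^{*k}(\cdot-\theta)$: peel off one exponential convolution, apply the $k=1$ identity with test function $(I-\lambda^{-1}\nabla_\theta)^{k-1}v$, and invoke the inductive hypothesis for Erlang$(k-1)$. The paper instead works directly against the Erlang density, splits the integral into a $j=0$ piece $T_1$ and a $j\ge 1$ piece $T_2$, and reduces each by repeated integration by parts to a common family of integrals $\int g_1 g_2^{(i)}\nabla v$; matching the two expansions requires the combinatorial identity in Lemma~\ref{lemma:combinatorial}. Your inductive route avoids that lemma entirely, and in fact only ever uses the $x\to\infty$ condition~\eqref{eq1:assump-prop-Erlang}: the $k=1$ step has no interior boundary term, and (as you note) $x^{k-1}e^{-\lambda x}\phi(x)\to 0$ forces $e^{-\lambda x}\phi(x)\to 0$, so~\eqref{eq1:assump-prop-Erlang} cascades down. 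The paper's direct route is what generates the $x\to 0$ boundary terms handled by~\eqref{eq2:assump-prop-Erlang}--\eqref{eq3:assump-prop-Erlang}.

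One point you should tighten: you attribute~\eqref{eq2:assump-prop-Erlang}--\eqref{eq3:assump-prop-Erlang} to ``if one instead integrates by parts directly,'' but then leave it ambiguous whether your own induction needs them. Be explicit that it does not --- and note that if it did, the induction would break, since the $(k-1)$-version of~\eqref{eq3:assump-prop-Erlang} requires $x^{k-2}\nabla^{(j)}v\to 0$ as $x\to 0$, which is \emph{not} implied by the $k$-version $x^{k-1}\nabla^{(j)}v\to 0$. The Fubini step (splitting the Erlang convolution) is justified exactly when the original integral $\int u(x;\theta)|\mathcal{S}^*(y\mid x)|\,\mathrm{d}x$ is finite, so no extra hypothesis is needed there.
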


See Section~\ref{sec:pf-Erlang} for the proof of Proposition~\ref{prop:Erlang}. As before, we present both a Fourier-theoretic proof sketch
as well as a rigorous proof under the differentiability conditions~\eqref{eq:assump-prop-Erlang} on the target experiment. Roughly speaking, if the source is an Erlang $(1, k)$ distribution, then we require the target experiment to be $k$ times differentiable in $\theta$, with bounded growth properties at zero and infinity. Below, we present an example reduction when $k = 1$, i.e., for the exponential distribution.

\subsubsection{Deficiency between an exponential source and log-concave target}

As a special case of Proposition~\ref{prop:Erlang}, suppose we are interested in mapping the exponential location model to a general log-concave target location model. In particular, suppose the source experiment takes the form of an $\mathsf{Exp}(1)$ random variable with unknown shift $\theta$, having density
\begin{align}\label{eq:exponential-location}
	u(x;\theta) = \begin{cases} \exp(-(x-\theta+1)) &\text{for} \quad x \geq \theta-1, 
								\\ 0 &\text{for} \quad x<\theta-1. \end{cases}
\end{align}
Note that the source distribution has unknown mean $\theta$ and unit variance.

To define the target experiment, let $\psi:\real \rightarrow \real$ be a convex and differentiable function that satisfies
\begin{subequations} \label{eq:exp-target}
\begin{align} \label{eq:psi-density-condition}
	\int_{\real} e^{-\psi(z)} \mathrm{d}z = 1, \quad \int_{\real} z\cdot e^{-\psi(z)} \mathrm{d}z = 0 \quad \text{and} \quad \int_{\real}z^{2} \cdot e^{-\psi(z)} \mathrm{d}z = 1.
\end{align}
Suppose the target experiment is given by 
\begin{align}
    v(y ; \theta) = \frac{1}{\sigma} \exp\left\{ - \psi\Big(\frac{y-\theta}{\sigma} \Big) \right\} \text{ for some } \sigma \geq 1. 
\end{align}
\end{subequations}
\begin{subequations} \label{eq:functionals}
Also define the functionals 
\begin{align} \label{eq:kappa-tau}
    \kappa(\sigma) &:= \begin{cases} \inf\{z\in \real: \psi'(z) = \sigma \}, &\text{if there exists }  z \in \real \text{ such that } \psi'(z) = \sigma 
    \\ + \infty, & \text{otherwise, and}\end{cases} \\
    \tau(\sigma) &:= 2\int_{\kappa(\sigma)}^{+\infty}e^{-\psi(z)} \mathrm{d}z + \frac{2}{\sigma}e^{ -\psi(\kappa(\sigma))}. \label{eq:functional2}
\end{align}
\end{subequations}

Applying Proposition~\ref{prop:Erlang}, we see that the following signed kernel attains $\SignedDef(\mathcal{U}, \mathcal{V}; \mathcal{S}^*) = 0$:
\begin{align}\label{eq:signed-kernel-Exp}
\mathcal{S}^{*}(y|x) =  v(y;x+1) - \nabla_{\theta}v(y;\theta) \mid_{\theta = x+1}.
\end{align}
\begin{theorem}\label{thm:exp-log-concave}
Suppose the source experiment $\mathcal{U}(\cdot | \theta)$ is given by the exponential location model~\eqref{eq:exponential-location} and the target experiment by the log-concave location model~\eqref{eq:exp-target}.
Choose the family of base measures 
\begin{align}\label{eq:base-measure-log-concave}
\mathcal{P}(y|x) = \frac{1}{2\sigma} \exp\left\{ -\psi\Big( \frac{y-x-1}{2\sigma} \Big) \right\}.
\end{align}
Then for each $\sigma>0$ and $\theta \in \real$, Algorithm $\textsc{rk}$ using $\mathcal{S}^*$~\eqref{eq:signed-kernel-Exp}, with any $y_0 \in \real$,  $N \geq 1$ and
\begin{align} \label{ineq:M-lower-bound-exp}
M \geq 2 \exp\left\{ \frac{\psi(0)}{2}\right\} \cdot \sup_{z \leq \kappa(\sigma)} \exp\left\{-\frac{\psi(z)}{2} \right\} \Big(1- \frac{\psi'(z)}{\sigma} \Big)
\end{align}
satisfies
\begin{align} \label{ineq:exp-log-concave}
\left\| \mathcal{L} \left[ \textsc{RK}(X_{\theta}, N, M, y_0) \right] - v(\cdot; \theta) \right\|_{\mathsf{TV}} &\leq 2 \exp \left\{ -\frac{N}{M} \big(1-\tau(\sigma)\big) \right\} + \tau(\sigma),
\end{align}
where $\tau(\sigma)$ is defined in Eq.~\eqref{eq:functional2}.
\end{theorem}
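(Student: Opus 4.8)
The plan is to apply the oracle rejection-sampling lemma, Lemma~\ref{lem:rej-sampling}, to the signed kernel $\mathcal{S}^*$ of~\eqref{eq:signed-kernel-Exp} and the base family $\mathcal{P}(\cdot\,|\,x)$ of~\eqref{eq:base-measure-log-concave}, and then to evaluate all three terms in~\eqref{eq:def-ub} in closed form. The first task is to check Assumptions~\ref{assptn:exact} and~\ref{assptn:RND}. Assumption~\ref{assptn:exact} is immediate from the explicit formula~\eqref{eq:signed-kernel-Exp}, and Assumption~\ref{assptn:RND}(a) holds because $\mathcal{P}(\cdot\,|\,x)$ is merely a location-scale image of the log-concave density $e^{-\psi}$. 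The substantive step is Assumption~\ref{assptn:RND}(b): writing $z=(y-x-1)/\sigma$ and differentiating $v$, one finds $\mathcal{S}^*(y\,|\,x)=\tfrac1\sigma e^{-\psi(z)}\bigl(1-\psi'(z)/\sigma\bigr)$, so that $\frac{\mathrm{d}\overline{\mathcal{S}}}{\mathrm{d}\mathcal{P}}(y\,|\,x)=2\,e^{\psi(z/2)-\psi(z)}\bigl(1-\psi'(z)/\sigma\bigr)_+$. The key estimate is midpoint convexity of $\psi$, namely $\psi(z/2)\le\tfrac12\psi(z)+\tfrac12\psi(0)$, which yields $\frac{\mathrm{d}\overline{\mathcal{S}}}{\mathrm{d}\mathcal{P}}(y\,|\,x)\le 2\,e^{\psi(0)/2}e^{-\psi(z)/2}\bigl(1-\psi'(z)/\sigma\bigr)_+$; since $\psi'$ is nondecreasing and continuous (a differentiable convex function is automatically $C^1$), the positive part vanishes once $z>\kappa(\sigma)$, so the supremum over $z$ may be restricted to $z\le\kappa(\sigma)$, matching exactly the lower bound~\eqref{ineq:M-lower-bound-exp} on $M$.

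With the assumptions verified, Lemma~\ref{lem:rej-sampling} applies. Its middle term $\SignedDef(\mathcal{U},\mathcal{V};\mathcal{S}^*)$ vanishes by the application of Proposition~\ref{prop:Erlang} (with $k=1$, $\lambda=1$) recorded just before the theorem statement. For the remaining terms I would compute the functionals $p,q$ of~\eqref{eq:pq}. Since $\mathcal{S}^*(y\,|\,x)$ depends on $(x,y)$ only through $y-x-1$, both $p(\cdot)$ and $q(\cdot)$ are constant in $x$; substituting $z=(y-x-1)/\sigma$, using that $\mathcal{S}^*\ge 0$ exactly on $\{z\le\kappa(\sigma)\}$, and invoking the integration-by-parts identity $\int_{-\infty}^{\kappa(\sigma)}e^{-\psi(z)}\psi'(z)\,\mathrm{d}z=-e^{-\psi(\kappa(\sigma))}$ (whose boundary term at $-\infty$ vanishes because $e^{-\psi}$ is an integrable convex-exponent density), one obtains
\[
p(x)=1-\int_{\kappa(\sigma)}^{+\infty}e^{-\psi(z)}\,\mathrm{d}z+\tfrac1\sigma e^{-\psi(\kappa(\sigma))},\qquad q(x)=p(x)-1 .
\]
On $[\kappa(\sigma),+\infty)$ we have $\psi'\ge\sigma$, hence $\psi(z)\ge\psi(\kappa(\sigma))+\sigma\bigl(z-\kappa(\sigma)\bigr)$ and therefore $\int_{\kappa(\sigma)}^{+\infty}e^{-\psi}\le\tfrac1\sigma e^{-\psi(\kappa(\sigma))}$. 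This gives $0\le q(x)\le \tfrac1\sigma e^{-\psi(\kappa(\sigma))}\le\tfrac12\tau(\sigma)$ and $\inf_{x}p(x)=1+q(x)\ge 1\ge 1-\tau(\sigma)$.

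Finally I would substitute into the last line of~\eqref{eq:def-ub}: because $|p(x)-1|+q(x)=2q(x)$ is constant, the $\theta$-supremum integral term equals $q(x)\le\tfrac12\tau(\sigma)\le\tau(\sigma)$, while the exponential term is $2\exp\{-\tfrac NM\inf_x p(x)\}\le 2\exp\{-\tfrac NM(1-\tau(\sigma))\}$; adding these yields precisely~\eqref{ineq:exp-log-concave}. The degenerate case $\kappa(\sigma)=+\infty$ (when $\psi'<\sigma$ everywhere) is covered by the same formulas, with $q\equiv 0$ and $\tau(\sigma)=0$.

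I expect the main obstacle to be the clean verification of Assumption~\ref{assptn:RND}(b): one must pin down the support $\{z\le\kappa(\sigma)\}$ of the positive part of $\mathcal{S}^*$ (which relies on the automatic $C^1$-regularity and monotonicity of $\psi'$) and push the midpoint-convexity bound through so that the admissible range of $M$ lands exactly on~\eqref{ineq:M-lower-bound-exp}. The remaining steps are bookkeeping with the integration-by-parts identities and the already-established zero deficiency of $\mathcal{S}^*$.
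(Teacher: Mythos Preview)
Your proposal is correct and follows essentially the same route as the paper: verify Assumption~\ref{assptn:RND}(b) via midpoint convexity $\psi(z/2)\le\tfrac12\psi(z)+\tfrac12\psi(0)$ and the sign characterization $\mathcal{S}^*\ge 0 \Leftrightarrow z\le\kappa(\sigma)$, invoke Proposition~\ref{prop:Erlang} for $\SignedDef=0$, compute $p(x)$ and $q(x)$ in closed form via the substitution $z=(y-x-1)/\sigma$, and plug into Lemma~\ref{lem:rej-sampling}. Your intermediate estimates are in fact marginally sharper than the paper's: you observe the exact identity $q(x)=p(x)-1$ (which follows from $\int \mathcal{S}^*(y\,|\,x)\,\mathrm{d}y=1$) and then prove $q(x)\ge 0$ via the tail bound $\int_{\kappa(\sigma)}^{\infty}e^{-\psi}\le\sigma^{-1}e^{-\psi(\kappa(\sigma))}$, obtaining $p(x)\ge 1$ and $|p(x)-1|+q(x)=2q(x)\le\tau(\sigma)$, whereas the paper reaches the same final bound by a triangle inequality that only gives $p(x)\ge 1-\tau(\sigma)$.
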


Theorem~\ref{thm:exp-log-concave} is proved in Section~\ref{sec:pf-ExpLog}, and covers many log-concave target distributions. Operationally speaking, it suggests that the exponential distribution is ``extremal'' among log-concave distributions in some approximate sense: If we have a computationally efficient parameter estimation algorithm for any log-concave location model for which the RHS of Eq.~\eqref{ineq:exp-log-concave} is small, then this algorithm can be used to perform parameter estimation under the exponential location model (see Eq.~\eqref{eq:pointwise-risk-transfer} and Appendix~\ref{sec:loss-unbounded}). Similarly, lower bounds on the minimax risk for the exponential model imply minimax lower bounds for all log-concave measures covered by the theorem (see Eq.~\eqref{eq:minimax-transfer}). It is worth noting that extremal properties of the exponential distribution among log-concave measures have also been observed in distinct settings~\citep[see, e.g., the full version of][]{dumbgen2011approximation}.
We now present two prototypical examples of log-concave targets and explicitly evaluate the functionals $\kappa(\sigma)$ and $\tau(\sigma)$~\eqref{eq:functionals}. Proofs of the associated claims can be found in the Sections~\ref{sec:pf-example1}--\ref{sec:pf-example3}.

\begin{example}[Gaussian Target]\label{example-exp-gaussian}
Consider the function $\psi(z) = z^{2}/2 + \log(2\pi)/2$, so that the target model is given by the pdf of of a normal distribution $\NORMAL(\theta,\sigma^{2})$. Then $\kappa(\sigma) = \sigma$, $\tau(\sigma) \leq 2e^{-\sigma^{2}/2}$,
\[
	\text{and} \quad 2 \exp\left\{ \frac{\psi(0)}{2}\right\} \cdot \sup_{z \leq \kappa(\sigma)} \exp\left\{-\frac{\psi(z)}{2} \right\} \big(1-\psi'(z) /\sigma \big) \leq 4, \quad \text{ for all }  \sigma \geq 1.
\]
Thus, by letting $\sigma \geq \sqrt{2\log(4/\epsilon)}$, $M=4$, $N = 2M \log(4/\epsilon)$ and applying Theorem~\ref{thm:exp-log-concave}, we obtain that algorithm $\textsc{rk}$ runs in time $\mathcal{O}\big(\log(4/\epsilon)(T_{samp} + T_{eval})\big)$ and $\delta(\mathcal{U}, \mathcal{V}; \textsc{rk}) \leq \epsilon$.
\hfill $\clubsuit$
\end{example} 

\begin{example}[Logistic Target]\label{example-exp-logistic}
Consider the target distribution $v(y;\theta)$ to be the logistic distribution with mean $\theta$ and variance $\sigma^2$, i.e., 
\begin{align}\label{eq:psi-logistic}
	\psi(z) = \frac{\pi z}{\sqrt{3}} + 2\log \Big( 1 + e^{-\frac{\pi z}{\sqrt{3}} } \Big) - \log\big(\pi/\sqrt{3}\big) \quad \text{and} \quad v(y;\theta) = \frac{\pi}{\sqrt{3}\sigma} \cdot \frac{ e^{-\frac{\pi(y-\theta)}{\sqrt{3}\sigma}}}{ \Big( 1 + e^{-\frac{\pi(y-\theta)}{\sqrt{3}\sigma}} \Big)^{2}}.
\end{align}
Suppose $\sigma \geq \pi/\sqrt{3}$. Then $\kappa_{\sigma} = +\infty$, $\tau_{\sigma} = 0$, and 
\[
	2 \exp\left\{ \frac{\psi(0)}{2}\right\} \cdot \sup_{z \leq \kappa(\sigma)} \exp\left\{-\frac{\psi(z)}{2} \right\} \big(1-\psi'(z) /\sigma \big) \leq 4.
\]
Thus, by letting $\sigma \geq \pi/\sqrt{3}$, $M=4$, $N = M \log(2/\epsilon)$ and applying Theorem~\ref{thm:exp-log-concave}, we obtain that algorithm $\textsc{rk}$ runs in time $\mathcal{O}\big(\log(2/\epsilon)(T_{samp} + T_{eval}) \big)$ and $\delta(\mathcal{U}, \mathcal{V}; \textsc{rk}) \leq \epsilon$.
\hfill $\clubsuit$
\end{example}

\begin{remark}[Exact comparison of experiments: Exponential to logistic]
In Theorem~\ref{thm:Lap-Gaussian} and Example~\ref{example-exp-gaussian}, the TV deficiency parameter $\epsilon$ controls both the scale parameter $\sigma$ of the target and the running time of Algorithm~\textsc{rk}. However, in Example~\ref{example-exp-logistic}, the parameter $\epsilon$ only influences the running time through the iteration count $N$, \emph{but not the scale parameter}. In particular, provided $\sigma \geq \pi/\sqrt{3}$, one can take $\epsilon \downarrow 0$ to conclude that there exists a perfect Markov kernel (i.e., $\mathcal{S}^*$~\eqref{eq:signed-kernel-Exp} is in itself a Markov kernel having deficiency zero) from the exponential source location model to the logistic target model with scale $\sigma$, thereby providing an exact comparison in the sense of~\citet{blackwell1951comparison}.
\end{remark}
At first glance, Theorem~\ref{thm:exp-log-concave} appears to require that $\psi$ is differentiable on $\real$. However, such a condition can be weakened via a mollification technique, which we now showcase when the target is the Laplace model
\begin{align}\label{eq:laplace-target}
	v(y;\theta) = \frac{1}{2\sigma} \exp\left\{ - \psi\Big( \frac{y-\theta}{\sigma} \Big) \right\}, \quad \text{where} \quad \psi(z) = |z|,\; \text{for all} \; z\in\real.
\end{align}
Clearly, $\psi(z)$ is non-differentiable at $z=0$. Recall that $\phi_{\eta}(\cdot)$ is the pdf of a Gaussian random variable with zero mean and variance $\eta^2$. Let $Y \sim \mathsf{Laplace}(0,1)$ and $G \sim \NORMAL(0,1)$ be independent. For each $\eta \in (0,1)$, the random variable $Y+\eta G$ is log-concave (see, e.g.~\citet{saumard2014log}) and has probability density function
\begin{align}\label{eq:psi-epsilon}
	f_{Y+\eta G}(z) = e^{ - \psi_{\eta}(z)}, \quad \text{where} \quad \psi_{\eta}(z) = -\log \Big( \int_{y \in \real} \phi_{\eta}(z-y) \cdot \frac{1}{2}e^{-|y|} \mathrm{d}y \Big).
\end{align}
Note that $\psi_{\eta}(z)$ is differentiable for all $z \in \real$, and define a surrogate target model
\begin{align}\label{eq:smooth-laplace-target}
	v_{\eta}(y;\theta) = \frac{1}{\sigma} \exp\left\{ -\psi_{\eta} \Big( \frac{y-\theta}{\sigma} \Big) \right\}.
\end{align}
The following result, which we prove in Section~\ref{sec:pf-example3} for completeness, shows that this surrogate model closely resembles the Laplace model, i.e.,
\begin{align}\label{ineq:approx-error}
	\big\| v(\cdot;\theta) - v_{\eta}(\cdot;\theta) \big\|_{\mathsf{TV}} \leq \frac{\eta}{2} \quad \text{for all} \quad \eta \in (0,1).
\end{align}
Motivated by this observation, we map the exponential source $u(\cdot;\theta)$~\eqref{eq:exp-target} to target $v_{\eta}(\cdot;\theta)$~\eqref{eq:smooth-laplace-target} instead of the original Laplace target $v(\cdot;\theta)$~\eqref{eq:laplace-target}. For $\eta$ chosen small enough, we then obtain a reduction to the Laplace model.

\begin{example}\label{example-exp-laplace}
Consider $\psi_{\eta}$ in Eq.~\eqref{eq:psi-epsilon} and the target $v_{\eta}(y;\theta)$ in Eq~\eqref{eq:smooth-laplace-target}.
Choose 
\begin{align*}
	\mathcal{S}^{*}(y|x) = v_{\eta}(y;x+1) - \nabla_{\theta} v_{\eta}(y;\theta) \mid_{x+1} \quad \text{and} \quad \mathcal{P}(y|x) = \frac{1}{4\sigma} \exp\left\{ - \frac{|y-x-1|}{2\sigma} \right\}.
\end{align*}
Suppose $\sigma \geq 1$. Then $\kappa(\sigma) = +\infty$, $\tau(\sigma) = 0$, and $\frac{\mathcal{S}^{*}(y|x) \vee 0}{\mathcal{P}(y|x)} \leq 35$. Thus, for all $\theta\in \real$ and $\epsilon \in (0,1)$, the reduction algorithm $\textsc{RK}$ with the parameter settings $M=35$ and $N = 35 \log(4/\epsilon)$ with any $y_0 \in \real$ runs in time $O(\log(4/\epsilon)(T_{samp} + T_{eval}))$ and satisfies
\[
	\left\| \mathcal{L} \left[ \textsc{RK}(X_{\theta}, N, M, y_0) \right] - v_{\eta}(\cdot; \theta) \right\|_{\mathsf{TV}} \leq \frac{\epsilon}{2}.
\]
Consequently, by letting $\eta = \epsilon$, using bound~\eqref{ineq:approx-error} and applying the triangle inequality, we obtain for all $\theta\in \real$ and $\epsilon \in (0,1)$, the guarantee
\[
	\left\| \mathcal{L} \left[ \textsc{RK}(X_{\theta}, N, M, y_0) \right] - v(\cdot; \theta) \right\|_{\mathsf{TV}} \leq \epsilon,
\]
where $v(\cdot|\theta)$ is the Laplace target~\eqref{eq:laplace-target}.
\end{example}

\begin{remark}[Exact comparison of experiments: Exponential to Laplace]
As in Example~\ref{example-exp-logistic}, the TV deficiency parameter $\epsilon$ controls only the running time of Algorithm~\textsc{rk} through the iteration count $N$. Thus, provided $\sigma \geq 1$, one can take $\epsilon \downarrow 0$ to conclude that there exists a perfect Markov kernel (i.e., $\mathcal{S}^*$~\eqref{eq:signed-kernel-Exp} is in itself a Markov kernel having deficiency zero) from the exponential source location model to the Laplace target model with scale $\sigma$, thereby providing an exact comparison in the sense of~\citet{blackwell1951comparison}. 
\end{remark}

\begin{remark} \label{rem:uniform-target}
In principle, the mollification technique accommodates target log-concave densities with discontinuous $\psi$, such as the uniform location model. Suppose we carry out Gaussian mollification of the uniform---i.e., by constructing a surrogate statistical model with infinitely differentiable density by adding a Gaussian of small variance to the uniform distribution in question and considering this mollified distribution to be the target. Then carrying out our proof as in Section~\ref{sec:pf-example3}, we obtain an $\epsilon$-deficient reduction only for scale parameter $\sigma = \Omega(\epsilon^{-1})$. It is not clear if this is a limitation of our techniques, or if it fundamentally difficult to map an exponential location model to a uniform location model having scale parameter growing more slowly (say polylogarithmically) with $1/\eps$.
\end{remark}

The reductions presented in Theorems~\ref{thm:Lap-Gaussian} and~\ref{thm:exp-log-concave} relied on the Fourier technique to solve for $\mathcal{S}^*$ in Eq.~\eqref{eq:infinite-deconvolution}.
A prototypical example in which the Fourier technique does not apply is uniform location families. In the next section, we present a different technique to obtain an exact signed kernel $\mathcal{S}^*$ for this case (i.e., one satisfying $\SignedDef(\mathcal{U}, \mathcal{V}; \mathcal{S}^*)) = 0$) via successive differentiation of both sides of Eq.~\eqref{eq:infinite-deconvolution}. In the process, we will also show how one can automatically allow for some degree of non-smoothness in the target model.

\subsection{Uniform to general distribution} 

Suppose $\Theta = [-1/2, 1/2]$, and we are interested in mapping the uniform location family to a general target distribution. 
In particular, the source experiment is given by
\begin{align} \label{eq:Uniform-location}
u(x; \theta) = \ind{ \theta -1/2 \leq x \leq \theta + 1/2}
\end{align}
and the target model by some general $\mathcal{V}$ with sample space $\mathbb{Y}$.

\begin{proposition}\label{prop:uniform-S-star}
Suppose we have the above setup with parameter space $\Theta = [-1/2, 1/2]$, uniform source~\eqref{eq:Uniform-location}, and target model given by $v(\cdot; \theta)$. Further suppose that for each $y \in \mathbb{Y}$, $v(y; \theta)$ is a continuous function of $\theta$, and $\nabla_{\theta} v(y;\theta)$ exists for all $\theta \in [-1/2, \theta_{0}) \cup (\theta_{0}, 1/2]$. Let $g^+, g^-: \mathbb{Y} \to \mathbb{R}$ denote two functions satisfying
\begin{align}\label{eq:g-plus-g-minus-y}
g^+(y)\cdot(1-2\theta_{0}) + g^{-}(y) \cdot (1 + 2\theta_{0}) = 2v(y; 1/2) + 2v(y; -1/2) - 2 v(y; \theta_{0}).
\end{align}
Then the signed kernel
\begin{align}\label{S-star-uniform}
\begin{split}
	\mathcal{S}^*(y| x) = 
	\begin{cases}
	g^-(y) \quad &\text{ for all } x \leq \theta_{0} -1/2 \\
	g^+(y) - \nabla_{\theta} v(y; \theta)\mid_{\theta = x + 1/2} \quad &\text{ for all } -1/2 + \theta_{0} < x \leq 0 \\
	\nabla_{\theta} v(y; \theta)\mid_{\theta = x - 1/2} + g^-(y) \quad &\text{ for all } 0 < x < 1/2 + \theta_{0} \\
	g^+(y) \quad &\text{ for all } x \geq 1/2 + \theta_{0}
	\end{cases}
\end{split}
\end{align}
attains $\SignedDef(\mathcal{U}, \mathcal{V}; \mathcal{S}^*) = 0$.
\end{proposition}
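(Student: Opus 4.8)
The plan is to verify directly that the signed kernel \eqref{S-star-uniform} solves the deconvolution identity \eqref{eq:infinite-deconvolution} for the uniform source, i.e.\ that
\[
\int_{\theta-1/2}^{\theta+1/2}\mathcal{S}^*(y\mid x)\,\mathrm{d}x \;=\; v(y;\theta)\qquad\text{for every } y\in\mathbb{Y},\ \theta\in[-1/2,1/2].
\]
Once this holds we have $\mathcal{V}(\cdot\mid\theta)=(\mathcal{S}^*\circ\mathcal{U})(\cdot\mid\theta)$ for all $\theta$, so $\SignedDef(\mathcal{U},\mathcal{V};\mathcal{S}^*)=0$ by definition \eqref{eq:one-way-sign-fixed}. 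I would fix $y$ throughout; note that as $\theta$ sweeps $[-1/2,1/2]$ the window $[\theta-1/2,\theta+1/2]$ covers $[-1,1]$, so only the four pieces listed in \eqref{S-star-uniform} ever matter, and the break points of the definition sit at $x=\theta_0-1/2$, $x=0$, and $x=1/2+\theta_0$.

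I would then split into the cases $\theta\in[-1/2,\theta_0]$ and $\theta\in[\theta_0,1/2]$, which overlap harmlessly at $\theta=\theta_0$. In the first case the left endpoint $\theta-1/2$ lies in the flat region $\{x\le\theta_0-1/2\}$ and the right endpoint $\theta+1/2$ lies in $\{0<x<1/2+\theta_0\}$, so I cut $\int_{\theta-1/2}^{\theta+1/2}$ at $\theta_0-1/2$ and at $0$ into three pieces. The constant ($g^{\pm}$) contributions evaluate to $g^-(y)(\theta_0-\theta)+g^+(y)(1/2-\theta_0)+g^-(y)(\theta+1/2)$, while on the two middle pieces the substitutions $u=x+1/2$ and $u=x-1/2$ followed by the fundamental theorem of calculus turn the $\nabla_\theta v$ terms into the telescoping differences $-(v(y;1/2)-v(y;\theta_0))$ and $v(y;\theta)-v(y;-1/2)$. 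Adding everything up, the $\theta$-dependence in the $g^-$ terms cancels, a single copy of $v(y;\theta)$ survives, and the identity $\int_{\theta-1/2}^{\theta+1/2}\mathcal{S}^*=v(y;\theta)$ reduces to the requirement
\[
g^+(y)(1/2-\theta_0)+g^-(y)(1/2+\theta_0) \;=\; v(y;1/2)+v(y;-1/2)-v(y;\theta_0),
\]
which is exactly \eqref{eq:g-plus-g-minus-y} up to an overall factor of $2$. The case $\theta\in[\theta_0,1/2]$ is completely symmetric: now the left endpoint lies in $\{-1/2+\theta_0<x\le0\}$ and the right endpoint in $\{x\ge1/2+\theta_0\}$, the cuts are at $0$ and $1/2+\theta_0$, and the same substitution-and-telescoping bookkeeping reduces the identity to the identical linear constraint on $g^{\pm}(y)$. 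Thus \eqref{eq:g-plus-g-minus-y} is precisely what makes both cases succeed, and I would finish by recording the degenerate cases $\theta_0=\pm1/2$, in which one or two of the four pieces of \eqref{S-star-uniform} become empty but the same computation goes through verbatim.

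The only genuinely delicate point, as opposed to bookkeeping, is the application of the fundamental theorem of calculus across $\theta_0$, where $v(y;\cdot)$ is assumed only continuous. What is actually needed is $\int_{-1/2}^{\theta_0}\nabla_\theta v(y;\theta)\,\mathrm{d}\theta=v(y;\theta_0)-v(y;-1/2)$ and its counterpart on $[\theta_0,1/2]$; this is the FTC for a function continuous on a closed interval, differentiable on its interior, with integrable derivative there — and the hypotheses that $v(y;\cdot)$ is continuous on $[-1/2,1/2]$ while $\nabla_\theta v(y;\cdot)$ exists off $\theta_0$ are exactly what make the two one-sided integrals glue at $\theta_0$. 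I would also note, as in Proposition~\ref{prop:Laplace}, that integrability of $\nabla_\theta v(y;\cdot)$ and of $\mathcal{S}^*(\cdot\mid x)$ over $\mathbb{Y}$ are implicit well-definedness requirements; granting these, the proof is the elementary calculation sketched above.
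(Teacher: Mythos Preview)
Your proposal is correct and follows essentially the same route as the paper: both verify the identity $\int_{\theta-1/2}^{\theta+1/2}\mathcal{S}^*(y\mid x)\,\mathrm{d}x=v(y;\theta)$ by splitting into the two cases $\theta\le\theta_0$ and $\theta\ge\theta_0$, cutting the window at the break points of the piecewise definition, applying the fundamental theorem of calculus to the $\nabla_\theta v$ pieces, and reducing the result to the linear constraint~\eqref{eq:g-plus-g-minus-y} on $g^\pm$. Your explicit remark on the one-sided FTC at $\theta_0$ (continuity on the closed interval, differentiability off $\theta_0$) is a welcome clarification that the paper leaves implicit.
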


The proof of this proposition can be found in Section~\ref{sec:pf-Unif}. As previously mentioned, the conditions required for the target model are now weaker than before. Most importantly, we allow for the function $v(y; \cdot)$ to have a point of non-differentiability at some $\theta_0$. Note that our choice of allowing just one such kink is arbitrary---when there are multiple kinks, a variant of Proposition~\ref{prop:uniform-S-star} can be proved either using the same techniques, or---in some cases---by the mollification technique introduced just before Example~\ref{example-exp-laplace}. We now present an explicit reduction from a uniform location model to a Gaussian model with a nonlinearly transformed mean.

To define the target experiment, let $f:[-1/2,1/2] \rightarrow \real$ be continuous on its domain and differentiable on $[-1/2, \theta_{0}) \cup (\theta_{0}, 1/2]$. Define the maximum magnitude of the function and its derivative as
\begin{align}\label{eq:alpha0-alpha1-f}
	\alpha_{0} := \max_{-1/2 \leq t \leq 1/2} \big|f(t)\big| \qquad \text{and} \qquad \alpha_1 := \sup_{t \in [-1/2,\theta_{0}) \cup (\theta_{0},1/2]} \big|f'(t) \big|,
\end{align}
respectively. Suppose the target experiment is given by a normal with mean $f(\theta)$, i.e.,
\begin{align}\label{eq:gaussian-target-f}
	v(y;\theta) = \frac{1}{\sqrt{2\pi} \sigma} \exp\left\{ -\frac{(y-f(\theta))^{2}}{2\sigma^2} \right\}.
\end{align}

Applying Proposition~\ref{prop:uniform-S-star}, we see that the kernel $\mathcal{S}^{*}(y|x)$~\eqref{S-star-uniform} with
\begin{align}\label{eq:unif-g-func}
	g^{+}(y) = g^{-}(y) = v(y;1/2) + v(y;-1/2) - v(y;\theta_{0})
\end{align}
attains $\SignedDef(\mathcal{U}, \mathcal{V}; \mathcal{S}^*) = 0$. 
Equipped with this notation, we are ready to state our theorem.
\begin{theorem} \label{thm:Unif-Gauss}
Suppose the source experiment $\mathcal{U}(\cdot | \theta)$ is given by the uniform location model~\eqref{eq:Uniform-location} and the target experiment $v( \cdot ; \theta)$ is given by the Gaussian nonlinear location model~\eqref{eq:gaussian-target-f}. Recall the functionals $\alpha_{0}$ and $\alpha_{1}$ defined in Eq.~\eqref{eq:alpha0-alpha1-f}. Choose $\mathcal{P}(\cdot | x)$ to be the pdf of the Gaussian distribution $\NORMAL(0,2\sigma^2)$ for each $x \in \mathbb{X}$.  Then for each $\epsilon \in (0,1)$, parameter $\theta \in [-1/2,1/2]$, and standard deviation $\sigma \geq 40 \max\{\alpha_{0}, \alpha_{1}\}$, Algorithm $\textsc{rk}$ using $\mathcal{S}^*$ from Eqs.~\eqref{S-star-uniform} and~\eqref{eq:unif-g-func}, with the parameter settings 
$M = 30$ and $N = 60\log(4/\epsilon)$ and any $y_0 \in \real$ satisfies
\begin{align} \label{eq:claim-unif-Gauss}
	\left\| \mathcal{L} \left[ \textsc{rk}(X_{\theta}, N, M, y_0) \right] - \NORMAL\big(f(\theta), \sigma^2\big) \right\|_{\mathsf{TV}} \leq \frac{\epsilon}{2} + 10 \exp\bigg\{-\frac{\sigma^2}{200\alpha_{1}^{2}}\bigg\}.
\end{align}
Consequently, for $\Theta = [-1/2,1/2]$ and provided $\sigma \geq 10\alpha_{1} \sqrt{2 \log (20 / \epsilon)}$, the reduction algorithm $\textsc{rk}$ runs in time $\mathcal{O}(\log(4/\epsilon)(T_{samp} + T_{eval}))$ and satisfies
\begin{align}
\delta(\mathcal{U}, \mathcal{V}; \textsc{rk}) \leq \epsilon.
\end{align}
\end{theorem}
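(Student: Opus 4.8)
The plan is to combine Proposition~\ref{prop:uniform-S-star} with the oracle rejection-sampling bound of Lemma~\ref{lem:rej-sampling}. First I would check that the choice $g^+ = g^- = v(\cdot;1/2) + v(\cdot;-1/2) - v(\cdot;\theta_0)$ of Eq.~\eqref{eq:unif-g-func} satisfies the linear constraint~\eqref{eq:g-plus-g-minus-y} (a one-line computation), so that the signed kernel $\mathcal{S}^*$ of Eq.~\eqref{S-star-uniform} attains $\SignedDef(\mathcal{U},\mathcal{V};\mathcal{S}^*) = 0$; this kills the middle term in the bound~\eqref{eq:def-ub}. It then remains to (i) verify Assumptions~\ref{assptn:exact}--\ref{assptn:RND} with $M = 30$ and base measure $\mathcal{P}(\cdot|x) = \NORMAL(0,2\sigma^2)$; (ii) lower bound $\inf_x p(x)$; and (iii) control the distortion term $\tfrac12 \sup_\theta \int_{\mathbb{X}} (|p(x)-1| + q(x))\,u(\mathrm{d}x;\theta)$.

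For (i), Assumption~\ref{assptn:exact} is immediate, since $\mathcal{S}^*(y|x)$ is an explicit finite combination of the Gaussian density $v(y;\theta)$ and $\nabla_\theta v(y;\theta) = \sigma^{-2}(y-f(\theta))f'(\theta)v(y;\theta)$ at a constant number of values of $\theta$ (in the two middle branches of~\eqref{S-star-uniform} the argument $x\pm1/2$ lies in $[-1/2,\theta_0)\cup(\theta_0,1/2]$, where $f'$ exists). For Assumption~\ref{assptn:RND}(b) I would exploit that $\mathcal{P}(\cdot|x)$ has \emph{twice} the variance of the target: completing the square yields $v(y;\theta)/\mathcal{P}(y|x) = \sqrt2\,e^{f(\theta)^2/(2\sigma^2)}\,e^{-(y-2f(\theta))^2/(4\sigma^2)} \le 2$ once $\sigma \ge 40\alpha_0$, and the same computation together with $\sup_s s e^{-s^2/4} \le 1$ gives $|\nabla_\theta v(y;\theta)|/\mathcal{P}(y|x) \lesssim \alpha_1/\sigma$; summing the (at most three) ``$v$-type'' pieces and one ``$\nabla_\theta v$-type'' piece appearing in any branch of~\eqref{S-star-uniform} then gives $(\mathcal{S}^*(y|x))_+/\mathcal{P}(y|x) \le 30$ with room to spare. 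For (ii), integrating~\eqref{S-star-uniform} in $y$ shows $\int_{\mathbb{Y}}\mathcal{S}^*(\mathrm{d}y|x) = 1$ for every $x$ (the derivative pieces integrate to $\nabla_\theta\!\int v(y;\theta)\,\mathrm{d}y = 0$), hence $p(x) = 1 + q(x) \ge 1$ and also $|p(x)-1| = q(x)$; so $\inf_x p(x) \ge 1$ makes the first (sampling) term of~\eqref{eq:def-ub} at most $2e^{-N/M} \le \epsilon/2$ for $N = 60\log(4/\epsilon)$, and (iii) reduces to $\sup_{\theta}\int_{\theta-1/2}^{\theta+1/2} q(x)\,\mathrm{d}x \le \sup_x q(x)$, where $q(x)$ is the total negative mass of $\mathcal{S}^*(\cdot|x)$.

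The crux of the proof --- and the step I expect to be the main obstacle --- is the bound $\sup_x q(x) \le 10\,e^{-\sigma^2/(200\alpha_1^2)}$. Writing $\mathcal{S}^*(y|x) = g(y) - N_x(y)$ with $g = g^\pm$ and $N_x$ either $0$ or $\pm\nabla_\theta v(y;\theta)|_{\theta = x\pm1/2}$, and letting $\phi_\sigma$ denote the $\NORMAL(0,\sigma^2)$ density, the key observation that makes $\alpha_1$ (rather than $\alpha_0$) appear is that, $f$ being $\alpha_1$-Lipschitz on an interval of length $1$, the means $f(1/2),\,f(-1/2),\,f(\theta_0),\,f(x\pm1/2)$ all lie within $\alpha_1$ of one another. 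Consequently, for $|y| \le \sigma^2/(2\alpha_1) - \alpha_0$ one has $\phi_\sigma(y - f(\theta_0)) \le e^{1/2}\phi_\sigma(y - f(\pm1/2))$, which forces $g(y) \ge (2e^{-1/2}-1)e^{-1/2}\,\phi_\sigma(y - f(x\pm1/2)) \ge \tfrac1{10}\phi_\sigma(y - f(x\pm1/2)) > 0$ on that range; there $\mathcal{S}^*(y|x) < 0$ can happen only where $|N_x(y)| > \tfrac1{10}\phi_\sigma(y - f(x\pm1/2))$, i.e.\ where $|y - f(x\pm1/2)| > \sigma^2/(10\alpha_1)$, so that $\int_{|s| > \sigma^2/(10\alpha_1)} \sigma^{-2}\alpha_1|s|\,\phi_\sigma(s)\,\mathrm{d}s \lesssim \sigma^{-1}\alpha_1\,e^{-\sigma^2/(200\alpha_1^2)}$ is of the required order. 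Outside that range, the crude bound $|\mathcal{S}^*(y|x)| \le \phi_\sigma(y-f(1/2)) + \phi_\sigma(y-f(-1/2)) + \phi_\sigma(y-f(\theta_0)) + |N_x(y)|$, integrated over $|y| > \sigma^2/(2\alpha_1) - \alpha_0$ (i.e.\ past $\Omega(\sigma/\alpha_1)$ standard deviations), again contributes $\lesssim e^{-\Omega(\sigma^2/\alpha_1^2)}$; the $(g)_-$ term in the branches with $N_x = 0$ is handled identically. Bookkeeping the universal constants in these Gaussian-tail estimates (using $\sigma \ge 40\max\{\alpha_0,\alpha_1\}$) delivers $\sup_x q(x) \le 10\,e^{-\sigma^2/(200\alpha_1^2)}$, and feeding this into~\eqref{eq:def-ub} gives~\eqref{eq:claim-unif-Gauss}. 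The runtime $\mathcal{O}(\log(4/\epsilon)(T_{samp}+T_{eval}))$ is immediate from Lemma~\ref{lem:rej-sampling} with $N = 60\log(4/\epsilon)$, and $\delta(\mathcal{U},\mathcal{V};\textsc{rk}) \le \epsilon$ follows by choosing $\sigma \ge 10\alpha_1\sqrt{2\log(20/\epsilon)}$, which makes $10\,e^{-\sigma^2/(200\alpha_1^2)} \le \epsilon/2$.
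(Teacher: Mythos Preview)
Your proposal is correct and follows the same high-level strategy as the paper: invoke Proposition~\ref{prop:uniform-S-star} to kill the $\SignedDef$ term, verify $M=30$ by completing the square against the doubled-variance base measure, bound $q(x)$ by splitting into a ``central'' region (where $g(y)>0$ by Lipschitzness of $f$) and Gaussian tails, and feed everything into Lemma~\ref{lem:rej-sampling}.

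You make two streamlining observations that the paper does not use. First, you note that $\int_{\mathbb{Y}}\mathcal{S}^*(y\mid x)\,\mathrm{d}y=1$ for every $x$ (since $\int g^\pm=1+1-1=1$ and the derivative piece integrates to zero), whence $p(x)-q(x)=1$, so $p(x)\ge 1$ and $|p(x)-1|=q(x)$; the paper instead proves only $p(x)\ge 1/2$ via its case analysis, which is why it needs $N=2M\log(4/\epsilon)$ rather than $N=M\log(4/\epsilon)$ to get the $\epsilon/2$ sampling term. Second, your treatment of the negative mass is uniform across all four branches of~\eqref{S-star-uniform}, while the paper's Lemma~\ref{lemma:auxiliary-proof-Unif-Gauss} separates the outer branches (with a further three-way split on the sign of $\Delta=f(1/2)+f(-1/2)-2f(\theta_0)$ and an explicit crossing point $y_0$) from the inner branches (with an explicit interval $[\ell(x),L(x)]$ on which $\mathcal{S}^*\ge 0$). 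Your single ``interior'' $|y|\le \sigma^2/(2\alpha_1)-\alpha_0$, on which the Lipschitz bound $|f(a)-f(b)|\le\alpha_1$ makes all the Gaussian ratios $\le e^{1/2}$ and hence $g(y)\ge(2e^{-1/2}-1)e^{-1/2}\phi_\sigma(y-f(x\pm 1/2))>\tfrac{1}{10}\phi_\sigma(\cdot)$, plays the same role more compactly. Both routes yield the same $e^{-\sigma^2/(200\alpha_1^2)}$ tail; the paper's extra case structure does not buy a sharper constant.
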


Theorem~\ref{thm:Unif-Gauss} is proved in Section~\ref{sec:pf-UnifGauss}. Our choice of target experiment---Gaussian location models with nonlinearly transformed mean---is motivated by a concrete reduction to follow, between symmetric mixtures of experts and phase retrieval (see Section~\ref{sec:MOE-PR}). In this reduction, we choose $f$ to be the absolute value function, which has a single point of nondifferentiability, but Theorem~\ref{thm:Unif-Gauss} also applies for any differentiable function $f$. Properties of $f$ enter the theorem through the parameters $\alpha_0$ and $\alpha_1$, but note that these only depend on the zeroth- and first-order properties of the function.

\begin{remark} \label{rem:nonsmooth-target}
As previously mentioned, suppose $f$ is continuous but nondifferentiable at a finite number of points. Then it is easy to see that there must exist a differentiable function $\widetilde{f}$ such that for all $\eta > 0$, we have $\sup_{z \in [-1/2, 1/2]} |\widetilde{f}(z) - f(z) | \leq \eta$ and the functionals $\alpha_0$ and $\alpha_1$ of $\widetilde{f}$ are bounded by the corresponding functionals of $f$. Thus, as in Example~\ref{example-exp-laplace}, one can design a reduction from the uniform location model to the surrogate target model $\NORMAL(\widetilde{f}(\theta), \sigma^2)$ and apply Theorem~\ref{thm:Unif-Gauss} to obtain a TV deficiency guarantee. The additional TV distortion to the model $\NORMAL(f(\theta), \sigma^2)$ will be bounded\footnote{To see this, bound the KL-divergence between $\NORMAL(\widetilde{f}(\theta), \sigma^2)$ and $\NORMAL(f(\theta), \sigma^2)$ and then use Pinsker's inequality.} proportionally to $\eta$, and one may then take $\eta \downarrow 0$ to obtain a reduction to $\NORMAL(f(\theta), \sigma^2)$.
\end{remark}

\begin{figure}[] 
	\centering
	\begin{subfigure}[b]{0.4\textwidth}
		\centering
		\includegraphics[width=\textwidth]{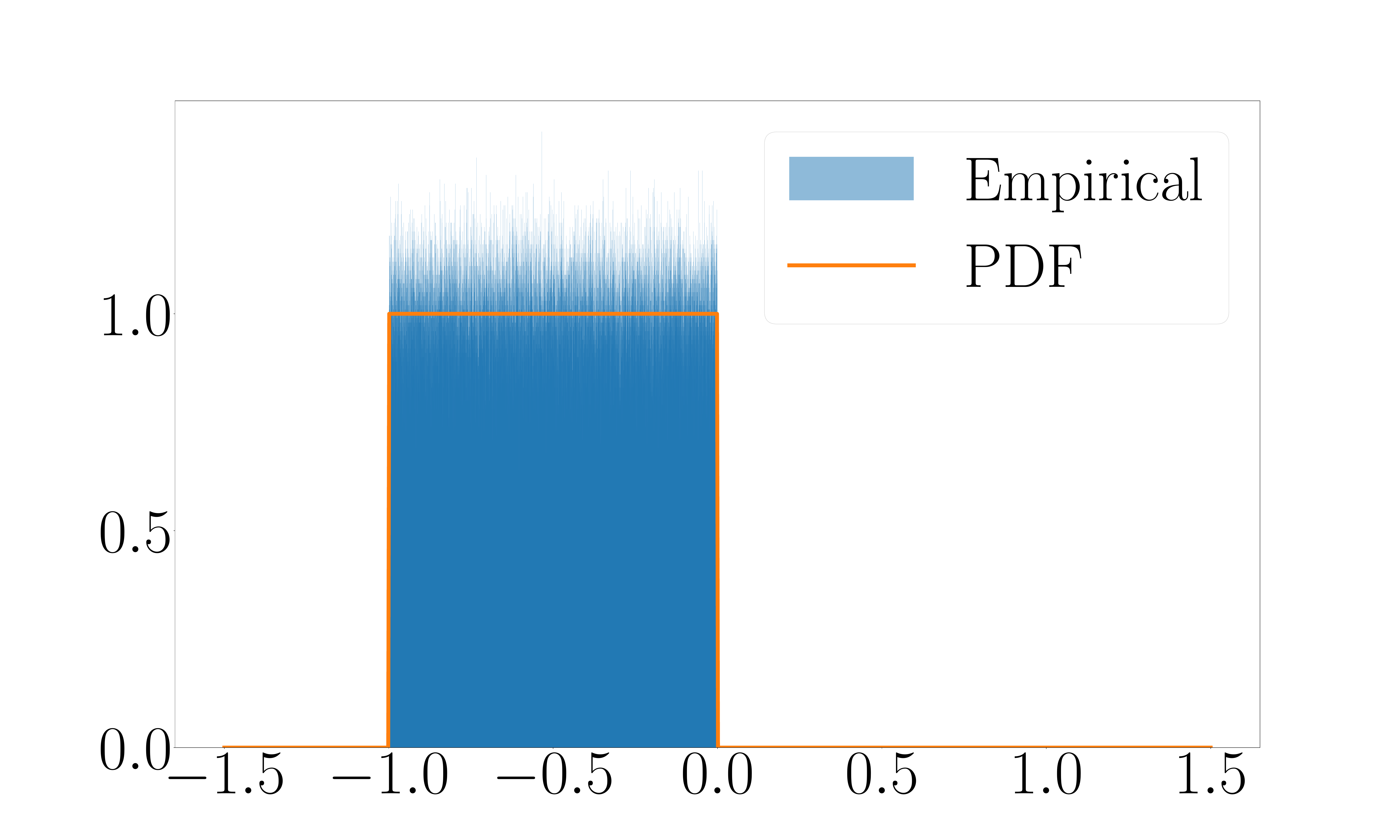}
		\caption{Source pdf and histogram, $\theta = -1/2$} 
		\label{fig:unif-gaussian1}
	\end{subfigure}
	\hfill
	\begin{subfigure}[b]{0.4\textwidth}  
		\centering 
		\includegraphics[width=\textwidth]{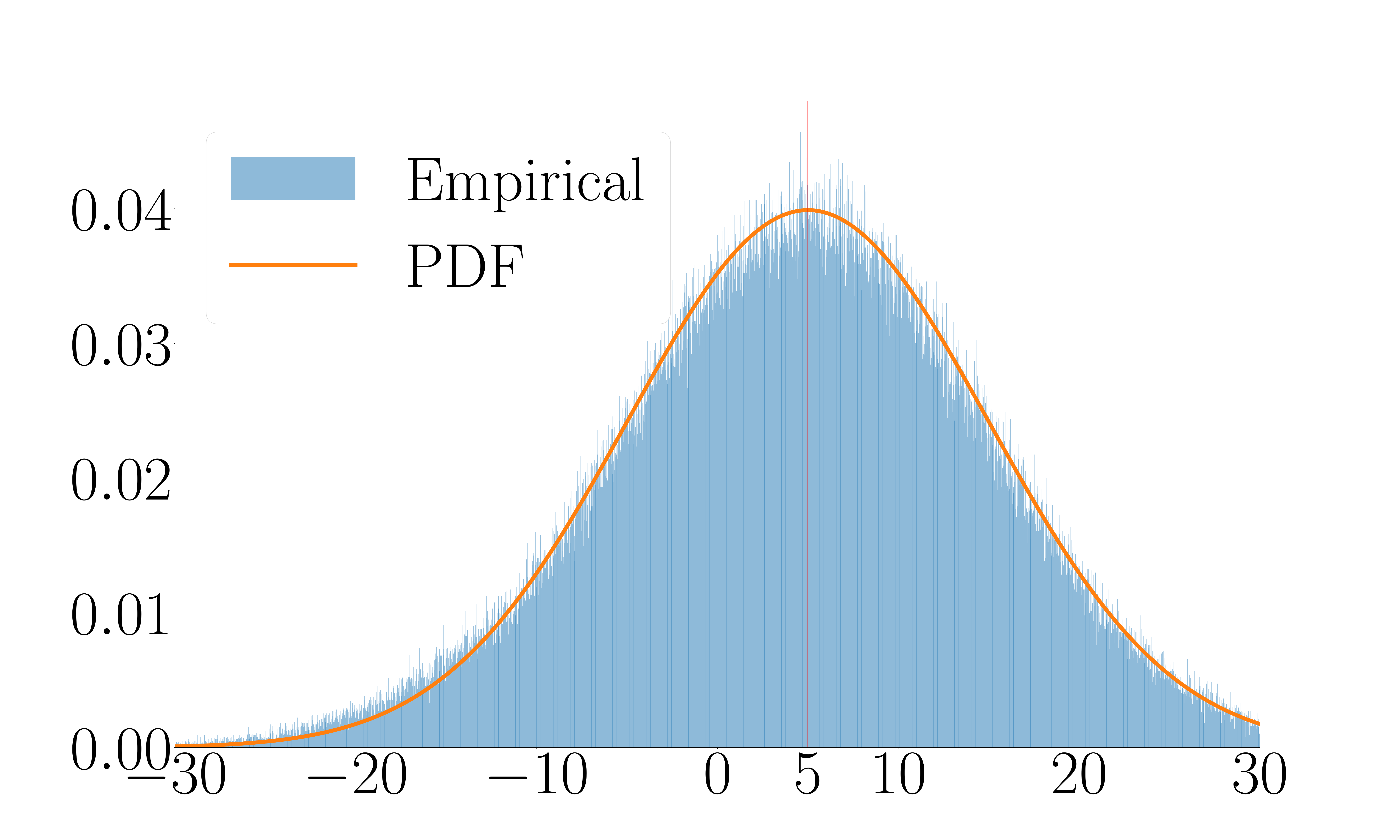}
		\caption{Target pdf and histogram, $\theta = -1/2$}
		\label{fig:unif-gaussian2}
	\end{subfigure}
	\vskip\baselineskip
	\vspace{-0.56cm}
	\begin{subfigure}[b]{0.4\textwidth}
		\centering
		\includegraphics[width=\textwidth]{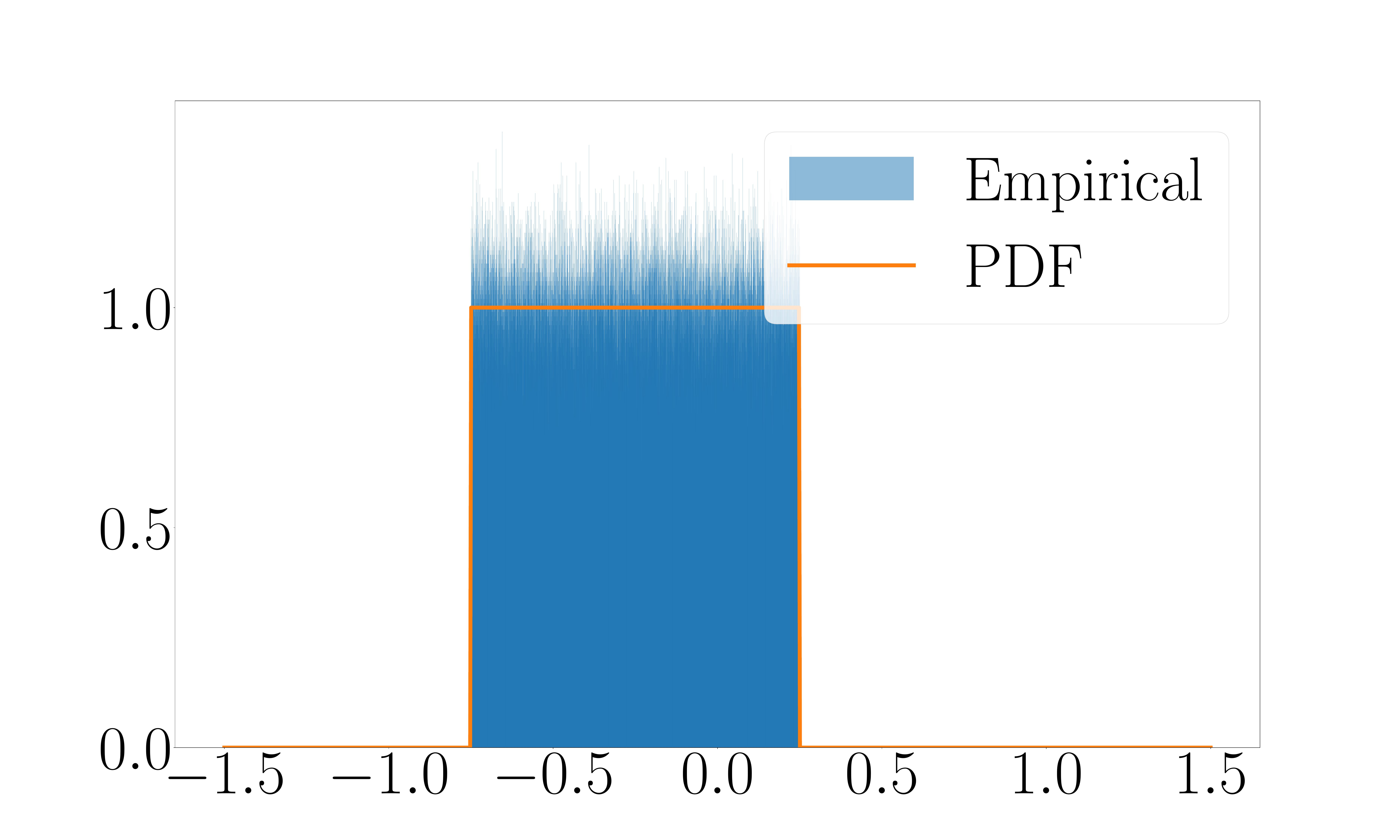}
		\caption{Source pdf and histogram, $\theta = -1/4$} 
		\label{fig:unif-gaussian3}
	\end{subfigure}
	\hfill
	\begin{subfigure}[b]{0.4\textwidth}  
		\centering 
		\includegraphics[width=\textwidth]{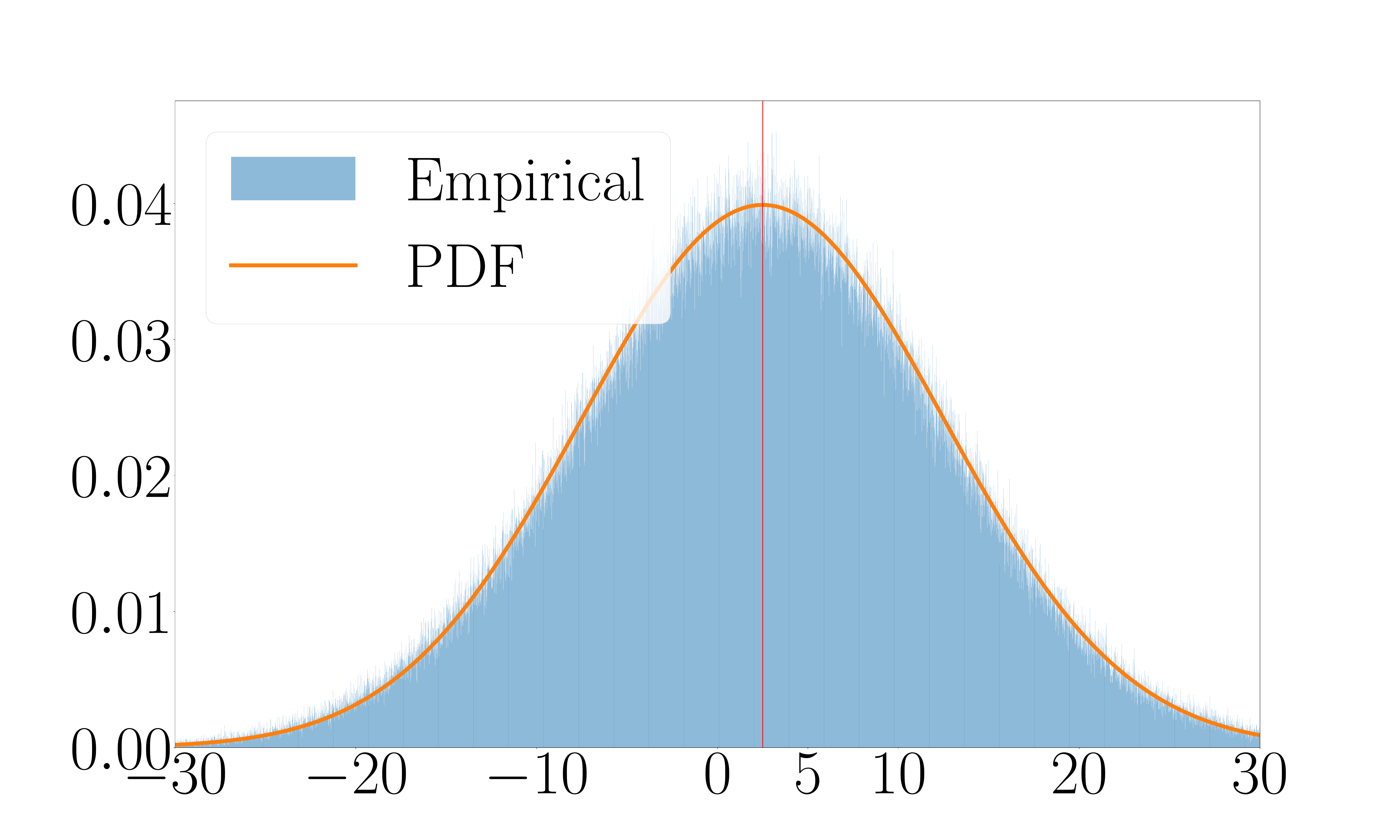}
		\caption{Target pdf and histogram, $\theta = -1/4$}
		\label{fig:unif-gaussian4}
	\end{subfigure}
	\vskip\baselineskip
	\vspace{-0.56cm}
	\begin{subfigure}[b]{0.4\textwidth}
		\centering
		\includegraphics[width=\textwidth]{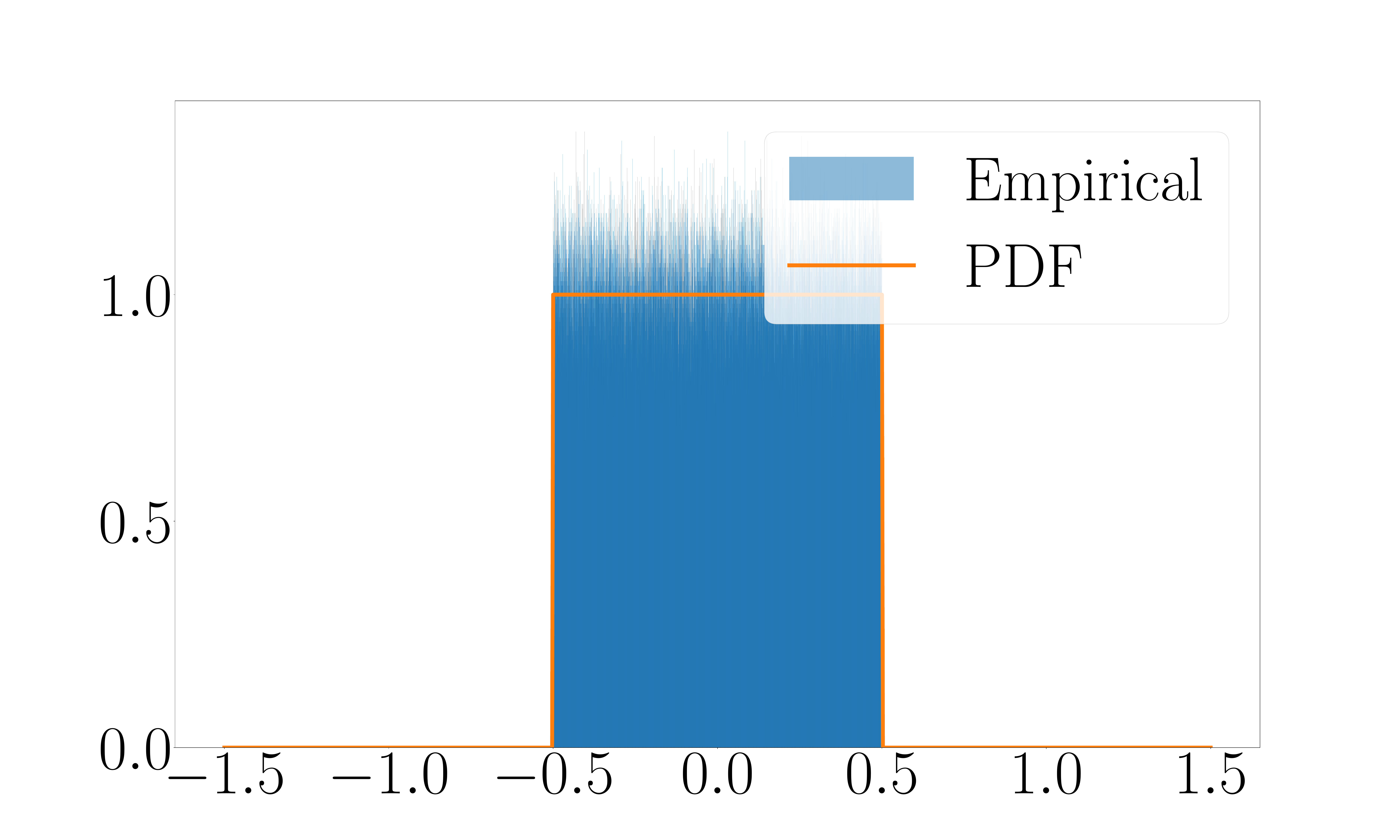}
		\caption{Source pdf and histogram, $\theta = 0$} 
		\label{fig:unif-gaussian5}
	\end{subfigure}
	\hfill
	\begin{subfigure}[b]{0.4\textwidth}  
		\centering 
		\includegraphics[width=\textwidth]{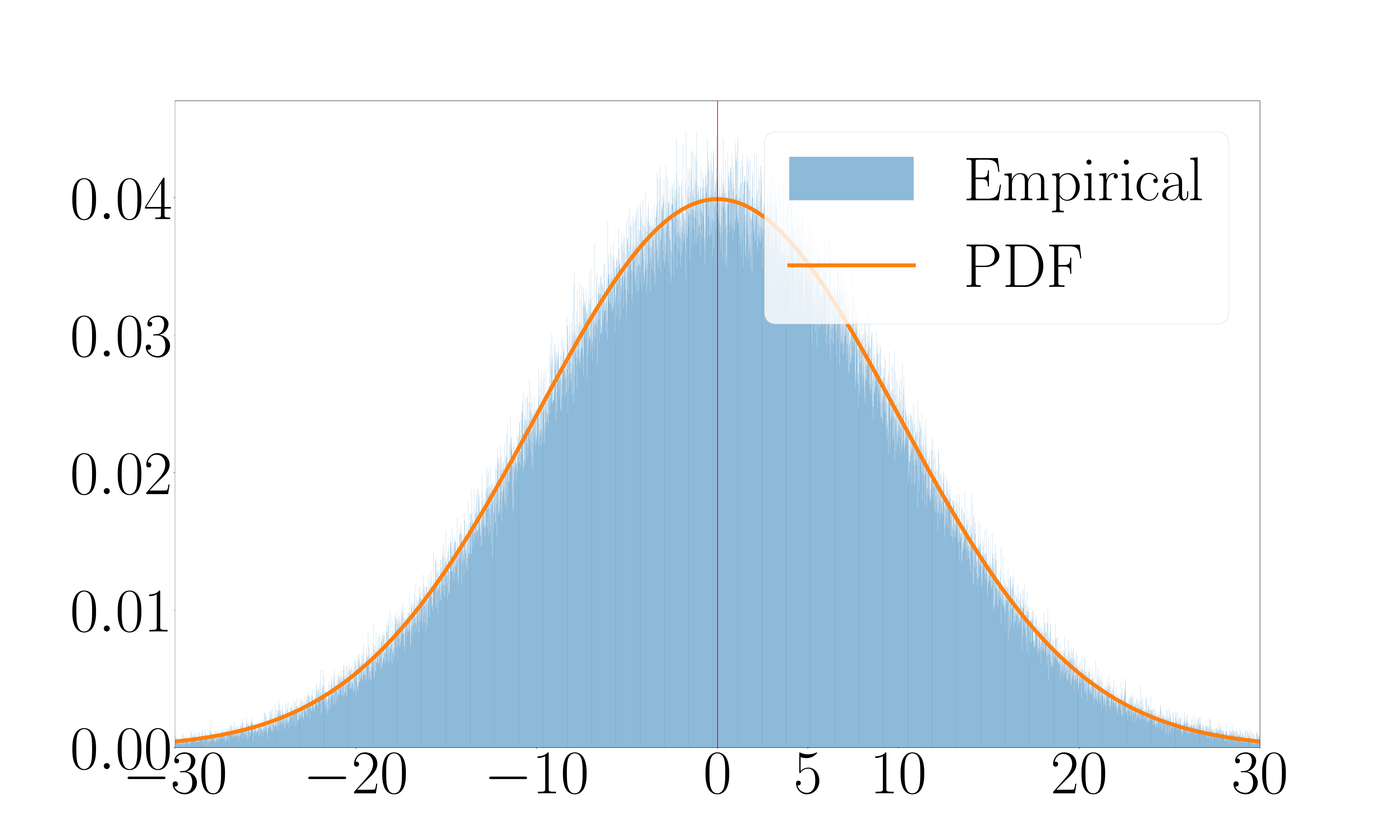}
		\caption{Target pdf and histogram, $\theta = 0$}
		\label{fig:unif-gaussian6}
	\end{subfigure}
	\vskip\baselineskip
	\vspace{-0.56cm}
	\begin{subfigure}[b]{0.4\textwidth}
		\centering
		\includegraphics[width=\textwidth]{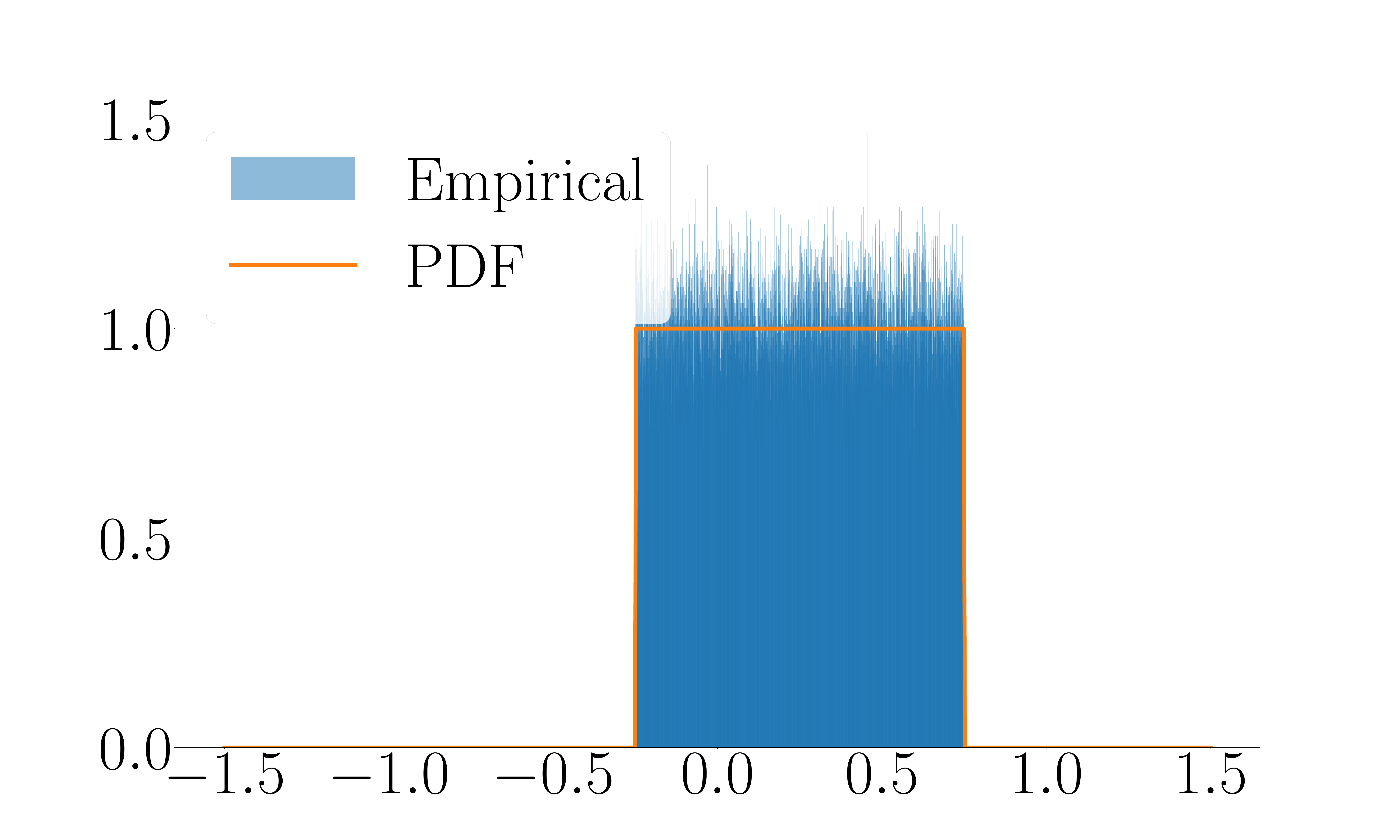}
		\caption{Source pdf and histogram, $\theta = 1/4$} 
		\label{fig:unif-gaussian7}
	\end{subfigure}
	\hfill
	\begin{subfigure}[b]{0.4\textwidth}  
		\centering 
		\includegraphics[width=\textwidth]{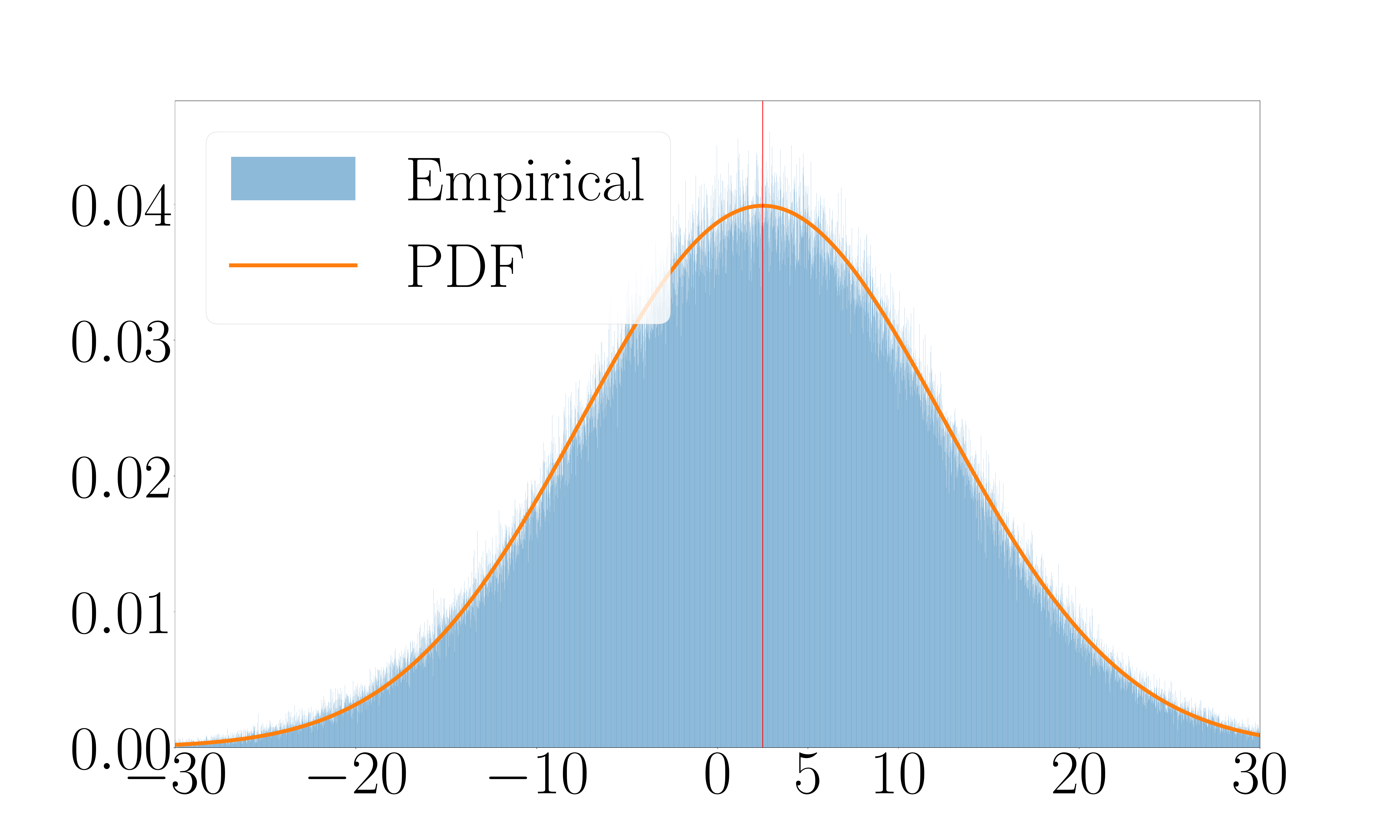}
		\caption{Target pdf and histogram, $\theta = 1/4$}
		\label{fig:unif-gaussian8}
	\end{subfigure}
	\vskip\baselineskip
	\vspace{-0.56cm}
	\begin{subfigure}[b]{0.4\textwidth}
		\centering
		\includegraphics[width=\textwidth]{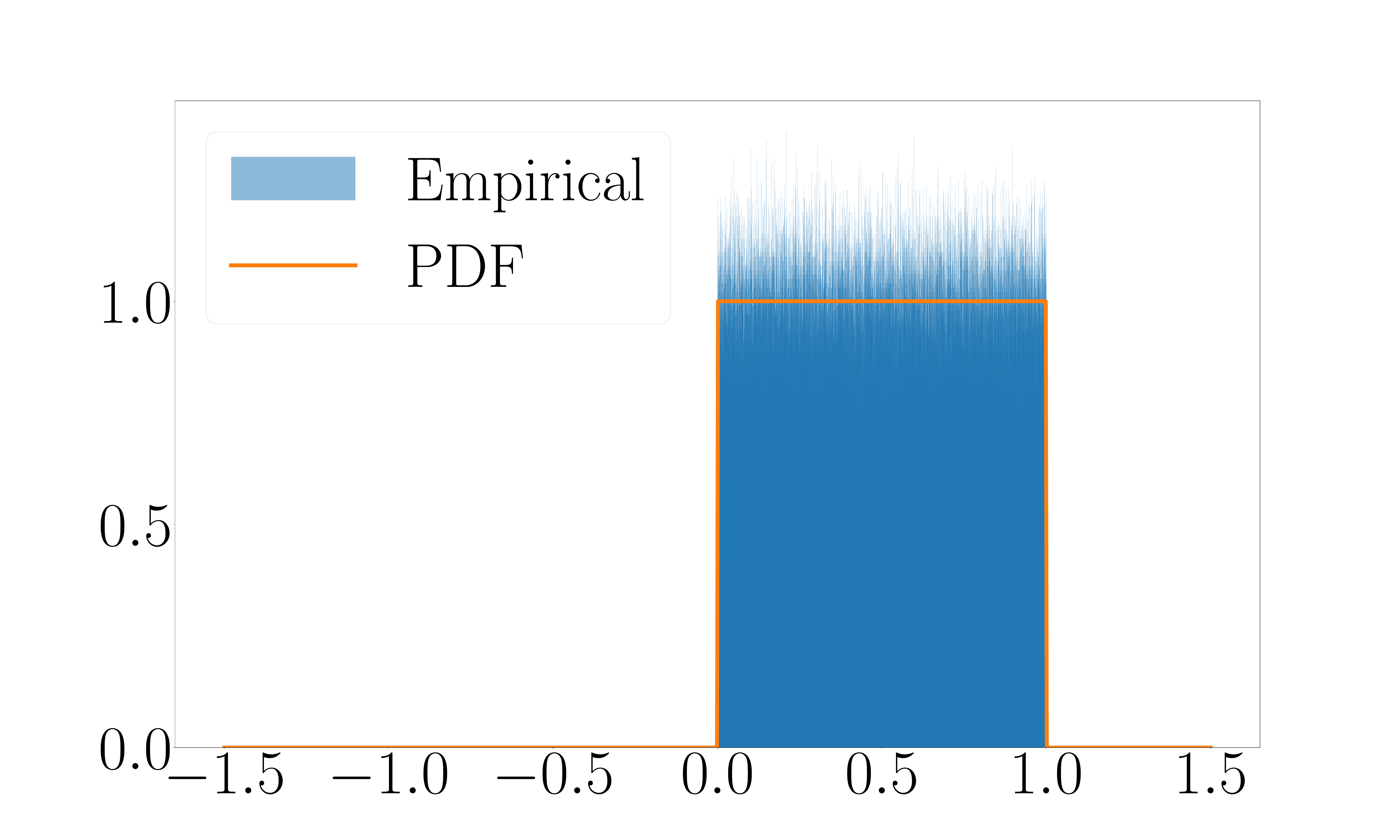}
		\caption{Source pdf and histogram, $\theta = 1/2$} 
		\label{fig:unif-gaussian9}
	\end{subfigure}
	\hfill
	\begin{subfigure}[b]{0.4\textwidth}  
		\centering 
		\includegraphics[width=\textwidth]{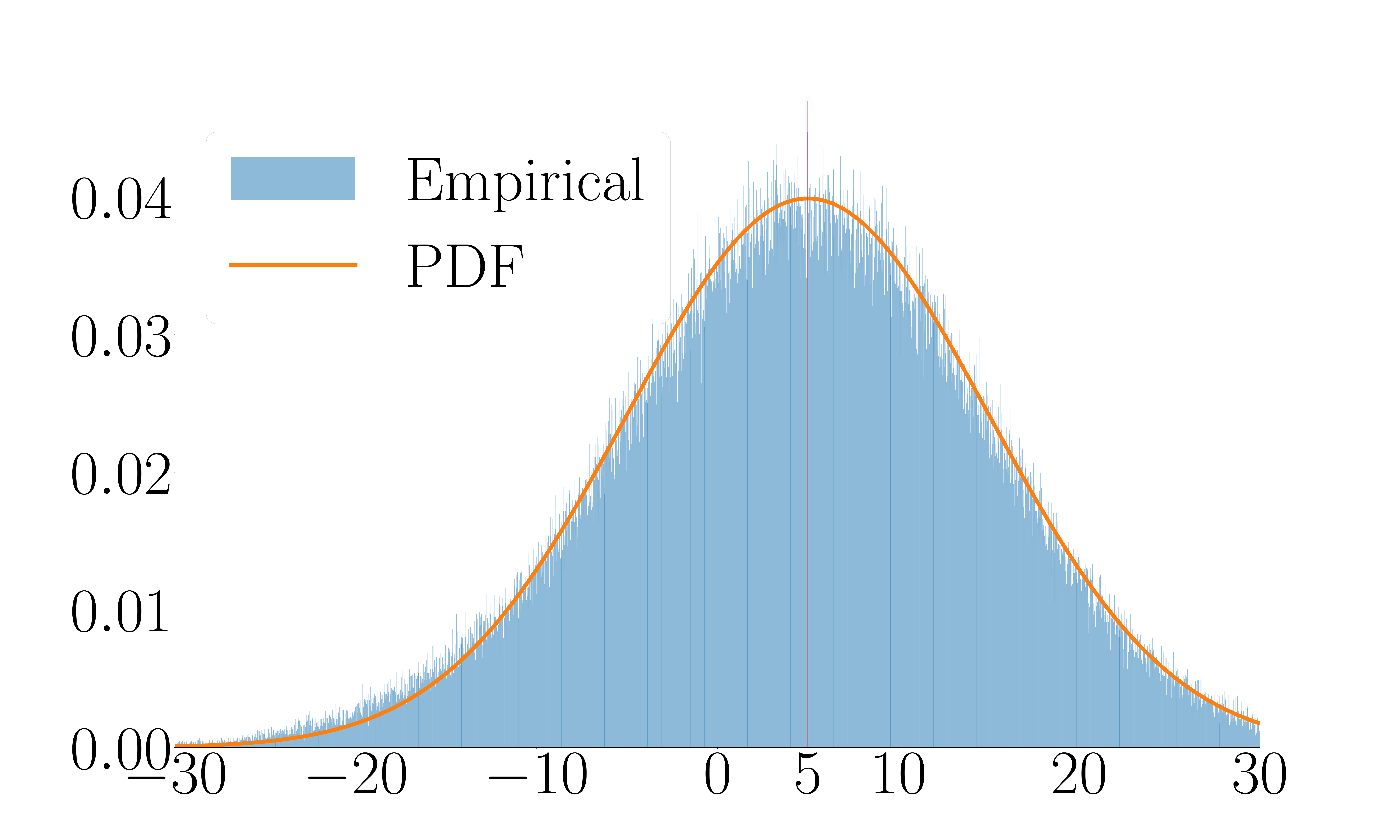}
		\caption{Target pdf and histogram, $\theta = 1/2$}
		\label{fig:unif-gaussian10}
	\end{subfigure}
	\caption{Simulation of $\mathsf{Unif}([\theta-1/2,\theta + 1/2])$ to $\NORMAL(10\cdot|\theta|,\sigma^2)$ reduction, plotted for parameters $\theta \in\{ -1/2, -1/4, 0, 1/4, 1/2\}$ and $\sigma = 10$.}
	\label{fig:unif-gaussian}
\end{figure}

Figure~\ref{fig:unif-gaussian} provides a numerical illustration of Theorem~\ref{thm:Unif-Gauss}
with $f(\theta) = 10 \cdot |\theta|$ so that our target distribution is
$\NORMAL(10\cdot |\theta|,\sigma^2)$. We set $M = 30$, $N = 3000$, $\sigma = 10$, $y_0 = 0$ and set the base distribution $\mathcal{P}(y|x)$ and the kernel $\mathcal{S}^{*}(y|x)$ as in Theorem~\ref{thm:Unif-Gauss}. For each $\theta \in \{-1/2,-1/4,0,1/4,1/2\}$, 
we generate $K = 10^{6}$ source samples $\{X^{i}\}_{i=1}^{K} \overset{\mathsf{i.i.d.}}{\sim} \mathsf{Unif}([\theta-1/2,\theta+1/2])$ and generate target samples $Y^{i} = \textsc{rk}(X^{i}, N, M, y_0)$ for each $i\in \{1,2,\dots,K\}$. From Figure~\ref{fig:unif-gaussian}, we see that the target sample almost follows distribution $\NORMAL(10 \cdot |\theta|,\sigma^2)$, i.e., the mean $\theta$ is transformed to be $10\cdot |\theta|$.

\section{Consequences} \label{sec:consequences}

We now present three concrete consequences of our reductions in Section~\ref{sec:reductions} for some canonical problems in high-dimensional statistics and machine learning. The following three sections mirror the short motivating paragraphs in Section~\ref{sec:intro}.

\subsection{Mixtures of experts and phase retrieval} \label{sec:MOE-PR}
Suppose there is some unknown $\beta_{\star} \in \real^d$. We observe i.i.d. data $(x_{i},y_{i})_{i=1}^{n}$ generated from a symmetric \emph{mixture of linear experts} model
\begin{align}\label{eq:mix-regression-model}
	y_{i} = R_{i} \cdot \langle x_{i}, \beta_{\star} \rangle + \xi_{i}, \quad i = 1, \ldots, n.
\end{align}
Here, the random variable $R_{i} \in \{-1,+1\}$ and additive noise $\xi_{i} \in \real$ are unobserved. Suppose that the noise variables are drawn as $\{\xi_{i}\}_{i=1}^{n} \overset{\mathsf{i.i.d.}}{\sim} \mathsf{Unif}([-1/2,1/2])$ and independent of the pair $(R_{i},x_i)$. We allow $R_i$ to depend on $x_i$ such as in the general definition of the mixture of experts model~\citep{jordan1994hierarchical,jacobs1997bias}. When $R_i$ is independent of $x_i$ and drawn from a Rademacher distribution, we obtain the well-studied \emph{mixture of linear regressions} model~\citep{quandt1978estimating,stadler2010l1}. 

We now show that any mixture of linear experts model can be reduced to the phase retrieval model, which is given by i.i.d. data $(x_i, \widetilde{y}_{i})_{i = 1}^n$ generated according to the nonlinear regression model
\begin{align}\label{eq:phase-retrieval-model}
	\widetilde{y}_{i} = \big|  \langle x_{i}, \beta_{\star} \rangle \big| + \sigma \cdot \varepsilon_{i}, \quad i = 1, \ldots, n.
\end{align}
The additive noise is independent of everything else, but now Gaussian, with $\{ \varepsilon_{i} \}_{i=1}^{n} \overset{\mathsf{i.i.d.}}{\sim} \NORMAL(0,1)$.

\begin{corollary}\label{coro-mix-phase-retrieval}
	Fix any value of the signs $\{R_{i}\}_{i=1}^{n} \in \{-1, 1\}$ and parameter $\beta_{\star} \in \real^d$. Also suppose that $\max_{i = 1, \ldots, n} \lvert \langle x_{i}, \beta_{\star} \rangle \rvert \leq 1/2$. Conditioned on $\{ x_i \}_{i = 1}^n$, suppose observations $(x_i, y_i)_{i = 1}^n$ are drawn i.i.d. from the model~\eqref{eq:mix-regression-model} and $(x_i, \widetilde{y}_{i})_{i=1}^{n}$ are drawn i.i.d. from model~\eqref{eq:phase-retrieval-model}.  There exists a universal, positive constant $C$ such that for each $\delta \in (0,1)$ and $\sigma = C\sqrt{\log(20n / \delta)}$, there is an algorithm $h: (\real^d \times \real)^n \to (\real^d \times \real)^n$ running in time polynomial in $n$ and $\log(1/\delta)$ such that 
	\[
		\mathsf{d_{TV}} \Big( h\left( (x_i, y_{i})_{i=1}^{n} \right), (x_i, \widetilde{y}_{i} )_{i=1}^{n} \Big) \leq \delta.
	\] 
\end{corollary}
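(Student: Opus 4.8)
The plan is to apply the uniform-to-Gaussian reduction of Theorem~\ref{thm:Unif-Gauss} coordinate by coordinate, with the transformed mean function $f(\theta) = |\theta|$. Write $\mu_i := \langle x_i, \beta_{\star} \rangle$ and, conditioned on $x_i$ (with the fixed sign $R_i$), set $t_i := R_i \mu_i$. Since $R_i \in \{-1,+1\}$ and $\xi_i \sim \mathsf{Unif}([-1/2,1/2])$ is independent of $(R_i, x_i)$, the conditional law of $y_i$ given $x_i$ in model~\eqref{eq:mix-regression-model} is exactly the uniform location model of Eq.~\eqref{eq:Uniform-location} with location $t_i$. The hypothesis $\max_i |\mu_i| \le 1/2$ guarantees $t_i \in [-1/2,1/2] = \Theta$, so Theorem~\ref{thm:Unif-Gauss} applies. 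Crucially, $f(t_i) = |t_i| = |R_i \mu_i| = |\mu_i|$, so the Gaussian nonlinear location target $\NORMAL(f(t_i), \sigma^2)$ coincides with the conditional law $\NORMAL(|\mu_i|, \sigma^2)$ of $\widetilde y_i$ in the phase retrieval model~\eqref{eq:phase-retrieval-model}: the absolute value ``folds away'' both the unknown sign of $\mu_i$ and the unknown expert label $R_i$, neither of which the reduction needs to know.

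Concretely, the algorithm $h$ leaves every $x_i$ untouched and, for each $i$ independently (with fresh randomness), outputs $\widetilde y_i = \textsc{rk}(y_i, N, M, y_0)$ using the base measure $\NORMAL(0, 2\sigma^2)$ and the signed kernel $\mathcal{S}^*$ of Eqs.~\eqref{S-star-uniform} and~\eqref{eq:unif-g-func} instantiated with $f = |\cdot|$, so that $\theta_0 = 0$ and, by Eq.~\eqref{eq:alpha0-alpha1-f}, $\alpha_0 = 1/2$ and $\alpha_1 = 1$. Note $f$ is continuous with a single kink at $\theta_0 = 0$, which is precisely the setting of Proposition~\ref{prop:uniform-S-star} and Theorem~\ref{thm:Unif-Gauss}; no mollification (cf.\ Remark~\ref{rem:nonsmooth-target}) is needed. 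Applying Theorem~\ref{thm:Unif-Gauss} with target accuracy $\epsilon := \delta/n$ and parameter settings $M = 30$, $N = 60\log(4n/\delta)$, $y_0 = 0$, we obtain---provided $\sigma \ge 10\sqrt{2\log(20n/\delta)}$, which is implied by $\sigma = C\sqrt{\log(20n/\delta)}$ for a large enough universal constant $C$ (this $C$ also absorbing the requirement $\sigma \ge 40\max\{\alpha_0,\alpha_1\} = 40$, since $\log(20n/\delta) \ge \log 20$)---that for every $i$,
\[
\mathsf{d_{TV}}\big( \mathcal{L}[\widetilde y_i \mid x_i], \, \NORMAL(|\mu_i|, \sigma^2) \big) \le \delta/n .
\]

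Finally I would tensorize over the $n$ coordinates. Conditioned on $\{x_i\}_{i=1}^n$, the outputs $\{\widetilde y_i\}$ produced by $h$ are mutually independent, as are the $\{\widetilde y_i\}$ in the phase retrieval model; since the $x_i$-components are reproduced exactly, subadditivity of total variation over product measures gives that the conditional TV distance between $h\big((x_i,y_i)_{i=1}^n\big)$ and $(x_i,\widetilde y_i)_{i=1}^n$ is at most $\sum_{i=1}^n \delta/n = \delta$ for every realization of $\{x_i\}$, hence also unconditionally. For the runtime, each call to $\textsc{rk}$ runs in time $\mathcal{O}\big(\log(4n/\delta)(T_{samp} + T_{eval})\big)$ by Theorem~\ref{thm:Unif-Gauss}, where sampling from $\NORMAL(0,2\sigma^2)$ and evaluating $\mathcal{S}^*$ (a finite combination of Gaussian densities and their first derivatives) each take time polynomial in the relevant parameters; summing over the $n$ coordinates yields total runtime polynomial in $n$ and $\log(1/\delta)$. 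The only genuinely delicate point is the bookkeeping above---recognizing that the choice $f = |\cdot|$ lets a single, parameter-agnostic reduction absorb the sign nuisance, and verifying that the scaling $\sigma = C\sqrt{\log(20n/\delta)}$ simultaneously meets both conditions of Theorem~\ref{thm:Unif-Gauss} after the union bound over coordinates; the statistical content is inherited entirely from Theorem~\ref{thm:Unif-Gauss}.
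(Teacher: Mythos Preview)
Your proposal is correct and follows essentially the same approach as the paper: apply Theorem~\ref{thm:Unif-Gauss} with $f=|\cdot|$ (so $\theta_0=0$, $\alpha_0=1/2$, $\alpha_1=1$) coordinatewise with accuracy $\epsilon=\delta/n$, then tensorize via subadditivity of TV. The only cosmetic difference is that the paper initializes the rejection kernel at $y_0=y_i$ rather than $y_0=0$, but Theorem~\ref{thm:Unif-Gauss} allows any $y_0\in\real$, so this is immaterial.
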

We provide the proof of Corollary~\ref{coro-mix-phase-retrieval} in Section~\ref{sec:pf-mix-phase}. Taking $\delta$ small but constant, it shows that any mixture of experts model is essentially equivalent to a phase retrieval model with a variance that is inflated logarithmically in the sample size $n$. A consequence is that any estimator for the phase retrieval model, of which there are many (see, e.g.,~\citet{candes2015phase,candes2015phase2,fienup1982phase,tan2019phase,goldstein2018phasemax}) can be used alongside our reduction as an estimator for mixtures of experts (see Eq.~\eqref{eq:pointwise-risk-transfer} and Appendix~\ref{sec:loss-unbounded}). To be concrete, suppose the conditional distribution of $R_i$  given $x_i$ is known. Then a natural method to estimate $\beta_{\star}$ from observations of the model~\eqref{eq:mix-regression-model} is the expectation maximization algorithm~\citep{jordan1994hierarchical,makkuva2019breaking}. However, depending on the complexity of the given conditional distribution, each iteration of this algorithm may be complicated to run, and furthermore, it is difficult to obtain guarantees on the estimation error for such a general method. On the other hand, if we ignore the conditional distribution of the sign entirely and transform the observations into those from the phase retrieval model~\eqref{eq:phase-retrieval-model}, then we can choose any of the aforementioned algorithms to obtain provable and near-optimal estimation of $\beta_{\star}$. 

To produce a quantitative consequence of this flavor, it is useful to consider the source model to be mixtures of linear regressions, where the random variables $R_i$ are Rademacher and independent of everything else. This model has been extensively studied, and many algorithms and lower bounds are known in the literature~\citep[see, e.g.,][]{balakrishnan2017statistical,chen2017convex,brennan2020reducibility,arpino2023statistical}. 
In particular, certain algorithms like the popular alternating minimization heuristic are known to have distinct behavior for mixtures of linear regressions and phase retrieval---for instance, this algorithm is not consistent for parameter estimation in mixtures of linear regressions for any nonzero noise level but always consistent for phase retrieval~\citep{chandrasekher2023sharp}. Corollary~\ref{coro-mix-phase-retrieval} shows that  our reduction can be applied to observations from the mixtures of linear regressions model and then the alternating minimization algorithm can be run; this meta-algorithm then inherits the guarantees of alternating minimization for phase retrieval---in particular, consistent estimation of $\beta_\star$ with high probability---with a logarithmic in $n$ inflation in the noise variance.

Two technical aspects of Corollary~\ref{coro-mix-phase-retrieval} are worth emphasizing. First, note that the phase retrieval model is an instance of a single-index model~\citep{friedman1974projection,li1991sliced} in which the nonlinear link function is taken to be the absolute value function. A similar result to Corollary~\ref{coro-mix-phase-retrieval} can be proved by identical means for other link functions, by applying Theorem~\ref{thm:Unif-Gauss} and repeating the steps in Section~\ref{sec:pf-mix-phase}. Second, Corollary~\ref{coro-mix-phase-retrieval} requires bounded signal to noise ratio with $|\langle x_i, \theta \rangle| \leq 1/2$. This should not be thought of as strict; indeed, in the limiting case of infinite signal-to-noise ratio (in which the models~\eqref{eq:mix-regression-model} and~\eqref{eq:phase-retrieval-model} are both noiseless), there is a simple reduction that takes the absolute values of the responses in model~\eqref{eq:mix-regression-model} to obtain responses in the model~\eqref{eq:phase-retrieval-model}. The regime of finite signal-to-noise ratio is more delicate and the aforementioned simple reduction will not work, since the absolute values of responses in model~\eqref{eq:mix-regression-model} will not correspond to responses from the phase retrieval model but to one with dithering~\citep{thrampoulidis2020generalized}. 

\subsection{Signal denoising with log-concave noise and partial observations}

Let $\Theta \subseteq \mathbb{R}^{\Dim}$ be an arbitrary constraint set in finite-dimensional space, a setting that covers vector problems as well as their isomorphisms. For 
example, using the standard isomorphism between $\real^{d_{1} d_{2}}$ and $\real^{d_{1} \times d_{2}}$, the set $\Theta$ can represent the set of rank-$r$ and $d_{1} \times d_{2}$ matrices. Other examples could involve tensor-valued parameters or other, possibly nonparametric, constraints~\citep[see, e.g.,][]{oymak2016sharp,chatterjee2014new,chatterjee2023denoising}.

Now consider the denoising problem with missing data, which is the problem of estimating an unknown signal $\theta^{\star} \in \Theta$ from noisy observations of a subset of entries 
\begin{align}\label{eq:low-rank-observ-exp}
	y_{i} = \begin{cases} 
	\theta^{\star}_{i} + \xi_{i}, & i \in \Omega, \\
	\star, & i \notin \Omega.
	\end{cases}
\end{align}
Here $\Omega \subseteq \{1,\dots,\Dim\}$ denotes the set of indices for observed entries, $\xi_{i}$ is additive, zero-mean noise at entry $i$, and $\star$ denotes that that entry is unobserved. We assume that the noise variables $\{\xi_{i}\}_{i \in \Omega}$ are i.i.d., and suppose that each $\xi_i$ is exponential with probability density function
\[
	u(x) = \begin{cases} \exp\{-x-1\}, & \quad \text{for } x\geq -1,
	\\0,& \quad \text{for } x< -1.  \end{cases}
\]
We will show that the model with exponential noise~\eqref{eq:low-rank-observ-exp} can be reduced to a model with general log-concave noise, given by
\begin{align}\label{eq:low-rank-observ-log-concave}
	\widetilde{y}_{i} = 
	\begin{cases} \theta^{\star}_{i} + \varepsilon_{i}, &i \in \Omega, \\
	\star, & i \notin \Omega.
	\end{cases}
\end{align}
As before, the noise variables $\{ \varepsilon_{i} \}_{i \in \Omega}$ are i.i.d. and each $\varepsilon_{i}$ is log-concave with probability density function $v(z) = \frac{1}{\sigma}\exp\left\{-\psi\Big(\frac{z}{\sigma}\Big)\right\}$, where $\psi:\real \rightarrow \real$ is convex, differentiable, and satisfies Eq.~\eqref{eq:psi-density-condition}.

Let $T_{samp}$ be the time to sample from the base measure $\mathcal{P}(\cdot|x)$~\eqref{eq:base-measure-log-concave} and $T_{eval}$ be the time to evaluate the kernel $\mathcal{S}^{*}(y|x)$~\eqref{eq:signed-kernel-Exp} at a single point. Recall the definition of functionals $\tau(\sigma)$ in Eq.~\eqref{eq:functionals}. With this notation, we are now ready to state the theorem.
\begin{corollary}\label{coro-low-rank-matrix}
	Suppose $\{y_{i}\}_{i=1}^{\Dim}$ satisfy Eq.~\eqref{eq:low-rank-observ-exp} and $\{\widetilde{y}_{i}\}_{i=1}^{\Dim}$ satisfy Eq.~\eqref{eq:low-rank-observ-log-concave} with the same ground truth $\theta^{\star} \in \mathbb{R}^{\Dim}$. Suppose $\sigma>0$ satisfies $\tau(\sigma)<1$, $M$ satisfies Eq.~\eqref{ineq:M-lower-bound-exp}, and $N = M \log(2/\epsilon) /(1-\tau(\sigma))$ for $\epsilon \in (0,1)$. Then there is an algorithm $h:\mathbb{R}^{\Dim} \rightarrow \mathbb{R}^{\Dim}$ running in time $O\big(|\Omega| N (T_{samp} + T_{eval})\big)$ for which
	\begin{align}\label{ineq:TV-bound-low-rank}
		\mathsf{d_{TV}} \Big( h\big( \{y_{i}\}_{i=1}^{\Dim} \big), \{\widetilde{y}_{i}\}_{i=1}^{\Dim}\Big) \leq |\Omega| \cdot \big(\epsilon +  \tau(\sigma) \big).
	\end{align}
\end{corollary}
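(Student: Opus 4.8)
The plan is to treat the problem as $|\Omega|$ parallel, independent instances of the scalar exponential-to-log-concave reduction of Theorem~\ref{thm:exp-log-concave}, one per observed coordinate, and then glue the coordinates together by a tensorization argument for total variation distance.

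First I would specify the algorithm $h$. Given the input vector $\{y_i\}_{i=1}^{\Dim}$, the set $\Omega$ is recovered as the set of indices whose entry is not the symbol $\star$. On each observed coordinate $i \in \Omega$, note that $y_i = \theta^\star_i + \xi_i$ is exactly a sample from the exponential location model~\eqref{eq:exponential-location} with parameter $\theta = \theta^\star_i$; the algorithm runs Algorithm $\textsc{rk}$ on $y_i$ using the signed kernel $\mathcal{S}^*$ from Eq.~\eqref{eq:signed-kernel-Exp}, the base measure from Eq.~\eqref{eq:base-measure-log-concave}, the parameter $M$ satisfying Eq.~\eqref{ineq:M-lower-bound-exp}, the iteration count $N = M\log(2/\epsilon)/(1-\tau(\sigma))$, and the initialization $y_0$, drawing \emph{fresh independent} randomness for each coordinate. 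On each unobserved coordinate $i \notin \Omega$ it simply outputs $\star$. Since $\tau(\sigma)<1$ and $M$ obeys Eq.~\eqref{ineq:M-lower-bound-exp} by hypothesis, Theorem~\ref{thm:exp-log-concave} is applicable at $\theta = \theta^\star_i$, and substituting $N = M\log(2/\epsilon)/(1-\tau(\sigma))$ into Eq.~\eqref{ineq:exp-log-concave} gives $\frac{N}{M}(1-\tau(\sigma)) = \log(2/\epsilon)$, so
\[
\big\| \mathcal{L}\big[\textsc{rk}(y_i, N, M, y_0)\big] - v(\cdot;\theta^\star_i) \big\|_{\mathsf{TV}} \;\leq\; 2e^{-\log(2/\epsilon)} + \tau(\sigma) \;=\; \epsilon + \tau(\sigma)
\]
for every $i \in \Omega$, where $v(\cdot;\theta^\star_i)$ is the target log-concave density. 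Each such call runs in time $O(N(T_{samp}+T_{eval}))$ by Lemma~\ref{lem:rej-sampling}, and there are $|\Omega|$ of them, yielding the claimed runtime $O(|\Omega| N (T_{samp}+T_{eval}))$; the $\star$ coordinates cost nothing.

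Next I would assemble the coordinates. Because the source noises $\{\xi_i\}_{i\in\Omega}$ are i.i.d.\ (hence the observed coordinates of $\{y_i\}$ are mutually independent given $\theta^\star$), the internal randomness of $h$ is independent across coordinates, and the target noises $\{\varepsilon_i\}_{i\in\Omega}$ are i.i.d., the law of $h(\{y_i\}_{i=1}^{\Dim})$ and the law of $\{\widetilde{y}_i\}_{i=1}^{\Dim}$ are both product measures on $\prod_{i=1}^{\Dim}(\real \cup \{\star\})$ that agree (deterministically, on the atom $\star$) in every unobserved coordinate. Invoking the standard subadditivity of total variation under products, $\big\|\bigotimes_i P_i - \bigotimes_i Q_i\big\|_{\mathsf{TV}} \leq \sum_i \|P_i - Q_i\|_{\mathsf{TV}}$ (a telescoping/hybrid argument swapping one factor at a time, using that a common factor does not increase TV), and keeping only the $|\Omega|$ observed coordinates, we obtain
\[
\mathsf{d_{TV}}\Big( h\big(\{y_i\}_{i=1}^{\Dim}\big),\, \{\widetilde{y}_i\}_{i=1}^{\Dim} \Big) \;\leq\; \sum_{i \in \Omega} \big(\epsilon + \tau(\sigma)\big) \;=\; |\Omega|\cdot\big(\epsilon + \tau(\sigma)\big),
\]
which is exactly Eq.~\eqref{ineq:TV-bound-low-rank}.

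The substance of the argument is genuinely just Theorem~\ref{thm:exp-log-concave} applied coordinatewise, so I do not expect a serious obstacle; the one point that needs care is the tensorization step — verifying that the two laws really do factorize over coordinates (so that the product subadditivity bound is legitimately applicable) and that the shared, noiseless $\star$-coordinates can be excised without loss. Once that is in place, the remainder is substitution of the stated parameter choices into Eq.~\eqref{ineq:exp-log-concave} and a union-type bound. I would also remark explicitly that the hypotheses "$\tau(\sigma)<1$" and "$M$ satisfies Eq.~\eqref{ineq:M-lower-bound-exp}" are precisely what Theorem~\ref{thm:exp-log-concave} requires, so the corollary introduces no conditions beyond those already needed for the scalar reduction.
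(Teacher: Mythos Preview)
Your proposal is correct and matches the paper's own proof essentially line for line: define $h$ by running Algorithm~\textsc{rk} coordinatewise on the observed entries (leaving $\star$-entries unchanged), invoke Theorem~\ref{thm:exp-log-concave} at each $\theta^\star_i$, substitute the specified $N$ to turn the bound into $\epsilon+\tau(\sigma)$, and finish with subadditivity of total variation over the product of independent coordinates. The only cosmetic difference is that the paper fixes the initialization as $y_0=y_i$ in each call, whereas you leave $y_0$ generic; this is immaterial since Theorem~\ref{thm:exp-log-concave} holds for any $y_0\in\real$.
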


We provide the proof of Corollary~\ref{coro-low-rank-matrix} in Section~\ref{sec:pf-low-rank}. The reduction guaranteed by Corollary~\ref{coro-low-rank-matrix} for denoising problems has two salient features. First, it is \emph{structure preserving}, in that the signal $\theta^*$ (and hence its structure) is preserved in both source and target models. Second, the reduction is robust to missing data or partial observations, since we only need to transform the observed entries from the model and leave the unobserved entries as they are. As done in Examples~\ref{example-exp-gaussian}, \ref{example-exp-logistic}, and~\ref{example-exp-laplace}, Corollary~\ref{coro-low-rank-matrix} can be specialized to particular log-concave targets such as the Gaussian, logistic, and Laplace location models. As detailed in those examples, in the latter two cases an exact reduction exists provided $\sigma$ is greater than a universal constant (without dependence on dimension or the set $\Omega$); in the former case, the variance of the target model will need to be inflated logarithmically in the number of observed entries $|\Omega|$.

Let us give two further consequences of Corollary~\ref{coro-low-rank-matrix} for characterizing the risk of estimation. First consider the Gaussian target model with full observations, in which denoising problems and their relatives have been extensively studied in the literature, especially when $\Theta$ is a convex set~\citep{amelunxen2014living,oymak2016sharp,chatterjee2014new,neykov2022minimax}. For such problems, the proximal mapping of observations onto $\Theta$ (which corresponds to a type of regularized least squares estimator) has been carefully studied, and explicit error bounds in $\ell_2$ loss---often with sharp constants---have been proved depending on the geometry of the set around the true signal $\theta^*$. It is also known that the error rate of the least squares estimator in the Gaussian setting can be characterized in terms of the so-called ``critical radius'' of the problem, given by the solution of a certain deterministic variational formula~\citep[Theorem 1.1]{chatterjee2014new}. 

Specifically, let $\widehat{\theta}_{\mathsf{LS}}(w) = \arg\min_{\theta \in \Theta} \| w - \theta \|_2$ denote the least squares estimator that projects any vector $w \in \real^\Dim$ onto a closed convex set $\Theta \subseteq \real^{\Dim}$. In the setting where 
\begin{align} \label{eq:Gaussian-convex}
\widetilde{y} = \theta^{\star} + \sigma Z 
\end{align}
for some $\theta^{\star} \in \Theta$ and $Z \sim \NORMAL(0, I_d)$, \citet{chatterjee2014new} showed that there is an explicit positive scalar $t(\theta^*, \Theta; \sigma)$ expressible in terms of the local Gaussian width of the tangent cone of $\Theta$ centered at $\theta^*$ and the noise standard deviation $\sigma$, such that
\begin{align}\label{eq:risk-bound-gauss-noise}
|\mathbb{E}[ \| \widehat{\theta}_{\mathsf{LS}}(\widetilde{y}) - \theta^* \|_2 ] - t(\theta^*, \Theta; \sigma)| = \mathcal{O} ( \sqrt{t(\theta^*, \Theta; \sigma)} \vee 1).
\end{align}
It is also known that the random loss $\| \widehat{\theta}_{\mathsf{LS}}(\widetilde{y}) - \theta^* \|_2$ concentrates around its expectation~\citep{chatterjee2014new,van2017concentration}. Typically, one should expect that $t(\theta^*, \Theta; \sigma)$ is larger than one---or grows with $n$---so that the fluctuation term $\sqrt{t(\theta^*, \Theta; \sigma)}$ is of a much smaller order than the error prediction $t(\theta^*, \Theta; \sigma)$. In particular, since the bound is in absolute value, this gives a sharp (two-sided) guarantee on performance.

Now imagine that data $y \in \real^\Dim$ are instead drawn from a model with exponential noise~\eqref{eq:low-rank-observ-exp}, which, in particular, has heavier tails than its Gaussian counterpart. We will use the reduction in Corollary~\ref{coro-low-rank-matrix} with $\psi(z) = z^{2}/2+\log(2\pi)/2$ and the setting $\epsilon = \exp(-\sigma^{2}/2)$. Call the resulting output $\overline{y} = h(y)$, noting that $h$ runs in time $O\big(\Dim \sigma^{2} (T_{samp} + T_{eval})\big)$.
Applying Corollary~\ref{coro-low-rank-matrix} in conjunction with the inequality $\tau(\sigma) \leq 2\exp(-\sigma^2/2)$ (see Example~\ref{example-exp-gaussian}), we obtain for $\sigma \geq 2$ and $\widetilde{y}$ as in Eq.~\eqref{eq:Gaussian-convex}, we have
\begin{align}\label{ineq:TV-exp-gauss-signal-denoise}
	\mathsf{d_{TV}}( h(y), \widetilde{y}) \leq 3\Dim\exp\left( -\sigma^2/2 \right).
\end{align}
Note that such a total variation guarantee does not immediately imply a guarantee in $\ell_2$ risk.
Nevertheless, the following proposition shows that the least squares estimator applied to $\overline{y}$ inherits the sharp guarantee proved for Gaussian data.
\begin{proposition} \label{prop:Exp-reduction-sharp}
 Suppose $y  = \theta^{\star} + \xi$ as described in the exponential location model~\eqref{eq:low-rank-observ-exp}, and that $n \geq 2$. Generate $\overline{y} = h(y)$ with parameter $\epsilon = \exp(-\sigma^{2}/2)$ and with input $y_0 = y$ to the rejection kernel. Then for all $\sigma\geq  6\log n$, we have
 \[
 | \mathbb{E}[ \| \widehat{\theta}_{\mathsf{LS}}(\overline{y}) - \theta^* \|_2 ] - t(\theta^*, \Theta; \sigma)| = \mathcal{O} ( \sqrt{t(\theta^*, \Theta; \sigma)} \vee 1) + \mathcal{O}\left(  \frac{ \sigma \log \Dim }{\Dim} \right). 
\]
\end{proposition}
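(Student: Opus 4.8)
The plan is to couple the reduction output $\overline y = h(y)$ with a genuine Gaussian observation and to transfer the sharp guarantee~\eqref{eq:risk-bound-gauss-noise} across the coupling, the only subtlety being that the loss $\|\cdot-\theta^\star\|_2$ is unbounded so the bounded-loss transfer~\eqref{eq:pointwise-risk-transfer} does not apply off the shelf. Here $h$ is the coordinatewise reduction of Corollary~\ref{coro-low-rank-matrix} with $\psi(z)=z^2/2+\log(2\pi)/2$ and $\Omega=\{1,\dots,n\}$, so its target is $\NORMAL(\theta^\star,\sigma^2 I_n)$, and by Eq.~\eqref{ineq:TV-exp-gauss-signal-denoise} we have $\mathsf{d_{TV}}\big(\mathcal L[\overline y],\NORMAL(\theta^\star,\sigma^2 I_n)\big)\le 3n e^{-\sigma^2/2}=:\rho$ for $\sigma\ge 2$. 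I would take a maximal coupling of $\overline y$ with $\widetilde y\sim\NORMAL(\theta^\star,\sigma^2 I_n)$, so that on $E:=\{\overline y=\widetilde y\}$ we have $\widehat\theta_{\mathsf{LS}}(\overline y)=\widehat\theta_{\mathsf{LS}}(\widetilde y)$ and $\mathbb P(E^c)=\rho$. Decomposing $\mathbb E\|\widehat\theta_{\mathsf{LS}}(\overline y)-\theta^\star\|_2-t(\theta^\star,\Theta;\sigma)$ as $\big(\mathbb E\|\widehat\theta_{\mathsf{LS}}(\overline y)-\theta^\star\|_2-\mathbb E\|\widehat\theta_{\mathsf{LS}}(\widetilde y)-\theta^\star\|_2\big)+\big(\mathbb E\|\widehat\theta_{\mathsf{LS}}(\widetilde y)-\theta^\star\|_2-t(\theta^\star,\Theta;\sigma)\big)$, the second bracket is $\mathcal O(\sqrt{t(\theta^\star,\Theta;\sigma)}\vee 1)$ by~\eqref{eq:risk-bound-gauss-noise}, and since the two loss integrands agree on $E$, the first bracket has absolute value at most $\mathbb E\big[\big(\|\widehat\theta_{\mathsf{LS}}(\overline y)-\theta^\star\|_2+\|\widehat\theta_{\mathsf{LS}}(\widetilde y)-\theta^\star\|_2\big)\mathbf{1}_{E^c}\big]$. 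So it remains to bound this ``discrepancy term'' by $\mathcal O(\sigma\log n/n)$.

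For that I would use that $\widehat\theta_{\mathsf{LS}}=\Pi_\Theta$ is the Euclidean projection onto the closed convex set $\Theta$, hence $1$-Lipschitz and fixing $\theta^\star\in\Theta$, so $\|\widehat\theta_{\mathsf{LS}}(w)-\theta^\star\|_2\le\|w-\theta^\star\|_2$; by Cauchy--Schwarz the discrepancy term is then at most $\big(\sqrt{\mathbb E\|\overline y-\theta^\star\|_2^2}+\sqrt{\mathbb E\|\widetilde y-\theta^\star\|_2^2}\big)\sqrt{\rho}$, with $\mathbb E\|\widetilde y-\theta^\star\|_2^2=n\sigma^2$. To bound $\mathbb E\|\overline y-\theta^\star\|_2^2=\sum_i\mathbb E[(\overline y_i-\theta^\star_i)^2]$, by translation equivariance of the exponential location model and of Algorithm~\textsc{rk} it suffices to take $\theta^\star=0$, so $y_i=\xi_i$ with $\xi_i+1\sim\mathsf{Exp}(1)$ and $\overline y_i=\textsc{rk}(\xi_i,N,M,\xi_i)$. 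The choice $y_0=y$ (read coordinatewise) is what makes this controllable: if the loop fails to accept within $N$ iterations then $\overline y_i=\xi_i$, whose second moment is $\mathbb E[\xi_i^2]=1$ (rather than a fixed point, which would make the error scale with $\|\theta^\star\|_2$); and conditionally on acceptance $\overline y_i\sim\widehat{\mathcal T}(\cdot\mid\xi_i)$ from Eq.~\eqref{eq:MK-thresh}, whose normalizing constant $p(\xi_i)=\int(\mathcal S^*(z\mid\xi_i)\vee 0)\,\mathrm{d}z\ge 1$ because $\int\mathcal S^*(z\mid x)\,\mathrm{d}z=1$, so $\mathbb E[(\overline y_i)^2\mid\text{accept},\xi_i]\le\int z^2|\mathcal S^*(z\mid\xi_i)|\,\mathrm{d}z$. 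Using the explicit form $\mathcal S^*(z\mid x)=v(z;x+1)\big(1-(z-x-1)/\sigma^2\big)$ from Eq.~\eqref{eq:signed-kernel-Exp} with $v(\cdot;x+1)$ the $\NORMAL(x+1,\sigma^2)$ density, a short computation of Gaussian absolute moments gives $\int z^2|\mathcal S^*(z\mid x)|\,\mathrm{d}z=\mathcal O\big((x+1)^2+\sigma^2\big)$ for $\sigma\ge1$; averaging over $\xi_i+1\sim\mathsf{Exp}(1)$ then yields $\mathbb E[(\overline y_i)^2]=\mathcal O(\sigma^2)$ and hence $\mathbb E\|\overline y-\theta^\star\|_2^2=\mathcal O(n\sigma^2)$. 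Combining, the discrepancy term is $\mathcal O\big(\sigma\sqrt n\cdot\sqrt{ne^{-\sigma^2/2}}\big)=\mathcal O\big(\sigma n e^{-\sigma^2/4}\big)$, and since $\sigma\mapsto\sigma n e^{-\sigma^2/4}$ is decreasing for $\sigma\ge\sqrt2$, for $\sigma\ge 6\log n$ and $n\ge2$ it is at most its value at $\sigma=6\log n$, which is $\mathcal O\big((\log n)^2/n\big)\le\mathcal O(\sigma\log n/n)$. The running-time claim is read off from Example~\ref{example-exp-gaussian} with $\epsilon=e^{-\sigma^2/2}$ (giving $N=\mathcal O(\sigma^2)$, hence $\mathcal O(n\sigma^2(T_{samp}+T_{eval}))$ total).

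The main obstacle is the unbounded loss: one cannot simply invoke the $\epsilon$-deficiency transfer~\eqref{eq:pointwise-risk-transfer} (nor its unbounded-loss variant, since that would not preserve the sharp constant $t(\theta^\star,\Theta;\sigma)$), and must instead control the ``failure'' contribution $\mathbf{1}_{E^c}$ by hand. The resolution rests on the two second-moment estimates for $\overline y$, which in turn rely on (i) setting $y_0=y$ in the rejection kernel so that a non-accepting coordinate outputs the original, light-tailed exponential observation rather than an arbitrary point, and (ii) the fact that $\mathcal S^*$ in Eq.~\eqref{eq:signed-kernel-Exp} inherits Gaussian tails from the target, so accepted coordinates also have variance $\mathcal O(\sigma^2)$. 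The remaining ingredients---the maximal coupling, Cauchy--Schwarz, nonexpansiveness of $\Pi_\Theta$, the interchange of $\int$ and $\nabla_\theta$ underlying $\int\mathcal S^*(z\mid x)\,\mathrm{d}z=1$, and the translation-equivariance reduction to $\theta^\star=0$---are routine.
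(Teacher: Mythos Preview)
Your argument is correct and arrives at the stated bound, but it takes a genuinely different route from the paper's proof. The paper does not go through a maximal coupling; instead it invokes its unbounded-loss transfer lemma (Lemma~\ref{lemma:risk-bound}), which is a truncation argument: pick a level $\overline L$, bound the difference of risks by $\overline L\cdot\mathsf{d_{TV}}(\overline y,\widetilde y)$ on the event $\{\text{loss}\le\overline L\}$ and by H\"older (with $p=q=2$) on the tails, then use nonexpansiveness of $\Pi_\Theta$ exactly as you do. To control the tail probabilities $\Pr\{\|\overline y-\theta^\star\|_2\ge\overline L\}$ the paper develops a separate sub-exponential tail bound for each coordinate of $\overline y$ (Lemma~\ref{lemma:prob-bound-RK-sample-exp}), takes a union bound, and finally optimizes with $\overline L=8\sqrt{n}\,\sigma\log n$. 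Your coupling-plus-Cauchy--Schwarz approach bypasses this entirely: it needs only second moments of $\overline y-\theta^\star$ and the TV bound $\rho$, not any coordinatewise tail estimate, and gives the discrepancy $\mathcal O(\sigma\sqrt n\cdot\sqrt{n e^{-\sigma^2/2}})=\mathcal O(\sigma n e^{-\sigma^2/4})$, which for $\sigma\ge 6\log n$ is comfortably $\mathcal O(\sigma\log n/n)$. The trade-off: the paper's truncation framework is stated as a reusable lemma (applicable to other losses and reductions), at the cost of proving and invoking the extra tail lemma; your argument is more elementary and self-contained for this specific $\ell_2$ setting. Your second-moment computation for $\overline y_i$ is also more direct than the paper's, which obtains it by integrating the tail bound of Lemma~\ref{lemma:prob-bound-RK-sample-exp}; both rely on the same two observations you isolate, namely that (i) the choice $y_0=y$ keeps the non-accepting output light-tailed and (ii) accepted outputs inherit Gaussian moments from $\mathcal S^*$.
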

We provide the proof of Proposition~\ref{prop:Exp-reduction-sharp} in Section~\ref{sec:pf-prop-cor2}. 
Note that as previously mentioned, one should expect that $t(\theta^*, \Theta; \sigma)$ is either fixed or grows with $n$, so that the additional fluctuation term $\mathcal{O}\left(  \frac{ \sigma \log \Dim }{\Dim} \right)$ should be thought of as negligible when compared to the analogous bound~\eqref{eq:risk-bound-gauss-noise} under Gaussian noise. Thus, operationally speaking, Proposition~\ref{prop:Exp-reduction-sharp} allows us to sharply characterize the risk of an estimator (in a two-sided fashion) under the exponential location model using the same geometric quantities as those that appear in the Gaussian case. 

A second application is in proving minimax lower bounds. For instance, consider the case of structured matrix estimation with partial observations, a problem that has been considered under various signal and noise models~\citep[see, e.g.,][]{davenport20141,soni2016noisy,mcrae2021low}. By Corollary~\ref{coro-low-rank-matrix}, any minimax lower bound proved under exponential noise transfers to its log-concave counterparts without having to recompute any information-theoretic quantities. To give a concrete example, minimax lower bounds for estimating sparse factor models with missing data are known when the observations are corrupted by Laplace noise~\citep{sambasivan2018minimax}. Via an analog of Corollary~\ref{coro-low-rank-matrix} for Laplace sources\footnote{A reduction from Laplace source to log-concave target can be shown by combining the techniques in Proposition~\ref{prop:Laplace} and Theorem~\ref{thm:exp-log-concave}.}, one can transfer this specific lower bound to any log-concave distribution that is covered by our theory.

\subsection{Privacy: Transforming a Laplace mechanism into a Gaussian mechanism}

Over the last few decades, differential privacy has emerged as a core paradigm for privacy-preserving data analysis, whereby we wish to answer ``aggregate" queries about databases while preserving the privacy of individuals within the database. There are several mechanisms that preserve differential privacy. Many of these are classical, \emph{noise-addition} mechanisms for vector-valued numeric queries about the database~\citep{dwork2006differential}. 

To make things concrete, consider a database of $n$ individuals given by $(X_1, \ldots, X_n) \in \mathcal{X}^n$ and a function $f: \mathcal{X}^n \to \mathbb{R}$ that we wish to compute in a differentially private fashion via a (possibly randomized) mechanism $g: \mathcal{X}^n \to \mathbb{R}$. Databases $\boldsymbol{X} = (X_1, \ldots, X_n)$ and $\boldsymbol{Y} = (Y_1, \ldots, Y_n)$ are said to be neighbors if their Hamming distance is at most $1$. 
The mechanism $g$ is said to be $(\epsilon, \delta)$ differentially private if for all Borel sets $B \subseteq \mathbb{R}$ and neighboring databases $(\boldsymbol{X}, \boldsymbol{Y})$, we have
\[
\mathbb{P}( g(\boldsymbol{X}) \in B ) \leq \exp(\epsilon) \cdot \mathbb{P}( g(\boldsymbol{Y}) \in B ) + \delta.
\]

An important consideration is how $f$ itself behaves on neighboring databases. The \emph{sensitivity} of $f$ is given by
\[
\phi(f) = \max_{\boldsymbol{X}, \boldsymbol{Y} \text{ neighbors} } | f(\boldsymbol{X}) - f(\boldsymbol{Y}) |.
\]
While any query can be made private with the addition of enough noise, one also desires that the mechanism be useful, in that the output of the mechanism is close to the desired query. Accordingly, the $\ell_2$ \emph{accuracy} 
of $g$ is given by $\max_{\boldsymbol{X}} \sqrt{\mathbb{E}[\| g(\boldsymbol{X}) - f(\boldsymbol{X})\|_2^2]}$.

Two common mechanisms for differentially private data release are given by the Laplace mechanism and the Gaussian mechanism. For example, the following result for the Laplace mechanism is classical.

\begin{proposition} [see, e.g., \cite{dwork2014algorithmic}] \label{prop:Laplace-DP}
Let $\xi \sim \mathsf{Lap}(0, b)$ denote a Laplace random variable drawn independently of the database. The Laplace mechanism $g(\boldsymbol{X}) = f(\boldsymbol{X}) + \xi$ is $\left( \frac{\phi(f)}{b}, 0 \right)$ differentially private for $f$ with accuracy $b\sqrt{2}$.
\end{proposition}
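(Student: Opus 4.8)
The plan is to verify the two assertions---privacy and accuracy---separately, each by an elementary computation with the Laplace density.

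For the privacy claim, I would fix neighboring databases $\boldsymbol{X}, \boldsymbol{Y}$ and a Borel set $B \subseteq \mathbb{R}$. Since $\xi \sim \mathsf{Lap}(0,b)$ has density $p(t) = \frac{1}{2b}\exp(-|t|/b)$ with respect to Lebesgue measure, the output $g(\boldsymbol{X}) = f(\boldsymbol{X}) + \xi$ has density $z \mapsto \frac{1}{2b}\exp(-|z - f(\boldsymbol{X})|/b)$, and similarly for $\boldsymbol{Y}$. The key pointwise estimate is that for every $z \in \mathbb{R}$,
\begin{align*}
\frac{\exp(-|z - f(\boldsymbol{X})|/b)}{\exp(-|z - f(\boldsymbol{Y})|/b)}
&= \exp\!\left(\frac{|z - f(\boldsymbol{Y})| - |z - f(\boldsymbol{X})|}{b}\right) \\
&\le \exp\!\left(\frac{|f(\boldsymbol{X}) - f(\boldsymbol{Y})|}{b}\right) \le \exp\!\left(\frac{\phi(f)}{b}\right),
\end{align*}
where the first inequality is the reverse triangle inequality and the second is the definition of the sensitivity $\phi(f)$. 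Integrating this density-ratio bound over $B$ yields $\mathbb{P}(g(\boldsymbol{X}) \in B) \le \exp(\phi(f)/b)\, \mathbb{P}(g(\boldsymbol{Y}) \in B)$, which is exactly $(\phi(f)/b, 0)$-differential privacy; the additive slack $\delta$ can be taken to be zero precisely because the bound holds at every point $z$.

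For the accuracy claim, since the query is scalar-valued, $\|g(\boldsymbol{X}) - f(\boldsymbol{X})\|_2 = |\xi|$, so $\mathbb{E}[\|g(\boldsymbol{X}) - f(\boldsymbol{X})\|_2^2] = \mathbb{E}[\xi^2] = \Var(\xi) = 2b^2$, using the standard second-moment formula $\int_{\mathbb{R}} t^2 \frac{1}{2b} e^{-|t|/b}\,\mathrm{d}t = 2b^2$ (two integrations by parts). Taking square roots, and noting the quantity does not depend on $\boldsymbol{X}$, gives accuracy $b\sqrt{2}$.

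There is no genuine obstacle here: the statement is classical and both parts collapse to one-line computations. The only points requiring any care are that the density-ratio bound must be established \emph{pointwise} in $z$ (not merely in an averaged sense) in order to conclude pure differential privacy with $\delta = 0$, and that the notion of ``accuracy'' is defined through the second moment of the injected noise rather than, say, its absolute first moment.
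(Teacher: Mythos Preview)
Your proof is correct and is exactly the standard argument from \cite{dwork2014algorithmic}. The paper itself does not give a proof of this proposition; it simply cites it as a classical fact, so your verification via the pointwise density-ratio bound for privacy and the Laplace variance computation for accuracy is precisely what is needed.
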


Other mechanisms come equipped with their own privacy and accuracy guarantees as well as composition properties, and different scenarios may call for our privacy-preserving mechanism to have distinct properties~\citep{dwork2014algorithmic,wasserman2010statistical,wood2018differential,bun2016concentrated}.  In particular, given that the Gaussian mechanism has a distinct set of operational advantages from its Laplace counterpart, a natural question is whether the output of a Laplace mechanism for $f$ can be transformed into a Gaussian mechanism for $f$ \emph{without having access to} $f$ or the database. For a concrete example, in the problem of differentially private empirical risk minimization (ERM)~\citep{chaudhuri2011differentially,rubinstein2012learning}, a common approach is to corrupt the parameter output of an ERM procedure with Laplace noise. If we wanted to now produce a Gaussian mechanism (in the classical sense), then we would need to recover the original ERM solution---which in turn requires re-solving the ERM problem and de-privatization---and corrupt it with Gaussian noise. In contrast, one might be interested in a direct approach that uses the output of the already executed Laplace mechanism. Does such a transformation exist? An application of Theorem~\ref{thm:Lap-Gaussian} answers this question in the affirmative.

\begin{corollary} \label{cor:privacy}
Suppose $g$ is given by the Laplace mechanism in Proposition~\ref{prop:Laplace-DP}, and let $Z \sim \NORMAL(0, 2b^2 \log(12/\delta))$ denote a Gaussian random variable drawn independently of everything else. For each $\boldsymbol{X} \in \mathcal{X}^n$, there is a mechanism $h$ constructible in time polynomial in $1/\delta$ purely from $g(\boldsymbol{X})$ (without access to $f$ or $\boldsymbol{X}$) such that $\mathsf{d_{TV}} ( h, f(\boldsymbol{X}) + Z) \leq \delta$.

In particular, the mechanism $h$ is $\left( \frac{\phi(f)}{b}, 0 \right)$ differentially private for $f$ with accuracy
\[
\sqrt{2b^2 \log(12/\delta) + 2b^2 + \frac{b^2}{4} \cdot \delta \log^{3/2} (12/\delta) }.
\]
\end{corollary}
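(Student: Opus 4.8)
The plan is to apply Theorem~\ref{thm:Lap-Gaussian} directly, treating the query value $f(\boldsymbol{X})$ as the unknown location parameter $\theta$. Fix $\boldsymbol{X} \in \mathcal{X}^n$ and write $\theta = f(\boldsymbol{X}) \in \real$. The already-executed Laplace mechanism produces $g(\boldsymbol{X}) = \theta + \xi$ with $\xi \sim \mathsf{Lap}(0,b)$, i.e.\ a sample from the Laplace location model $\mathsf{Lap}(\theta, b)$ with unknown mean $\theta$. Setting $\sigma^2 = 2b^2 \log(12/\delta)$ (so that $\sigma \geq b$ whenever $\delta \leq 12/e$, which we may assume since otherwise the claim is trivial), the hypothesis $\sigma^2 \geq 2b^2 \log(12/\delta)$ of Theorem~\ref{thm:Lap-Gaussian} holds with equality. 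First I would invoke Theorem~\ref{thm:Lap-Gaussian} with $\epsilon$ replaced by $\delta$: running Algorithm~\textsc{rk} on input $g(\boldsymbol{X})$ with $M = 2$, $N = \lceil 2\log(4/\delta)\rceil$, kernel $\mathcal{S}^*$ from Eq.~\eqref{S-star-Laplace-scalar}, and base measure $\mathcal{P}(\cdot|x) = \NORMAL(x,\sigma^2)$, we obtain a random variable $h := \textsc{rk}(g(\boldsymbol{X}), N, M, y_0)$ with $\mathsf{d_{TV}}(h, \NORMAL(\theta,\sigma^2)) \leq \delta$; since $\NORMAL(\theta,\sigma^2) = \mathcal{L}[f(\boldsymbol{X}) + Z]$, this is precisely the stated total-variation bound. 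Crucially, the algorithm only accesses $g(\boldsymbol{X})$ (not $f$ or $\boldsymbol{X}$), runs in time polynomial in $\log(1/\delta)$ hence polynomial in $1/\delta$, and so $h$ is a valid mechanism.

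Next I would verify the privacy claim. The mechanism $h$ is a (randomized) post-processing of $g(\boldsymbol{X})$: conditionally on $g(\boldsymbol{X})$, the output $h$ does not depend on the database at all. By Proposition~\ref{prop:Laplace-DP}, $g$ is $\left(\frac{\phi(f)}{b}, 0\right)$-differentially private, and differential privacy is closed under post-processing (applying any data-independent, possibly randomized map to the output preserves the $(\epsilon,\delta)$ guarantee). Hence $h$ is $\left(\frac{\phi(f)}{b}, 0\right)$-differentially private for $f$, as claimed. I would state the post-processing lemma explicitly and cite~\cite{dwork2014algorithmic}.

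The remaining step---and the only one requiring real calculation---is the accuracy bound. We must bound $\sqrt{\EE[\|h - f(\boldsymbol{X})\|_2^2]} = \sqrt{\EE[(h - \theta)^2]}$. Let $Y \sim \NORMAL(\theta, \sigma^2)$ be coupled to $h$ optimally so that $\Prob(h \neq Y) \leq \delta$. Write $\EE[(h-\theta)^2] = \EE[(Y-\theta)^2] + \EE[(h-\theta)^2 - (Y-\theta)^2]$. The first term is $\sigma^2 = 2b^2\log(12/\delta)$. For the correction term, on the event $\{h = Y\}$ it vanishes, so it equals $\EE[((h-\theta)^2 - (Y-\theta)^2)\mathbbm{1}\{h \neq Y\}]$; bounding this requires a tail/moment control on $h - \theta$ and on $Y - \theta$ under the low-probability event. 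Here I would use that $h$ lies in the support generated by the base measure $\NORMAL(g(\boldsymbol{X}), \sigma^2)$ together with a bound on $|g(\boldsymbol{X}) - \theta| = |\xi|$, controlling $\EE[(h-\theta)^4]$ or $\EE[(h-\theta)^2 \mathbbm{1}\{h\neq Y\}]$ via Cauchy--Schwarz against $\Prob(h \neq Y)^{1/2} \leq \delta^{1/2}$, which should yield a term of order $b^2 \delta \log^{3/2}(12/\delta)$ after accounting for the scale of a Gaussian of variance $\sigma^2 = \Theta(b^2\log(1/\delta))$. The additive $2b^2$ term presumably absorbs the contribution of the Laplace variance ($\Var(\xi) = 2b^2$) that leaks into $h$ through the rejection step. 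Combining, $\EE[(h-\theta)^2] \leq 2b^2\log(12/\delta) + 2b^2 + \frac{b^2}{4}\delta\log^{3/2}(12/\delta)$, and taking square roots gives the stated accuracy.

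The main obstacle is the accuracy computation: unlike the TV and privacy parts, which follow essentially immediately from Theorem~\ref{thm:Lap-Gaussian} and the post-processing lemma, bounding the second moment of $h$ requires understanding how far the rejection-kernel output can stray from a genuine $\NORMAL(\theta,\sigma^2)$ sample on the small failure event, and tracking the constants carefully enough to land on the precise expression $\sqrt{2b^2\log(12/\delta) + 2b^2 + \tfrac{b^2}{4}\delta\log^{3/2}(12/\delta)}$. I would expect to reuse the moment estimates already developed in the proof of Theorem~\ref{thm:Lap-Gaussian} (Section~\ref{sec:pf-thmLapGauss}) rather than redo them from scratch.
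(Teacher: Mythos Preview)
Your handling of the total-variation bound and the privacy claim matches the paper's proof exactly: set $\sigma^2 = 2b^2\log(12/\delta)$, run Algorithm~\textsc{rk} on $g(\boldsymbol X)$ via Theorem~\ref{thm:Lap-Gaussian}, and invoke the post-processing closure of differential privacy. The paper also takes $y_0 = g(\boldsymbol X)$, which becomes relevant below.

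The gap is in the accuracy argument. Your coupling-plus-Cauchy--Schwarz plan cannot deliver the stated expression. First, Cauchy--Schwarz against $\Pr\{h\neq Y\}^{1/2}\le \delta^{1/2}$ yields a correction of order $\sqrt{\EE[(h-\theta)^4]}\cdot\sqrt{\delta}$; with $\EE[(h-\theta)^4]$ on the scale $\sigma^4 = 4b^4\log^2(12/\delta)$ this gives $O\big(b^2\log(12/\delta)\sqrt{\delta}\big)$, not $O\big(b^2\delta\log^{3/2}(12/\delta)\big)$. Second, and more basically, your decomposition has leading term $\EE[(Y-\theta)^2]=\sigma^2$, so any further contribution must come from the $\{h\neq Y\}$ event and hence be controlled by $\delta$; but the target bound contains an additive $2b^2$ that does \emph{not} vanish with $\delta$. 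That term cannot appear as a ``small correction'' on top of $\sigma^2$ under a TV-coupling scheme.

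The paper avoids coupling altogether and computes $\EE[(h-\theta)^2]$ directly from the mechanics of the rejection sampler (Lemma~\ref{lemma:prob-bound-RK-sample-Laplace}, with the choice $y_0=X_\theta$). Conditioning on the accepted iteration, the output satisfies $h-\theta \overset{d}{=} X_1+X_2$ with $X_1\sim\mathsf{Lap}(0,b)$ independent of $X_2$, where $X_2$ has the normalized density $\phi_\sigma(z)\big(1+b^2\sigma^{-2}-b^2\sigma^{-4}z^2\big)\vee 0$. This gives
\[
\EE[(h-\theta)^2]\;\le\; 2b^2 + \sigma^2\Big(1+\tfrac{\sigma}{b}e^{-\sigma^2/(2b^2)}\Big),
\]
and substituting $\sigma^2=2b^2\log(12/\delta)$ produces exactly $2b^2+2b^2\log(12/\delta)+\tfrac{b^2}{3\sqrt 2}\,\delta\log^{3/2}(12/\delta)$, after which $3\sqrt 2>4$ gives the claimed constant. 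The $2b^2$ is the Laplace variance entering through $X_1$, and the $\delta\log^{3/2}$ term is the excess variance of $X_2$ over $\sigma^2$---neither arises from a TV-error correction. The moment control you hoped to borrow from Section~\ref{sec:pf-thmLapGauss} is not there; it lives in this separate appendix lemma.
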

Corollary~\ref{cor:privacy} is proved in Section~\ref{sec:pf-privacy}. A few comments are in order. First, note that the created mechanism $h$ is a postprocessing of the Laplace mechanism, and hence inherits its differential privacy guarantee (see~\citet[Theorem 3.14]{dwork2014algorithmic}). Second, it achieves nearly the same privacy-accuracy tradeoff that is achieved by the classical Gaussian mechanism, which has access to $f(\boldsymbol{X})$ and corrupts this query output with Gaussian noise having the same variance as $Z$ in Corollary~\ref{cor:privacy} (see~\citet[Appendix A]{dwork2014algorithmic}). The similarity of the two privacy-accuracy tradeoffs is particularly clear when $\delta \downarrow 0$, where the accuracy guarantee of $h$ tends to the limit $\sqrt{2b^2 \log(12/\delta)}$, which is precisely the standard deviation of the Gaussian $Z$.  In other words, adding Laplace noise to privatize data gives, to a degree, a universal mechanism, in that the result of this mechanism can be transformed, \emph{downstream}, to resemble an ``oracle'' Gaussian mechanism that is created with access to $f$. 
Third, note that Corollary~\ref{cor:privacy} provides an alternative proof that the classical Gaussian mechanism $f(\bm{X}) + Z$ is $\left( \frac{\phi(f)}{b}, \delta \right)$ differentially private. Such a proof is illuminating because it explicitly exhibits a $\left( \frac{\phi(f)}{b}, 0 \right)$ mechanism---one that can be realized via post-processing of the Laplace mechanism---that is $\delta$-close in total variation to the Gaussian mechanism. This provides a construction of an intermediate mechanism that witnesses the conditions in~\citet[Lemma 3.17]{dwork2014algorithmic}, which state (roughly speaking) that for any $(\epsilon, \delta)$ mechanism $\mathcal{M}$, there must exist an $(\epsilon, 0)$ mechanism that is $\delta$ close to $\mathcal{M}$ in total variation.
Finally, note that there is a direct analog of the above corollary for vector-valued queries, in which case Proposition~\ref{prop:Laplace} can be directly applied to produce the reduction instead of going through Theorem~\ref{thm:Lap-Gaussian}.

\section{Discussion} \label{sec:discussion}

Motivated by the problem of constructing computationally efficient reductions between high-dimensional models, we revisited the problem of bounding the total variation deficiency between some canonical statistical models, showcasing our framework on source models such as the Laplace, Erlang, and uniform location models. Note that our main lemma (Lemma~\ref{lem:rej-sampling}) is not limited to these reductions, and can in principle be applied much more broadly. 
As concrete consequences, we covered problems spanning mixture modeling, denoising, missing data, and privacy. As alluded to in Sections~\ref{sec:setup} and~\ref{sec:consequences}, we expect our results to have several consequences not covered here, specifically to proving (computationally constrained) minimax lower bounds.

Our work leaves open several questions; let us list a few here. First, note that as articulated in Remarks~\ref{rem:uniform-target} and~\ref{rem:nonsmooth-target}, the differentiability properties of source and target appear crucial to producing good reductions via the techniques introduced in this paper. In particular, all the source distributions that we have considered have nondifferentiable densities, and the corresponding target densities are at least as smooth as the respective source distributions. In particular, a reduction from a very smooth source density (such as a Gaussian) appears to be out of the reach of our techniques, unless the target is also Gaussian. It is an interesting open problem to prove a nontrivial reduction from a Gaussian source model, since it would be particular useful in transferring lower bounds from the canonical Gaussian location model to other noise models. 

The second set of questions pertains to our construction of a Markov kernel. Indeed, Algorithm~\textsc{rk} was designed to sample from the Markov kernel $\widehat{T}$~\eqref{eq:MK-thresh}, which was, in turn, a simple thresholded version of our oracle signed kernel. Is such a construction optimal given access to a signed kernel $\mathcal{S}^*$ satisfying $\SignedDef(\mathcal{U}, \mathcal{V}; \mathcal{S}^*) = 0$? Can we directly pursue the optimal Markov kernel by approximately solving the infinite-dimensional linear program~\eqref{eq:one-way-def} for some interesting class of source and target families?

A third direction would be to extend approximate reductions beyond total variation deficiency. Indeed, it is reasonable to define notions of deficiency---i.e., ask question (Q2)---in any ``distance" between probability measures, with natural choices being the family of $f$-divergences, R\'enyi divergences, and integral probability metrics. What forms do corresponding reductions take, and are they significantly different from reductions in total variation? How does the choice of divergence influence the particular data processing inequality that must be satisfied in order to map source to target?

As a final direction, we reiterate that an important application of reductions between experiments is to transfer risk bounds and algorithmic lower bounds, and we pose a few concrete questions in this vein. The paradigm of smoothed analysis~\citep{spielman2004smoothed} has been extensively applied to explain the practical efficiency of algorithms that are inefficient in the worst-case. More generally, average-case analysis of algorithms provides a framework to reason about the properties of algorithms when run on random instances of problems. These definitions rely intrinsically on the perturbations of an instance by noise that is chosen from a particular distribution, and this distribution is in turn chosen in some ad-hoc fashion. Reductions between problem instances corrupted by different noise distributions would show that similar insights hold for several other perturbation distributions. A distinct application is to prove lower bounds on particular classes of algorithms (or models of computation) for high-dimensional statistics problems, and not all computationally efficient algorithms as alluded to in Section~\ref{sec:setup}. Examples include algorithms that utilize low-degree polynomials of data~\citep{hopkins2018statistical,kunisky2019notes} or the sum-of-squares hierarchy~\citep{barak2019nearly}. If the reduction between source and target can be performed by an algorithm that is itself implementable by a certain model of computation, then it would show that lower bounds on this model of computation transfer from source to target instances. This would obviate the need for tailor-made lower bounds for each individual noise distribution and each class of algorithms.

\section{Proofs} \label{sec:proofs}

In this section, we provide proofs of all our main results, postponing the proofs of the more technical lemmas to Appendix~\ref{sec:proof-technical}. Throughout, we draw on the general notation and convention introduced in Section~\ref{sec:setup} of the main paper.
Additionally, we use $\mathcal{\mathcal{F}} \{ g(\cdot) \} (\omega) = \int_{\real^d} e^{i \langle \omega,  z \rangle } g(z) \mathrm{d} z$ to denote the Fourier transform of a function $g: \real^d \to \real$ evaluated at $\omega \in \mathbb{C}^d$, where $i = \sqrt{-1}$ and $\langle \cdot, \cdot \rangle$ is the standard inner product in $\mathbb{C}^d$.

\subsection{Proof of Proposition~\ref{prop:plugin}} \label{sec:pf-prop-plugin}

The proof is based on lower bounding the total variation between two distributions by the supremum of the difference between their characteristic functions. Letting $Z$ denote a standard normal variate and letting $G_W$ denote the characteristic function of $W$, we have
\begin{align*}
\mathsf{d_{TV}}(\mathsf{K}_{\mathsf{plugin}}(X_{\theta}), Y_{\theta}) &= \mathsf{d_{TV}}(\theta + W + \sigma Z, \theta + \sigma Z) \\
&= \mathsf{d_{TV}}(W + \sigma Z, \sigma Z) \\
&\overset{\1}{\geq} \sup_{t \in \mathbb{R}} \; \exp\{ -t^2 \sigma^2 / 2\} \cdot | G_W(t) - 1 |,
\end{align*}
where step $\1$ follows from~\cite{krishnamurthy2020algebraic}. Now observe that
\begin{align*}
G_W(t) = 
\begin{cases}
(1 + t^2)^{-1}  \quad &\text{ in case (i)} \\
(1 - it)^{-1} &\text{ in case (ii)}  \\
\frac{2}{t} \sin(t/2) &\text{ in case (iii).}
 \end{cases}
\end{align*}
In cases (i) and (iii), we have $| G_W(t) - 1 | \gtrsim t^2$ for all $|t| \leq 1$, and in case (ii), we have $| G_W(t) - 1 | \gtrsim t$ for all $|t| \leq 1$. Setting $t = 1/\sigma$ and noting that $\sigma \geq 1$ completes the proof. 
\qed


\subsection{Proof of Lemma~\ref{lem:rej-sampling}} \label{sec:pf-lemma1}

In this proof, we will assume that the source distribution $u(\cdot; \theta)$ as well as $\overline{\mathcal{S}}(\cdot|x)$ and $\mathcal{P}(\cdot | x)$ are absolutely continuous with respect to Lebesgue measure---this simplifies our notation and allows us to write densities $(u(x; \theta), \overline{\mathcal{S}}(y|x), \mathcal{P}(y | x))$. Note however that the proof extends straightforwardly to the general case.

The algorithm runs for at most $N$ iterations, and at each iteration, there is one sampling step and one evaluation step. The runtime is thus $N(T_{samp} + T_{eval})$.
To prove the guarantee on total variation, fix any $\theta \in \Theta$. By the correspondence between total variation and $\ell_1$ norm, we have
\begin{align*}
&\| \mathcal{L} \left[ \textsc{rk}(X_{\theta}, N, M, y_0) \right] - v( \cdot ; \theta) \|_{\mathsf{TV}} \\
&\quad= \frac{1}{2} \| \mathcal{L} \left[ \textsc{rk}(X_{\theta}, N, M, y_0) \right] - v( \cdot ; \theta) \|_{1} \\
&\quad \overset{\1}{\leq} \frac{1}{2} \| \mathcal{L} \left[ \textsc{rk}(X_{\theta}, N, M, y_0) \right] - (\widehat{\mathcal{T}} \circ \mathcal{U}) (\cdot | \theta) \|_{1} + \frac{1}{2} \| (\widehat{\mathcal{T}} \circ \mathcal{U}) (\cdot | \theta) - (\mathcal{S}^* \circ \mathcal{U}) (\cdot | \theta) \|_{1} + \frac{1}{2} \| (\mathcal{S}^* \circ \mathcal{U}) (\cdot | \theta) - \mathcal{V} (\cdot | \theta) \|_{1} \\
&\quad \overset{\2}{\leq} \frac{1}{2} \| \mathcal{L} \left[ \textsc{rk}(X_{\theta}, N, M, y_0) \right] - (\widehat{\mathcal{T}} \circ \mathcal{U}) (\cdot | \theta) \|_{1} + \frac{1}{2} \| (\widehat{\mathcal{T}} \circ \mathcal{U}) (\cdot | \theta) - (\mathcal{S}^* \circ \mathcal{U}) (\cdot | \theta) \|_{1} + \SignedDef(\mathcal{U}, \mathcal{V}; \mathcal{S}^*),
\end{align*}
where step~$\1$ follows by triangle inequality, and step $\2$ from the definition of $\SignedDef(\mathcal{U}, \mathcal{V}; \mathcal{S}^*)$~\eqref{eq:one-way-sign-fixed}. To complete the proof, it suffices to show that for each $\theta \in \Theta$, we have
\begin{subequations}
\begin{align}
\| \mathcal{L} \left[ \textsc{rk}(X_{\theta}, N, M, y_0) \right] - (\widehat{\mathcal{T}} \circ \mathcal{U}) (\cdot | \theta) \|_{1} &\leq 4\exp\left\{ -\frac{N}{M} \cdot \inf_{x \in \mathbb{X}}p(x) \right\},\quad \text{and} \label{clm:sim} \\
\| (\widehat{\mathcal{T}} \circ \mathcal{U}) (\cdot | \theta) - (\mathcal{S}^* \circ \mathcal{U}) (\cdot | \theta) \|_{1} &\leq \int_{\mathbb{X}} \left( |p(x) - 1| + q(x) \right) \cdot u(x;\theta) \;\mathrm{d}x. \label{clm:trunc}
\end{align}
\end{subequations}
We prove each statement in turn, taking as given the following technical lemma for the rejection sampling step. The proof of this lemma is provided in Appendix~\ref{sec:rej-samp-tech-lemma}. 

\begin{lemma} \label{lem:tech-prob}
Let $g(x) =  \Big( 1- \frac{p(x)}{M}\Big)^{N}$. For all $C \in \mathcal{B}(\mathbb{Y})$, we have
\begin{align}\label{eq:probability-Y-E}
	\Pr\big\{ Y \in C \big\} =  \frac{\int_{y \in C} \overline{\mathcal{S}}(y | x) \mathrm{d}y }{p(x)} \cdot (1-g(x)) + \mathbbm{1}_{y_{0} \in C} \cdot g(x).
\end{align}
\end{lemma}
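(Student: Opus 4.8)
The plan is to follow the rejection loop iteration by iteration, conditioning on the fixed input $x$ so that the only source of randomness is the i.i.d.\ sequence $\{(U_t,Y_t)\}$. Working in the density setting adopted in the surrounding proof of Lemma~\ref{lem:rej-sampling} (so that $\overline{\mathcal{S}}(\cdot|x)$ and $\mathcal{P}(\cdot|x)$ have densities and $\tfrac{\mathrm{d}\overline{\mathcal{S}}}{\mathrm{d}\mathcal{P}}(y|x)=\overline{\mathcal{S}}(y|x)/\mathcal{P}(y|x)$), the first step is to compute the per-iteration acceptance probability. At a single iteration $U_t\sim\mathsf{Unif}([0,1])$ is independent of $Y_t\sim\mathcal{P}(\cdot|x)$, and $0\le \tfrac1M\tfrac{\mathrm{d}\overline{\mathcal{S}}}{\mathrm{d}\mathcal{P}}(\cdot|x)\le 1$ by nonnegativity of $\overline{\mathcal{S}}$ and Assumption~\ref{assptn:RND}(b), so
\[
\Pr\big(\text{the iteration accepts}\big)=\mathbb{E}_{Y_t\sim\mathcal{P}(\cdot|x)}\Big[\tfrac1M\tfrac{\mathrm{d}\overline{\mathcal{S}}}{\mathrm{d}\mathcal{P}}(Y_t|x)\Big]=\tfrac1M\,\overline{\mathcal{S}}(\mathbb{Y}|x)=\tfrac{p(x)}{M}=:\alpha(x),
\]
with $p$ as in Eq.~\eqref{eq:pq}, and the same computation restricted to a set $C$ gives $\Pr(\text{accept and } Y_t\in C)=\tfrac1M\overline{\mathcal{S}}(C|x)$. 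Hence, conditioned on acceptance, the accepted value has law $\overline{\mathcal{S}}(C|x)/p(x)=\widehat{\mathcal{T}}(C|x)$ from Eq.~\eqref{eq:MK-thresh} (whenever $p(x)>0$), independently of the iteration index.

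The second step is to sum over the first accepting iteration. Let $\tau$ be the index of the first iteration that accepts, with $\tau=\infty$ if none of the $N$ attempts accepts. Because the pairs $\{(U_t,Y_t)\}$ are independent across iterations and the loop halts and returns the current $Y$ the first time $Y$ is set (and returns $y_0$ otherwise), for every $C\in\mathcal{B}(\mathbb{Y})$,
\[
\Pr\{Y\in C\}=\Big(\textstyle\sum_{k\ge 1}\Pr(\tau=k)\Big)\,\widehat{\mathcal{T}}(C|x)+\Pr(\tau=\infty)\,\mathbbm{1}_{y_0\in C},
\]
where I use the factorization $\Pr(\tau=k,\,Y\in C)=(1-\alpha(x))^{k-1}\cdot\tfrac1M\overline{\mathcal{S}}(C|x)=\Pr(\tau=k)\,\widehat{\mathcal{T}}(C|x)$. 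The geometric structure gives $\Pr(\tau=k)=(1-\alpha(x))^{k-1}\alpha(x)$ over the admissible range and $\Pr(\tau=\infty)=(1-\alpha(x))^{N}=(1-p(x)/M)^{N}=g(x)$, so $\sum_k\Pr(\tau=k)=1-g(x)$. Substituting $\widehat{\mathcal{T}}(C|x)=\tfrac{1}{p(x)}\int_{y\in C}\overline{\mathcal{S}}(y|x)\,\mathrm{d}y$ then yields precisely Eq.~\eqref{eq:probability-Y-E}.

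Finally I would dispose of the degenerate case $p(x)=0$: here $\overline{\mathcal{S}}(\cdot|x)\equiv 0$, every iteration rejects, $g(x)=1$, and the algorithm outputs $y_0$ deterministically, consistent with Eq.~\eqref{eq:probability-Y-E} once the first summand is read as $0$ (its prefactor $1-g(x)$ vanishes). I do not expect a genuine obstacle; the only points requiring care are bookkeeping ones --- that once $Y$ is set it is not overwritten before the loop exits, that the acceptance events across iterations are truly independent (immediate from the sampling convention), and that the number of attempts is matched to the exponent $N$ in $g(x)$. In short, the statement reduces to a routine computation about a geometric first-success time.
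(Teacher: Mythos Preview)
Your proposal is correct and follows essentially the same route as the paper: compute the per-iteration acceptance probability $p(x)/M$ and the joint probability of accepting into $C$, decompose by the first acceptance time, sum the geometric series, and handle the fallthrough to $y_0$. The only addition beyond the paper's argument is your explicit treatment of the degenerate case $p(x)=0$, which the paper leaves implicit.
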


\paragraph*{Proof of claim~\eqref{clm:sim}.}
Suppose $f(\cdot|x)$ is the probability density function of the random variable $Y = \textsc{rk}(x, N, M, y_0)$. By definition, we have
\begin{align}\label{ineq:upper-bound-l1-distance-master-lemma}
	\| \mathcal{L} \left[ \textsc{rk}(X_{\theta}, N, M, y_0) \right] - (\widehat{\mathcal{T}} \circ \mathcal{U}) (\cdot | \theta) \|_{1} &= \int_{y \in \mathbb{Y}} \bigg| \int_{x \in \mathbb{X}}  f(y | x) \cdot u(x;\theta) \mathrm{d}x - \int_{x\in \mathbb{X}} \widehat{\mathcal{T}} (y|x) \cdot u(x;\theta) \mathrm{d}x \bigg| \mathrm{d}y \nonumber
	\\ & \overset{\1}{\leq} \int_{x \in \mathbb{X}}  \int_{y\in \mathbb{Y}} \Big| f(y|x) - \widehat{\mathcal{T}} (y|x) \Big|   \mathrm{d}y \cdot u(x;\theta) \mathrm{d}x \nonumber
	\\ & =  \int_{x \in \mathbb{X}}   \big\| f(\cdot|x) - \widehat{\mathcal{T}} (\cdot|x) \big\|_{1}  \cdot u(x;\theta) \mathrm{d}x,
\end{align}
where step $\1$ follows from Jensen's inequality and Fubini's theorem. Continuing, we obtain
\begin{align*}
	\big\| f(\cdot|x) - \widehat{\mathcal{T}} (\cdot|x) \big\|_{1} &= 2 \big\| f(\cdot|x) - \widehat{\mathcal{T}} (\cdot|x) \big\|_{\mathsf{TV}} 
	\\& = 2 \sup_{C \in \mathcal{B}(\mathbb{Y}) } \bigg| \int_{y \in C} f(y|x) \mathrm{d}x  -  \int_{y \in C} \widehat{\mathcal{T}}(y|x) \mathrm{d}x \bigg|
	\\ &\overset{\1}{=} 2 \sup_{C \in \mathcal{B}(\mathbb{Y}) } \bigg| \frac{\int_{y \in C} \overline{\mathcal{S}}(y | x) \mathrm{d}y }{p(x)} \cdot (1-g(x)) + \mathbbm{1}_{y_{0} \in C} \cdot g(x) -  \frac{\int_{y \in C} \overline{\mathcal{S}}(y | x) \mathrm{d}y }{p(x)} \bigg|
	\\ &= 2 \sup_{C \in \mathcal{B}(\mathbb{Y}) } \bigg| -\frac{\int_{y \in C} \overline{\mathcal{S}}(y | x) \mathrm{d}y }{p(x)} \cdot g(x)  + \mathbbm{1}_{y_{0} \in C} \cdot g(x)  \bigg|
	\overset{\2}{\leq} 4g(x).
\end{align*}
In step $\1$, we use the definition $\int_{y \in C} f(y|x) \mathrm{d}x = \Pr\{Y \in C\}$ and apply Lemma~\ref{lem:tech-prob}. We also use the definition $\widehat{\mathcal{T}}(y|x) = \overline{\mathcal{S}}(y | x)/p(x)$. In step $\2$, we apply the triangle inequality and use the fact that $\frac{\int_{y \in C} \overline{\mathcal{S}}(y | x) \mathrm{d}y }{p(x)} \leq 1$. Putting together the pieces, we have 
\begin{align*}
	\| \mathcal{L} \left[ \textsc{rk}(X_{\theta}, N, M, y_0) \right] - (\widehat{\mathcal{T}} \circ \mathcal{U}) (\cdot | \theta) \|_{1}  \leq  \int_{x \in \mathbb{X}} 4g(x) \cdot u(x;\theta) \mathrm{d}x &\leq \sup_{x \in \mathbb{X}} 4 g(x) 
	 \overset{\1}{\leq} \sup_{x \in \mathbb{X}}\; 4\exp\left\{ -\frac{p(x)}{M} \cdot N \right\},
\end{align*}
where in step $\1$ we use $g(x) = (1-p(x)/M)^{N} \leq e^{-p(x)N/M}$. This concludes the proof of claim~\eqref{clm:sim}.

\paragraph*{Proof of claim~\eqref{clm:trunc}.} We have
\begin{align*}
\| (\widehat{\mathcal{T}} \circ \mathcal{U}) (\cdot | \theta) - (\mathcal{S}^* \circ \mathcal{U}) (\cdot | \theta) \|_{1}  &= \int_{\mathbb{Y}} \left| \int_{\mathbb{X}} (\mathcal{S}^*(y| x) - \widehat{\mathcal{T}}(y|x)) u(x; \theta) \mathrm{d}x \right| \mathrm{d}y \\
&\overset{\1}{\leq}    \int_{\mathbb{X}} \int_{\mathbb{Y}} \left| (\mathcal{S}^*(y| x) - \widehat{\mathcal{T}}(y|x)) u(x; \theta) \right| \mathrm{d}y \mathrm{d}x    \\
&\overset{\2}{=} \int_{\mathbb{X}} \int_{\mathbb{Y}} \left| \left( \mathcal{S}^*(y| x) - \frac{\mathcal{S}^*(y|x) \vee 0}{p(x)} \right) u(x; \theta) \right| \mathrm{d}y \mathrm{d}x, 
\end{align*}
where step $\1$ follows as before from Jensen's inequality and Fubini's theorem, and step $\2$ from the definition of $\widehat{\mathcal{T}}(y|x)$ and $p(x)$. Splitting the integral over $\mathbb{Y}$ into two parts corresponding to whether or not $\mathcal{S}^*(y|x)$ is positive, we have
\begin{align*}
&\int_{\mathbb{Y}} \left| \left( \mathcal{S}^*(y| x) - \frac{\mathcal{S}^*(y|x) \vee 0}{p(x)} \right) u(x; \theta) \right| \mathrm{d}y \\
&= \int_{\mathbb{Y}} \frac{|p(x) - 1|}{p(x)} \mathcal{S}^*(y| x) u(x; \theta) \cdot \ind{\mathcal{S}^*(y|x) \geq 0} \mathrm{d}y + \int_{\mathbb{Y}} [- \mathcal{S}^*(y| x)] u(x; \theta) \cdot \ind{\mathcal{S}^*(y|x) < 0} \mathrm{d}y \\
&= u(x; \theta) \cdot \frac{|p(x) - 1|}{p(x)}  \int_{\mathbb{Y}}  [\mathcal{S}^*(y| x) \vee 0 ] \mathrm{d}y + u(x; \theta) \cdot \int_{\mathbb{Y}} [- \mathcal{S}^*(y| x) \wedge 0]  \mathrm{d}y \\
&=  u(x; \theta) \cdot |p(x) - 1| + u(x | \theta) \cdot q(x).
\end{align*}
Integrating over $x \in \mathbb{X}$ completes the proof of claim~\eqref{clm:trunc}.

Putting together the two claims completes the proof. 
\qed


\subsection{Proof of Proposition~\ref{prop:Laplace}} \label{sec:pf-propLap}
We provide two versions of the proof. The first is a heuristic proof sketch based on the Fourier inversion theorem, which shows a constructive procedure for the claimed reduction---however, making it rigorous would require stronger assumptions than those stated. Our second technique is rigorous and based on integration by parts, and proved under the weak assumptions~\eqref{assump-target-laplace} stated in the proposition. Recall $u(x;\theta)$ in Eq.~\eqref{eq:Laplace-location}. Since the theorem is claimed for all $\Theta \subseteq \real^d$, it suffices to verify the equation
\begin{align} \label{eq:Fourier1}
	\int_{\real^{d}} u(x;\theta) \mathcal{S}^{*}(y|x) \mathrm{d}x = v(y;\theta), \quad \text{for all} \quad  y\in \mathbb{Y}, \theta\in \real^d.
\end{align}
\subsubsection{Proof sketch based on Fourier Inversion}
Define the shorthand $h(y;\theta) = \int_{\real^d} u(x;\theta) \cdot \mathcal{S}^*(y| x) \mathrm{d}x$ for convenience, and let
\begin{align*}
 g_{1}(z) = \prod_{i=1}^{d} \frac{e^{-|z_i|/b_i}}{2b_{i}}, \quad g_{2}(z) = \mathcal{S}^{*}(y|z), \quad \text{for all} \quad z \in \real^d.
\end{align*}
By definition, 
$
	h(y;\theta) = \int_{\real^d} g_{1}(x-\theta) g_{2}(x) \mathrm{d}x = \big( g_{1} * g_{2} \big) (\theta).
$
Taking Fourier transforms and applying the convolution theorem, we obtain
\begin{align}\label{step:convolution-lap}
	\mathcal{F} \{ h(y;\cdot)\} (\omega) \overset{\1}{=} \mathcal{F} \{ g_{1} \} (\omega) \cdot \mathcal{F} \{ g_{2} \} (\omega).
\end{align}
By definition of the two functions, we have
\begin{align*}
	&\mathcal{F} \{ g_{1} \} (\omega) = \int_{\real^d} e^{i \langle \omega, z \rangle} g_{1}(z) \mathrm{d}z = \prod_{i = 1}^d \frac{1}{1 + b_{i}^{2}\omega_i^2} \quad  \text{and}\\
	&\mathcal{F} \{ g_{2} \} (\omega) = \int_{\real^d} e^{i \langle \omega, z \rangle} \mathcal{S}^{*}(y|z) \mathrm{d}z = \prod_{i = 1}^d (1 + b_{i}^{2}\omega_i^2) \cdot \mathcal{F}\{v(y;\cdot)\}.
\end{align*}
Multiplying the above two displays and using Eq.~\eqref{step:convolution-lap} yields $\mathcal{F} \{ h(y;\cdot)\} (\omega) = \mathcal{F}\{v(y;\cdot)\}$. Taking inverse Fourier transforms, we then obtain $h(y;\theta) \overset{\2}{=} v(y;\theta)$ for all $\theta \in \real^{d}$, as claimed. 

Here step $\1$ relies on the assumption that the convolution theorem holds for $g_1$ and $g_2$, and step $\2$ relies on the assumption that the Fourier inversion theorem holds. Standard sufficient conditions for these steps are stronger than those stated in the theorem---we next provide a rigorous proof.

\subsubsection{Proof based on integration by parts}

With the intuition gained from the previous sketch, we can now directly verify that Eq.~\eqref{eq:Fourier1} holds, and under weaker conditions. Our proof proceeds via integration by parts and requires the following technical lemma. We provide proof of Lemma~\ref{lemma:aux-pf-prop-laplace} in Section~\ref{sec:pf-lemma-prop-laplace-integral}.
 
\begin{lemma}\label{lemma:aux-pf-prop-laplace}
Consider the setting of Proposition~\ref{prop:Laplace} and suppose assumption~\eqref{assump-target-laplace} holds. For each $x\in \real^{d}$, $y \in \mathbb{Y}$, $\theta \in \Theta$, and $1\leq k \leq d$, we have
\begin{align*}
	\int_{\real} \frac{1}{2b_{k}}\exp\big( - |x_{k} - \theta_{k}| / b_{k} \big) \cdot \bigg( v_{k}(y;x) - b_{k}^{2} \frac{\partial^{2} v_{k}(y;x) }{\partial x_{k}^{2}} \bigg) \mathrm{d} x_{k} = v_{k}(y;x) \mid_{x_{k} = \theta_{k}}.
\end{align*} 
\end{lemma}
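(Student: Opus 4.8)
The plan is to freeze all variables except the one being integrated over and reduce the claim to a one-dimensional deconvolution identity. Fix $y \in \mathbb{Y}$, the index $k$, and all coordinates of $x$ other than $x_k$; set $b := b_k$, $s := \theta_k$, and write $\phi(t) := v_k(y;x)$ viewed as a function of $t = x_k$ alone. Since the operators defining $v_k$ in Eq.~\eqref{eq:v-funcs-laplace} only differentiate in the coordinates $k+1, \dots, d$, and $v(y;\cdot)$ is twice differentiable in each entry by hypothesis, $\phi$ is of class $C^2$ on $\real$. The lemma is then equivalent to the statement
\[
\int_{\real} \tfrac{1}{2b}\, e^{-|t-s|/b}\bigl(\phi(t) - b^2\phi''(t)\bigr)\,\mathrm{d}t = \phi(s).
\]
Conceptually this holds because $\tfrac{1}{2b}e^{-|t|/b}$ is the Green's function of the operator $I - b^2\partial_t^2$ on the line (equivalently, $(I-b^2\partial_t^2)\,\tfrac{1}{2b}e^{-|t|/b} = \delta$ in the distributional sense), so convolving it against $(I-b^2\partial_t^2)\phi$ returns $\phi$. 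I would record this heuristic and then make it rigorous by integration by parts, since the distributional computation would otherwise require stronger regularity than~\eqref{assump-target-laplace}.

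For the rigorous argument I would split the integral at $t=s$. On $(s,\infty)$ the kernel equals the smooth function $g(t) := \tfrac{1}{2b}e^{-(t-s)/b}$, which satisfies $g'' = b^{-2}g$, $g(s) = \tfrac{1}{2b}$, $g'(s) = -\tfrac{1}{2b^2}$; on $(-\infty,s)$ it equals $\tilde g(t) := \tfrac{1}{2b}e^{(t-s)/b}$ with $\tilde g'' = b^{-2}\tilde g$, $\tilde g(s) = \tfrac{1}{2b}$, $\tilde g'(s) = \tfrac{1}{2b^2}$. On a truncated interval $[s,R]$, integrating $\int_s^R g\,\phi''$ by parts twice produces a $b^{-2}\int_s^R g\,\phi$ term which exactly cancels the $\int_s^R g\,\phi$ coming from the $\phi$-part of the integrand, leaving only endpoint contributions:
\[
\int_s^R g(t)\bigl(\phi(t)-b^2\phi''(t)\bigr)\,\mathrm{d}t = b^2\bigl[g(s)\phi'(s) - g'(s)\phi(s)\bigr] - b^2\bigl[g(R)\phi'(R) - g'(R)\phi(R)\bigr].
\]
The first bracket evaluates to $\tfrac{b}{2}\phi'(s) + \tfrac{1}{2}\phi(s)$, while the second bracket is, up to the harmless constant $e^{s/b}$, a linear combination of $e^{-R/b}\phi'(R)$ and $e^{-R/b}\phi(R)$, both of which vanish as $R\to\infty$ by exactly the two limits in Eq.~\eqref{assump2-target-laplace}. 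Hence $\int_s^{\infty} g(\phi - b^2\phi'') = \tfrac{b}{2}\phi'(s) + \tfrac{1}{2}\phi(s)$. An entirely symmetric computation on $[-R,s]$ with $\tilde g$, this time invoking Eq.~\eqref{assump1-target-laplace} to kill the endpoint at $-R$, gives $\int_{-\infty}^{s} \tilde g(\phi - b^2\phi'') = -\tfrac{b}{2}\phi'(s) + \tfrac{1}{2}\phi(s)$. Adding the two halves, the $\phi'(s)$ contributions cancel and the $\phi(s)$ contributions sum to $\phi(s)$; restoring the frozen coordinates yields precisely $v_k(y;x)\big|_{x_k = \theta_k}$, as claimed.

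The main obstacle is the boundary bookkeeping: one must confirm that the four endpoint expressions produced by the double integration by parts (as $R\to+\infty$ on the right and $-R\to-\infty$ on the left) correspond term-by-term to the four hypotheses in Eq.~\eqref{assump-target-laplace}, and that the two successive integrations by parts are legitimate, which needs only $\phi \in C^2(\real)$. A secondary point worth flagging is the meaning of the left-hand side: the truncated identity above shows that each one-sided improper integral converges as $R\to\infty$ \emph{without} any separate integrability hypothesis on $g\phi$ (the potentially troublesome term cancels algebraically before limits are taken), so the weak pointwise-convergence conditions of Eq.~\eqref{assump-target-laplace} genuinely suffice; if one prefers to read $\int_\real$ as a single Lebesgue integral, one can either add this as a mild standing assumption or note that it holds automatically for all target families of interest (e.g.\ the Gaussian).
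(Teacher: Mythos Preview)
Your proposal is correct and follows essentially the same route as the paper: split the integral at $\theta_k$, perform two integrations by parts on each half, and invoke the four hypotheses in Eq.~\eqref{assump-target-laplace} to eliminate the boundary terms at $\pm\infty$. Your packaging is a bit more streamlined—you exploit $g'' = b^{-2}g$ to cancel the bulk integrals in one stroke and you add the Green's-function heuristic plus the observation that convergence of the truncated integrals needs no extra integrability assumption—but the underlying computation is identical to the paper's.
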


We take this lemma as given for the moment and prove the proposition.
By definition of $\mathcal{S}^{*}(y|x)$~\eqref{S-star-Laplace} and $v_{k}(y;x)$~\eqref{eq:v-funcs-laplace}, we obtain 
\begin{align}
	\mathcal{S}^{*}(y|x) = v_{1}(y;x) - b_{1}^{2}\frac{\partial^{2} v_{1}(y;x)}{ \partial x_{1}^2}.
\end{align}
Thus
\begin{align}\label{eq1:pf-lap-kernel-integral}
	&\int_{\real^d} u(x;\theta) \cdot \mathcal{S}^*(y| x) \mathrm{d}x  \notag \\
	&\qquad = \int_{\real^{d-1}} \prod_{i=2}^{d} \frac{\exp(-|x_i - \theta_i| /b_i ) }{2b_i} \int_{\real} \frac{\exp(-|x_1 - \theta_1|/b_1) }{2b_1}  \bigg( v_{1}(y;x) - b_{1}^{2}\frac{\partial^{2} v_{1}(y;x)}{ \partial x_{1}^2} \bigg) \mathrm{d}x_{1}  \prod_{i=2}^{d} \mathrm{d}x_{i} \nonumber \\
	&\qquad = \int_{\real^{d-1}} \prod_{i=2}^{d} \frac{\exp(-|x_i - \theta_i| /b_i ) }{2b_i} v_{1}(y;x)\mid_{x_{1} = \theta_{1}}   \prod_{i=2}^{d} \mathrm{d}x_{i}
\end{align}
where in the last step we apply Lemma~\ref{lemma:aux-pf-prop-laplace} for $k=1$. Continuing, we have by definition that
\[
	v_{1}(y;x) \mid_{x_1 = \theta_1} = v_{2}(y;x) \mid_{x_1 = \theta_1} - b_{2}^{2} \frac{\partial^{2} v_{2}(y;x) }{ \partial x_{2}^{2}} \Big\vert_{x_1 = \theta_1}.
\]
Substituting the equation in the above display into Eq.~\eqref{eq1:pf-lap-kernel-integral} yields
\begin{align*}
	&\int_{\real^d} u(x;\theta) \cdot \mathcal{S}^*(y| x) \mathrm{d}x \\
	&= \int_{\real^{d-2}} \prod_{i=3}^{d} \frac{\exp(-|x_i - \theta_i| /b_i ) }{2b_i}   \int_{\real} \frac{\exp(-|x_2 - \theta_2|/b_2) }{2b_2} \Big( v_{2}(y;x) \mid_{x_1 = \theta_1} - b_{2}^{2} \frac{\partial^{2} v_{2}(y;x) }{ \partial x_{2}^{2}} \Big\vert_{x_1 = \theta_1}  \Big)  \mathrm{d}x_{2}    \prod_{i=3}^{d} \mathrm{d}x_{i} \\
	& = \int_{\real^{d-2}} \prod_{i=3}^{d} \frac{\exp(-|x_i - \theta_i| /b_i ) }{2b_i} \cdot   v_{2}(y;x)\mid_{x_1 = \theta_1, x_2 = \theta_2} \prod_{i=3}^{d} \mathrm{d}x_{i},
\end{align*} 
where in the last step we apply Lemma~\ref{lemma:aux-pf-prop-laplace} for $k=2$. Repeating the same process for $k=3,\dots,d-1$, we obtain
\begin{align*}
	&\int_{\real^d} u(x;\theta) \cdot \mathcal{S}^*(y| x) \mathrm{d}x \\
	&\quad = \int_{\real} \frac{\exp( - | x_{d} - \theta_d |/ b_{d} ) }{2b_{d}} \cdot \bigg( v_{d}(y;x)\mid_{x = [\theta_1\vert \dots \vert \theta_{d-1} \vert x_d]^{\top}} - b_{d}^{2} \frac{\partial^{2} v_{d}(y;x)  }{ \partial x_{d}^2} \Big \vert_{x = [\theta_1\vert \dots \vert \theta_{d-1} \vert x_d]^{\top}}  \bigg) \mathrm{d}x_{d} \\ 
	&\quad \overset{\1}{=} v_{d}(y;x) \mid_{x = \theta} = v(y;\theta)
\end{align*}
where in step $\1$ we apply Lemma~\ref{lemma:aux-pf-prop-laplace} for $k=d$, and the last step follows by the definition~\eqref{eq:v-funcs-laplace} of $v_{d}(y;x)$. This completes the proof. 
\qed


\subsection{Proof of Theorem~\ref{thm:Lap-Gaussian}} \label{sec:pf-thmLapGauss}

Let $\phi_{\sigma}(\cdot)$ denote the zero mean Gaussian pdf with standard deviation $\sigma$, and note that $v(y; \theta) = \phi_{\sigma}(y - \theta)$. Clearly, target $v(y; \theta) = \phi_{\sigma}(y - \theta)$ satisfies assumption~\eqref{assump-target-laplace}. Thus, from Eq.~\eqref{S-star-Laplace}, the signed kernel
\begin{align}\label{S-star-Laplace-general}
	\mathcal{S}^*(y | x ) = \phi_{\sigma}(y-x) \Big( 1 + b^{2}\sigma^{-2} - b^{2} \sigma^{-4}(y-x)^{2} \Big).
\end{align}
satisfies $\SignedDef(\mathcal{U}, \mathcal{V};\mathcal{S}^*) = 0$.

\paragraph*{Bounding the Radon--Nikodym derivative:} The truncated kernel $\mathcal{S}^* \vee 0$ satisfies, for $\sigma \geq b$, the relation
\begin{align}\label{ineq:S-P-ratio-Lap-b}
	\frac{ \mathcal{S}^{*}(y|x) \vee 0}{\mathcal{P}(y|x)} = (1 + b^{2}\sigma^{-2} - b^{2} \sigma^{-4}(y-x)^{2} ) \vee 0 \leq 2,  \quad \text{for all} \quad x,y \in \real.
\end{align}
Thus, we may set $M = 2$ in the algorithm.

\paragraph*{Establishing claim~\eqref{eq:Lap-Gaussian-TV}:} We use the shorthand $\Delta = \sigma\sqrt{\sigma^2 + b^2}/b$. It suffices to evaluate the quantities $p(x)$ and $q(x)$ from Eq.~\eqref{eq:pq}. For each $x \in \real$, we have
\begin{align}\label{eq:px-qx-laplace-gauss-b}
p(x) &= \int_{- \Delta}^{ \Delta} \phi_{\sigma}(t) \left\{ 1 + b^2\sigma^{-2} -  b^2\sigma^{-4} t^2 \right\} \mathrm{d}t  \nonumber \\
&\overset{\1}{=} \int_{- \infty}^{\infty} \phi_{\sigma}(t) \left\{ 1 + b^2\sigma^{-2} - b^2\sigma^{-4} t^2 \right\} + 2 \int_{\Delta}^{\infty} \phi_{\sigma}(t) \left\{ b^2  \sigma^{-4} t^2 - 1 - b^2\sigma^{-2} \right\} \mathrm{d}t  \nonumber \\
&= 1 + 2 \int_{\Delta}^{\infty} \phi_{\sigma}(t) \left\{ b^2\sigma^{-4}t^2 - 1 - b^2 \sigma^{-2} \right\} \mathrm{d}t,
\end{align}
where step $\1$ follows due to symmetry. We also have
\begin{align*}
q(x) &= 2 \int_{\Delta}^{\infty} \phi_{\sigma}(t) \left\{ b^2 \sigma^{-4} t^2  - 1 - b^2 \sigma^{-2} \right\} \mathrm{d}t.
\end{align*}
Putting together the pieces and noting that $ b^{2} \sigma^{-4}t^{2} - 1 - b^{2}\sigma^{-2} \geq 0$ when $t \geq \Delta$, we have
\begin{align}\label{px-qx-bound-lap-gauss-general}
	|p(x)-1| + q(x) \leq 4\int_{\Delta}^{+\infty} \phi_{\sigma}(t) \Big( b^{2} \sigma^{-4}t^{2} - 1 - b^{2}\sigma^{-2} \Big) \mathrm{d}t &\leq \frac{4b^2}{\sigma^4} \int_{\Delta}^{+\infty} \phi_{\sigma}(t) t^{2} \mathrm{d}t \nonumber \\
	& \overset{\1}{=} \frac{4b^2}{\sigma^4} \Big( \frac{\Delta \sigma}{\sqrt{2\pi} } e^{-\frac{\Delta^2}{2\sigma^2}} + \sigma^2 \int_{\Delta}^{+\infty} \phi_{\sigma}(t) \mathrm{d}t \Big) \nonumber \\
	& \overset{\2}{\leq} \frac{2(b\sqrt{\sigma^2+b^2} + b^2)}{\sigma^2} \cdot e^{-\frac{\sigma^2}{2b^2}} \nonumber \\
	& \leq 6 e^{-\frac{\sigma^2}{2b^2}},
\end{align}
where in step $\1$ we use integration by parts, in step $\2$ we use $\int_{\Delta}^{+\infty} \phi_{\sigma}(t) \mathrm{d}t \leq \frac{1}{2} e^{-\frac{\Delta^2}{2\sigma^2}}$, and in the last step we use $\sigma\geq b$. Note that $p(x) \geq 1$ from inequality~\eqref{eq:px-qx-laplace-gauss-b}. With the bound in the above display and Eq.~\eqref{ineq:S-P-ratio-Lap-b}, applying Lemma~\ref{lem:rej-sampling} yields that for $X_{\theta} \sim \mathsf{Lap}(\theta,b)$, $M=2$, $N\geq 1$, any $y_0 \in \real$ and $\sigma\geq b$, we have
\begin{align}
	\sup_{\theta \in \Theta} \; \| \mathcal{L} \left[ \textsc{rk}(X_{\theta}, N, M, y_0) \right] - v( \cdot ; \theta) \|_{\mathsf{TV}}  \leq 2\exp\left\{-\frac{N}{2} \right\} + 6\exp\left\{ -\frac{\sigma^2}{2b^2} \right\}.
\end{align}
Substituting the value of $N$ completes the proof.
\qed


\subsection{Proof of Proposition~\ref{prop:Erlang}} \label{sec:pf-Erlang}
As before, we provide a heuristic proof sketch based on the Fourier inversion theorem to demonstrate a constructive procedure for the reduction. We then provide a rigorous proof based on integration by parts under the weak assumptions~\eqref{eq:assump-prop-Erlang} stated in the proposition. We must verify for all $\theta \in \Theta$ and $y \in \mathbb{Y}$ the equality
\begin{align}\label{eq:erlang-kernel-source-int}
\int_{\theta}^{\infty} \frac{\lambda^k}{(k - 1)!} \cdot (x - \theta)^{k - 1} e^{-\lambda (x - \theta)} \cdot \mathcal{S}^*(y| x) \mathrm{d}x = v(y; \theta).
\end{align}
In order to prove this for all $\Theta \subseteq \real$, it suffices to do so for $\Theta = \real$.

\subsubsection{Proof sketch based on Fourier Inversion}
To bring it into a more algebraically convenient for, introduce the variable $\widetilde{x} := x - \theta$, kernel $\mathcal{K}^*(y | x) := S^*(y| -x)$, and $\widetilde{v}(y; \theta) := v(y; -\theta)$. Then Eq.~\eqref{eq:erlang-kernel-source-int} is equivalent to
\begin{align*}
\int_{0}^{\infty} \frac{\lambda^k}{(k - 1)!} \cdot \widetilde{x}^{k - 1} e^{-\lambda \widetilde{x}} \cdot \mathcal{K}^*(y| \theta - \widetilde{x} ) \mathrm{d}\widetilde{x} = \widetilde{v}(y; \theta).
\end{align*}
Define the shorthand $h(y;\theta) = \int_{0}^{\infty} \frac{\lambda^k}{(k - 1)!} \cdot \widetilde{x}^{k - 1} e^{-\lambda \widetilde{x}} \cdot \mathcal{K}^*(y| \theta - \widetilde{x} ) \mathrm{d}\widetilde{x}$ for convenience, and define for all $z \in \real$, 
\[
	f_{1}(z) = \frac{\lambda^k}{(k - 1)!} \cdot z^{k - 1} e^{-\lambda z} \ind{z \geq 0}, \quad f_{2}(z) = \mathcal{K}^*(y| z ).
\]
By definition, we have $h(y;\theta) = \big(f_{1} * f_{2} \big)(\theta)$. Taking the Fourier transform and applying the convolution theorem yields 
$
\mathcal{F}\{ h(y;\cdot) \}(\omega) \overset{\1}{=}  \mathcal{F}\{ f_{1} \}(\omega) \cdot \mathcal{F}\{ f_{2} \}(\omega).
$
But by the definition of the functions $f_1$ and $f_2$, we obtain
\begin{align*}
&\mathcal{F} \left\{ f_{1} \right\}(\omega) = \int_{\real} e^{iwz} \cdot f_{1}(z) \mathrm{d}z =\left(1 - \frac{i\omega}{\lambda} \right)^{-k}, \\ 
&\mathcal{F}\{ f_{2} \}(\omega) = - \int_{\real} e^{-iwz} \cdot  \mathcal{S}^{*}(y|z) \mathrm{d}z  = \left(1 - \frac{i\omega}{\lambda} \right)^{k} \cdot \mathcal{F}\{ \widetilde{v}(y;\cdot) \}(\omega),
\end{align*}
where in the last step we use the definition of $\mathcal{S}^{*}(y|\cdot)~\eqref{eq:signed-kernel-Erlang}$. 
Thus, $\mathcal{F}\{ h(y;\cdot) \}(\omega) = \mathcal{F}\{ \widetilde{v}(y;\cdot) \}(\omega)$. Taking the inverse Fourier transform yields
$h(y;\theta) \overset{\2}{=} \widetilde{v}(y;\theta)$.

Here step $\1$ relies on the assumption that the convolution theorem holds for $f_1$ and $f_2$, and step $\2$ relies on the assumption that the Fourier inversion theorem holds.

\subsubsection{Proof based on integration by parts} 
With the intuition gained from the previous sketch, we now rigorously verify Eq.~\eqref{eq:erlang-kernel-source-int} via a careful integration by parts argument.
Note that Eq.~\eqref{eq:signed-kernel-Erlang} is equivalent to
\begin{align*}
	\mathcal{S}^{*}(y|x) = \sum_{j=0}^{k} \binom{k}{j} (-1)^{j}\lambda^{-j} \cdot \nabla^{(j)}_t v(y; t) \Big|_{t = x}. 
\end{align*}
To reduce the notational burden, we define functions $g,g_{1},g_{2}: \real \rightarrow \real$ as
\[
	g_{1}(x) = \exp\big(-\lambda(x - \theta)\big), \quad g_{2}(x) = (x-\theta)^{k-1}, \quad g(x) = g_{1}(x) \cdot g_{2}(x) \quad \text{for all} \quad x \in \real.
\]
We must verify Eq.~\eqref{eq:erlang-kernel-source-int}. Using the notation above, we obtain
\begin{align}\label{eq:T1-T2}
	&\int_{\theta}^{+\infty} (x-\theta)^{k-1}e^{-\lambda(x-\theta)} \mathcal{S}^{*}(y|x) \mathrm{d}x =  \frac{\lambda^{k}}{(k-1)!} \cdot \big( T_{1} + T_{2} \big), \quad \text{where}  \nonumber \\ 
	&T_{1} = \int_{\theta}^{+\infty} g(x) \cdot v(y|x) \mathrm{d}x \quad
	\text{and} \quad  T_{2} = \sum_{j=1}^{k} \binom{k}{j} (-1)^{j}\lambda^{-j}  \int_{\theta}^{+\infty} g(x) \cdot \nabla^{(j)}_t v(y; t) \Big|_{t = x} \mathrm{d}x.
\end{align}
Note that the following claim suffices to verify Eq.~\eqref{eq:erlang-kernel-source-int}:
\begin{subequations}\label{claim:T1-T2-Erlang}
\begin{align}
	\label{claim:T1-Erlang}
	T_{1} &= \frac{(k-1)!}{\lambda^k} \cdot  v(y;\theta) + \sum_{i=0}^{k-1}\frac{1}{\lambda^{i+1}} \int_{\theta}^{+\infty} g_{1}(x) g_{2}^{(i)}(x) \nabla_{t}v(y;t)\Big \vert_{t=x} \mathrm{d}x, \\
	T_{2} &= - \sum_{i=0}^{k-1}\frac{1}{\lambda^{(i+1)}}  \int_{\theta}^{+\infty} g_{1}(x) \cdot g_{2}^{(i)}(x) \cdot \nabla_{t} v(y;t) \mid_{t=x} \mathrm{d}x. \label{claim:T2-Erlang}
\end{align}
\end{subequations}

Thus, we dedicate the rest of the proof to establishing Eq.~\eqref{claim:T1-T2-Erlang}. We prove each part in turn. Let $g'$ be the first-order derivative of $g$, and use $g^{(i)}$ to denote $i$-th order derivative of $g$ for $i\geq 0$. We use analogous notation for the functions $g_{1}$ and $g_{2}$.

\noindent \underline{Proof of Eq.~\eqref{claim:T1-Erlang}:} By definition of $T_{1}$, we obtain
\begin{align}\label{ineq1:T1-int-parts}
	T_{1} & = \int_{\theta}^{+\infty} (-\lambda)^{-1} g_{1}'(x) g_{2}(x) \cdot v(y;x) \mathrm{d} x \nonumber \\
	& \overset{\1}{=} (-\lambda)^{-1} g_{1}(x)g_{2}(x) v(y;x) \Big\vert_{\theta}^{+\infty} - (-\lambda)^{-1} \int_{\theta}^{+\infty} g_{1}(x) \Big( g_{2}'(x) v(y;x) + g_{2}(x) \cdot \nabla_{t} v(y;t)\Big \vert_{t=x} \Big) \mathrm{d}x \nonumber \\
	& \overset{\2}{=} \frac{1}{\lambda} \int_{\theta}^{+\infty} g_{1}(x) g_{2}'(x) v(y;x) \mathrm{d}x + \frac{1}{\lambda} \int_{\theta}^{+\infty} g_{1}(x)g_{2}(x) \nabla_{t} v(y;t)\Big \vert_{t=x} \mathrm{d}x,
\end{align}
where in step $\1$ we apply integration by parts and in step $\2$ we use assumptions~\eqref{eq1:assump-prop-Erlang} and~\eqref{eq2:assump-prop-Erlang}.
Continuing by applying the same steps to the first term of the RHS of inequality~\eqref{ineq1:T1-int-parts}, we obtain
\begin{align}\label{ineq2:T1-int-parts}
	 \int_{\theta}^{+\infty} g_{1}(x) &g_{2}'(x) v(y;x) \mathrm{dx} = (-\lambda)^{-1} \int_{\theta}^{+\infty} g_{1}'(x) g_{2}'(x) v(y;x) \mathrm{d}x \nonumber \\
	& \overset{\1}{=} \lambda^{-1} g_{1}(x)g_{2}'(x) v(y;x) \Big \vert_{x = \theta}^{+\infty} + \lambda^{-1} \int_{\theta}^{+\infty} g_{1}(x)\Big( g_{2}^{(2)}(x)v(y;x) + g_{2}'(x) \nabla_{t}v(y;t)\Big \vert_{t=x} \Big) \mathrm{d}x \nonumber \\
	& \overset{\2}{=} \frac{1}{\lambda} \int_{\theta}^{+\infty}g_{1}(x) g_{2}^{(2)}(x) v(y;x) \mathrm{d}x + \frac{1}{\lambda} \int_{\theta}^{+\infty} g_{1}(x) g_{2}'(x) \nabla_{t}v(y;t)\Big \vert_{t=x} \mathrm{d}x,
\end{align}
where in step $\1$ we apply integration by parts and in step $\2$ we use assumptions~\eqref{eq1:assump-prop-Erlang} and ~\eqref{eq2:assump-prop-Erlang}.
Putting inequalities~\eqref{ineq1:T1-int-parts} and~\eqref{ineq2:T1-int-parts} together yields
\begin{align*}
 T_{1} = \frac{1}{\lambda^2} \int_{\theta}^{+\infty}g_{1}(x) g_{2}^{(2)}(x) v(y;x) \mathrm{d}x + \sum_{i=0}^{1} \frac{1}{\lambda^{i+1}} \int_{\theta}^{+\infty} g_{1}(x) g_{2}^{(i)}(x) \nabla_{t}v(y;t)\Big \vert_{t=x} \mathrm{d}x.
\end{align*}
Repeating the same process $k-1$ times, we obtain
\begin{align*}
	T_{1} &= \frac{1}{\lambda^{k-1}} \int_{\theta}^{+\infty}g_{1}(x) g_{2}^{(k-1)}(x) v(y;x) \mathrm{d}x + \sum_{i=0}^{k-2}\frac{1}{\lambda^{i+1}} \int_{\theta}^{+\infty} g_{1}(x) g_{2}^{(i)}(x) \nabla_{t}v(y;t)\Big \vert_{t=x} \mathrm{d}x \\
	& =  \frac{(k-1)!}{\lambda^k} \cdot  v(y;\theta) + \sum_{i=0}^{k-1}\frac{1}{\lambda^{i+1}} \int_{\theta}^{+\infty} g_{1}(x) g_{2}^{(i)}(x) \nabla_{t}v(y;t)\Big \vert_{t=x} \mathrm{d}x,
\end{align*}
where in the last step we apply integration by parts once more so that
\begin{align*}
	\int_{\theta}^{+\infty}g_{1}(x) g_{2}^{(k-1)}(x) v(y;x) \mathrm{d}x &= (k-1)! \cdot (-\lambda)^{-1}\int_{\theta}^{+\infty}g_{1}'(x) v(y;x) \mathrm{d}x \\
	&=(k-1)! \cdot \lambda^{-1} v(y;\theta) + \lambda^{-1} \int_{\theta}^{+\infty}  g_{1}(x) g_{2}^{(k-1)}(x) \nabla_{t} v(y;t) \Big \vert_{t=x} \mathrm{d}x.
\end{align*} 
Eq.~\eqref{claim:T1-Erlang} is thus proved.
Towards proving Eq.~\eqref{claim:T2-Erlang}, we require the following combinatorial result, whose proof we defer to Section~\ref{sec:pf-lemma-combinatorial}.
\begin{lemma}\label{lemma:combinatorial}
For all $k\geq 1$ and $0\leq \ell \leq k-1$, we have
\begin{align}\label{eq:T2-Erlang-comb}
	\sum_{j= \ell}^{k-1} \binom{k}{j+1}  (-1)^{j-\ell} \binom{j}{j-\ell} = 1.
\end{align}
\end{lemma}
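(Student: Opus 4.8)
The plan is to establish Lemma~\ref{lemma:combinatorial} by a short generating-function computation that reduces the identity to a single coefficient extraction. First I would reindex via $m = j+1$, so that the sum runs over $m = \ell+1, \dots, k$ and the claim~\eqref{eq:T2-Erlang-comb} becomes
\[
\sum_{m=\ell+1}^{k} (-1)^{m-1-\ell}\binom{k}{m}\binom{m-1}{\ell} = 1,
\]
where I have used $\binom{j}{j-\ell} = \binom{m-1}{\ell}$. Since $\binom{m-1}{\ell} = 0$ for $1 \leq m \leq \ell$, I may extend the summation range to $m = 1, \dots, k$ without changing its value; after multiplying through by $(-1)^{\ell+1}$ it then suffices to prove that
\[
S := \sum_{m=1}^{k} (-1)^{m}\binom{k}{m}\binom{m-1}{\ell} = (-1)^{\ell+1}.
\]

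The key step is to write the inner coefficient as $\binom{m-1}{\ell} = [x^{\ell}](1+x)^{m-1}$, where $[x^{\ell}]$ extracts the coefficient of $x^{\ell}$. Exchanging the finite sum with the (linear) coefficient functional, and adding and subtracting the $m=0$ term so as to invoke the binomial theorem, I obtain
\[
S = [x^{\ell}]\,\frac{1}{1+x}\sum_{m=1}^{k}\binom{k}{m}\bigl(-(1+x)\bigr)^{m} = [x^{\ell}]\,\frac{1}{1+x}\Bigl(\bigl(1-(1+x)\bigr)^{k}-1\Bigr) = [x^{\ell}]\,\frac{(-x)^{k}-1}{1+x}.
\]
Next I would split $\frac{(-x)^{k}-1}{1+x} = (-1)^{k}x^{k}\sum_{j\geq 0}(-1)^{j}x^{j} - \sum_{j\geq 0}(-1)^{j}x^{j}$. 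The first summand has no monomial of degree less than $k$, and since $\ell \leq k-1 < k$ it contributes nothing to $[x^{\ell}]$, so $S = -[x^{\ell}]\frac{1}{1+x} = -(-1)^{\ell} = (-1)^{\ell+1}$, which is exactly the required value. Undoing the reindexing and the sign factor then yields~\eqref{eq:T2-Erlang-comb}.

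I do not anticipate any genuine obstacle here: the only care required is in the bookkeeping of signs (noting that $(-1)^{m-1-\ell}$ and $(-1)^{\ell+1}(-1)^{m}$ agree, since their exponents differ by the even number $2\ell+2$) and in justifying the two harmless modifications of the summation range—discarding the vanishing terms $m \leq \ell$, and reinstating the $m=0$ term to apply the binomial theorem. Should one prefer to avoid generating functions, the same identity follows by induction on $\ell$ using Pascal's rule $\binom{m-1}{\ell} = \binom{m}{\ell} - \binom{m-1}{\ell-1}$, with base case $\ell = 0$ amounting to the elementary computation $\sum_{m=1}^{k}(-1)^{m-1}\binom{k}{m} = 1 - (1-1)^{k} = 1$; but the coefficient-extraction route is the shortest and most transparent.
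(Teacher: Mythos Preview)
Your proof is correct. The paper takes a different route: after the same initial rewriting (multiplying through by $(-1)^{\ell+1}$ to target the value $(-1)^{\ell+1}$), it proceeds by induction on $\ell$, with base case $\ell=0$ being exactly the elementary computation $\sum_{t=1}^{k}(-1)^t\binom{k}{t}=-1$ that you mention. For the induction step from $\ell=m$ to $\ell=m+1$, the paper adds the two sums together, applies Pascal's rule $\binom{j}{m+1}+\binom{j}{m}=\binom{j+1}{m+1}$ along with the identity $\binom{k}{j+1}\binom{j+1}{m+1}=\binom{k}{m+1}\binom{k-m-1}{j-m}$, and reduces the combined expression to a vanishing binomial sum, concluding $T'=-T$. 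Your coefficient-extraction argument is shorter and more transparent---a single computation rather than an induction threaded through two auxiliary identities---while the paper's route is slightly more elementary in that it stays within finite sums and never invokes formal power series. Amusingly, your closing remark about the inductive alternative via Pascal's rule is precisely the strategy the paper adopts.
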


\noindent \underline{Proof of Eq.~\eqref{claim:T2-Erlang}:} For $2\leq j \leq k$, we obtain 
\begin{align*}
	\int_{\theta}^{+\infty} g(x) \nabla_{t}^{(j)} v(y;t) \mid_{t=x} \mathrm{d}x &= g(x) \nabla_{t}^{(j-1)} v(y;t) \mid_{t=x} \Big \vert_{\theta}^{+\infty} - \int_{\theta}^{+\infty} g'(x) \nabla_{t}^{(j-1)} v(y;t) \mid_{t=x} \mathrm{d}x \\
	& = - \int_{\theta}^{+\infty} g'(x) \nabla_{t}^{(j-1)} v(y;t) \mid_{t=x} \mathrm{d}x,
\end{align*}
where in the last step we use assumptions~\eqref{eq1:assump-prop-Erlang} and~\eqref{eq3:assump-prop-Erlang}. Repeating the same process $j-1$ times yields
\begin{align}\label{eq1:aux-step-pf-prop-Erlang}
	\int_{\theta}^{+\infty} g(x) \cdot & \nabla_{t}^{(j)} v(y;t) \mid_{t=x} \mathrm{d}x = (-1)^{j-1} \int_{\theta}^{+\infty} g^{(j-1)}(x) \cdot  \nabla_{t} v(y;t) \mid_{t=x} \mathrm{d}x  \nonumber\\
	& = (-1)^{j-1} \sum_{i=0}^{j-1} (-\lambda)^{i} \binom{j-1}{i} \int_{\theta}^{+\infty} g_{1}(x) \cdot g_{2}^{(j-i-1)}(x) \cdot \nabla_{t} v(y;t) \mid_{t=x} \mathrm{d}x,
\end{align}
where in the last step we use $g(x) = g_{1}(x) g_{2}(x)$ so that
\[
	g^{(j-1)}(x) = \sum_{i=0}^{j-1} \binom{j-1}{i} g_{1}^{(i)}(x) \cdot g_{2}^{(j-i-1)}(x) = \sum_{i=0}^{j-1} (-\lambda)^{i} \binom{j-1}{i} g_{1}(x) \cdot g_{2}^{(j-i-1)}(x). 
\]
Substituting Eq.~\eqref{eq1:aux-step-pf-prop-Erlang} into Eq.~\eqref{eq:T1-T2}, we obtain
\begin{align*}
	T_{2} & = \sum_{j=1}^{k} \binom{k}{j} (-1)^{j}\lambda^{-j}  (-1)^{j-1} \sum_{i=0}^{j-1} (-\lambda)^{i} \binom{j-1}{i} \int_{\theta}^{+\infty} g_{1}(x) \cdot g_{2}^{(j-i-1)}(x) \cdot \nabla_{t} v(y;t) \mid_{t=x} \mathrm{d}x \\
	& = - \sum_{j=0}^{k-1} \binom{k}{j+1} \lambda^{-(j+1)}  \sum_{i=0}^{j} (-\lambda)^{i} \binom{j}{i} \int_{\theta}^{+\infty} g_{1}(x) \cdot g_{2}^{(j-i)}(x) \cdot \nabla_{t} v(y;t) \mid_{t=x} \mathrm{d}x,
\end{align*}
where in the last step we change the index by letting $j \gets j+1$. Note that in the above summation, for each $0\leq \ell \leq k-1$, the coefficient of the integral $\int_{\theta}^{+\infty} g_{1}(x) \cdot g_{2}^{(\ell)}(x) \cdot \nabla_{t} v(y;t) \mid_{t=x} \mathrm{d}x$ is
\[
	 - \sum_{j= \ell}^{k-1} \binom{k}{j+1} \lambda^{-(j+1)} (-\lambda)^{j-\ell} \binom{j}{j-\ell} = - \lambda^{-(\ell+1)} \sum_{j= \ell}^{k-1} \binom{k}{j+1}  (-1)^{j-\ell} \binom{j}{j-\ell} = - \lambda^{-(\ell+1)},
\]
where in the last step we use Eq.~\eqref{eq:T2-Erlang-comb}. Consequently, we obtain 
\[
	T_{2} = - \sum_{\ell=0}^{k-1}\frac{1}{\lambda^{(\ell+1)}} \int_{\theta}^{+\infty} g_{1}(x) \cdot g_{2}^{(\ell)}(x) \cdot \nabla_{t} v(y;t) \mid_{t=x} \mathrm{d}x,
\]
as claimed.
\qed


\subsection{Proof of Theorem~\ref{thm:exp-log-concave}} \label{sec:pf-ExpLog}

Note that the exponential distribution $u(x;\theta)$~\eqref{eq:exponential-location} is an Erlang distribution with $\lambda = k = 1$ and mean $\theta$. Note that the target satisfies assumption~\eqref{eq:assump-prop-Erlang} since for each $w \in \real$, we have
\[
	\lim_{x \rightarrow +\infty} e^{-x} v(y;\theta) \big\vert_{\theta = x+w} = \lim_{x \rightarrow +\infty}  \frac{e^{-x}}{\sigma} \exp\Big( -\psi\Big( \frac{y-x-w}{\sigma} \Big) \Big) = 0,
\]
where the last step holds since $\lim_{z \rightarrow +\infty} \sigma^{-1}e^{-\psi(z/\sigma)} = 0$. Consequently, applying Proposition~\ref{prop:Erlang} yields that the signed kernel 
\begin{align*}
	\mathcal{S}^{*}(y|x) &= v(y;x+1) - \nabla_{\theta}v(y;\theta) \mid_{\theta = x+1}
	\\ & = \frac{1}{\sigma} \exp\left\{-\psi\Big(\frac{y-x-1}{\sigma} \Big) \right\} \cdot \bigg( 1 - \frac{1}{\sigma}\psi'\Big( \frac{y-x-1}{\sigma}\Big) \bigg)
\end{align*}
satisfies $\SignedDef(\mathcal{U},\mathcal{V};\mathcal{S}^*) = 0$.   

Recall the definition of $\kappa(\sigma)$ from Eq.~\eqref{eq:kappa-tau}, and note that the function $\psi'(z)$ is nondecreasing since $\psi(z)$ is convex. Thus,
\begin{align*}
	\sgn(\mathcal{S}^{*}(y|x)) =  
	\begin{cases}
	1 \quad &\text{ if } y\leq \sigma \cdot \kappa(\sigma) + x+1 \\
	-1 \quad &\text{ otherwise}.
	\end{cases}
\end{align*}
With this characterization of the sign of $\mathcal{S}^{*}$, we first verify Assumption~\ref{assptn:RND} with $\mathcal{P}(y|x) = \frac{1}{2\sigma} \exp\left\{ - \psi\Big( \frac{y-x-1}{2\sigma} \Big) \right\}$ and then prove claim~\eqref{ineq:exp-log-concave}.

\paragraph*{Bounding Radon--Nikodym derivative:} The truncated kernel $\mathcal{S}^{*} \vee 0$ satisfies
\begin{align}\label{ineq:kernel-base-ratio-exp-log-concave}
	\frac{\mathcal{S}^{*}(y|x) \vee 0}{\mathcal{P}(y|x)} &= 2\exp\left\{ -\psi\Big( \frac{y-x-1}{\sigma}\Big) + \psi\Big( \frac{y-x-1}{2\sigma} \Big) \right\} \cdot \Big( 1-\frac{1}{\sigma} \psi'\Big( \frac{y-x-1}{\sigma} \Big) \Big) \vee 0 \nonumber\\ 
	&\overset{\1}{\leq} 2\exp\left\{ \frac{\psi(0)}{2}\right\} \exp\left\{- \frac{1}{2}\psi\Big( \frac{y-x-1}{\sigma} \Big) \right\} \cdot \Big( 1-\frac{1}{\sigma} \psi'\Big( \frac{y-x-1}{\sigma} \Big) \Big) \vee 0 \nonumber\\ 
	& \overset{\2}{\leq} 2 \exp\left\{ \frac{\psi(0)}{2}\right\} \cdot \sup_{z \leq \kappa(\sigma)} \exp\left\{-\frac{\psi(z)}{2} \right\} \big(1-\psi'(z) /\sigma \big) \leq M,
\end{align}
where in step $\1$ we use the convexity of $\psi$ so that 
\[
	\psi\Big( \frac{y-x-1}{2\sigma} \Big) = \psi\Big( \frac{1}{2} \cdot \frac{y-x-1}{\sigma} + \frac{1}{2} \cdot 0 \Big) \leq \frac{1}{2} \psi\Big( \frac{y-x-1}{\sigma} \Big) + \frac{1}{2} \psi(0),
\]
and in step $\2$ we use the fact that $\mathcal{S}(y|x) \geq 0$ if and only if $y\leq \sigma \cdot \kappa(\sigma) + x+1$. 

\paragraph*{Proving inequality~\eqref{ineq:exp-log-concave}:}
By definition of $p(x)$ and $q(x)$ in Eq.~\eqref{eq:pq}, we obtain
\begin{align*}
	p(x) &= \int_{-\infty}^{\sigma \cdot \kappa(\sigma) + x + 1} \frac{1}{\sigma} \exp\left\{-\psi\Big(\frac{y-x-1}{\sigma} \Big) \right\} \cdot \bigg( 1 - \frac{1}{\sigma}\psi'\Big( \frac{y-x-1}{\sigma}\Big) \bigg) \mathrm{d}y
	\\ &\overset{\1}{=} \int_{-\infty}^{\kappa(\sigma)} e^{-\psi(z)} \cdot \big(1-\psi'(z) / \sigma \big) \mathrm{d}z
	\\& = 1 - \int_{\kappa(\sigma)}^{+\infty}  e^{-\psi(z)} \mathrm{d}z + \frac{1}{\sigma}e^{-\psi(\kappa(\sigma))},
\end{align*}
where in step $\1$ we change variables by letting $z = (y-x-1)/\sigma$. Proceeding similarly to above, we have
\[
	q(x) = \int_{\kappa(\sigma)}^{+\infty} e^{-\psi(z)} \cdot \big(\psi'(z) / \sigma  - 1 \big) \mathrm{d}z
	= \frac{1}{\sigma} e^{-\psi(\kappa(\sigma))} - \int_{\kappa(\sigma)}^{+\infty}  e^{-\psi(z)} \mathrm{d}z.
\]
Applying the triangle inequality yields that for all $x\in \real$, we have
\begin{align}\label{ineq:px-qx-bound-exp-log-concave}
	\big| p(x) - 1 \big| + q(x) \leq 2\int_{\kappa(\sigma)}^{+\infty}  e^{-\psi(z)} \mathrm{d}z + 
	\frac{2}{\sigma} e^{-\psi(\kappa(\sigma))} : = \tau(\sigma) \quad \text{and} \quad p(x) \geq 1 - \tau(\sigma).
\end{align}
Integrating it with respect to the probability measure $u(\cdot;\theta)$ and applying Lemma~\ref{lem:rej-sampling}, we obtain that for each $\theta \in \real$,
\begin{align*}
	\| \mathcal{L} \left[ \textsc{rk}(X_{\theta}, N) \right] - v(\cdot\; \theta) \|_{\mathsf{TV}} &\leq 2e^{-\frac{N}{M}\big(1-\tau(\sigma) \big)} + \int_{\mathbb{X}} \big( | p(x)-1 | + q(x) \big) \cdot u(x ; \theta) \mathrm{d}x 
	\\ &\leq  2e^{-\frac{N}{M}\big(1-\tau(\sigma) \big)} + \tau(\sigma).
\end{align*}
This concludes the proof.
\qed

\subsubsection{Calculations for Example~\ref{example-exp-gaussian}} \label{sec:pf-example1}

We evaluate the functionals $\kappa(\sigma)$ and $\tau(\sigma)$ for the Gaussian target $v(\cdot;\theta) = \NORMAL(\theta, \sigma^2)$. Since $\psi'(z) = z$, we have $\kappa(\sigma) = \inf\{ z: \psi'(z) = \sigma \} = \sigma$, and
\begin{align*}
	\tau(\sigma) = 2\int_{\kappa(\sigma)}^{+\infty} e^{-\psi(z)} \mathrm{d}z + \frac{2}{\sigma}e^{-\psi(\kappa(\sigma))} &= 2\int_{\sigma}^{+\infty} \frac{1}{\sqrt{2\pi}} e^{-\frac{z^2}{2}} \mathrm{d}z + \frac{2}{\sigma} \frac{1}{\sqrt{2\pi}}e^{-\frac{\sigma^2}{2}}
	\\& \leq 2\int_{0}^{+\infty} \frac{1}{\sqrt{2\pi}} e^{-\frac{z^2}{2}} \mathrm{d}z \cdot e^{-\frac{\sigma^2}{2}}  + e^{-\frac{\sigma^2}{2}} \leq 2e^{-\frac{\sigma^2}{2}}.
\end{align*}
For $\sigma \geq 1$, we obtain
\begin{align*}
	2 \exp\left\{ \frac{\psi(0)}{2}\right\} \cdot \sup_{z \leq \kappa(\sigma)} \exp\left\{-\frac{\psi(z)}{2} \right\} \big(1-\psi'(t) /\sigma \big) &= 2 \cdot \sup_{z\leq \sigma}\;  \exp\{-z^{2}/4\} \big(1-z/\sigma \big)
	\\& \leq 2 + 2\max_{z\geq 0}\; \exp\{-z^{2}/4\} \cdot  z/\sigma  = 2 + \frac{2\sqrt{2}}{\sqrt{e}\sigma} \leq 4,
\end{align*}
as claimed.
\qed

\subsubsection{Calculations for Example~\ref{example-exp-logistic}} \label{sec:pf-example2}

For the logistic target, we have
\[
	\psi'(z) = \frac{\pi}{\sqrt{3}} \left( \frac{1-e^{-\pi z/\sqrt{3}}}{1+e^{-\pi z/\sqrt{3}}}\right) \leq \frac{\pi}{\sqrt{3}},
\]
where the equality is achieved only as $z \to \infty$.
Consequently, if $\sigma \geq \pi/\sqrt{3}$, then $\kappa_{\sigma} = \infty$. Thus
$
	\tau_{\sigma} = 2\int_{\kappa_{\sigma}}^{+\infty} e^{-\psi(z)} \mathrm{d}z + \frac{2}{\sigma}e^{-\psi(\kappa_\sigma)} = 0.
$
Furthermore, the derivative $\psi'(z) = 0$ if and only if $z = 0$. Thus $\psi(z) \geq \psi(0)$ for all $z\in \real$. Thus, we obtain for $\sigma \geq \pi/\sqrt{3}$ that
\begin{align*}
	2 \exp\left\{ \frac{\psi(0)}{2}\right\} \cdot \sup_{z \leq \kappa(\sigma)} \exp\left\{-\frac{\psi(z)}{2} \right\} \big(1-\psi'(z) /\sigma \big) \overset{\1}{=} 2 \cdot (1 + \frac{\pi}{\sqrt{3} \sigma} ) \leq 4,
\end{align*}
where in step $\1$ we use $\exp\{-\psi(z)/2\} \leq \exp\{-\psi(0)/2\}$ and $|\psi'(z)| \leq \pi / \sqrt{3}$ for all $z\in \real$.
\qed

\subsubsection{Calculations for Example~\ref{example-exp-laplace}} \label{sec:pf-example3}
Recall our notation $\psi(z) = |z|$ and $\psi_{\eta}$~\eqref{eq:psi-epsilon}. We make the following three-part claim for each $\eta \in (0, 1)$, deferring its proof to the end of this section:
\begin{subequations} \label{claim:example-exp-laplace}
\begin{align}
	\frac{\exp\left\{-\psi_{\eta}(z)\right\}}{\frac{1}{2}\exp\left\{-\psi(z)\right\}} &\leq \exp\left\{\frac{\eta^{2}}{2} \right\}, \label{eq:ex-Lap1} \\
	 \exp\{\psi_{\eta}(0)\} &\leq 10, \text{ and } \label{eq:ex-Lap2} \\
	\big|\psi'_{\eta}(z)\big| &\leq 1 \quad \text{for all} \quad z \in \real \text{ with equality attained in the limit } z \to \infty. \label{eq:ex-Lap3}
\end{align}
\end{subequations}

Applying the last claim, we obtain $\kappa(\sigma) = + \infty$ for $\sigma \geq 1$. Consequently, using the definition of $\tau(\sigma)$, we have 
\[
	\tau(\sigma) = \int_{\kappa(\sigma)}^{+\infty} e^{-\psi_{\eta}(z)} \mathrm{d}z + \frac{2}{\sigma} e^{-\psi_{\eta}( \kappa(\sigma))} = 0,
\]
where in the last step, we use the fact that $\psi_{\eta}(z) \rightarrow + \infty$ as $z \rightarrow +\infty$, which holds true since $e^{-\psi_{\eta}(z)}$ is a probability density function. Next, we verify that $\frac{\mathcal{S}^{*}(y|x) \vee 0}{\mathcal{P}(y|x)} \leq 35$. Note that
\[
	\frac{\mathcal{S}^{*}(y|x) \vee 0}{\mathcal{P}(y|x)}  = \frac{\mathcal{S}^{*}(y|x) \vee 0}{\mathcal{P}_{\eta}(y|x)} \cdot \frac{\mathcal{P}_{\eta}(y|x)}{\mathcal{P}(y|x)}, \quad \text{where} \quad \mathcal{P}_{\eta}(y|x) = \frac{1}{2\sigma} \exp\left\{ - \psi_{\eta}\Big( \frac{y-x-1}{2\sigma} \Big) \right\}.
\]
Using inequality~\eqref{ineq:kernel-base-ratio-exp-log-concave}, we obtain 
\begin{align}
\hspace{-0.5cm}
	\frac{\mathcal{S}^{*}(y|x) \vee 0}{\mathcal{P}_{\eta}(y|x)} \leq  2 \exp\left\{ \frac{\psi_{\eta}(0)}{2}\right\} \cdot \sup_{z \leq \kappa(\sigma)} \exp\left\{-\frac{\psi_{\eta}(z)}{2} \right\} \big(1-\psi'_{\eta}(z) /\sigma \big) \leq 2\sqrt{10} \cdot e^{1/4} \cdot 2 \leq 20,
\end{align}
where in the last step we use inequality~\eqref{claim:example-exp-laplace} so that
\begin{align*}
	e^{\psi_{\eta}(0)} \leq 10,\quad e^{-\psi_{\eta}(z)} \leq e^{\eta^2/2} e^{-|z|}/2 \leq \sqrt{e}, \quad |\psi'_{\eta}(z)| \leq 1.
\end{align*}
Continuing, note that
\[	
	\frac{\mathcal{P}_{\eta}(y|x)}{\mathcal{P}(y|x)} \leq \frac{\exp\left\{ - \psi_{\eta}\Big( \frac{y-x-1}{2\sigma} \Big) \right\}}{\frac{1}{2} \exp\left\{-\psi\Big(\frac{y-x-1}{2\sigma} \Big) \right\}} \leq \sup_{z\in\real} \frac{\exp\left\{-\psi_{\eta}(z)\right\}}{\frac{1}{2}\exp\left\{-\psi(z)\right\}} \leq \exp\left\{\frac{\eta^{2}}{2} \right\} \leq \sqrt{e}.
\]
Putting the two pieces together yields
\[
	\frac{\mathcal{S}^{*}(y|x) \vee 0}{\mathcal{P}_{\eta}(y|x)} \leq 20 \sqrt{e} \leq 35.
\]
Thus, applying Theorem~\ref{thm:exp-log-concave} with $M = 35$, $N = 35\log(4/\epsilon)$ yields
\[
	\left\| \mathcal{L} \left[ \textsc{RK}(X_{\theta}, N, M, y_0) \right] - v_{\eta}(\cdot; \theta) \right\|_{\mathsf{TV}} \leq \frac{\epsilon}{2}.
\] 

Finally, we turn to verifying inequality~\eqref{ineq:approx-error}. Applying Pinsker's inequality yields
\begin{align*}
	\big( \| v(\cdot; \theta) - v_{\eta}(\cdot; \theta)  \|_{\mathsf{TV}} \big)^{2} \leq \frac{1}{2}\int_{y \in \real} v_{\eta}(y;\theta) \log \Big( \frac{v_{\eta}(y;\theta)}{v(y;\theta)} \Big) \mathrm{d}y \overset{\1}{\leq} \frac{1}{2} \int_{y \in \real} v_{\eta}(y;\theta) \log \Big( e^{\eta^{2}/2} \Big) \mathrm{d}y \leq \frac{\eta^2}{4},
\end{align*}
where in step $\1$ we use inequality~\eqref{claim:example-exp-laplace} so that
\[
	\frac{v_{\eta}(y;\theta)}{v(y;\theta)} = \frac{\exp\left\{ -\psi_{\eta}\Big( \frac{y-\theta}{\sigma} \Big) \right\}}{\frac{1}{2}\exp\left\{ -\psi\Big( \frac{y-\theta}{\sigma} \Big) \right\}} \leq \sup_{z\in\real} \frac{\exp\left\{-\psi_{\eta}(z)\right\}}{\frac{1}{2}\exp\left\{-\psi(z)\right\}} \leq \exp\left\{\frac{\eta^{2}}{2} \right\}.
\]
The desired inequality~\eqref{ineq:approx-error} immediately follows. It remains to prove claim~\eqref{claim:example-exp-laplace}, and we verify each subclaim in turn.

\paragraph*{Proof of claim~\eqref{eq:ex-Lap1}.}
By definition of $\psi_{\eta}$, we have
\begin{align}\label{eq:pdf-eta-laplace}
\hspace{-0.5cm}
	\exp\left\{ -\psi_{\eta}(z) \right\} &= \frac{1}{2} \int_{y\in \real} \phi_{\eta}(y-z) e^{-|y|} \mathrm{d}y = \frac{1}{2} \int_{y \leq 0} \phi_{\eta}(y-z) e^{y} \mathrm{d}y + \frac{1}{2} \int_{y > 0} \phi_{\eta}(y-z) e^{-y} \mathrm{d}y.
\end{align}
The first term on the RHS can be bounded as
\[
\int_{y \leq 0} \phi_{\eta}(y-z) e^{y} \mathrm{d}y = e^{\frac{|z+\eta^2|^2 - z^2}{2\eta^2}} \cdot  \int_{y\leq 0} \frac{1}{\sqrt{2\pi}\eta} e^{-\frac{|y-(z+\eta^2)|^{2}}{2\eta^2}} \mathrm{d}y = e^{z + \frac{\eta^2}{2}} \cdot \int_{z+\eta^2}^{+\infty} \phi_{\eta}(y) \mathrm{d}y,
\]
and the second term as
\[
\int_{y > 0} \phi_{\eta}(y-z) e^{-y} \mathrm{d}y = e^{\frac{|z-\eta^2|^2 - z^2}{2\eta^2}} \cdot \int_{y > 0} \frac{1}{\sqrt{2\pi}\eta} e^{-\frac{|y-(z-\eta^2)|^{2}}{2\eta^2}} \mathrm{d}y = e^{-z + \frac{\eta^2}{2}}  \cdot \int_{-z+\eta^2}^{+\infty} \phi_{\eta}(y) \mathrm{d}y.
\]
Putting the pieces together yields that for all $z\in\real$, we have
\begin{align} \label{eq:mengqi-key}
	 \exp\left\{ -\psi_{\eta}(z) \right\} = \frac{e^{z + \frac{\eta^2}{2}}}{2} \cdot \int_{z+\eta^2}^{+\infty} \phi_{\eta}(y) \mathrm{d}y + \frac{e^{-z + \frac{\eta^2}{2}} }{2} \cdot \int_{-z+\eta^2}^{+\infty} \phi_{\eta}(y) \mathrm{d}y.
\end{align}

We split the rest of the proof into two cases. 

\noindent \underline{Case $z \geq 0$.} Here, we have
\begin{align*}
	\frac{\exp\left\{ -\psi_{\eta}(z) \right\}}{\frac{1}{2}\exp\left\{-\psi(z)\right\}} = e^{\eta^2/2}e^{2z} \int_{z+\eta^2}^{+\infty} \phi_{\eta}(y) \mathrm{d}y + e^{\eta^2/2} \int_{-z+\eta^2}^{+\infty} \phi_{\eta}(y) \mathrm{d}y \leq e^{\eta^{2}/2},
\end{align*}
where in the last step we use
\begin{align}\label{ineq1:pf-claim-ex-laplace}
	e^{2z} \int_{z+\eta^2}^{+\infty} \phi_{\eta}(y) \mathrm{d}y + \int_{-z+\eta^2}^{+\infty} \phi_{\eta}(y) \mathrm{d}y &= \int_{0}^{+\infty} \frac{1}{\sqrt{2\pi} \eta} e^{-\frac{|y+z-\eta^2|^{2}}{2\eta^2}}e^{-2y} \mathrm{d}y + \int_{-\infty}^{z-\eta^2} \phi_{\eta}(y) \mathrm{d}y \nonumber \\
	&\leq \int_{0}^{+\infty} \frac{1}{\sqrt{2\pi} \eta} e^{-\frac{|y+z-\eta^2|^{2}}{2\eta^2}} \mathrm{d}y + \int_{-\infty}^{z-\eta^2} \phi_{\eta}(y) \mathrm{d}y \nonumber \\
	&= \int_{z-\eta^2}^{+\infty} \phi_{\eta}(y) \mathrm{d}y + \int_{-\infty}^{z-\eta^2} \phi_{\eta}(y) \mathrm{d}y  = 1.
\end{align}

\noindent \underline{Case $z < 0$.} In this case,
\begin{align*}
	\frac{\exp\left\{ -\psi_{\eta}(z) \right\}}{\frac{1}{2}\exp\left\{-\psi(z)\right\}} = e^{\eta^{2}/2} \cdot \int_{z+\eta^2}^{+\infty} \phi_{\eta}(y) \mathrm{d}y + e^{\eta^{2}/2} e^{-2z} \cdot \int_{-z+\eta^2}^{+\infty} \phi_{\eta}(y) \mathrm{d}y \leq e^{\eta^2/2},
\end{align*}
where in the last step we use
\begin{align*}
	e^{-2z} \int_{-z+\eta^2}^{+\infty} \phi_{\eta}(y) \mathrm{d}y + \int_{z+\eta^2}^{+\infty} \phi_{\eta}(y) \mathrm{d}y \overset{\1}{=} 
	e^{2z'} \int_{z'+\eta^2}^{+\infty} \phi_{\eta}(y) \mathrm{d}y + \int_{-z'+\eta^2}^{+\infty} \phi_{\eta}(y) \mathrm{d}y \overset{\2}{\leq} 1.
\end{align*}
Above, in step $\1$, we let $z' = -z$ so that $z' > 0$, and step $\2$ follows from inequality~\eqref{ineq1:pf-claim-ex-laplace}. 

\medskip

\noindent Combining the two cases establishes claim~\eqref{eq:ex-Lap1}.

\paragraph*{Proof of claim~\eqref{eq:ex-Lap2}.} Substituting $z = 0$ into Eq.~\eqref{eq:mengqi-key}, we have
\begin{align*}
	\exp\left\{ -\psi_{\eta}(0) \right\} = e^{ \frac{\eta^2}{2}} \int_{\eta^2}^{+\infty} \phi_{\eta}(y) \mathrm{d}y = e^{ \frac{\eta^2}{2}} \int_{\eta}^{+\infty} \phi(y) \mathrm{d}y \geq \int_{1}^{+\infty} \phi(y) \mathrm{d}y \geq 0.1,
\end{align*}
where $\phi$ is the pdf of standard normal random variable, and we have used the fact that $\eta \in (0,1)$. We thus obtain $\exp\left\{ \psi_{\eta}(0) \right\} \leq 10$, as claimed.

\paragraph*{Proof of claim~\eqref{eq:ex-Lap3}.}
Next we turn to bound $|\psi'_{\eta}(z)|$. By definition, we have
\begin{align}\label{eq:psi-eta-deriative}
	\psi'_{\eta}(z) = \frac{\int_{y \in \real} \phi_{\eta}(y-z) \frac{z-y}{\eta^2} \frac{1}{2}e^{-|y|} \mathrm{d}y }{\int_{y\in \real} \phi_{\eta}(y-z) \frac{1}{2}e^{-|y|} \mathrm{d} y } = \frac{ T_{1} + T_{2} }{\int_{y\in \real} \phi_{\eta}(y-z) \frac{1}{2}e^{-|y|} \mathrm{d} y }, 
\end{align}
where 
\[
T_{1} = \int_{y \leq 0} \phi_{\eta}(y-z) \frac{z-y}{\eta^2} \frac{1}{2}e^{y} \mathrm{d}y \quad \text{and} \quad T_{2} = \int_{y \geq 0} \phi_{\eta}(y-z) \frac{z-y}{\eta^2} \frac{1}{2}e^{-y} \mathrm{d}y.
\]
Let us calculate $T_{1}$ and $T_{2}$ individually. By integrating by parts, we obtain
\begin{align*}
	T_{1} &= \int_{-\infty}^{0} \phi_{\eta}(y-z) \frac{-(y-z)}{\eta^2} \frac{1}{2}e^{y} \mathrm{d} y \\
	& = \frac{1}{2}e^{y} \phi_{\eta}(y-z) \Big \vert_{-\infty}^{0} - \int_{-\infty}^{0} \phi_{\eta}(y-z) \frac{1}{2}e^{y} \mathrm{d} y \\
	& = \frac{1}{2} \phi_{\eta}(z) - \int_{-\infty}^{0} \phi_{\eta}(y-z) \frac{1}{2}e^{y} \mathrm{d} y.
\end{align*}
Proceeding similarly as in the display above yields
\begin{align*}
	T_{2} = \int_{y=0}^{+\infty} \frac{1}{2}e^{-y} \mathrm{d} \phi_{\eta}(y-z) = - \frac{1}{2}\phi_{\eta}(z) + \int_{y=0}^{+\infty} \phi_{\eta}(y-z) \frac{1}{2}e^{-y} \mathrm{d} y.
\end{align*}
Putting together the pieces yields
\begin{align*}
	\big| T_{1} + T_{2} \big| = \left| - \int_{-\infty}^{0} \phi_{\eta}(y-z) \frac{1}{2}e^{y} \mathrm{d} y + \int_{y=0}^{+\infty} \phi_{\eta}(y-z) \frac{1}{2}e^{-y} \mathrm{d} y \right| \leq \int_{y \in \real} \phi_{\eta}(y-z) \frac{1}{2}e^{-|y|} \mathrm{d} y,
\end{align*}
and substituting into Eq.~\eqref{eq:psi-eta-deriative} yields the claimed result.
\qed


\subsection{Proof of Proposition~\ref{prop:uniform-S-star}} \label{sec:pf-Unif}

Recall that
\[
u(x; \theta) =
\begin{cases}
1 \quad &\text{ if } \theta - 1/2 \leq x \leq \theta + 1/2 \\
0 \quad &\text{ otherwise,}  
\end{cases} 
\]
and that we have set 
\begin{align*}
S^*(y|x) = 
\begin{cases}
g^-(y) \quad &\text{ for all } x \leq \theta_{0}-1/2, \\
g^+(y) \quad &\text{ for all } x \geq \theta_{0}+1/2
\end{cases}
\end{align*}
where $\theta_0$ is a point at which $v(y; \cdot)$ is possibly non-differentiable. 
It suffices to verify that the following system of equations holds for all $\theta \in [-1/2, 1/2]$ and $ y \in \mathbb{Y}$:
\begin{align} \label{eq:equiv-uniform}
\int_{\theta - 1/2 \;\wedge\; \theta_{0}-1/2}^{\theta_{0}-1/2} g^- (y) \mathrm{d}x + \int_{\theta - 1/2 \; \vee \; \theta_{0} - 1/2}^{\theta + 1/2 \; \wedge \; \theta_{0}+1/2} \mathcal{S}^*(y| x) \mathrm{d}y + \int_{\theta_{0}+1/2}^{\theta + 1/2 \; \vee \; \theta_{0} + 1/2} g^+(y) \mathrm{d}x = v(y; \theta).
\end{align}

Simplifying Eq.~\eqref{eq:equiv-uniform}, we obtain the equivalent systems of equations
\begin{subequations}\label{eq2:equiv-uniform}
\begin{align}\label{subeq1:equiv-uniform}
\int_{\theta - 1/2}^{\theta_{0}+ 1/2} \mathcal{S}^*(y|x) \mathrm{d}x + \int_{\theta_{0}+1/2}^{\theta+1/2} g^+(y) \mathrm{d}x &= v(y; \theta) \quad \text{ for all } \theta \in [\theta_{0}, 1/2], \; y \in \mathbb{Y}, \text{ and } \\
\label{subeq2:equiv-uniform}
\int_{\theta - 1/2}^{\theta_{0}-1/2} g^- (y) \mathrm{d}x + \int_{\theta_{0} - 1/2}^{\theta+1/2} \mathcal{S}^*(y| x) \mathrm{d}x &= v(y; \theta) \quad \text{ for all } \theta \in [-1/2, \theta_{0}), \; y \in \mathbb{Y}.
\end{align}
\end{subequations}
We now verify that the kernel $\mathcal{S}^{*}(y|x)$ defined in Eq.~\eqref{eq:g-plus-g-minus-y} and Eq.~\eqref{S-star-uniform} satisfies Eq.~\eqref{eq2:equiv-uniform}. 

\paragraph*{Case $\theta_{0} \leq \theta \leq 1/2$:} Expanding the LHS of Eq.~\eqref{subeq1:equiv-uniform} by substituting for $\mathcal{S}^*$~\eqref{S-star-uniform} yields
\begin{align*}
	&\int_{\theta - 1/2}^{\theta_{0}+ 1/2} \mathcal{S}^*(y|x) \mathrm{d}x + \int_{\theta_{0}+1/2}^{\theta+1/2} g^+(y) \mathrm{d}x \\&= \int_{\theta - 1/2} ^{0} g^+(y) - \nabla_{\theta'} v(y; \theta')\mid_{\theta' = x + \frac{1}{2}} \mathrm{d}x + \int_{0}^{\theta_{0}+1/2} \nabla_{\theta'} v(y; \theta')\mid_{\theta' = x - \frac{1}{2}} + g^-(y) \mathrm{d}x + (\theta - \theta_{0}) \cdot g^{+}(y)
	\\ &= g^{+}(y) \cdot (1/2 - \theta_{0}) + g^{-}(y) \cdot (\theta_{0}+1/2) - v(y;1/2) + v(y;\theta) + v(y; \theta_{0}) - v(y;-1/2)  = v(y;\theta).
\end{align*}
where the last equality holds due to condition~\eqref{eq:g-plus-g-minus-y} on $g^+$ and $g^-$.

\paragraph*{Case $-1/2 \leq \theta < \theta_{0}$:} Expanding the LHS of Eq.~\eqref{subeq2:equiv-uniform} by substituting for $\mathcal{S}^*$~\eqref{S-star-uniform} yields
\begin{align*}
	&\int_{\theta - 1/2}^{\theta_{0}-1/2} g^- (y) \mathrm{d}x + \int_{\theta_{0} - 1/2}^{\theta+1/2} \mathcal{S}^*(y| x) \mathrm{d}x
	\\ &= (\theta_{0} - \theta) \cdot g^{-}(y) + \int_{\theta_{0}-1/2}^{0} g^+(y) - \nabla_{\theta'} v(y; \theta')\mid_{\theta' = x + \frac{1}{2}} \mathrm{d}x + \int_{0}^{\theta+1/2} \nabla_{\theta'} v(y; \theta')\mid_{\theta' = x - \frac{1}{2}} + g^-(y)  \mathrm{d}x
	\\ &= g^{-}(y) \cdot (\theta_{0}+1/2) + g^{+}(y) \cdot (1/2 - \theta_{0}) - v(y;1/2) + v(y;\theta_{0}) + v(y;\theta) - v(y;-1/2) = v(y;\theta),
\end{align*}
where again the equality step holds due to condition~\eqref{eq:g-plus-g-minus-y} on $g^+$ and $g^-$.

Putting together the two cases concludes the proof.
\qed

\begin{remark}
While we proved Proposition~\ref{prop:uniform-S-star} by verifying that our choice of $\mathcal{S}^*, g^+, g^-$ satisfies the system\eqref{eq:equiv-uniform}, a constructive way to arrive at the solution is to differentiate and equate both sides of Eqs.~\eqref{subeq1:equiv-uniform}, since they must hold for all $\theta \in \Theta$. The expressions for $\mathcal{S}^*$ as well as $g^+, g^-$ can then be obtained by solving the resulting equations.
\end{remark}


\subsection{Proof of Theorem~\ref{thm:Unif-Gauss}} \label{sec:pf-UnifGauss}

Recall that the input sample space is given by $\mathbb{X} = [-1, 1]$, the output sample space by $\mathbb{Y} = \real$ and the parameter space by $\Theta = [-1/2, 1/2]$.
Also recall the shorthand
$\alpha_{0} = \max_{-1/2 \leq t \leq 1/2} \big|f(t)\big|$ and $\alpha_1 = \sup_{t \in [-1/2,\theta_{0}) \cup (\theta_{0},1/2]} \big|f'(t) \big|$.
We additionally define some quantities for convenience. Let $\Delta = f(0.5) + f(-0.5) - 2f(\theta_{0})$, and 
\begin{align*}
y_0 := \frac{-2\log(2) \sigma^{2} }{\Delta} 
		+ \frac{ f(0.5)^{2} + f(-0.5)^{2} - 2f(\theta_{0})^{2}  }{2\Delta} \quad &\text{ if } \Delta \neq 0.
\end{align*}
Now on the domain $(\theta_{0} - 1/2, \theta_{0} + 1/2)$, define the functions $\ell$ and $L$ as
\begin{subequations} \label{eq:lx-ux}
	\begin{align}
		\ell(x) &= f(x+1/2) - \frac{\sigma^2}{5\alpha_{1}},\quad  L(x) = f(x+1/2) + \frac{\sigma^2}{5\alpha_{1}}, \quad \text{if} \quad \theta_{0} - 1/2 < x \leq 0,\\
		\ell(x) &= f(x-1/2) - \frac{\sigma^2}{5\alpha_{1}},\quad  L(x) = f(x-1/2) + \frac{\sigma^2}{5\alpha_{1}}, \quad \text{if} \quad 0< x < \theta_{0} +1/2.
	\end{align}
	\end{subequations}

To prove the theorem, we need to bound the appropriate Radon--Nikodym derivative in this case, and establish Ineq.~\eqref{eq:claim-unif-Gauss}. We use two technical lemmas to do so.
The first lemma is stated below, and its proof can be found in Appendix~\ref{sec:proof-lemma-aux-unif-gauss}.

\begin{lemma}\label{lemma:auxiliary-proof-Unif-Gauss}
Let $\mathcal{S}^{*}$ be defined in Eq.~\eqref{S-star-uniform}. 
The following holds for  $\sigma \geq 5 \max\{\alpha_{0}, \alpha_{1}\}$.
\begin{itemize}
	\item[(a)] Suppose $x \geq \theta_{0} + 1/2$ or $x \leq \theta_{0}-1/2$. 
	\begin{itemize}
		\item[(i)] If $\Delta = 0$, then $\mathcal{S}(y|x) \geq 0$ for all $y \in \real$.
		\item[(ii)] If $\Delta > 0$, then 
	\begin{subequations}\label{ineq:part-1-lemma-Unif-Gauss}
	\begin{align}\label{ineq:postivity-S-star-part-1}
		\mathcal{S}^{*}(y|x) &\geq 0 \quad \text{ for all }  \; y \geq y_0,
		\\ \label{ineq1:S-star-bound-part-1}
		1 - \exp\left\{-\frac{\log^{2}(2) \sigma^{2}}{2 \Delta^{2} } \right\} &\leq \int_{y_{0}}^{+\infty} \mathcal{S}^{*}(y|x) \mathrm{d}y \leq 1 + \exp\left\{-\frac{\log^{2}(2) \sigma^{2}}{2 \Delta^{2} } \right\}, 
		\\ \label{ineq2:S-star-bound-part-1}
		\text{and} \quad \int_{-\infty}^{y_{0}} \big| \mathcal{S}^{*}(y|x) \big| \mathrm{d}y &\leq \frac{3}{2} \exp\left\{-\frac{\log^{2}(2) \sigma^{2}}{2 \Delta^{2} } \right\}.
	\end{align}
	\end{subequations}
	\item[(iii)] If $\Delta < 0$, then
	\begin{subequations}\label{ineq:part3-lemma-Unif-Gauss}
	\begin{align}\label{ineq:postivity-S-star-part-3}
		\mathcal{S}^{*}(y|x) &\geq 0 \quad \text{ for all }  \; y \leq y_0,
		\\ \label{ineq1:S-star-bound-part-3}
		1 - \exp\left\{-\frac{\log^{2}(2) \sigma^{2}}{2 \Delta^{2} } \right\} &\leq \int_{-\infty}^{y_{0}} \mathcal{S}^{*}(y|x) \mathrm{d}y \leq 1 + \exp\left\{-\frac{\log^{2}(2) \sigma^{2}}{2 \Delta^{2} } \right\}, 
		\\ \label{ineq2:S-star-bound-part-3}
		\text{and} \quad \int_{y_{0}}^{+\infty} \big| \mathcal{S}^{*}(y|x) \big| \mathrm{d}y &\leq \frac{3}{2} \exp\left\{-\frac{\log^{2}(2) \sigma^{2}}{2 \Delta^{2} } \right\}.
	\end{align}
	\end{subequations}
\end{itemize}
	\item[(b)] Suppose $\theta_{0} - 1/2 < x < \theta_{0} + 1/2$ and recall the functions $\ell$ and $L$ from Eq.~\eqref{eq:lx-ux}.
 We have $\mathcal{S}^{*}(y|x) \geq 0$ for all $\ell(x) \leq y \leq L(x)$, and
	\begin{subequations}\label{ineq:part-2-lemma-Unif-Gauss}
	\begin{align}\label{ineq1:S-star-bound-part-2}
		1 - 2 \exp\left\{-\frac{\sigma^{2}}{200 \alpha_{1}^{2}} \right\}  \leq &\int_{\ell(x)}^{L(x)} \mathcal{S}^{*}(y|x) \mathrm{d}y \leq 1 + 2 \exp\left\{-\frac{\sigma^{2}}{200 \alpha_{1}^{2}} \right\}, 
		\\ \label{ineq2:S-star-bound-part-2}
		\int_{-\infty}^{\ell(x)} \big| \mathcal{S}^{*}(y|x) \big| \mathrm{d}y \; \vee \; &\int_{L(x)}^{+\infty} \big| \mathcal{S}^{*}(y|x) \big| \mathrm{d}y \leq  2 \exp\left\{ -\frac{\sigma^{2}}{200 \alpha_{1}^{2}} \right\}.
	\end{align}
	\end{subequations}
\end{itemize}
\end{lemma}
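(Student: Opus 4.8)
\textbf{Proof proposal for Lemma~\ref{lemma:auxiliary-proof-Unif-Gauss}.}

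The plan is to substitute the explicit Gaussian form $v(y;\theta) = \tfrac{1}{\sqrt{2\pi}\sigma} e^{-(y-f(\theta))^2/(2\sigma^2)}$ into the piecewise definition~\eqref{S-star-uniform} of $\mathcal{S}^*$ with $g^+ = g^- = v(y;1/2) + v(y;-1/2) - v(y;\theta_0)$, and carefully analyze the sign and the relevant tail integrals in each of the four regimes of $x$. The key computation is the gradient: $\nabla_\theta v(y;\theta) = v(y;\theta)\cdot \tfrac{f'(\theta)(y-f(\theta))}{\sigma^2}$, so that on the inner regime ($\theta_0 - 1/2 < x < \theta_0 + 1/2$) we have $\mathcal{S}^*(y|x) = g(y) \pm v(y;\theta_{x})\big(1 \mp \tfrac{f'(\theta_x)(y-f(\theta_x))}{\sigma^2}\big)$, where $\theta_x = x+1/2$ or $x-1/2$ as appropriate and $g(y)$ collects the three boundary terms. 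Since $g$ is a sum of three Gaussian densities with means in $[-\alpha_0,\alpha_0]$, each such term contributes something of order $e^{-\Omega(\sigma^2/\alpha_0^2)}$ when $y$ is $\Omega(\sigma)$ away from the interval $[-\alpha_0,\alpha_0]$; the hypothesis $\sigma \geq 40\max\{\alpha_0,\alpha_1\}$ ensures the dominant term is the $v(y;\theta_x)$ piece near its own mean $f(\theta_x)$.

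For part (b), I would argue as follows. The factor $1 - \tfrac{f'(\theta_x)(y-f(\theta_x))}{\sigma^2}$ is nonnegative precisely when $f'(\theta_x)(y-f(\theta_x)) \leq \sigma^2$, and using $|f'(\theta_x)| \leq \alpha_1$ this holds whenever $|y - f(\theta_x)| \leq \sigma^2/\alpha_1$, which comfortably contains the window $[\ell(x),L(x)] = [f(\theta_x) - \sigma^2/(5\alpha_1), f(\theta_x)+\sigma^2/(5\alpha_1)]$. Since also $g(y) \geq 0$ would not be guaranteed (it is a signed combination), one instead bounds $|g(y)| \leq 3\cdot\tfrac{1}{\sqrt{2\pi}\sigma}$ crudely and then observes that on the window $[\ell(x),L(x)]$, the main term $v(y;\theta_x)\big(1 - \tfrac{f'(\theta_x)(y-f(\theta_x))}{\sigma^2}\big)$ is bounded below by something of constant order times $\tfrac{1}{\sqrt{2\pi}\sigma}$ near $y = f(\theta_x)$; more carefully, one must verify the boundary terms $v(y;\pm 1/2), v(y;\theta_0)$ are exponentially small on this window by the same Gaussian-tail estimate (since $|f(\pm1/2) - f(\theta_x)|$ or $|f(\theta_0) - f(\theta_x)|$ could be up to $2\alpha_0$, but the window has half-width $\sigma^2/(5\alpha_1) \gg \sigma$ — actually one needs $y$ far from the competing means, which requires a slightly more delicate split of the window, or simply accepting the $\pm 2e^{-\sigma^2/(200\alpha_1^2)}$ slack in~\eqref{ineq1:S-star-bound-part-2}). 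The integral bounds~\eqref{ineq1:S-star-bound-part-2} and~\eqref{ineq2:S-star-bound-part-2} then follow by: (i) for the inner integral, $\int_{\ell(x)}^{L(x)} v(y;\theta_x)\,\mathrm dy = 1 - O(e^{-\sigma^2/(50\alpha_1^2)})$ (Gaussian tail beyond $\sigma/(5\alpha_1)$ standard deviations — note $\sigma^2/(5\alpha_1)$ translates to $\sigma/(5\alpha_1)$ in units of the standard deviation $\sigma$), plus $\int \tfrac{f'(\theta_x)(y-f(\theta_x))}{\sigma^2} v(y;\theta_x)\,\mathrm dy$ which vanishes by symmetry on the symmetric window, plus the exponentially small $g$-contribution; (ii) for the tail integral, bound $|\mathcal{S}^*(y|x)| \leq |g(y)| + v(y;\theta_x)\big|1 - \tfrac{f'(\theta_x)(y-f(\theta_x))}{\sigma^2}\big|$ and integrate each piece over $(-\infty,\ell(x)] \cup [L(x),\infty)$ using standard sub-Gaussian tail integrals of the form $\int_{t}^\infty (1+|z|) e^{-z^2/2}\,\mathrm dz \lesssim (1+t) e^{-t^2/2}$.

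For part (a), in the regimes $x \geq \theta_0 + 1/2$ or $x \leq \theta_0 - 1/2$ we have $\mathcal{S}^*(y|x) = g(y) = v(y;1/2) + v(y;-1/2) - v(y;\theta_0)$, a signed sum of three Gaussians with means in $[-\alpha_0,\alpha_0]$. When $\Delta := f(1/2) + f(-1/2) - 2f(\theta_0) = 0$ one checks directly that $g(y) \geq 0$ for all $y$: indeed $v(y;1/2) + v(y;-1/2) - v(y;\theta_0) \geq 0$ reduces, after factoring out $\tfrac{1}{\sqrt{2\pi}\sigma}$ and writing $a = f(1/2), b = f(-1/2), c = f(\theta_0)$, to $e^{-(y-a)^2/2\sigma^2} + e^{-(y-b)^2/2\sigma^2} \geq e^{-(y-c)^2/2\sigma^2}$; using convexity of $t \mapsto e^{-t}$ and the fact that the condition $a + b = 2c$ places $c$ at the midpoint, one gets the bound via $e^{-(y-a)^2/2\sigma^2} + e^{-(y-b)^2/2\sigma^2} \geq 2 e^{-((y-a)^2+(y-b)^2)/4\sigma^2} \geq 2 e^{-((y-c)^2 + (a-b)^2/4)/2\sigma^2} \cdot (\ldots)$ — the arithmetic-mean-of-exponents step needs care but goes through, possibly with the aid of the elementary inequality $e^{-u} + e^{-v} \geq e^{-(u+v)/2}$. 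When $\Delta \neq 0$, the sign of $g(y)$ for $y$ large is governed by the dominant exponent; computing when $v(y;1/2) + v(y;-1/2) = v(y;\theta_0)$ gives (to leading order) the threshold $y_0 = \tfrac{-2\log 2\,\sigma^2}{\Delta} + \tfrac{f(1/2)^2 + f(-1/2)^2 - 2f(\theta_0)^2}{2\Delta}$ defined in the statement — this is exactly the crossover of the linear-in-$y$ parts of the log-densities after the quadratic terms are matched — and one shows $g(y) \geq 0$ for $y \geq y_0$ (if $\Delta > 0$) or $y \leq y_0$ (if $\Delta < 0$) by a monotonicity argument on the ratio $g(y)/v(y;\theta_0)$. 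The integral estimates~\eqref{ineq1:S-star-bound-part-1}--\eqref{ineq2:S-star-bound-part-1} and~\eqref{ineq1:S-star-bound-part-3}--\eqref{ineq2:S-star-bound-part-3} then follow by splitting $\int_{y_0}^\infty g$ into the three Gaussian integrals: $\int_{y_0}^\infty v(y;\theta)\,\mathrm dy = \Phi\big(\tfrac{f(\theta) - y_0}{\sigma}\big)$, and since $|y_0| = \Omega(\sigma^2/|\Delta|) = \Omega(\sigma^2/\alpha_0)$ is much larger than $\sigma$ (using $|\Delta| \leq 2\alpha_0 \leq \sigma/20$), each such tail is either within $e^{-\Omega(\log^2(2)\sigma^2/\Delta^2)}$ of $1$ or of $0$. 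The main obstacle I anticipate is bookkeeping: getting the constants in the exponents ($1/200$, $\log^2 2 / 2$, etc.) to match precisely requires careful, non-generous application of the Gaussian tail bound and the triangle inequality at each split, and ensuring the window half-width $\sigma^2/(5\alpha_1)$ is large enough relative to both $\sigma$ and $\alpha_0$ under the stated hypothesis $\sigma \geq 40\max\{\alpha_0,\alpha_1\}$ — this is where the specific numerical constants in the theorem statement are calibrated. The conceptual content is routine Gaussian tail analysis; the work is in organizing the case split cleanly.
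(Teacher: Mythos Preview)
Your proposal has a genuine error in part~(b). You write
\[
\mathcal{S}^*(y\mid x) \;=\; g(y) \;\pm\; v(y;\theta_x)\Bigl(1 \mp \tfrac{f'(\theta_x)(y-f(\theta_x))}{\sigma^2}\Bigr),
\]
but look again at Eq.~\eqref{S-star-uniform}: on the inner regime the kernel is $g^\pm(y) \mp \nabla_\theta v(y;\theta)\big|_{\theta=\theta_x}$, which equals
\[
g(y) \;-\; v(y;\theta_x)\cdot \frac{f'(\theta_x)\,(y-f(\theta_x))}{\sigma^2}\qquad (\text{respectively } +),
\]
with \emph{no} additive $\pm v(y;\theta_x)$ term. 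This confusion---possibly imported from the Erlang/exponential case where $\mathcal{S}^*(y|x)=v(y;\theta_x)-\nabla_\theta v$---reverses your entire mass accounting. The gradient piece integrates to zero by oddness, and it is $g(y)$ that carries the unit mass, since $\int g = \int v(\cdot;1/2)+\int v(\cdot;-1/2)-\int v(\cdot;\theta_0)=1$. So your claim that ``$g$ is exponentially small'' and that ``$\int v(y;\theta_x)\,\mathrm{d}y$ furnishes the $1$'' is backwards.

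This also breaks your positivity argument: there is no dominant $v(y;\theta_x)$ factor to point to. What the paper does instead is bound the gradient term by $\tfrac15\,\phi_\sigma(y-f(\theta_x))$ on the window (using $|y-f(\theta_x)|\le \sigma^2/(5\alpha_1)$ and $|f'|\le\alpha_1$), then factor out $\phi_\sigma(y-f(\theta_0))$ from all four summands and control each ratio $\exp\{[(y-f(\theta_0))^2-(y-f(t))^2]/(2\sigma^2)\}$ uniformly on the window by $e^{\pm 13/50}$ (this is where $\sigma\ge 5\max\{\alpha_0,\alpha_1\}$ enters---note the lemma needs only $5$, not $40$). Positivity then reduces to the numerical check $2e^{-13/50}-1-\tfrac15 e^{13/50}>0$.

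A smaller issue in part~(a)(i): the inequality you invoke, $e^{-u}+e^{-v}\ge e^{-(u+v)/2}$, is too weak---for $a+b=2c$ one has $\tfrac{u+v}{2}\ge w$, not $\le$. The paper uses AM--GM, $e^{-u}+e^{-v}\ge 2e^{-(u+v)/2}$; the extra factor of $2$ is exactly what saves the argument after absorbing the $e^{-(a-b)^2/(8\sigma^2)}\ge e^{-1/50}$ loss. Your outline for (a)(ii)--(iii) is essentially right.
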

For each $x$, Lemma~\ref{lemma:auxiliary-proof-Unif-Gauss} characterizes the interval on which the kernel $\mathcal{S}^{*}(\cdot|x)$ is positive, and it gives bounds on the integral of $\mathcal{S}^{*}(\cdot|x)$ over that interval. We will use Lemma~\ref{lemma:auxiliary-proof-Unif-Gauss} to evaluate $p(x)$ and $q(x)$ from Eq.~\eqref{eq:pq}. We state another auxiliary lemma to bound $\frac{\mathcal{S}^{*}(y|x) \; \vee \;0}{\mathcal{P}(y|x)}$.

\begin{lemma}\label{lemma:M-30}
	Under the setting of Theorem~\ref{thm:Unif-Gauss}, we have 
	\[
		\frac{\mathcal{S}^{*}(y|x) \; \vee \; 0}{\mathcal{P}(y|x)} \leq 30, \quad \text{ for all } \; x \in \mathbb{X} \quad \text{and} \quad y\in \real.
	\]
\end{lemma}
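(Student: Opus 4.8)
The plan is to verify the pointwise inequality $\mathcal{S}^*(y|x)\vee 0 \le 30\,\mathcal{P}(y|x)$ by working separately on each of the four $x$-regions appearing in the definition~\eqref{S-star-uniform} of $\mathcal{S}^*$, and reducing everything to two elementary Gaussian estimates. Recall that here $v(y;\theta) = \frac{1}{\sqrt{2\pi}\sigma}\exp\{-(y-f(\theta))^2/(2\sigma^2)\}$, $\mathcal{P}(y|x)=\frac{1}{2\sqrt{\pi}\sigma}\exp\{-y^2/(4\sigma^2)\}$, and $g^+(y)=g^-(y)=v(y;1/2)+v(y;-1/2)-v(y;\theta_0)$ by Eq.~\eqref{eq:unif-g-func}. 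First I would record, for every $t\in[-1/2,1/2]$ and $y\in\real$, the identity obtained by writing out the two densities and completing the square,
\[
\frac{v(y;t)}{\mathcal{P}(y|x)} \;=\; \sqrt{2}\,\exp\left\{-\frac{(y-2f(t))^2}{4\sigma^2}+\frac{f(t)^2}{2\sigma^2}\right\}\;\le\; \sqrt{2}\,\exp\left\{\frac{\alpha_0^2}{2\sigma^2}\right\},
\]
where the inequality uses $|f(t)|\le\alpha_0$. Differentiating, $\nabla_{\theta}v(y;\theta)=v(y;\theta)\,(y-f(\theta))f'(\theta)/\sigma^2$, so for $t$ in the differentiability set $[-1/2,\theta_0)\cup(\theta_0,1/2]$, where $|f'(t)|\le\alpha_1$, this gives
\[
\frac{\bigl|\nabla_{\theta}v(y;\theta)\mid_{\theta=t}\bigr|}{\mathcal{P}(y|x)} \;\le\; \sqrt{2}\,\exp\left\{\frac{\alpha_0^2}{2\sigma^2}\right\}\cdot\frac{\alpha_1}{\sigma^2}\,|y-f(t)|\,\exp\left\{-\frac{(y-2f(t))^2}{4\sigma^2}\right\}.
\]
Splitting $|y-f(t)|\le|y-2f(t)|+|f(t)|$ and using the scalar bound $\sup_{s\in\real}|s|e^{-s^2/(4\sigma^2)}=\sqrt{2/e}\,\sigma\le\sigma$ together with $|f(t)|\le\alpha_0$, I obtain $\bigl|\nabla_{\theta}v(y;\theta)\mid_{\theta=t}\bigr|/\mathcal{P}(y|x)\le\sqrt{2}\,e^{\alpha_0^2/(2\sigma^2)}\bigl(\alpha_1/\sigma+\alpha_0\alpha_1/\sigma^2\bigr)$.

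Next I would assemble these estimates region by region. On $\{x\le\theta_0-1/2\}$ and on $\{x\ge\theta_0+1/2\}$, $\mathcal{S}^*(y|x)$ equals $g^-(y)$ or $g^+(y)$, which are equal and bounded above by $v(y;1/2)+v(y;-1/2)$; hence $\mathcal{S}^*(y|x)\vee 0\le v(y;1/2)+v(y;-1/2)$ and the first estimate gives $\mathcal{S}^*(y|x)\vee 0\le 2\sqrt{2}\,e^{\alpha_0^2/(2\sigma^2)}\,\mathcal{P}(y|x)$. On the two middle regions $-1/2+\theta_0<x\le0$ and $0<x<1/2+\theta_0$, the kernel $\mathcal{S}^*(y|x)$ equals $g^\pm(y)$ plus or minus a single term $\nabla_{\theta}v(y;\theta)\mid_{\theta=x\pm1/2}$ whose evaluation point lies in $[-1/2,\theta_0)\cup(\theta_0,1/2]$; applying the elementary inequality $(a\pm b)\vee 0\le(a\vee 0)+|b|$, the bound $g^\pm(y)\vee 0\le v(y;1/2)+v(y;-1/2)$, and the two estimates above, I get
\[
\frac{\mathcal{S}^*(y|x)\vee 0}{\mathcal{P}(y|x)}\;\le\;\sqrt{2}\,\exp\left\{\frac{\alpha_0^2}{2\sigma^2}\right\}\left(2+\frac{\alpha_1}{\sigma}+\frac{\alpha_0\alpha_1}{\sigma^2}\right)
\]
uniformly on all four regions and all $y\in\real$. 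Finally, substituting the standing hypothesis $\sigma\ge 40\max\{\alpha_0,\alpha_1\}$ yields $\alpha_0^2/(2\sigma^2)\le 1/3200$, $\alpha_1/\sigma\le 1/40$, and $\alpha_0\alpha_1/\sigma^2\le 1/1600$, so the right-hand side is at most $\sqrt{2}\,e^{1/3200}(2+1/40+1/1600)<30$, which is precisely the claimed bound.

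I do not expect a genuine obstacle here: the argument is a sequence of routine Gaussian manipulations. The only step requiring any care is the gradient estimate, where one must keep track of the completed square and invoke $\sup_{s}|s|e^{-s^2/(4\sigma^2)}\le\sigma$ to control the linear factor $|y-f(t)|$; the deliberately loose constant $30$ is there to absorb these crude bounds (the argument in fact gives a bound below $3$).
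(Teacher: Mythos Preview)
Your proof is correct and follows essentially the same approach as the paper: both split into the $x$-regions of~\eqref{S-star-uniform}, complete the square in the Gaussian ratio $v(y;t)/\mathcal{P}(y|x)$, and control the derivative term via $\sup_s |s|e^{-s^2/(4\sigma^2)}$. The only difference is bookkeeping: the paper bounds $|g^\pm(y)|$ by the sum of all three Gaussian terms via the triangle inequality, whereas you observe that $g^\pm(y)\vee 0\le v(y;1/2)+v(y;-1/2)$ and drop the $v(y;\theta_0)$ term, which yields the sharper constant you note (roughly $3$ instead of the paper's $\sqrt{2}\cdot 6\approx 8.5$ plus the derivative piece).
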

We provide the proof of Lemma~\ref{lemma:M-30} in Appendix~\ref{sec:proof-lemma-M-30}. Taking these two lemmas as given, we now proceed to a proof of the theorem.

\paragraph*{Bounding the Radon--Nikodym derivative:} Applying Lemma~\ref{lemma:M-30}, we obtain that it suffices to set $M = 30$ in Algorithm~\textsc{RK}.

\paragraph*{Establishing inequality~\eqref{eq:claim-unif-Gauss}:} We split the proof into two cases depending on the value of $x$. 

\noindent \underline{Case 1: $x\leq \theta_{0}-1/2$ or $x\geq \theta_{0}+1/2$.} In this case, we obtain from Eq.~\eqref{S-star-uniform} that
\begin{align}\label{eq:closed-form-S-star-case1}
\hspace{-1cm}
	\mathcal{S}^{*}(y|x) &= \frac{1}{\sqrt{2\pi} \sigma} \bigg( \exp\left\{-\frac{|y-f(0.5)|^{2}}{2\sigma^2}\right\} + \exp\left\{-\frac{|y-f(-0.5)|^{2}}{2\sigma^2} \right\} - \exp\left\{-\frac{|y-f(\theta_{0})|^{2}}{2\sigma^2} \right\}  \bigg).
\end{align}
Recall that $\Delta = f(0.5)+f(-0.5) - 2f(\theta_{0})$. If $\Delta = 0$, applying part (i) of Lemma~\ref{lemma:auxiliary-proof-Unif-Gauss} and using the expression of $\mathcal{S}^{*}(y|x)$ in Eq.~\eqref{eq:closed-form-S-star-case1} yields
\begin{align}\label{ineq:px-qx-bound-case0}
	p(x) = \int_{y\in \real} \mathcal{S}^{*}(y|x) \mathrm{d}y = 1 \quad \text{and} \quad q(x) = 0.
\end{align}
Now suppose $\Delta>0$, and let $y_{0}$ be defined as in Eq.~\eqref{ineq:postivity-S-star-part-1}. Applying inequalities~\eqref{ineq:postivity-S-star-part-1} and~\eqref{ineq1:S-star-bound-part-1} yields
\begin{align*}
	p(x) = \int_{y \in \real} \mathcal{S}^{*}(y|x) \vee 0 \;\mathrm{d}y \geq \int_{y_{0}}^{+\infty} \mathcal{S}^{*}(y|x) \mathrm{d}y \geq 1 - \exp\left\{-\frac{\log^{2}(2) \sigma^{2}}{2 \Delta^{2} } \right\}.
\end{align*}
Proceeding similarly and applying inequality~\eqref{ineq:part-1-lemma-Unif-Gauss}, we have
\begin{align*}
	p(x) &\leq \int_{y_{0}}^{+\infty} \mathcal{S}^{*}(y|x) \mathrm{d}y + \int_{-\infty}^{y_{0}} \big| \mathcal{S}^{*}(y|x) \big| \mathrm{d}y \leq 1 + \frac{5}{2} \exp\left\{-\frac{\log^{2}(2) \sigma^{2}}{2 \Delta^{2} } \right\} , \\
	q(x) &= \int_{y \in \real} -\big( \mathcal{S}^{*}(y|x) \wedge 0 \big) \;\mathrm{d}y \leq \int_{-\infty}^{y_{0}} \big| \mathcal{S}^{*}(y|x)\big| \mathrm{d}y \leq \frac{3}{2} \exp\left\{-\frac{\log^{2}(2) \sigma^{2}}{2 \Delta^{2} } \right\}.
\end{align*}
Putting the pieces together yields
\begin{align}\label{ineq:px-qx-bound-case1}
	\big| p(x) - 1 \big| + q(x) \leq 4 \cdot \exp\left\{-\frac{\log^{2}(2) \sigma^{2}}{2 \Delta^{2} } \right\}.
\end{align}
Finally, if $\Delta<0$ then we proceed exactly as in the case $\Delta>0$. In particular, using inequality~\eqref{ineq:part3-lemma-Unif-Gauss} yields the same bound~\eqref{ineq:px-qx-bound-case1}; we omit the calculations for brevity.

\noindent \underline{Case 2: $\theta_{0} - 1/2 < x < \theta_{0} + 1/2$.} Let $\ell(x)$ and $L(x)$ be defined as in~\eqref{eq:lx-ux}. Applying part (ii) of Lemma~\ref{lemma:auxiliary-proof-Unif-Gauss}, we obtain $\mathcal{S}^{*}(y|x) \geq 0$ for all $\ell(x) \leq y \leq L(x)$. Consequently, we have 
\begin{align*}
	p(x) = \int_{y \in \real} \mathcal{S}^{*}(y|x) \vee 0 \; \mathrm{d}y \geq \int_{\ell(x)}^{L(x)} \mathcal{S}^{*}(y|x) \mathrm{d}y \geq 1 - 2 \exp\left\{-\frac{\sigma^{2}}{200 \alpha_{1}^{2}} \right\} 
\end{align*}
where in the last step we use inequality~\eqref{ineq1:S-star-bound-part-2}. Proceeding similarly as in the display above, we obtain
\begin{align*}
	p(x) &\leq \int_{\ell(x)}^{L(x)} \mathcal{S}^{*}(y|x) \mathrm{d}y + \int_{-\infty}^{\ell(x)} \big| \mathcal{S}^{*}(y|x) \big| \mathrm{d}y  + \int_{L(x)}^{+\infty} \big| \mathcal{S}^{*}(y|x) \big| \mathrm{d}y \overset{\1}{\leq} 1 + 6\exp\left\{-\frac{\sigma^{2}}{200 \alpha_{1}^{2}} \right\},\\
	q(x) &= \int_{y\in \real} - \big( \mathcal{S}^{*}(y|x) \wedge 0 \big) \mathrm{d}y \leq \int_{-\infty}^{\ell(x)} \big| \mathcal{S}^{*}(y|x) \big| \mathrm{d}y  + \int_{L(x)}^{+\infty} \big| \mathcal{S}^{*}(y|x) \big| \mathrm{d}y \overset{\2}{\leq} 4\exp\left\{-\frac{\sigma^{2}}{200 \alpha_{1}^{2}} \right\},
\end{align*}
where in steps $\1$ and $\2$ we use inequality~\eqref{ineq:part-2-lemma-Unif-Gauss}. Putting the pieces together yields
\begin{align}\label{ineq:px-qx-bound-case2}
	\big| p(x) - 1 \big| + q(x) \leq 10 \exp\left\{-\frac{\sigma^{2}}{200 \alpha_{1}^{2}} \right\}.
\end{align}

\noindent \underline{Putting the pieces together.} Combining equation~\eqref{ineq:px-qx-bound-case0}, inequalities~\eqref{ineq:px-qx-bound-case1} and~\eqref{ineq:px-qx-bound-case2}, we have that for all $x \in \mathbb{X}$,
\begin{align*}
	\big|p(x) - 1 \big| + q(x) \leq 10 \bigg( \exp\left\{-\frac{\log^{2}(2) \sigma^{2}}{2\Delta^{2}}\right\} \vee  \exp\left\{-\frac{\sigma^{2}}{200 \alpha_{1}^{2}} \right\}\bigg) \leq 10 \exp\left\{-\frac{\sigma^{2}}{200 \alpha_{1}^{2}} \right\},
\end{align*}
where in the last step we have used 
\begin{align*}
|\Delta| = |f(0.5) + f(-0.5) - 2f(\theta_{0})| &\leq |f(0.5) - f(\theta_{0})| + |f(-0.5) - f(\theta_0)| 
\\ & \leq \alpha_{1} \cdot \big(|0.5 - \theta_{0}| + |-0.5 - \theta_{0}| \big) \leq 2 \alpha_{1}.
\end{align*}
Consequently, for $\sigma \geq 40\alpha_{1}$ and for all $x\in \mathbb{X}$, we have
\[
	p(x) \geq 1 - 10e^{-8} \geq 0.5.
\]
Note that by Proposition~\ref{prop:uniform-S-star}, we have $\SignedDef(\mathcal{U}, \mathcal{V};\mathcal{S}^*) = 0$. Consequently, applying Lemma~\ref{lem:rej-sampling}, we obtain that for each $-1/2 \leq \theta \leq 1/2$ and $M = 30$, $y \in \real$,
\begin{align*}
	\| \mathcal{L} \left[ \textsc{rk}(X_{\theta}, N, M, y_0) \right] - \NORMAL(\theta, \sigma^2) \|_{\mathsf{TV}} &\leq 2e^{-\frac{N}{M} \cdot 0.5 } + \int_{\mathbb{X}} 10 \exp\bigg\{ - \frac{\sigma^2}{200\alpha_{1}^{2}}\bigg\} \cdot u(x ; \theta) \mathrm{d}x 
	\\ &= \frac{\epsilon}{2} + 10 \exp\bigg\{-\frac{\sigma^2}{200\alpha_{1}^{2}}\bigg\},
\end{align*}
where in the last step we let $N = 2M \log(4/\epsilon)$.
\qed


\subsection{Proof of Corollary~\ref{coro-mix-phase-retrieval}}\label{sec:pf-mix-phase}
Condition on the tuple $\{ x_i\}_{i = 1}^n$. For each $1 \leq i \leq n$, we have $y_{i} \sim \mathsf{Unif}( [\theta_{i} - 1/2, \theta_{i} + 1/2])$ and $\widetilde{y}_{i} \sim \NORMAL(f(\theta_{i}),\sigma^{2})$, where $\theta_{i} = R_{i} \cdot \langle x_{i},\beta_{\star} \rangle$ and $f(\cdot) = |\cdot|$. Thus, our source distribution is uniform of unit width around its mean $\theta_{i}$ and our target distribution is normal with mean $|\theta_{i}|$ and variance $\sigma^{2}$. We therefore make use of Theorem~\ref{thm:Unif-Gauss} to design the algorithm $h$. 

\paragraph*{Algorithm.} Using Eq.~\eqref{S-star-uniform} to calculate the signed kernel for this target model, we have 
\begin{align}\label{S-star-uniform-gaussian-abs}
\begin{split}
	\mathcal{S}^*(z| w) = 
	\begin{cases}
	2\phi_{\sigma}(z-1/2) - \phi_{\sigma}(z)  &\text{ for all } w \leq -\frac{1}{2} \text{ or } w \geq \frac{1}{2}, \\
	2\phi_{\sigma}(z-1/2) - \phi_{\sigma}(z) - \phi_{\sigma}(z-w-1/2) \cdot \frac{z-w-1/2}{\sigma^2}  &\text{ for all } -1/2 < w \leq 0, \\
	2\phi_{\sigma}(z-1/2) - \phi_{\sigma}(z) - \phi_{\sigma}(z+w-1/2) \cdot \frac{z+w-1/2}{\sigma^2}  &\text{ for all } 0 < w < 1/2,
	\end{cases}
\end{split}
\end{align} 
where $\phi_{\sigma}(\cdot)$ is the pdf of Gaussian random variable with zero mean and variance $\sigma^2$, and $z, w \in \real$ are placeholder variables. Now for $\epsilon \in (0,1)$, let $M = 30$ and $N = 60 \log(4/\epsilon)$, design the algorithm $h((x_{i},y_{i})_{i=1}^{n})$ via the following entry-wise reduction. 
\begin{enumerate}
	\item For each $i \in [n]$, let the sensing vector $x_{i}$ be same, i.e., $x_{i} \gets x_{i}$ and
	\begin{align}\label{eq:step1-mix-phase}
		 \overline{y}_{i} \gets \textsc{RK}(y_{i},N,M, y_i), 
	\end{align}
	with kernel $\mathcal{S}^*$ as in Eq.~\eqref{S-star-uniform-gaussian-abs} and base measure $\mathcal{P}(\cdot|w) = \NORMAL(0,2\sigma^{2})$ for all $w \in \real$.
	\item Output $h((x_{i},y_{i})_{i=1}^{n}) = (x_{i},\overline{y}_{i})_{i=1}^{n}$.
\end{enumerate}  

\paragraph*{Analysis.} In the notation of Theorem~\ref{thm:Unif-Gauss}, we have $f(\cdot) = | \cdot |$ and thus $\theta_0 = 0$. Applying Theorem~\ref{thm:Unif-Gauss} yields that for $\sigma \geq C\sqrt{\log(20/\epsilon)}$,
\[
	\mathsf{d_{TV}}\big((x_{i},\overline{y}_{i}), (x_{i},\widetilde{y}_{i}) \big) \leq \epsilon, \quad \text{ for all } \; 1 \leq i \leq n.
\]
Since $(x_{i},\widetilde{y}_{i})_{i=1}^{n}$ are i.i.d. and $(x_{i},\overline{y}_{i})_{i=1}^{n}$ are i.i.d., we obtain
\begin{align*}
	\mathsf{d_{TV}}\big(h\big((x_{i},y_{i})_{i=1}^{n} \big), (x_{i},\widetilde{y}_{i})_{i=1}^{n} \big) &= \mathsf{d_{TV}}\big((x_{i},\overline{y}_{i})_{i=1}^{n}, (x_{i},\widetilde{y}_{i})_{i=1}^{n} \big) 
	\\ &\leq \sum_{i=1}^{n} \mathsf{d_{TV}}\big((x_{i},\overline{y}_{i}), (x_{i},\widetilde{y}_{i}) \big) \leq n \epsilon.
\end{align*}
Choosing $\epsilon = \delta/n$ yields $\mathsf{d_{TV}}\big(h\big((x_{i},y_{i})_{i=1}^{n} \big), (x_{i},\widetilde{y}_{i})_{i=1}^{n} \big) \leq \delta$.
Note that step~\eqref{eq:step1-mix-phase} runs in time $N(T_{samp}+T_{eval}) = O\big( \log(4n/\delta) (T_{samp}+T_{eval})  \big)$ and we need to run this step $n$ times. Thus, the total running time is $O\big( n \log(4n/\delta) (T_{samp}+T_{eval})  \big)$. This concludes the proof.
\qed


\subsection{Proof of Corollary~\ref{coro-low-rank-matrix}}\label{sec:pf-low-rank}

Note that for each $i \in \Omega$, the random variable $y_{i}$ is a shifted exponential with mean $\theta^{\star}_{i}$ and unit variance, and $\widetilde{y}_{i}$ is log-concave with mean $\theta^{\star}_{i}$ and scale $\sigma$. 
We therefore make use of Theorem~\ref{thm:exp-log-concave} to design the algorithm $h$. 

\paragraph*{Algorithm.} We output $\{\overline{y}_{i}\}_{i=1}^{\Dim} = h(\{y_{i}\}_{i=1}^{\Dim})$ via
\begin{enumerate}
	\item For each $i\notin \Omega$, let $\overline{y}_{i} = \star$. For each $i \in \Omega$, let
	\begin{align}\label{eq:step1-low-rank}
		\overline{y}_{i} \gets \textsc{RK}(y_{i},N,M, y_i),
	\end{align}
	with the kernel $\mathcal{S}^{*}$ defined in Eq.~\eqref{eq:signed-kernel-Exp}, base measure $\mathcal{P}$ defined in Eq.~\eqref{eq:base-measure-log-concave}, $N \geq 1$, and $M$ satisfying Eq.~\eqref{ineq:M-lower-bound-exp}.
	\item Output $h\big(\{y_{i}\}_{i=1}^{\Dim} \big) = \{\overline{y}_{i} \}_{i=1}^{\Dim}$.
\end{enumerate}

\paragraph*{Analysis.} Applying Theorem~\ref{thm:exp-log-concave} yields that for all $\sigma>0$,
\[
	\mathsf{d_{TV}}\big( \overline{y}_{i}, \widetilde{y}_{i} \big) \leq  2e^{-\frac{N}{M} \cdot \big(1-\tau(\sigma)\big)} + \tau(\sigma), \quad \text{for all} \; i \in \Omega.
\]
Moreover, for each $i \notin \Omega$, we have $\mathsf{d_{TV}}\big( \overline{y}_{i}, \widetilde{y}_{i} \big) = 0$ since $\overline{y}_{i} = y_{i} = \star$. Since $\{\overline{y}_{i}\}_{i\in \Omega}$ are i.i.d. and $\{\widetilde{y}_{i}\}_{i\in \Omega}$ are i.i.d., we obtain
\begin{align*}
	\mathsf{d_{TV}}\big( h\big( \{y_{i}\}_{i=1}^{\Dim} \big), \{ \widetilde{y}_{i} \}_{i=1}^{\Dim} \big) &= \mathsf{d_{TV}}\big( \{ \overline{y}_{i} \}_{i=1}^{\Dim}, \{ \widetilde{y}_{i} \}_{i=1}^{\Dim} \big) 
	\\ &\leq \sum_{i \in \Omega}  \mathsf{d_{TV}}\big( \overline{y}_{i}, \widetilde{y}_{i} \big) + \sum_{i \notin \Omega} \mathsf{d_{TV}}\big( \overline{y}_{i}, \widetilde{y}_{i} \big)
	\\ &\leq |\Omega| \cdot \Big( 2e^{-\frac{N}{M} \cdot \big(1-\tau(\sigma)\big)} + \tau(\sigma) \Big).
\end{align*}
Since $1-\tau(\sigma)>0$ by assumption, letting $N = M\log(2/\epsilon)/ (1-\tau(\sigma))$ for $\epsilon \in (0,1)$ yields
\[
	\mathsf{d_{TV}}\big( h\big( \{y_{i}\}_{i=1}^{\Dim} \big), \{ \widetilde{y}_{i} \}_{i=1}^{\Dim} \big) \leq |\Omega|\cdot\big(\epsilon + \tau(\sigma)\big).
\]
Note that each step~\eqref{eq:step1-low-rank} runs in time $N(T_{eval}+T_{samp})$ and we need to this step $|\Omega|$ times. Thus, the total running time of $h$ is $O\big(|\Omega| N(T_{eval} + T_{samp})\big)$.
\qed


\subsection{Proof of Proposition~\ref{prop:Exp-reduction-sharp}}\label{sec:pf-prop-cor2}
Recall that $\overline{y} = h(y)$. Applying Lemma~\ref{lemma:risk-bound} with the $\ell_{2}$ loss and the setting $p=q=1/2$, we obtain 
\begin{align*}
	\Big| \EE \big[ \| \widehat{\theta}_{\mathsf{LS}}(\overline{y}) - \theta^{\star} \|_2 \big] - \EE \big[ \| \widehat{\theta}_{\mathsf{LS}}(\widetilde{y}) - \theta^{\star} \|_2 \big] \Big| &\leq \overline{L} \cdot \mathsf{d_{TV}}\big( \overline{y}, \widetilde{y} \big)  \\
	 &+  \EE \big[ \| \widehat{\theta}_{\mathsf{LS}}(\overline{y}) - \theta^{\star} \|_2^2 \big]^{1/2} \cdot \Pr\big\{ \| \widehat{\theta}_{\mathsf{LS}}(\overline{y}) - \theta^{\star} \|_2 \geq \overline{L} \big\}^{1/2} \\
	 &+ \EE \big[ \| \widehat{\theta}_{\mathsf{LS}}(\widetilde{y}) - \theta^{\star} \|_2^2 \big]^{1/2} \cdot \Pr\big\{ \| \widehat{\theta}_{\mathsf{LS}}(\widetilde{y}) - \theta^{\star} \|_2 \geq \overline{L} \big\}^{1/2}
\end{align*}
Since $\Theta$ is a convex set, we obtain $\| \widehat{\theta}_{\mathsf{LS}}(w) - \theta^{\star} \|_{2} \leq \| w - \theta^{\star}\|_{2}$ for all $w\in \real^{\Dim}$. Consequently, using the bound in the display yields
\begin{align*}
	\Big| \EE \big[ \| \widehat{\theta}_{\mathsf{LS}}(\overline{y}) - \theta^{\star} \|_2 \big] - \EE \big[ \| \widehat{\theta}_{\mathsf{LS}}(\widetilde{y}) - \theta^{\star} \|_2 \big] \Big| 
	&\leq \overline{L} \cdot \mathsf{d_{TV}}\big( \overline{y}, \widetilde{y} \big) +  \EE \big[ \| \overline{y} - \theta^{\star} \|_2^2 \big]^{1/2} \cdot \Pr\big\{ \| \overline{y} - \theta^{\star} \|_2 \geq \overline{L} \big\}^{1/2} \\
	 &+ \EE \big[ \| \widetilde{y} - \theta^{\star} \|_2^2 \big]^{1/2} \cdot \Pr\big\{ \| \widetilde{y} - \theta^{\star} \|_2 \geq \overline{L} \big\}^{1/2}
\end{align*}
We then turn to bound each term in the RHS of the display above. Recall that $\overline{y}_{i} \gets \textsc{RK}(y_{i},N,M, y_i)$ in the design of algorithm $h$~\eqref{eq:step1-low-rank}. \\
\underline{Controlling the expectations.} 
We obtain
$
	\EE\big[ \| \overline{y} - \theta^{\star} \|_2^2  \big] = \sum_{i=1}^{\Dim} \EE\left[(\overline{y}_{i} - \theta^{\star}_{i} )^{2} \right].
$
Applying Lemma~\ref{lemma:prob-bound-RK-sample-exp} then yields
\begin{align*}
	\EE\left[(\overline{y}_{i} - \theta^{\star}_{i} )^{2} \right] = \int_{0}^{\infty} \Pr\big\{ (\overline{y}_{i} - \theta^{\star}_{i} )^{2} \geq t \big\} \mathrm{d}t &= \int_{0}^{\infty} \Pr\big\{ |\overline{y}_{i} - \theta^{\star}_{i} | \geq t^{1/2} \big\} \mathrm{d}t \\
	&\leq 3 \int_{0}^{\infty} \exp\big(-t / (8\sigma^2)\big) \mathrm{d}t + 5 \int_{0}^{\infty} \exp(-\sqrt{t}/2) \mathrm{d}t \\
	&= 24\sigma^2 + 40.
\end{align*}
Putting the two pieces together yields
\[
	\EE\big[ \| \overline{y} - \theta^{\star} \|_2^2  \big]^{1/2} \leq \sqrt{\Dim} \cdot \sqrt{24\sigma^2 + 40} 
\]
Note that $\widetilde{y}_{i} - \theta^{\star}_{i} \sim \NORMAL(0,\sigma^2)$. Following identical steps yields
$
	\EE\big[ \| \widetilde{y} - \theta^{\star} \|_2^2  \big]^{1/2} = \sqrt{n\sigma^2}.
$

\noindent \underline{Controlling the tails.} Let $\alpha \geq 8$ and $\overline{L} = \sqrt{\Dim} \alpha \sigma  \log(\Dim)$. We obtain 
\begin{align*}
	\Pr\big\{ \| \overline{y} - \theta^{\star} \|_2 \geq \overline{L} \big\} = 
	\Pr\big\{ \| \overline{y} - \theta^{\star} \|_2^2 \geq \overline{L}^{2} \big\} &\overset{\1}{\leq}  \sum_{i=1}^{\Dim}  \Pr\big\{ |\overline{y}_{i} - \theta^{\star}_{i}| \geq \alpha \sigma \log(\Dim) \big\} \\
	&\overset{\2}{\leq} \Dim\Big( 3e^{-\alpha^{2} \log(\Dim)/8} + 5e^{-\alpha \sigma \log(\Dim)/2} \Big)
	\leq 10  \Dim^{-\alpha+1},
\end{align*}
where in step $\1$ we apply union bound, and in step $\2$ we apply Lemma~\ref{lemma:prob-bound-RK-sample-exp}. Similarly, we obtain 
\[
	\Pr\big\{ \| \widetilde{y} - \theta^{\star} \|_2 \geq \overline{L} \big\} \leq \Dim \Pr\big\{ |\widetilde{y}_{i} - \theta^{\star}_{i}| \geq \alpha \sigma  \log(\Dim) \big\} \leq \Dim^{-\alpha/2 + 1}.
\]

\noindent \underline{Putting the pieces together.} From inequality~\eqref{ineq:TV-exp-gauss-signal-denoise}, we have $\mathsf{d_{TV}}\big( \overline{y}, \widetilde{y} \big) \leq 3\Dim\exp(-\sigma^2/2)$. Combining all the pieces yields for all $\alpha \geq 8$,
\begin{align*}
	\Big| \EE \big[ \| \widehat{\theta}_{\mathsf{LS}}(\overline{y}) - \theta^{\star} \|_2 \big] - \EE \big[ \| \widehat{\theta}_{\mathsf{LS}}(\widetilde{y}) - \theta^{\star} \|_2 \big] \Big| &\lesssim
	\sqrt{\Dim} \alpha \sigma  \log(\Dim) \cdot \Dim\exp(-\sigma^2/2) + \sqrt{\Dim\sigma^{2} } \Dim^{-\alpha/4 + 1/2} \\
	&\leq \alpha \log(\Dim) \Dim^{3/2} \cdot \sigma \exp(-\sigma^2/2) + \sigma \Dim^{-\alpha/4 + 1} \\
	& \lesssim \sigma \log(\Dim) / \Dim,
\end{align*}
where in the last step we let $\sigma^{2} \geq 5 \log(n)$ and $\alpha = 8$ so that $ \Dim^{3/2} \cdot \exp(-\sigma^2/2) \leq \Dim^{-1}$. Here we use $a_1 \lesssim a_2$ to mean that the inequality holds up to an absolute constant. The proposition then follows by Eq.~\eqref{eq:risk-bound-gauss-noise}.
\qed


\subsection{Proof of Corollary~\ref{cor:privacy}}\label{sec:pf-privacy}

From Proposition~\ref{prop:Laplace-DP}, we have $g(\bm{X}) \sim \mathsf{Lap}(f(\bm{X}), b)$. Our goal is to transform $g$ into a random variable that is $\delta$-close in total variation to one drawn from the distribution $\NORMAL(f(\bm{X}), 2b^2 \log(12/\delta))$. Our algorithm will thus use the Laplace to Gaussian reduction sketched in Theorem~\ref{thm:Lap-Gaussian}.

\paragraph*{Algorithm.} Let the signed kernel be given by Eq.~\eqref{S-star-Laplace-general}, where we set $\sigma^2 = 2b^2 \log (12/\delta)$.
With this choice of signed kernel, compute and output
	\begin{align}\label{eq:alg-privacy}
		 h \gets \textsc{RK}(g(\bm{X}),N,M, y_0), 
	\end{align}
	with base measure $\mathcal{P}(\;\cdot\;| g(\bm{X})) = \NORMAL(g(\bm{X}), 2b^2 \log (12/\delta))$, $M = 2$, $N = 2 \log (48/\delta)$, and $y_0 = g(\bm{X})$.

\paragraph*{Analysis.} Note that $h$ is realized by composing the Laplace mechanism---which is $\left( \frac{\phi(f)}{b}, 0 \right)$ differentially private, with the above algorithm, which does not access $\bm{X}$. By Theorem 3.14 of~\citet{dwork2014algorithmic}, the mechanism $h$ is thus $\left( \frac{\phi(f)}{b}, 0 \right)$ differentially private.
%

To prove the accuracy bound, apply the second part of Lemma~\ref{lemma:prob-bound-RK-sample-Laplace} (with $\sigma^2 = 2b^2 \log(12/\delta)$) to obtain
\begin{align*}
\mathbb{E}[ (h - f(\bm{X})^2] \leq 2b^2 + 2b^2 \log(12/\delta) + \frac{b^2}{3\sqrt{2}} \cdot \delta \log^{3/2} (12/\delta),
\end{align*}
so that taking square roots and further bounding $3\sqrt{2} > 4$ completes the proof.
\qed

\subsection*{Acknowledgments} 
ML and AP were supported in part by the NSF under grants CCF-2107455 and DMS-2210734 and by research awards/gifts from Adobe, Amazon, and Mathworks. GB gratefully acknowledges support from the NSF Career award CCF-1940205. AP is grateful to Juba Ziani for helpful discussions and pointers to the literature on differential privacy. We are grateful to the Simons Institute program on Computational Complexity of Statistical Inference, during which our discussions were initiated.

\small
\bibliographystyle{abbrvnat}
\bibliography{refs}

\begin{thebibliography}{90}
\providecommand{\natexlab}[1]{#1}
\providecommand{\url}[1]{\texttt{#1}}
\expandafter\ifx\csname urlstyle\endcsname\relax
  \providecommand{\doi}[1]{doi: #1}\else
  \providecommand{\doi}{doi: \begingroup \urlstyle{rm}\Url}\fi

\bibitem[Amelunxen et~al.(2014)Amelunxen, Lotz, McCoy, and
  Tropp]{amelunxen2014living}
D.~Amelunxen, M.~Lotz, M.~B. McCoy, and J.~A. Tropp.
\newblock Living on the edge: {P}hase transitions in convex programs with
  random data.
\newblock \emph{Information and Inference: A Journal of the IMA}, 3\penalty0
  (3):\penalty0 224--294, 2014.

\bibitem[Arpino and Venkataramanan(2023)]{arpino2023statistical}
G.~Arpino and R.~Venkataramanan.
\newblock Statistical-computational tradeoffs in mixed sparse linear
  regression.
\newblock In G.~Neu and L.~Rosasco, editors, \emph{The Thirty Sixth Annual
  Conference on Learning Theory, {COLT} 2023, 12-15 July 2023, Bangalore,
  India}, volume 195 of \emph{Proceedings of Machine Learning Research}, pages
  921--986. {PMLR}, 2023.

\bibitem[Balakrishnan et~al.(2017)Balakrishnan, Wainwright, and
  Yu]{balakrishnan2017statistical}
S.~Balakrishnan, M.~J. Wainwright, and B.~Yu.
\newblock {Statistical guarantees for the {EM} algorithm: From population to
  sample-based analysis}.
\newblock \emph{The Annals of Statistics}, 45\penalty0 (1):\penalty0 77 -- 120,
  2017.
\newblock \doi{10.1214/16-AOS1435}.

\bibitem[Barak et~al.(2019)Barak, Hopkins, Kelner, Kothari, Moitra, and
  Potechin]{barak2019nearly}
B.~Barak, S.~Hopkins, J.~Kelner, P.~K. Kothari, A.~Moitra, and A.~Potechin.
\newblock A nearly tight sum-of-squares lower bound for the planted clique
  problem.
\newblock \emph{SIAM Journal on Computing}, 48\penalty0 (2):\penalty0 687--735,
  2019.

\bibitem[Berthet and Rigollet(2013)]{berthet2013complexity}
Q.~Berthet and P.~Rigollet.
\newblock Complexity theoretic lower bounds for sparse principal component
  detection.
\newblock In \emph{Conference on learning theory}, pages 1046--1066. PMLR,
  2013.

\bibitem[Blackwell(1951)]{blackwell1951comparison}
D.~Blackwell.
\newblock Comparison of experiments.
\newblock In \emph{Proceedings of the Second Berkeley Symposium on Mathematical
  Statistics and Probability}, volume~2, pages 93--103. University of
  California Press, 1951.

\bibitem[Blackwell(1953)]{blackwell1953equivalent}
D.~Blackwell.
\newblock Equivalent comparisons of experiments.
\newblock \emph{The Annals of Mathematical Statistics}, pages 265--272, 1953.

\bibitem[Bohnenblust et~al.(1949)Bohnenblust, Shapley, and
  Sherman]{bohnenblust1949reconnaissance}
H.~F. Bohnenblust, L.~S. Shapley, and S.~Sherman.
\newblock Reconnaissance in game theory.
\newblock \emph{RM-208, RAND Corporation, Santa Monica}, 1949.

\bibitem[Boll(1955)]{boll1955comparison}
C.~H. Boll.
\newblock \emph{Comparison of experiments in the infinite case and the use of
  invariance in establishing sufficiency}.
\newblock Stanford University, 1955.

\bibitem[Brennan and Bresler(2019)]{brennan2019optimal}
M.~Brennan and G.~Bresler.
\newblock Optimal average-case reductions to sparse {PCA}: From weak
  assumptions to strong hardness.
\newblock In \emph{Conference on Learning Theory}, pages 469--470. PMLR, 2019.

\bibitem[Brennan and Bresler(2020)]{brennan2020reducibility}
M.~Brennan and G.~Bresler.
\newblock Reducibility and statistical-computational gaps from secret leakage.
\newblock In \emph{Conference on Learning Theory}, pages 648--847. PMLR, 2020.

\bibitem[Brennan et~al.(2018)Brennan, Bresler, and
  Huleihel]{brennan2018reducibility}
M.~Brennan, G.~Bresler, and W.~Huleihel.
\newblock Reducibility and computational lower bounds for problems with planted
  sparse structure.
\newblock In \emph{Conference On Learning Theory}, pages 48--166. PMLR, 2018.

\bibitem[Brennan et~al.(2019)Brennan, Bresler, and
  Huleihel]{brennan2019universality}
M.~Brennan, G.~Bresler, and W.~Huleihel.
\newblock Universality of computational lower bounds for submatrix detection.
\newblock In \emph{Conference on Learning Theory}, pages 417--468. PMLR, 2019.

\bibitem[Brown and Low(1996)]{brown1996asymptotic}
L.~D. Brown and M.~G. Low.
\newblock Asymptotic equivalence of nonparametric regression and white noise.
\newblock \emph{The Annals of Statistics}, 24\penalty0 (6):\penalty0
  2384--2398, 1996.

\bibitem[Brown et~al.(1976)Brown, Cohen, and Strawderman]{brown1976complete}
L.~D. Brown, A.~Cohen, and W.~E. Strawderman.
\newblock A complete class theorem for strict monotone likelihood ratio with
  applications.
\newblock \emph{The Annals of Statistics}, 4\penalty0 (4):\penalty0 712--722,
  1976.

\bibitem[Brown et~al.(2002)Brown, Cai, Low, and Zhang]{brown2002asymptotic}
L.~D. Brown, T.~T. Cai, M.~G. Low, and C.-H. Zhang.
\newblock Asymptotic equivalence theory for nonparametric regression with
  random design.
\newblock \emph{The Annals of statistics}, 30\penalty0 (3):\penalty0 688--707,
  2002.

\bibitem[Brown et~al.(2004)Brown, Carter, Low, and Zhang]{brown2004equivalence}
L.~D. Brown, A.~V. Carter, M.~G. Low, and C.-H. Zhang.
\newblock Equivalence theory for density estimation, {P}oisson processes and
  {G}aussian white noise with drift.
\newblock \emph{The Annals of Statistics}, 32\penalty0 (5):\penalty0
  2074--2097, 2004.

\bibitem[Bun and Steinke(2016)]{bun2016concentrated}
M.~Bun and T.~Steinke.
\newblock Concentrated differential privacy: {S}implifications, extensions, and
  lower bounds.
\newblock In \emph{Theory of Cryptography Conference}, pages 635--658.
  Springer, 2016.

\bibitem[Butucea and Ingster(2013)]{butucea2013detection}
C.~Butucea and Y.~I. Ingster.
\newblock {Detection of a sparse submatrix of a high-dimensional noisy matrix}.
\newblock \emph{Bernoulli}, 19\penalty0 (5B):\penalty0 2652 -- 2688, 2013.
\newblock \doi{10.3150/12-BEJ470}.

\bibitem[Cai et~al.(2016)Cai, Li, and Ma]{cai2015optimal}
T.~T. Cai, X.~Li, and Z.~Ma.
\newblock Optimal rates of convergence for noisy sparse phase retrieval via
  thresholded {W}irtinger flow.
\newblock \emph{The Annals of Statistics}, 44\penalty0 (5):\penalty0 2221 --
  2251, 2016.
\newblock \doi{10.1214/16-AOS1443}.

\bibitem[Candes et~al.(2015{\natexlab{a}})Candes, Eldar, Strohmer, and
  Voroninski]{candes2015phase}
E.~J. Candes, Y.~C. Eldar, T.~Strohmer, and V.~Voroninski.
\newblock Phase retrieval via matrix completion.
\newblock \emph{SIAM review}, 57\penalty0 (2):\penalty0 225--251,
  2015{\natexlab{a}}.

\bibitem[Candes et~al.(2015{\natexlab{b}})Candes, Li, and
  Soltanolkotabi]{candes2015phase2}
E.~J. Candes, X.~Li, and M.~Soltanolkotabi.
\newblock Phase retrieval via {W}irtinger flow: Theory and algorithms.
\newblock \emph{IEEE Transactions on Information Theory}, 61\penalty0
  (4):\penalty0 1985--2007, 2015{\natexlab{b}}.

\bibitem[Carter(2002)]{carter2002deficiency}
A.~V. Carter.
\newblock Deficiency distance between multinomial and multivariate normal
  experiments.
\newblock \emph{The Annals of Statistics}, 30\penalty0 (3):\penalty0 708--730,
  2002.

\bibitem[Chandrasekher et~al.(2023)Chandrasekher, Pananjady, and
  Thrampoulidis]{chandrasekher2023sharp}
K.~A. Chandrasekher, A.~Pananjady, and C.~Thrampoulidis.
\newblock Sharp global convergence guarantees for iterative nonconvex
  optimization with random data.
\newblock \emph{The Annals of Statistics}, 51\penalty0 (1):\penalty0 179--210,
  2023.

\bibitem[Chatterjee(2014)]{chatterjee2014new}
S.~Chatterjee.
\newblock {A new perspective on least squares under convex constraint}.
\newblock \emph{The Annals of Statistics}, 42\penalty0 (6):\penalty0 2340 --
  2381, 2014.
\newblock \doi{10.1214/14-AOS1254}.

\bibitem[Chaudhuri and Chatterjee(2023)]{chatterjee2023denoising}
A.~Chaudhuri and S.~Chatterjee.
\newblock {A cross-validation framework for signal denoising with applications
  to trend filtering, dyadic CART and beyond}.
\newblock \emph{The Annals of Statistics}, 51\penalty0 (4):\penalty0 1534 --
  1560, 2023.
\newblock \doi{10.1214/23-AOS2283}.

\bibitem[Chaudhuri et~al.(2011)Chaudhuri, Monteleoni, and
  Sarwate]{chaudhuri2011differentially}
K.~Chaudhuri, C.~Monteleoni, and A.~D. Sarwate.
\newblock Differentially private empirical risk minimization.
\newblock \emph{Journal of Machine Learning Research}, 12\penalty0 (3), 2011.

\bibitem[Chen et~al.(2017)Chen, Yi, and Caramanis]{chen2017convex}
Y.~Chen, X.~Yi, and C.~Caramanis.
\newblock Convex and nonconvex formulations for mixed regression with two
  components: Minimax optimal rates.
\newblock \emph{IEEE Transactions on Information Theory}, 64\penalty0
  (3):\penalty0 1738--1766, 2017.

\bibitem[Davenport et~al.(2014)Davenport, Plan, Van Den~Berg, and
  Wootters]{davenport20141}
M.~A. Davenport, Y.~Plan, E.~Van Den~Berg, and M.~Wootters.
\newblock 1-bit matrix completion.
\newblock \emph{Information and Inference: A Journal of the IMA}, 3\penalty0
  (3):\penalty0 189--223, 2014.

\bibitem[Duchi and Ruan(2019)]{duchi2019solving}
J.~C. Duchi and F.~Ruan.
\newblock Solving (most) of a set of quadratic equalities: {C}omposite
  optimization for robust phase retrieval.
\newblock \emph{Information and Inference: A Journal of the IMA}, 8\penalty0
  (3):\penalty0 471--529, 2019.

\bibitem[D{\"u}mbgen et~al.(2011)D{\"u}mbgen, Samworth, and
  Schuhmacher]{dumbgen2011approximation}
L.~D{\"u}mbgen, R.~Samworth, and D.~Schuhmacher.
\newblock Approximation by log-concave distributions, with applications to
  regression.
\newblock \emph{The Annals of Statistics}, pages 702--730, 2011.

\bibitem[Dwork(2006)]{dwork2006differential}
C.~Dwork.
\newblock Differential privacy.
\newblock In \emph{International colloquium on automata, languages, and
  programming}, pages 1--12. Springer, 2006.

\bibitem[Dwork and Roth(2014)]{dwork2014algorithmic}
C.~Dwork and A.~Roth.
\newblock The algorithmic foundations of differential privacy.
\newblock \emph{Foundations and Trends{\textregistered} in Theoretical Computer
  Science}, 9\penalty0 (3--4):\penalty0 211--407, 2014.

\bibitem[Fienup(1982)]{fienup1982phase}
J.~R. Fienup.
\newblock Phase retrieval algorithms: {A} comparison.
\newblock \emph{Applied optics}, 21\penalty0 (15):\penalty0 2758--2769, 1982.

\bibitem[Friedman and Tukey(1974)]{friedman1974projection}
J.~H. Friedman and J.~W. Tukey.
\newblock A projection pursuit algorithm for exploratory data analysis.
\newblock \emph{IEEE Transactions on Computers}, 100\penalty0 (9):\penalty0
  881--890, 1974.

\bibitem[Goel and DeGroot(1979)]{goel1979comparison}
P.~K. Goel and M.~H. DeGroot.
\newblock Comparison of experiments and information measures.
\newblock \emph{The Annals of Statistics}, 7\penalty0 (5):\penalty0 1066--1077,
  1979.

\bibitem[Goldstein and Studer(2018)]{goldstein2018phasemax}
T.~Goldstein and C.~Studer.
\newblock Phasemax: Convex phase retrieval via basis pursuit.
\newblock \emph{IEEE Transactions on Information Theory}, 64\penalty0
  (4):\penalty0 2675--2689, 2018.

\bibitem[Hajek et~al.(2015)Hajek, Wu, and Xu]{hajek2015computational}
B.~Hajek, Y.~Wu, and J.~Xu.
\newblock Computational lower bounds for community detection on random graphs.
\newblock In \emph{Conference on Learning Theory}, pages 899--928. PMLR, 2015.

\bibitem[Han et~al.(2022)Han, Luo, Wang, and Zhang]{han2022exact}
R.~Han, Y.~Luo, M.~Wang, and A.~R. Zhang.
\newblock Exact clustering in tensor block model: Statistical optimality and
  computational limit.
\newblock \emph{Journal of the Royal Statistical Society Series B: Statistical
  Methodology}, 84\penalty0 (5):\penalty0 1666--1698, 2022.

\bibitem[Hansen and Torgersen(1974)]{hansen1974comparison}
O.~H. Hansen and E.~N. Torgersen.
\newblock Comparison of linear normal experiments.
\newblock \emph{The Annals of Statistics}, pages 367--373, 1974.

\bibitem[Hopkins(2018)]{hopkins2018statistical}
S.~Hopkins.
\newblock \emph{Statistical inference and the sum of squares method}.
\newblock PhD thesis, Cornell University, 2018.

\bibitem[Huber(1965)]{huber1965robust}
P.~J. Huber.
\newblock A robust version of the probability ratio test.
\newblock \emph{The Annals of Mathematical Statistics}, pages 1753--1758, 1965.

\bibitem[Jacobs(1997)]{jacobs1997bias}
R.~A. Jacobs.
\newblock Bias/variance analyses of mixtures-of-experts architectures.
\newblock \emph{Neural computation}, 9\penalty0 (2):\penalty0 369--383, 1997.

\bibitem[Jordan and Jacobs(1994)]{jordan1994hierarchical}
M.~I. Jordan and R.~A. Jacobs.
\newblock Hierarchical mixtures of experts and the {EM} algorithm.
\newblock \emph{Neural computation}, 6\penalty0 (2):\penalty0 181--214, 1994.

\bibitem[Karlin and Rubin(1956)]{karlin1956theory}
S.~Karlin and H.~Rubin.
\newblock The theory of decision procedures for distributions with monotone
  likelihood ratio.
\newblock \emph{The Annals of Mathematical Statistics}, pages 272--299, 1956.

\bibitem[Krishnamurthy et~al.(2020)Krishnamurthy, Mazumdar, McGregor, and
  Pal]{krishnamurthy2020algebraic}
A.~Krishnamurthy, A.~Mazumdar, A.~McGregor, and S.~Pal.
\newblock Algebraic and analytic approaches for parameter learning in mixture
  models.
\newblock In \emph{Algorithmic Learning Theory}, pages 468--489. PMLR, 2020.

\bibitem[Kunisky et~al.(2019)Kunisky, Wein, and Bandeira]{kunisky2019notes}
D.~Kunisky, A.~S. Wein, and A.~S. Bandeira.
\newblock Notes on computational hardness of hypothesis testing: Predictions
  using the low-degree likelihood ratio.
\newblock In \emph{ISAAC Congress (International Society for Analysis, its
  Applications and Computation)}, pages 1--50. Springer, 2019.

\bibitem[Le~Cam(1964)]{le1964sufficiency}
L.~Le~Cam.
\newblock Sufficiency and approximate sufficiency.
\newblock \emph{The Annals of Mathematical Statistics}, pages 1419--1455, 1964.

\bibitem[Le~Cam(1996)]{le1996comparison}
L.~Le~Cam.
\newblock Comparison of experiments: A short review.
\newblock \emph{Lecture Notes-Monograph Series}, pages 127--138, 1996.

\bibitem[Le~Cam and Yang(2000)]{le2000asymptotics}
L.~M. Le~Cam and G.~L. Yang.
\newblock \emph{Asymptotics in statistics: {S}ome basic concepts}.
\newblock Springer Science \& Business Media, 2000.

\bibitem[Lehmann(1988)]{lehmann2011comparing}
E.~L. Lehmann.
\newblock {Comparing Location Experiments}.
\newblock \emph{The Annals of Statistics}, 16\penalty0 (2):\penalty0 521 --
  533, 1988.

\bibitem[Li(1991)]{li1991sliced}
K.-C. Li.
\newblock Sliced inverse regression for dimension reduction.
\newblock \emph{Journal of the American Statistical Association}, 86\penalty0
  (414):\penalty0 316--327, 1991.

\bibitem[Lindley(1956)]{lindley1956measure}
D.~V. Lindley.
\newblock On a measure of the information provided by an experiment.
\newblock \emph{The Annals of Mathematical Statistics}, 27\penalty0
  (4):\penalty0 986--1005, 1956.

\bibitem[Luo and Zhang(2022)]{luo2022tensor}
Y.~Luo and A.~R. Zhang.
\newblock Tensor clustering with planted structures: Statistical optimality and
  computational limits.
\newblock \emph{The Annals of Statistics}, 50\penalty0 (1):\penalty0 584--613,
  2022.

\bibitem[Ma and Wu(2015)]{ma2015computational}
Z.~Ma and Y.~Wu.
\newblock {Computational barriers in minimax submatrix detection}.
\newblock \emph{The Annals of Statistics}, 43\penalty0 (3):\penalty0 1089 --
  1116, 2015.
\newblock \doi{10.1214/14-AOS1300}.

\bibitem[Makkuva et~al.(2019)Makkuva, Viswanath, Kannan, and
  Oh]{makkuva2019breaking}
A.~Makkuva, P.~Viswanath, S.~Kannan, and S.~Oh.
\newblock Breaking the gridlock in mixture-of-experts: Consistent and efficient
  algorithms.
\newblock In \emph{International Conference on Machine Learning}, pages
  4304--4313. PMLR, 2019.

\bibitem[Makur and Polyanskiy(2018)]{makur2018comparison}
A.~Makur and Y.~Polyanskiy.
\newblock Comparison of channels: {C}riteria for domination by a symmetric
  channel.
\newblock \emph{IEEE Transactions on Information Theory}, 64\penalty0
  (8):\penalty0 5704--5725, 2018.

\bibitem[Mariucci(2016)]{mariucci2016cam}
E.~Mariucci.
\newblock {L}e {C}am theory on the comparison of statistical models.
\newblock \emph{Graduate J. Math.}, \penalty0 (1):\penalty0 79--91, 2016.

\bibitem[Masry and Rice(1992)]{masry1992gaussian}
E.~Masry and J.~A. Rice.
\newblock Gaussian deconvolution via differentiation.
\newblock \emph{Canadian Journal of Statistics}, 20\penalty0 (1):\penalty0
  9--21, 1992.

\bibitem[McRae and Davenport(2021)]{mcrae2021low}
A.~D. McRae and M.~A. Davenport.
\newblock Low-rank matrix completion and denoising under {P}oisson noise.
\newblock \emph{Information and Inference: A Journal of the IMA}, 10\penalty0
  (2):\penalty0 697--720, 2021.

\bibitem[Neykov(2022)]{neykov2022minimax}
M.~Neykov.
\newblock On the minimax rate of the {G}aussian sequence model under bounded
  convex constraints.
\newblock \emph{IEEE Transactions on Information Theory}, 69\penalty0
  (2):\penalty0 1244--1260, 2022.

\bibitem[Nussbaum(1996)]{nussbaum1996asymptotic}
M.~Nussbaum.
\newblock Asymptotic equivalence of density estimation and {G}aussian white
  noise.
\newblock \emph{The Annals of Statistics}, 24\penalty0 (6):\penalty0
  2399--2430, 1996.

\bibitem[Oymak and Hassibi(2016)]{oymak2016sharp}
S.~Oymak and B.~Hassibi.
\newblock Sharp {MSE} bounds for proximal denoising.
\newblock \emph{Foundations of Computational Mathematics}, 16:\penalty0
  965--1029, 2016.

\bibitem[Pananjady and Samworth(2022)]{pananjady2022isotonic}
A.~Pananjady and R.~J. Samworth.
\newblock Isotonic regression with unknown permutations: Statistics,
  computation and adaptation.
\newblock \emph{The Annals of Statistics}, 50\penalty0 (1):\penalty0 324--350,
  2022.

\bibitem[Polyanskiy and Wu(2017)]{polyanskiy2017strong}
Y.~Polyanskiy and Y.~Wu.
\newblock Strong data-processing inequalities for channels and {B}ayesian
  networks.
\newblock In \emph{Convexity and Concentration}, pages 211--249. Springer,
  2017.

\bibitem[Polyanskiy and Wu(2023+)]{polyanskiy2023book}
Y.~Polyanskiy and Y.~Wu.
\newblock \emph{Information theory: From coding to learning}.
\newblock Cambridge University Press, 2023+.

\bibitem[Quandt and Ramsey(1978)]{quandt1978estimating}
R.~E. Quandt and J.~B. Ramsey.
\newblock Estimating mixtures of normal distributions and switching
  regressions.
\newblock \emph{Journal of the American Statistical Association}, 73\penalty0
  (364):\penalty0 730--738, 1978.

\bibitem[Raginsky(2011)]{raginsky2011shannon}
M.~Raginsky.
\newblock Shannon meets {B}lackwell and {L}e {C}am: {C}hannels, codes, and
  statistical experiments.
\newblock In \emph{2011 IEEE International Symposium on Information Theory
  Proceedings}, pages 1220--1224. IEEE, 2011.

\bibitem[Rubinstein et~al.(2012)Rubinstein, Bartlett, Huang, and
  Taft]{rubinstein2012learning}
B.~I. Rubinstein, P.~L. Bartlett, L.~Huang, and N.~Taft.
\newblock Learning in a large function space: Privacy-preserving mechanisms for
  {SVM} learning.
\newblock \emph{Journal of Privacy and Confidentiality}, 4\penalty0
  (1):\penalty0 65--100, 2012.

\bibitem[Sambasivan and Haupt(2018)]{sambasivan2018minimax}
A.~V. Sambasivan and J.~D. Haupt.
\newblock Minimax lower bounds for noisy matrix completion under sparse factor
  models.
\newblock \emph{IEEE Transactions on Information Theory}, 64\penalty0
  (5):\penalty0 3274--3285, 2018.

\bibitem[Saumard and Wellner(2014)]{saumard2014log}
A.~Saumard and J.~A. Wellner.
\newblock Log-concavity and strong log-concavity: {A} review.
\newblock \emph{Statistics surveys}, 8:\penalty0 45, 2014.

\bibitem[Shannon(1958)]{shannon1958note}
C.~E. Shannon.
\newblock A note on a partial ordering for communication channels.
\newblock \emph{Information and control}, 1\penalty0 (4):\penalty0 390--397,
  1958.

\bibitem[Shiryaev and Spokoiny(2000)]{shiryaev2000statistical}
A.~N. Shiryaev and V.~G. Spokoiny.
\newblock \emph{Statistical Experiments And Decisions, Asymptotic Theory},
  volume~8.
\newblock World Scientific, 2000.

\bibitem[Song et~al.(2014)Song, Yao, and Xing]{song2014robust}
W.~Song, W.~Yao, and Y.~Xing.
\newblock Robust mixture regression model fitting by {L}aplace distribution.
\newblock \emph{Computational Statistics \& Data Analysis}, 71:\penalty0
  128--137, 2014.

\bibitem[Soni et~al.(2016)Soni, Jain, Haupt, and Gonella]{soni2016noisy}
A.~Soni, S.~Jain, J.~Haupt, and S.~Gonella.
\newblock Noisy matrix completion under sparse factor models.
\newblock \emph{IEEE Transactions on Information Theory}, 62\penalty0
  (6):\penalty0 3636--3661, 2016.

\bibitem[Spielman and Teng(2004)]{spielman2004smoothed}
D.~A. Spielman and S.-H. Teng.
\newblock Smoothed analysis of algorithms: {W}hy the simplex algorithm usually
  takes polynomial time.
\newblock \emph{Journal of the ACM (JACM)}, 51\penalty0 (3):\penalty0 385--463,
  2004.

\bibitem[St{\"a}dler et~al.(2010)St{\"a}dler, B{\"u}hlmann, and van~de
  Geer]{stadler2010l1}
N.~St{\"a}dler, P.~B{\"u}hlmann, and S.~van~de Geer.
\newblock $\ell_1$-penalization for mixture regression models.
\newblock \emph{Test}, 19:\penalty0 209--256, 2010.

\bibitem[Stone(1961)]{stone1961non}
M.~Stone.
\newblock Non-equivalent comparisons of experiments and their use for
  experiments involving location parameters.
\newblock \emph{The Annals of Mathematical Statistics}, pages 326--332, 1961.

\bibitem[Strassen(1965)]{strassen1965existence}
V.~Strassen.
\newblock The existence of probability measures with given marginals.
\newblock \emph{The Annals of Mathematical Statistics}, 36\penalty0
  (2):\penalty0 423--439, 1965.

\bibitem[Tan and Vershynin(2019)]{tan2019phase}
Y.~S. Tan and R.~Vershynin.
\newblock Phase retrieval via randomized {K}aczmarz: theoretical guarantees.
\newblock \emph{Information and Inference: A Journal of the IMA}, 8\penalty0
  (1):\penalty0 97--123, 2019.

\bibitem[Thrampoulidis and Rawat(2020)]{thrampoulidis2020generalized}
C.~Thrampoulidis and A.~S. Rawat.
\newblock The generalized {L}asso for sub-{G}aussian measurements with dithered
  quantization.
\newblock \emph{IEEE Transactions on Information Theory}, 66\penalty0
  (4):\penalty0 2487--2500, 2020.

\bibitem[Torgersen(1991)]{torgersen1991comparison}
E.~Torgersen.
\newblock \emph{Comparison of statistical experiments}, volume~36.
\newblock Cambridge University Press, 1991.

\bibitem[{v}an~de Geer and Wainwright(2017)]{van2017concentration}
S.~{v}an~de Geer and M.~J. Wainwright.
\newblock On concentration for (regularized) empirical risk minimization.
\newblock \emph{Sankhya A}, 79:\penalty0 159--200, 2017.

\bibitem[van~der {V}aart(2000)]{van2000asymptotic}
A.~W. van~der {V}aart.
\newblock \emph{Asymptotic statistics}, volume~3.
\newblock Cambridge University Press, 2000.

\bibitem[Wainwright(2014)]{wainwright2014constrained}
M.~J. Wainwright.
\newblock Constrained forms of statistical minimax: Computation, communication
  and privacy.
\newblock In \emph{Proceedings of the International Congress of
  Mathematicians}, pages 13--21, 2014.

\bibitem[Wang et~al.(2023)Wang, Lou, and Pananjady]{wang2023algorithms}
G.~Wang, M.~Lou, and A.~Pananjady.
\newblock Do algorithms and barriers for sparse principal component analysis
  extend to other structured settings?
\newblock \emph{arXiv preprint arXiv:2307.13535}, 2023.

\bibitem[Wang and Zhang(2022)]{wang2022simultaneous}
R.~Wang and Z.~Zhang.
\newblock Simultaneous optimal transport.
\newblock \emph{arXiv preprint arXiv:2201.03483}, 2022.

\bibitem[Wasserman and Zhou(2010)]{wasserman2010statistical}
L.~Wasserman and S.~Zhou.
\newblock A statistical framework for differential privacy.
\newblock \emph{Journal of the American Statistical Association}, 105\penalty0
  (489):\penalty0 375--389, 2010.

\bibitem[Wood et~al.(2018)Wood, Altman, Bembenek, Bun, Gaboardi, Honaker,
  Nissim, O'Brien, Steinke, and Vadhan]{wood2018differential}
A.~Wood, M.~Altman, A.~Bembenek, M.~Bun, M.~Gaboardi, J.~Honaker, K.~Nissim,
  D.~R. O'Brien, T.~Steinke, and S.~Vadhan.
\newblock Differential privacy: {A} primer for a non-technical audience.
\newblock \emph{Vand. J. Ent. \& Tech. L.}, 21:\penalty0 209, 2018.

\bibitem[Wu and Xu(2021)]{wu2021statistical}
Y.~Wu and J.~Xu.
\newblock Statistical problems with planted structures: Information-theoretical
  and computational limits.
\newblock \emph{Information-Theoretic Methods in Data Science}, 383:\penalty0
  13, 2021.

\end{thebibliography}

\normalsize

\appendix

\section{An example for the non-existence of efficient reductions} \label{sec:examples-inefficient}
In this section, we provide an example of source/target distributions such that their Le Cam distance is small but the reduction procedure cannot be computed in polynomial time assuming validity of the planted clique conjecture. We note that such an example can be produced for \emph{any} problem with a statistical-computational gap. In particular, even if a small-deficiency reduction from a problem having a statistical-computational gap to one without a statistical-computational gap exists in principle, it cannot be accomplished in polynomial time unless we violate conjectures in average-case complexity.

Let us give a concrete such example below. Suppose the unknown parameter $S$ is set-valued, and given by some $k$-sized subset of the set $[d] = \{ 1, 2, \ldots, d \}$. Let $\binom{[d]}{k}$ denote the collection of all such subsets. Let $v^S \in \mathbb{R}^d$ denote a sparse vector such that for all $i \in [d]$,
\[
v^S_i = 
\begin{cases}
\sqrt{k} \quad &\text{ if } i \in S \\
0 &\text{ otherwise.}
\end{cases}
\]
For some unknown $S \in \binom{[d]}{k}$, define the source random variable $X = v^S + z$ for $z \sim \NORMAL(0, I)$. This defines the source statistical model as $S$ varies over all $k$-subsets, and corresponds to a detection version of the classical sparse mean estimation model.

Let $M^S$ denote the sparse matrix with
\[
M^S_{i, j} = 
\begin{cases}
1 &\text{ if } i, j \in S \\
0 &\text{ otherwise.}
\end{cases}
\]
For a random matrix $Z$ having i.i.d. standard normal entries and unknown $S \in \binom{[d]}{k}$, define the output $Y = M^S + Z$. In other words, the target model is given by the sparse submatrix detection model~\citep{ma2015computational} as $S$ varies over all $k$-subsets. 

Suppose that $4 \log d \ll k \ll \sqrt{d}$. We will now show that (a) There exist Markov kernels that can map the source experiment to the target experiment as well as the target experiment to the source up to arbitrarily small total variation distance, thereby showing that the Le Cam distance is small. (b) If a Markov kernel from the target to the source attains small deficiency and is implementable in polynomial time, then the planted clique conjecture is false. Taken together, these two observations constitute an example: the Le Cam distance between these models is small but the transformation of the models cannot be computed in polynomial time in the sample size (assuming validity of the planted clique conjecture).

\paragraph{Reduction from source to target:} Given the random vector $X$, compute the subset 
\begin{align} \label{eq:vector-threshold}
\widehat{S} = \arg\min_{\widetilde{S} \subseteq \binom{[d]}{k}} \| X - v^{\widetilde{S}} \|_2.
\end{align}
Note that the estimator $\widehat{S}$ can be computed efficiently by selecting entries with the $k$ largest values --- this is an immediate application of the rearrangement inequality.

Now generate $\widehat{Y} = M^{\widehat{S}} + Z'$, where $Z'$ is a random matrix with standard Gaussian entries chosen independently of everything else. Letting $Y^S = M^S + Z$ denote the output random variable that we are interested in mapping to, we have
\[
\mathsf{d_{TV}}(\widehat{Y}, Y^S) \leq \Pr\{ \widehat{S} \neq S \} \overset{\1}{=} o(1), 
\] 
where step $\1$ follows since $k \gg 4 \log d$, from a simple union bound calculation, since for $\widehat{S}$ to be exactly equal to $S$ we need $\max_{j \in [d] \setminus S} X_j < \min_{j \in S} X_j$. Since this holds for every choice of $S \in \binom{[d]}{k}$, we have a polynomial-time reduction from source to target attaining $o(1)$ deficiency.

\paragraph{Reduction from target to source:} Now we proceed the other way. Given random matrix $Y$ of the above form, we set 
\begin{align} \label{eq:matrix-threshold}
\widehat{S} = \arg\min_{\widetilde{S} \subseteq \binom{[d]}{k}} \| Y - M^{\widetilde{S}} \|_F.
\end{align}
Next, generate $\widehat{X} = v^{\widehat{S}} + z'$, where $z'$ is a random $d$-dimensional vector with standard Gaussian entries chosen independently of everything else. Letting $X^S = v^S + z$ denote the source random variable that we are interested in mapping to, we have
\[
\mathsf{d_{TV}}(\widehat{X}, X^S) \leq \Pr\{ \widehat{S} \neq S \} \overset{\2}{=} o(1), 
\] 
where step $\2$ follows from results on detection of a sparse submatrix~\citep{butucea2013detection,ma2015computational}. Since this holds for every choice of $S \in \binom{[d]}{k}$, we have a reduction from target to source attaining $o(1)$ deficiency.

Taken together with the previous reduction, we see that our source and target models have small (i.e. $o(1)$) Le Cam distance. We note that the $o(1)$ term can be made explicit in terms of the tuple $(k, d, n)$, but we have chosen not to so as to keep our exposition simple.

\paragraph{Efficient reduction from target to source violates planted clique conjecture:} We know that computationally efficient submatrix detection in the regime $4 \log d \ll k \ll \sqrt{d}$ is impossible given the planted clique conjecture~\citep{ma2015computational}. 
Assume for the sake of contradiction that a computationally efficient Markov kernel exists from submatrix detection to sparse mean detection. But, as sketched on page 2, if a computationally efficient Markov kernel exists from submatrix detection to sparse mean detection, then sparse mean detection must also be computationally hard. However, as argued above in Eq.~\eqref{eq:vector-threshold}, the sparse mean detection problem is solvable in polynomial time, which leads to a contradiction.

Note that another interpretation of our argument is that the optimization problem in Eq.~\eqref{eq:matrix-threshold} cannot be computed in polynomial time unless the planted clique conjecture is false.

\section{Total variation deficiency and risk for unbounded losses} \label{sec:loss-unbounded}

In this section, we state a result relating the total variation deficiency between two statistical models to risk bounds on estimators when the loss function is unbounded.

Suppose $L:\Theta \times \Theta \rightarrow \mathbb{R}_{\geq 0}$ is some loss function, $\widehat{\theta}:\mathbb{Y} \rightarrow \Theta$ is an estimator for the target model, and $\textsc{K}:\mathbb{X} \rightarrow \mathbb{Y}$ is a reduction algorithm from the source model to the target model. Then $\widetilde{\theta}:= \widehat{\theta} \circ \textsc{K}: \mathbb{X} \rightarrow \Theta$ is an estimator for the source model. The following lemma gives an analog of Eq.~\eqref{eq:pointwise-risk-transfer} for such an unbounded loss.

\begin{lemma}\label{lemma:risk-bound}
Let $\theta^* \in \Theta$ be the unknown ground truth parameter. Let $X_{{\theta^*}}$ be a sample from the source model and $Y_{{\theta^*}}$ be a sample from the target model. Then for all scalars $\overline{L}>0$ and $p,q\geq 1$ with $1/p + 1/q = 1$, we have
\begin{align}\label{ineq:risk-bound}
	&\Big| \EE\big\{  L\big(\widetilde{\theta}, \theta^* \big) \big\} - \EE\big\{ L\big( \widehat{\theta},\theta^*\big) \big\} \Big|  \notag \\
	&\leq \overline{L} \cdot \mathsf{d_{TV}}\big( \textsc{K}(X_{\theta^*}), Y_{\theta^*} \big) + \EE\big\{ L\big( \widetilde{\theta}, \theta^* \big)^{p} \big\}^{1/p} \cdot \Pr\big\{ L\big( \widetilde{\theta}, \theta^* \big) \geq \overline{L} \big\}^{1/q}
	 + \EE\big\{ L\big( \widehat{\theta}, \theta^* \big)^{p} \big\}^{1/p} \cdot \Pr\big\{ L\big( \widehat{\theta}, \theta^* \big) \geq \overline{L} \big\}^{1/q}.
\end{align}
\end{lemma}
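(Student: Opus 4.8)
\textbf{Proof proposal for Lemma~\ref{lemma:risk-bound}.}

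The plan is to split the difference of expected losses by introducing an intermediate comparison at the truncation level $\overline{L}$. First I would write
\[
\EE\{ L(\widetilde\theta,\theta^*)\} - \EE\{ L(\widehat\theta,\theta^*)\}
= \big(\EE\{ L(\widetilde\theta,\theta^*)\wedge\overline{L}\} - \EE\{ L(\widehat\theta,\theta^*)\wedge\overline{L}\}\big) + \EE\{ (L(\widetilde\theta,\theta^*)-\overline{L})_+\} - \EE\{ (L(\widehat\theta,\theta^*)-\overline{L})_+\},
\]
and bound the three pieces separately. For the truncated term, note that $L(\widetilde\theta,\theta^*)\wedge\overline{L} = \big(L(\widehat\theta\circ\textsc{K}(\cdot),\theta^*)\wedge\overline{L}\big)$ is a measurable function of $X_{\theta^*}$ taking values in $[0,\overline{L}]$, and $L(\widehat\theta(\cdot),\theta^*)\wedge\overline{L}$ is the same function composed with $\textsc{K}$, evaluated on $Y_{\theta^*}$ versus $\textsc{K}(X_{\theta^*})$; more precisely both equal $\phi\circ\textsc{K}$ applied to the respective input, where $\phi(y) = L(\widehat\theta(y),\theta^*)\wedge\overline{L}$ is a $[0,\overline{L}]$-valued function on $\mathbb{Y}$. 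Hence by the standard variational characterization of total variation distance for bounded functions,
\[
\big|\EE\{\phi(\textsc{K}(X_{\theta^*}))\} - \EE\{\phi(Y_{\theta^*})\}\big| \le \overline{L}\cdot \mathsf{d_{TV}}\big(\textsc{K}(X_{\theta^*}),Y_{\theta^*}\big),
\]
which gives the first term on the right-hand side of~\eqref{ineq:risk-bound}.

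Next I would control each of the two tail terms $\EE\{(L-\overline{L})_+\}$. For a nonnegative random variable $Z$ we have $\EE\{(Z-\overline{L})_+\} \le \EE\{Z\cdot \ind{Z\ge\overline{L}}\}$, and applying Hölder's inequality with exponents $p,q$ (where $1/p+1/q=1$) yields $\EE\{Z\cdot\ind{Z\ge\overline{L}}\} \le \EE\{Z^p\}^{1/p}\cdot \Pr\{Z\ge\overline{L}\}^{1/q}$. Instantiating this with $Z = L(\widetilde\theta,\theta^*)$ and with $Z = L(\widehat\theta,\theta^*)$ produces exactly the second and third terms in~\eqref{ineq:risk-bound}. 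Assembling the three bounds via the triangle inequality gives the stated one-sided inequality; since the roles of $\widetilde\theta$ and $\widehat\theta$ are symmetric in the construction, the same argument with the two swapped gives the bound on the absolute value, completing the proof.

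The only mildly delicate point — not really an obstacle — is ensuring the measurability bookkeeping is clean: one must check that $\phi(y) := L(\widehat\theta(y),\theta^*)\wedge\overline{L}$ is a bona fide measurable $[0,\overline{L}]$-valued function on $\mathbb{Y}$ (immediate since $\widehat\theta$ is measurable and $L$ is a loss function), and that $L(\widetilde\theta,\theta^*) = \phi'\circ\textsc{K}$ where $\phi'(y) = L(\widehat\theta(y),\theta^*)$, so that truncating commutes with composition. Everything else is the two elementary inequalities above (bounded-function TV bound and Hölder), so no genuine difficulty arises.
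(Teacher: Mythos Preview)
Your proof is correct and follows essentially the same approach as the paper's: split into a bounded piece controlled by the total variation distance and tail pieces controlled by H\"older's inequality. The only cosmetic difference is that the paper truncates via the indicator $L\cdot\ind{L\le\overline{L}}$ and works with the pushforward laws of $\widehat\theta,\widetilde\theta$ on $\Theta$, whereas you truncate via $L\wedge\overline{L}$ and work directly with the bounded test function $\phi(y)=L(\widehat\theta(y),\theta^*)\wedge\overline{L}$ on $\mathbb{Y}$; both lead to the same bound.
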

\begin{proof}
Note that $\widehat{\theta}$ and $\widetilde{\theta}$ are both random variables supported on the parameter space $\Theta$. Let $\mu$ and $\nu$ denote their respective measures. 
Defining the set $\Theta_{\overline{L}} := \{\theta \in \Theta: L(\theta, \theta^*) \leq \overline{L}\}$, we have
\begin{align*}
	&\Big| \EE\big\{  L\big(\widetilde{\theta}, \theta^* \big)  \big\} - \EE\big\{ L\big( \widehat{\theta},\theta^* \big) \big\} \Big| \\
	&\qquad \leq \Big| \int_{\Theta_{\overline{L}}} L\big( \theta, \theta^* \big) \cdot \big( \mu(\mathrm{d}\theta) - \nu(\mathrm{d}\theta) \big) \Big| + \int_{\Theta \setminus \Theta_{\overline{L}}} L\big( \theta, \theta^* \big)  \mu(\mathrm{d}\theta) + \int_{\Theta \setminus \Theta_{\overline{L}}} L\big( \theta, \theta^* \big)  \nu(\mathrm{d}\theta) \\
	&\qquad \overset{\1}{\leq} \overline{L} \cdot \mathsf{d_{TV}}\big(\textsc{K}(X_{\theta^*}),Y_{\theta^*}\big) + \EE\big\{ L\big(\widetilde{\theta}, \theta^*\big) \cdot \mathbbm{1}_{L(\widetilde{\theta}, \theta^*) > \overline{L}} \big\}
	+ \EE\big\{ L\big(\widehat{\theta}, \theta^*\big) \cdot \mathbbm{1}_{L(\widehat{\theta}, \theta^*) > \overline{L}} \big\} \\
	&\qquad \overset{\2}{\leq} \overline{L} \cdot \mathsf{d_{TV}}\big(\textsc{K}(X_{\theta^*}),Y_{\theta^*}\big) + \EE\big\{ L\big(\widetilde{\theta}, \theta^*\big)^{p} \big\}^{1/p} \cdot \Pr\big\{ L(\widetilde{\theta}, \theta^*) > \overline{L} \big\}^{1/q} \\
	& \qquad \quad + \EE\big\{ L\big(\widehat{\theta}, \theta^*\big)^{p} \big\}^{1/p} \cdot \Pr\big\{ L(\widehat{\theta}, \theta^*) > \overline{L} \big\}^{1/q}.
\end{align*}
Here step $\1$ follows since $L(\theta, \theta^*) \leq \overline{L}$ on the set $\Theta_{\overline{L}}$ and by the definition of total variation, and step $\2$ follows by H\"older's inequality for $p,q \geq 1$ and $1/p + 1/q = 1$. 
\end{proof}


\section{Concentration results for approximate target models} \label{sec:concentration-specific}

In this section, we prove tail bounds for a sample generated by the algorithm $\textsc{RK}$ when the source is either the exponential or Laplace distribution, and the target is the Gaussian distribution. In conjunction with Lemma~\ref{lemma:risk-bound}, this is used to prove risk bounds in $\ell_2$ on a meta-procedure for the source model.

\begin{lemma}\label{lemma:prob-bound-RK-sample-exp}
Let $X_{\theta}$ be an exponential random variable with pdf~\eqref{eq:exponential-location} and the target $v(\cdot;\theta) = \NORMAL(\theta,\sigma^{2})$. Run $Y \gets \textsc{RK}(X_{\theta},N,M, y_0)$ with $\mathcal{S}^{*}$ in Eq.~\eqref{eq:signed-kernel-Exp}, base measure $\mathcal{P}$ in Eq.~\eqref{eq:base-measure-log-concave}, $M=4$, $N\geq 1$, and the point $y_{0} = X_{\theta}+1$. Then all $\sigma \geq 2$, we have
	\[
		\Pr\{ |Y - \theta | \geq t\} \leq 3\exp\left\{ -\frac{t^{2}}{8\sigma^2} \right\} + 5\exp\left\{-\frac{t}{2} \right \}, \quad \text{for all} \quad t \geq 0.
	\]
\end{lemma}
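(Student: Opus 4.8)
\textbf{Proof plan for Lemma~\ref{lemma:prob-bound-RK-sample-exp}.}
The plan is to condition on the source sample $X_\theta$ and decompose the event $\{|Y-\theta|\ge t\}$ according to whether or not the rejection kernel $\textsc{RK}$ ever ``accepts'' a fresh draw from the base measure $\mathcal{P}(\cdot|X_\theta)$. Recall from the description of Algorithm~$\textsc{rk}$ that the output $Y$ equals either the initialization $y_0 = X_\theta+1$ (if every iteration rejects) or an accepted draw $Y_s \sim \mathcal{P}(\cdot|X_\theta)$, which here is a Gaussian with mean $X_\theta+1$ and standard deviation $2\sigma$. In either case $Y$ is $X_\theta+1$ plus a sub-Gaussian perturbation with proxy variance at most $4\sigma^2$ (the constant-$y_0$ branch contributes zero perturbation). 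Thus, conditionally on $X_\theta$, we have the pointwise bound
\begin{align*}
\Pr\{|Y-\theta|\ge t \mid X_\theta\} \le \Pr\{ |Y-(X_\theta+1)| + |X_\theta+1-\theta| \ge t \mid X_\theta\},
\end{align*}
and $X_\theta+1-\theta = W+1 \sim \mathsf{Exp}(1)$ from the exponential location model~\eqref{eq:exponential-location}.

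Concretely, first I would split $\{|Y-\theta|\ge t\}\subseteq\{|Y-(X_\theta+1)|\ge t/2\}\cup\{|X_\theta+1-\theta|\ge t/2\}$. The second event is handled immediately by the exponential tail $\Pr\{W+1\ge t/2\} = e^{-t/2}$. For the first event, I would argue that conditionally on $X_\theta$, the variable $Y-(X_\theta+1)$ is either identically $0$ (the all-reject branch) or distributed as $\NORMAL(0,4\sigma^2)$ conditioned on a measurable acceptance event; either way its unconditional-over-the-algorithm-randomness tail obeys $\Pr\{|Y-(X_\theta+1)|\ge s\}\le c\,e^{-s^2/(8\sigma^2)}$ for a small absolute constant $c$, uniformly in $X_\theta$. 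The cleanest way to see this is: the accepted draw $Y_s$ has unconditional law $\mathcal{P}(\cdot|X_\theta) = \NORMAL(X_\theta+1,4\sigma^2)$, and the acceptance probability at each step is at least $p(X_\theta)/M$; conditioning a Gaussian on an event of probability $\rho$ inflates its tail by at most $1/\rho$, but here we instead note that $Y$ is a \emph{mixture} of $\delta_{X_\theta+1}$ and the (sub-)distribution of accepted draws, so $\Pr\{|Y-(X_\theta+1)|\ge s \mid X_\theta\} \le \Pr_{\mathcal{P}(\cdot|X_\theta)}\{|Z-(X_\theta+1)|\ge s\}/\text{(total accept mass)}$ — which I would rather avoid by instead using that the output $Y$ restricted to the accepting branch has density proportional to $\overline{\mathcal{S}}(\cdot|X_\theta)$ (see Eq.~\eqref{eq:MK-thresh}) and that $\overline{\mathcal{S}}(y|x)/\mathcal{P}(y|x)\le M = 4$ by the Radon--Nikodym bound established in the proof of Theorem~\ref{thm:exp-log-concave}. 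Hence the density of $Y$ on the accepting branch is at most $4$ times the $\NORMAL(X_\theta+1,4\sigma^2)$ density, giving $\Pr\{|Y-(X_\theta+1)|\ge s\mid X_\theta\}\le 4\,\Pr\{|\NORMAL(0,4\sigma^2)|\ge s\} + \mathbbm{1}_{s=0}$, and the standard Gaussian tail $\Pr\{|\NORMAL(0,4\sigma^2)|\ge s\}\le e^{-s^2/(8\sigma^2)}$ finishes it after absorbing the factor $4$ into the constant (noting $s\mapsto 4e^{-s^2/(8\sigma^2)}$ already dominates, or re-tuning). Then I would take $s = t/2$, so that $4e^{-(t/2)^2/(8\sigma^2)} = 4e^{-t^2/(32\sigma^2)}$; to land the stated constant $8\sigma^2$ in the exponent one should instead split at, say, $t/\sqrt{2}$ and $(1-1/\sqrt2)t$ or carry the split asymmetrically — I would choose the split points to make the Gaussian exponent exactly $t^2/(8\sigma^2)$ and collect the remaining exponential terms under the $5e^{-t/2}$ bound using $\sigma\ge 2$.

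The main obstacle is the conditioning subtlety: the accepted sample $Y_s$ is drawn from $\mathcal{P}(\cdot|X_\theta)$ \emph{conditioned} on the acceptance test passing, which is a data-dependent truncation rather than the clean Gaussian, so one cannot directly quote a Gaussian tail. The resolution — using the density-ratio bound $\mathrm{d}\overline{\mathcal{S}}/\mathrm{d}\mathcal{P}\le M$ to control the output density pointwise by a constant multiple of a Gaussian density, exactly as in Assumption~\ref{assptn:RND}(b) and its verification for this source/target pair — is what makes the argument go through, and I would present that density-domination step carefully. Everything else (the exponential tail for $W+1$, the union bound over the two split events, and the arithmetic to assemble the constants $3$ and $5$ using $\sigma\ge 2$ and $N\ge 1$) is routine bookkeeping. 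A final remark: the role of $y_0 = X_\theta+1$ is precisely that the all-reject fallback value is already within the correct concentration window around $\theta$ (it equals $X_\theta+1$, whose distance from $\theta$ is the exponential $W+1$), so the fallback branch contributes nothing beyond the $e^{-t/2}$ term already accounted for; I would state this explicitly since a poorly chosen $y_0$ would break the bound.
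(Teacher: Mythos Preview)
Your overall architecture---condition on $X_\theta$, use Lemma~\ref{lem:tech-prob} to separate the accept and fallback branches, then control the accept branch by a Gaussian tail and the fallback/shift by the exponential tail of $X_\theta+1-\theta\sim\mathsf{Exp}(1)$---matches the paper and would yield a bound of the correct \emph{shape}. The gap is quantitative: the density-domination step $\overline{\mathcal{S}}(y|x)\le M\,\mathcal{P}(y|x)$ that you invoke compares to $\mathcal{P}(\cdot|x)=\NORMAL(x+1,4\sigma^2)$, i.e., a Gaussian with variance $4\sigma^2$. After the triangle-inequality split (or any split conditioning on the value of $z=X_\theta+1-\theta$), the right-tail Gaussian piece depends on $t-z$, and the best you can extract is $\exp\{-(t/2)^2/(8\sigma^2)\}=\exp\{-t^2/(32\sigma^2)\}$. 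Your suggestion that one can ``re-tune the split to make the Gaussian exponent exactly $t^2/(8\sigma^2)$'' does not work: to get exponent $t^2/(8\sigma^2)$ from a variance-$4\sigma^2$ Gaussian you must take the Gaussian threshold all the way to $t$, which forces the exponential threshold to $0$ and makes that half of the union bound vacuous. No choice of split points rescues the factor of four lost in the variance.

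The paper avoids this by \emph{not} passing through the base measure. It uses Lemma~\ref{lem:tech-prob} to write $\Pr\{|Y-\theta|\ge t\mid X_\theta=x\}\le \frac{2}{p(x)}\!\int_{|y-\theta|\ge t}\overline{\mathcal{S}}(y|x)\,\mathrm{d}y+\mathbbm{1}_{|x+1-\theta|\ge t}$ and then substitutes the \emph{explicit} form $\mathcal{S}^*(y|x)=\phi_\sigma(y-x-1)\big(1-(y-x-1)/\sigma^2\big)$, which carries a variance-$\sigma^2$ (not $4\sigma^2$) Gaussian factor. After the change of variable $s=y-x-1$, the left tail $T_1$ is bounded uniformly in $x$ by $e^{-t^2/(2\sigma^2)}$ (using that $x\ge\theta-1$ forces the integration limit below $-t$, and that $\int_{-\infty}^{-t}\phi_\sigma(s)(-s/\sigma^2)\,\mathrm{d}s=\phi_\sigma(t)$). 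The right tail $T_2$ depends on $z=x+1-\theta$; only \emph{after} integrating against $e^{-z}\,\mathrm{d}z$ does the paper split at $z=t/2$, yielding $\tfrac12 e^{-(t/2)^2/(2\sigma^2)}=\tfrac12 e^{-t^2/(8\sigma^2)}$ from the $z\le t/2$ range and $2e^{-t/2}$ from $z>t/2$. Because the controlling Gaussian has variance $\sigma^2$, the $t/2$ split now lands on the stated exponent $t^2/(8\sigma^2)$, and the constants $3$ and $5$ assemble from $2T_1+2T_2+e^{-t}$. To repair your argument you would need to replace the blunt $\overline{\mathcal{S}}\le 4\mathcal{P}$ step by this direct integration of $\phi_\sigma\cdot(\text{linear})$ over the tail set---which is exactly the paper's route.
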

\begin{proof}
Note that $X_{\theta}$ is exponential with mean $\theta$ and unit variance. Applying the law of total probability, we obtain
\begin{align}\label{eq:total-prob-exp-gauss}
\Pr\{ |Y - \theta| \geq t \} = \int_{\theta - 1}^{+\infty} \Pr\{ |Y - \theta| \geq t \;|\; X_{\theta} = x \} \cdot e^{-x + \theta-1} \mathrm{d}x
\end{align}
Condition on $X_{\theta} = x$, using $Y \gets \textsc{RK}(x,N,M, y_0)$ and applying Lemma~\ref{lem:tech-prob} with $y_{0} = x+1$, we obtain
\[
	\Pr\{ |Y - \theta| \geq t \;|\; X_{\theta} = x \} = \frac{1-g(x)}{p(x)} \int_{|y-\theta| \geq t} \mathcal{S}^{*}(y|x) \vee 0 \; \mathrm{d}y + \mathbbm{1}_{|x+1 - \theta| \geq t} \cdot g(x),
\]
where $g(x) = (1-p(x)/M)^{N} \leq 1$. Continuing, we have 
\[
	p(x) \overset{\1}{\geq} 1 - \tau(\sigma) \overset{\2}{\geq} 1 - 2e^{-\sigma^2/2} \geq 1/2, \quad \text{for all} \quad x \in \real, \sigma \geq 2,
\]
where in step $\1$ we use inequality~\eqref{ineq:px-qx-bound-exp-log-concave} in the proof of Theorem~\ref{thm:exp-log-concave}, and in step $\2$ we use $\tau(\sigma) \leq 2e^{-\sigma^2/2}$ in Example~\ref{example-exp-gaussian}. Putting the pieces together, we obtain
\begin{align}\label{ineq:condition-tail-exp-gauss}
	\Pr\{ |Y - \theta| \geq t \;|\; X_{\theta} = x \} &\leq 2 \int_{|y-\theta| \geq t} \mathcal{S}^{*}(y|x) \vee 0 \; \mathrm{d}y +  \mathbbm{1}_{|x+1 - \theta| \geq t} \nonumber \\
	& \overset{\1}{=}  2 \int_{| s + x +1 - \theta| \geq t} \phi_{\sigma}(s) (1-s/\sigma^2) \vee 0  \; \mathrm{d}s + \mathbbm{1}_{|x+1 - \theta| \geq t} \nonumber \\
	& = 2T_{1} + 2T_{2} + \mathbbm{1}_{|x+1 - \theta| \geq t},
\end{align}
where in step $\1$ we use from Eq.~\eqref{eq:signed-kernel-Exp}, 
\[
	\mathcal{S}^{*}(y|x) = \frac{1}{\sqrt{2\pi} \sigma} \exp\left\{ -\frac{(y-x-1)^{2}}{2\sigma^2}\right\} \Big( 1 - \frac{y-x-1}{\sigma^2} \Big).
\] 
and let $s = y - x -1$, and in the last step we let
\[
	T_{1} =  \int_{-\infty}^{\theta - x - 1 - t} \phi_{\sigma}(s) (1-s/\sigma^2) \vee 0   \; \mathrm{d}s \quad \text{and} \quad T_{2} = \int_{\theta - x - 1 + t}^{+\infty} \phi_{\sigma}(s) (1-s/\sigma^2) \vee 0\;  \mathrm{d}s.
\]
We first turn to bound $T_{1}$. Note that $x \geq \theta-1$. Thus, $\theta - x - 1 \leq 0$ and for $t\geq 0$,
\begin{align*}
	T_{1} = \int_{-\infty}^{\theta - x - 1 - t} \phi_{\sigma}(s) (1-s/\sigma^2)  \; \mathrm{d}s &\leq \int_{-\infty}^{-t} \phi_{\sigma}(s) (1-s/\sigma^2)  \; \mathrm{d}s \\
	& \leq e^{-t^{2} / (2\sigma^2) } \cdot  \int_{-\infty}^{0} \phi_{\sigma}(s) \mathrm{d}s + \int_{-\infty}^{-t} \phi_{\sigma}(s) \frac{-s}{\sigma^2}  \; \mathrm{d}s \\
	& = \frac{1}{2} e^{-t^{2} / (2\sigma^2) } + \frac{1}{\sqrt{2\pi} \sigma} e^{-t^{2} / (2\sigma^2) } \leq e^{-t^{2} / (2\sigma^2) }.
\end{align*}
Substituting the inequality in the display and inequality~\eqref{ineq:condition-tail-exp-gauss} into Eq.~\eqref{eq:total-prob-exp-gauss} yields
\begin{align} \label{ineq1:tail-exp-gauss}
	\Pr\{ |Y - \theta| \geq t \} &\leq 2 e^{-t^{2} / (2\sigma^2) } + 2 \int_{\theta - 1}^{+\infty} T_{2} \cdot e^{-x + \theta-1} \mathrm{d}x + \int_{\theta-1}^{+\infty} \mathbbm{1}_{|x+1 - \theta| \geq t} \cdot e^{-x + \theta-1} \mathrm{d}x \nonumber \\
	& = 2 e^{-t^{2} / (2\sigma^2) } + 2 \int_{0}^{+\infty} \int_{t - z}^{+\infty} \phi_{\sigma}(s) (1-s/\sigma^2) \vee 0 \; \mathrm{d}s \cdot e^{-z} \mathrm{d} z + e^{-t},
\end{align}
where in the last step we let $z = x - \theta + 1$ and 
\[ 
	\int_{\theta-1}^{+\infty} \mathbbm{1}_{|x+1 - \theta| \geq t} \cdot e^{-x + \theta-1} \mathrm{d}x = \int_{0}^{+\infty} \mathbbm{1}_{|z| \geq t} \cdot e^{-z} \mathrm{d}z = e^{-t}.
\] 
Continuing, for $z \leq t/2$, we have $t-z \geq t/2\geq 0$ so that
\[
	\int_{t - z}^{+\infty} \phi_{\sigma}(s) (1-s/\sigma^2) \vee 0 \; \mathrm{d}s \leq \int_{t/2}^{+\infty} \phi_{\sigma}(s) \mathrm{d}s \leq \frac{1}{2} \exp\left\{ - \frac{t^2}{8\sigma^2} \right\}.
\]
For $z \geq t/2$, we obtain
\begin{align*}
	\int_{t -z}^{+\infty} \phi_{\sigma}(s) (1-s/\sigma^2) \vee 0 \; \mathrm{d}s &\leq \int_{-\infty}^{\sigma^{2}} \phi_{\sigma}(s) (1-s/\sigma^2) \; \mathrm{d}s \\
	& \leq 1 + \frac{1}{\sqrt{2\pi}\sigma} \exp\left\{ -\sigma^2/2 \right\} \leq 2.
\end{align*}
Putting together the two inequalities in the display above yields
\begin{align*}
	\int_{0}^{+\infty} \int_{t - z}^{+\infty} \phi_{\sigma}(s) (1-s/\sigma^2) \vee 0 \; \mathrm{d}s \cdot e^{-z} \mathrm{d} z &\leq \frac{1}{2} \int_{0}^{t/2} e^{-z} e^{-t^{2} / (8\sigma^2) } \mathrm{d}z + \int_{t/2}^{+\infty} 2e^{-z} \mathrm{d} z  \\ 
	& \leq \frac{1}{2} e^{-t^{2} / (8\sigma^2) } + 2e^{-t/2}.
\end{align*}
Substituting the inequality in the display into inequality~\eqref{ineq1:tail-exp-gauss} yields
\[
	\Pr\{ |Y - \theta| \geq t \} \leq 3 e^{-t^{2} / (8\sigma^2) } + 5 e^{-t/2}, \quad \text{for all} \quad t \geq 0. 
\]
\end{proof}

\begin{lemma}\label{lemma:prob-bound-RK-sample-Laplace}
Let $X_{\theta} \sim \mathsf{Lap}(\theta,b)$ and the target $v(\cdot;\theta) = \NORMAL(\theta,\sigma^{2})$. Run $Y \gets \textsc{RK}(X_{\theta},N,M, y_0)$ with $\mathcal{S}^{*}$ in Eq.~\eqref{S-star-Laplace-general}, base measure $\mathcal{P}(\cdot|x) = \NORMAL(x,\sigma^2)$ for all $x\in\real$, $M=2$, $N\geq 1$, and the initial point $y_{0} = X_{\theta}$. Then for $\sigma \geq b >0$, we have
\begin{subequations}
\begin{align}\label{tail-bound-laplace-gauss}
	\Pr\{ |Y - \theta | \geq t\} \leq 2\exp\left\{ -\frac{t^{2}}{8\sigma^2} \right\} + 3\exp\left\{-\frac{t}{2b} \right \}, \quad \text{for all} \quad t \geq 0.
\end{align}
Moreover, we have for all $\sigma,b>0$
\begin{align}\label{var-bound-laplace-gauss}
	\EE\big\{ |Y - \theta |^{2} \big\} \leq 2b^2 + \sigma^{2} \Big( 1+\frac{\sigma}{b} \exp\left\{ - \frac{\sigma^2}{2b^2} \right\} \Big).
\end{align} 
\end{subequations}
\end{lemma}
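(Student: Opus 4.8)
\emph{Overview of the plan.} The whole argument rests on one structural observation: the signed kernel $\mathcal{S}^{*}$ of Eq.~\eqref{S-star-Laplace-general} is \emph{translation invariant}, since $\mathcal{S}^{*}(y\mid x)=\phi_{\sigma}(y-x)\bigl(1+b^{2}\sigma^{-2}-b^{2}\sigma^{-4}(y-x)^{2}\bigr)$, and the base measure $\mathcal{P}(\cdot\mid x)=\NORMAL(x,\sigma^{2})$ is the $x$-translate of $\NORMAL(0,\sigma^{2})$. Writing $\overline{s}(s):=\bigl[\phi_{\sigma}(s)(1+b^{2}\sigma^{-2}-b^{2}\sigma^{-4}s^{2})\bigr]\vee 0$, I would first record that $\overline{\mathcal{S}}(y\mid x)=\overline{s}(y-x)$, so the functionals $p(x)=\int_{\mathbb{R}}\overline{s}(y-x)\,\mathrm{d}y=:p$ and $g(x)=(1-p(x)/M)^{N}=:g$ are constants in $x$, with $p\ge 1$ because $\int_{\mathbb{R}}\mathcal{S}^{*}(y\mid x)\,\mathrm{d}y=1$ (a one-line Gaussian moment computation) and truncating to the positive part only adds mass — the bound $p\ge 1$ is also noted in the proof of Theorem~\ref{thm:Lap-Gaussian}. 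Conditioning on $X_{\theta}=x$ and invoking Lemma~\ref{lem:tech-prob} with the choice $y_{0}=x$, the conditional law of $Y-x$ equals $\tfrac{1-g}{p}\,\overline{s}(\cdot)\,\mathrm{d}s$ plus an atom of mass $g$ at $0$; because $\overline{s}$ is even and the atom sits at the origin, this conditional law is symmetric about $0$, with conditional mean $0$ and conditional second moment $\tfrac{1-g}{p}\,m_{2}$, where $m_{2}:=\int_{\mathbb{R}}s^{2}\overline{s}(s)\,\mathrm{d}s$. Both bounds in the lemma are consequences of this picture.

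\emph{Tail bound} (for $\sigma\ge b$). I would combine a triangle inequality with a union bound, $\Pr\{|Y-\theta|\ge t\}\le\Pr\{|Y-X_{\theta}|\ge t/2\}+\Pr\{|X_{\theta}-\theta|\ge t/2\}$. The second term equals $e^{-t/(2b)}$ exactly, since $X_{\theta}-\theta\sim\mathsf{Lap}(0,b)$. For the first, note that for $t>0$ the atom at $0$ is irrelevant, so conditionally on $X_{\theta}=x$ one has $\Pr\{|Y-X_{\theta}|\ge t/2\mid X_{\theta}=x\}=\tfrac{1-g}{p}\int_{|s|\ge t/2}\overline{s}(s)\,\mathrm{d}s\le\int_{|s|\ge t/2}\overline{s}(s)\,\mathrm{d}s$ (using $p\ge 1$, $g\ge 0$), a bound independent of $x$. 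Then the pointwise estimate $\overline{s}(s)\le(1+b^{2}\sigma^{-2})\phi_{\sigma}(s)\le 2\phi_{\sigma}(s)$ valid for $\sigma\ge b$, together with $\int_{|s|\ge t/2}\phi_{\sigma}(s)\,\mathrm{d}s\le e^{-t^{2}/(8\sigma^{2})}$ (the Gaussian tail bound already used in the proof of Theorem~\ref{thm:Lap-Gaussian}), gives $\Pr\{|Y-X_{\theta}|\ge t/2\}\le 2e^{-t^{2}/(8\sigma^{2})}$, and summing proves $\Pr\{|Y-\theta|\ge t\}\le 2e^{-t^{2}/(8\sigma^{2})}+e^{-t/(2b)}\le 2e^{-t^{2}/(8\sigma^{2})}+3e^{-t/(2b)}$.

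\emph{Variance bound} (for all $\sigma,b>0$). I would decompose $Y-\theta=(Y-X_{\theta})+(X_{\theta}-\theta)$ and use the tower rule: since $\EE[Y-X_{\theta}\mid X_{\theta}]=0$ (conditional symmetry) and $\EE[(X_{\theta}-\theta)^{2}]=2b^{2}$, the cross term drops and $\EE[(Y-\theta)^{2}]=\tfrac{1-g}{p}\,m_{2}+2b^{2}\le m_{2}/p+2b^{2}$. It then remains to bound $m_{2}/p$. Setting $a:=\sqrt{1+\sigma^{2}/b^{2}}$ (the normalized edge of the support of $\overline{s}$) and carrying out the one-dimensional Gaussian moment integrals $\int_{|u|>a}u^{j}\phi(u)\,\mathrm{d}u$ for $j\in\{0,2,4\}$, I expect the clean identities $p=1-2\Phi^{c}(a)+2(b^{2}/\sigma^{2})\,a\phi(a)$ and — after the cancellation $b^{2}a^{3}-\sigma^{2}a=ab^{2}$ — $m_{2}=\sigma^{2}p-2b^{2}\int_{-a}^{a}u^{2}\phi(u)\,\mathrm{d}u$, hence $m_{2}\le\sigma^{2}p$ and $\EE[(Y-\theta)^{2}]\le 2b^{2}+\sigma^{2}$, which implies the stated (weaker) bound $2b^{2}+\sigma^{2}\bigl(1+\tfrac{\sigma}{b}e^{-\sigma^{2}/(2b^{2})}\bigr)$ a fortiori. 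If one instead keeps the slack and uses only $\overline{s}(s)\le(1+b^{2}\sigma^{-2})\phi_{\sigma}(s)$ together with a tail estimate for $\int_{|u|>a}u^{4}\phi(u)\,\mathrm{d}u$, one recovers exactly the $\tfrac{\sigma}{b}e^{-\sigma^{2}/(2b^{2})}$ correction in the stated form.

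\emph{Where the difficulty lies.} The tail bound is routine bookkeeping once translation invariance and the conditional symmetry are in hand. The delicate step is the variance bound's validity \emph{for all} $\sigma,b>0$: one cannot just integrate the tail estimate (which requires $\sigma\ge b$), and the crude bounds $\tfrac{1-g}{p}\le 1$ and $m_{2}\le(1+b^{2}\sigma^{-2})\sigma^{2}$ are lossy precisely when $\sigma\ll b$, where $p$ is large. The real work is the careful evaluation of the negative lobes of $\mathcal{S}^{*}$ that produces the cancellation $m_{2}=\sigma^{2}p-2b^{2}\int_{-a}^{a}u^{2}\phi(u)\,\mathrm{d}u$; a minor additional wrinkle is the edge case $\sigma<b$, where $M=2$ violates Assumption~\ref{assptn:RND}(b) and one must note that the clipped acceptance rule still keeps the conditional law of $Y-X_{\theta}$ symmetric about the origin, so the same computation goes through.
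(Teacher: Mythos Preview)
Your proof is correct and takes a genuinely different (and in places sharper) route than the paper's.

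\textbf{Tail bound.} The paper conditions on $X_{\theta}=x$, applies Lemma~\ref{lem:tech-prob}, bounds $\mathcal{S}^{*}\vee 0\le 2\phi_{\sigma}$, and then integrates $\int_{|s+z|\ge t}2\phi_{\sigma}(s)\,\mathrm{d}s$ against the Laplace density in $z=x-\theta$, splitting on $|z|\lessgtr t/2$. You instead use the triangle inequality $|Y-\theta|\le|Y-X_{\theta}|+|X_{\theta}-\theta|$ and bound each tail separately, exploiting that $|Y-X_{\theta}|$ has an $x$-free conditional law. Your route is shorter and yields the better constant $2e^{-t^{2}/(8\sigma^{2})}+e^{-t/(2b)}$; the paper arrives at $2e^{-t^{2}/(8\sigma^{2})}+2e^{-t/(2b)}+e^{-t/b}$.

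\textbf{Variance bound.} The paper conditions on the stopping index $T$, recognizes the accepted law as a convolution $g_{1}\ast g_{2}$ with $g_{2}=\overline{s}/p$, and bounds $\mathbb{E}[X_{2}^{2}]=m_{2}/p$ directly via the integration-by-parts identity for $\int_{-\Delta}^{\Delta}z^{4}\phi_{\sigma}(z)\,\mathrm{d}z$, obtaining the stated $\sigma^{2}(1+\sigma b^{-1}e^{-\sigma^{2}/(2b^{2})})$. Your decomposition $Y-\theta=(Y-X_{\theta})+(X_{\theta}-\theta)$ with the cross term killed by conditional symmetry reaches the same quantity $m_{2}/p$, but your algebraic identity $m_{2}=\sigma^{2}p-2b^{2}\int_{-a}^{a}u^{2}\phi(u)\,\mathrm{d}u$ (which I checked: it follows from $\int s^{2}\mathcal{S}^{*}(s)\,\mathrm{d}s=\sigma^{2}-2b^{2}$ and the two integration-by-parts relations) gives the strictly stronger $m_{2}/p\le\sigma^{2}$, hence $\mathbb{E}[(Y-\theta)^{2}]\le 2b^{2}+\sigma^{2}$.

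\textbf{One caveat.} You are right to flag the $\sigma<b$ edge case, where $M=2$ no longer dominates the ratio and Lemma~\ref{lem:tech-prob} does not apply as stated. Your remark that ``the same computation goes through'' is too quick: under the clipped acceptance rule the accepted density becomes proportional to $\min\{2\phi_{\sigma}(s),\overline{s}(s)\}$ rather than $\overline{s}(s)$, so the identity $m_{2}=\sigma^{2}p-2b^{2}\int_{-a}^{a}u^{2}\phi(u)\,\mathrm{d}u$ is no longer the relevant one (symmetry survives, but the integrals change). That said, the paper's proof has exactly the same gap---it invokes Eq.~\eqref{eq:distr-Y-RK} without verifying Assumption~\ref{assptn:RND}(b) for $\sigma<b$---and the lemma is only ever applied in the regime $\sigma^{2}=2b^{2}\log(12/\delta)\ge b^{2}$, so this is a cosmetic issue in both arguments.
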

\begin{proof} We prove each part in turn.\\
\noindent \underline{Proof of inequality~\eqref{tail-bound-laplace-gauss}:}
Applying the law of total probability, we obtain
\begin{align}\label{eq:total-prob-laplace-gauss}
\Pr\{ |Y - \theta| \geq t \} = \int_{\real} \Pr\{ |Y - \theta| \geq t \;|\; X_{\theta} = x \} \cdot \frac{e^{-|x-\theta|/b}}{2b} \mathrm{d}x.
\end{align}
Condition on $X_{\theta} = x$, using $Y \gets \textsc{RK}(x,N,M, y_0)$ and applying Lemma~\ref{lem:tech-prob} with $y_{0} = x$, we obtain
\[
	\Pr\{ |Y - \theta| \geq t \;|\; X_{\theta} = x \} = \frac{1-g(x)}{p(x)} \int_{|y-\theta| \geq t} \mathcal{S}^{*}(y|x) \vee 0 \; \mathrm{d}y + \mathbbm{1}_{|x - \theta| \geq t} \cdot g(x),
\]
where $g(x) = (1-p(x)/M)^{N} \leq 1$. Note that from Eq.~\eqref{eq:px-qx-laplace-gauss-b}, we obtain $p(x) \geq 1$. By Eq.~\eqref{S-star-Laplace-general}, we obtain for $\sigma\geq b$,
\[
	\mathcal{S}^{*}(y|x) = \phi_{\sigma}(y-x) \cdot \big( 1 + b^2\sigma^{-2} - b^2\sigma^{-4}(y-x)^2 \big) \leq 2 \phi_{\sigma}(y-x).
\]
Putting the pieces together yields
\begin{align*}
	\Pr\{ |Y - \theta| \geq t \;|\; X_{\theta} = x \} &\leq \int_{|y-\theta| \geq t} 2 \phi_{\sigma}(y-x)  \mathrm{d}y + \mathbbm{1}_{|x - \theta| \geq t}  \\
	&= \int_{|s+x-\theta|\geq t} 2\phi_{\sigma}(s) \mathrm{d}s + \mathbbm{1}_{|x - \theta| \geq t}.
\end{align*}
Substituting the inequality in the display into inequality~\eqref{eq:total-prob-laplace-gauss} yields
\begin{align*}
	\Pr\{ |Y - \theta| \geq t \} &\leq \int_{\real} \frac{e^{-|x-\theta|/b}}{2b} \cdot \int_{|s+x-\theta|\geq t} 2\phi_{\sigma}(s) \mathrm{d}s \mathrm{d}x + \int_{\real} \frac{e^{-|x-\theta|/b}}{2b} \cdot \mathbbm{1}_{|x - \theta| \geq t} \mathrm{d}x \\ 
	&\leq \int_{\real} \frac{e^{-|z|/b}}{b} \cdot \int_{|s+z|\geq t} \phi_{\sigma}(s) \mathrm{d}s \mathrm{d}z + e^{-t/b}.
\end{align*}
Note that for $|z| \leq t/2$, we have $|s| \geq t/2$ if $|s+z| \geq t$, whence we deduce the bound
\begin{align*}
\int_{\real} \frac{e^{-|z|/b}}{b} \cdot \int_{|s+z|\geq t} \phi_{\sigma}(s) \mathrm{d}s \mathrm{d}z &= \int_{|z|\leq t/2} \frac{e^{-|z|/b}}{b} \int_{|s|\geq t/2}  \phi_{\sigma}(s) \mathrm{d}s \mathrm{d}z + \int_{|z|>t/2} \frac{e^{-|z|/b}}{b} \mathrm{d}z \\
&\leq 2 e^{-t^{2} / (8\sigma^2) } + 2e^{-t/(2b)},
\end{align*}
where in the last step we use $\int_{|s|\geq t/2}  \phi_{\sigma}(s) \mathrm{d}s \leq e^{-t^{2}/(8\sigma^2)}$. Combining the two inequalities in the display above completes the proof.\\

\noindent \underline{Proof of inequality~\eqref{var-bound-laplace-gauss}:} Let $T$ be the total number of iterations in the randomized algorithm $Y \gets \textsc{RK}(X_{\theta},N,M, y_0)$. Applying the law of total expectation yields
\begin{align}\label{eq:cond-expect-total-lap-gauss}
\hspace{-0.5cm}
	\EE\big\{ |Y-\theta|^{2} \big\} = \sum_{k=1}^{N} \EE\big\{ |Y-\theta|^{2} \;\vert\; T = k\big\} \cdot \Pr\{T=k\} + \EE\big\{ |Y-\theta|^{2} \;\vert\; T > N \big\} \cdot \Pr\{T > N\}.
\end{align}
We next calculate the conditional expectation in the display. For each $1\leq k \leq N$, we obtain for all $t\in \real$,
\begin{align*}
	\Pr\big\{Y - \theta \leq t \;\vert\; T = k, X_{\theta} = x \big\} &= \frac{\Pr\big\{ \{Y\leq t\} \cap \{T = k\} \;\vert\; X_{\theta} = x \big\}}{\Pr\{T=k\}} \\
	&= \int_{-\infty}^{t+\theta} \frac{\mathcal{S}^{*}(y|x) \vee 0 }{p(x)} \mathrm{d}y,
\end{align*}
where in the last step we use Eq.~\eqref{eq:distr-Y-RK} and $\Pr\{T=k\} = (1-p(x)/M)^{k-1}p(x)/M$ by the calculations in Section~\ref{sec:rej-samp-tech-lemma}. By the law of total probability, we obtain for all $t\in \real$,
\begin{align*}
	\Pr\big\{Y - \theta \leq t \;\vert\; T = k \big\} = \int_{\real} \frac{e^{-|x-\theta|/b}}{2b} \int_{-\infty}^{t+\theta} \frac{\mathcal{S}^{*}(y|x) \vee 0 }{p(x)} \mathrm{d}y \mathrm{d}x.
\end{align*}
Differentiating with respect to $t$, we obtain the conditional pdf of the random variable $Y-\theta$
\begin{align*}
	f_{Y-\theta}(t \; \vert \; T=k) &= \int_{\real} \frac{e^{-|x-\theta|/b}}{2b}  \frac{\mathcal{S}^{*}(t+\theta|x) \vee 0 }{p(x)} \mathrm{d}x \\
	& \overset{\1}{=}  \int_{\real} \frac{e^{-|x-\theta|/b}}{2b} \cdot \frac{ \phi_{\sigma}(t+\theta-x) \big(1+b^{2}\sigma^{-2} - b^{2}\sigma^{-4}(t+\theta-x)^2 \big) \vee 0 }{1+C_{\sigma,b}} \mathrm{d}x \\
	& = \int_{\real} g_{1}(z) \cdot g_{2}(t-z)  \mathrm{d}z,
\end{align*}
where in step $\1$ we use the closed-form of $\mathcal{S}^*$~\eqref{S-star-Laplace-general}, from Eq.~\eqref{eq:px-qx-laplace-gauss-b}
\[
	p(x) = 1 + C_{\sigma,b}, \quad \text{where} \quad C_{\sigma,b} = 2\int_{\sigma\sqrt{\sigma^2 + b^2}/b}^{+\infty} \phi_{\sigma}(t) \Big( b^{2} \sigma^{-4}t^{2} - 1 - b^{2}\sigma^{-2} \Big) \mathrm{d}t,
\]
and in the last step we let $z = x-\theta$ and 
\[
	g_{1}(z) = \frac{e^{-|z|/b}}{2b}, \quad g_{2}(z) =  \frac{ \phi_{\sigma}(z) \big(1+b^{2}\sigma^{-2} - b^{2}\sigma^{-4}z^2 \big) \vee 0 }{1+C_{\sigma,b}}, \quad \text{for all } z \in \real.
\]
Note that $g_{1}(z),g_{2}(z)$ are both probability density functions, and $f_{Y-\theta}(t\;\vert\; T=k) = (g_1 * g_2)(t)$. Thus, condition on $T=k$, $Y-\theta \overset{(d)}{=} X_{1} + X_{2}$, where $X_1$ and $X_2$ are independent random variables with pdf $g_1$ and $g_2$, respectively. Consequently, using $X_1$ is zero mean, we obtain
\begin{align}\label{ineq:cond-variance-k-lap-gauss}
	\EE\{ |Y-\theta|^{2} \;\vert \; T=k \} = \EE\{ X_{1}^{2} \} + \EE\{X_{2}^{2}\} \overset{\1}{\leq} 2b^2 + \sigma^2 \big( 1 + \sigma b^{-1} e^{-\frac{\sigma^2}{2b^2}}  \big),
\end{align}
Here in step $\1$ we use $\EE\{ X_{1}^{2} \} = 2b^2$, and with $\Delta = \sigma\sqrt{\sigma^2 + b^2}/b$ and $C_{\sigma,b} \geq 0$, we have
\begin{align*}
	\EE\{X_2^2\} = \int_{\real} z^{2} g_{2}(z) \mathrm{d}z &\leq \int_{-\Delta}^{\Delta} z^{2}\phi_{\sigma}(z)(1+b^2\sigma^2 - b^{2}\sigma^{-4}z^2) \mathrm{d}z \\
	& \overset{\1}{=} (1+b^2\sigma^{-2}) \int_{-\Delta}^{\Delta} z^{2}\phi_{\sigma}(z) \mathrm{d}z - b^2\sigma^{-4} \Big( 3\sigma^2 \int_{-\Delta}^{\Delta} z^{2}\phi_{\sigma}(z) \mathrm{d}z - 2\sigma^2 \Delta^3 \phi_{\sigma}(\Delta) \Big) \\
	& = (1-2b^{2} \sigma^{-2}) \int_{-\Delta}^{\Delta} z^{2}\phi_{\sigma}(z) \mathrm{d}z + 2b^2\sigma^{-2}\Delta^3 \phi_{\sigma}(\Delta) \\
	& \overset{\2}{\leq} \sigma^2-2b^{2} +   2b^2\sigma^{-2}\Delta^3 \phi_{\sigma}(\Delta) \leq \sigma^2 \Big( 1 + \sigma b^{-1} \exp\big(-\sigma^2/(2b^2)\big)  \Big),
\end{align*}
where in step $\1$ we apply integration by parts so that
$
	\int_{-\Delta}^{\Delta} \phi_{\sigma}(z)z^4 \mathrm{d}z = 3\sigma^2 \int_{-\Delta}^{\Delta} z^{2}\phi_{\sigma}(z) \mathrm{d}z - 2\sigma^2 \Delta^3 \phi_{\sigma}(\Delta),
$
in step $\2$ we use $\int_{-\Delta}^{\Delta} z^{2}\phi_{\sigma}(z) \mathrm{d}z \leq \sigma^2$, and in the last step we use
\[
	2b^2\sigma^{-2}\Delta^3 \phi_{\sigma}(\Delta) = \frac{2(\sigma^2+b^2)^{3/2}}{b} \frac{\exp\big(-\sigma^2/(2b^2)\big) }{\sqrt{2\pi e}} \leq 2b^2 + \sigma^3 b^{-1} \exp\big(-\sigma^2/(2b^2)\big).
\]
Finally, condition on $T>N$, we obtain $Y-\theta = X_{\theta} - \theta$, and thus
\begin{align*}
	\EE\big\{ |Y-\theta|^2 \; \vert \; T>N \big\} = \int_{\real} \frac{e^{-|x-\theta|/b}}{2b} (x-\theta)^{2} \mathrm{d}x = 2b^{2}.
\end{align*}
Now substituting the bound in the display and inequality~\eqref{ineq:cond-variance-k-lap-gauss} into Eq.~\eqref{eq:cond-expect-total-lap-gauss}, we obtain
\begin{align*}
	\EE\big\{ |Y-\theta|^2 \big\} &\leq \Big( 2b^2 + \sigma^2 \big( 1 + \sigma b^{-1} e^{-\frac{\sigma^2}{2b^2}}  \big) \Big) \cdot \Pr\{T\leq N\} + 2b^2  \cdot \Pr\{T>N\} \\
	&\leq 2b^2 + \sigma^{2} \Big( 1+\frac{\sigma}{b} \exp\left\{ - \frac{\sigma^2}{2b^2} \right\} \Big).
\end{align*}

\end{proof}


\section{Proofs of technical lemmas} \label{sec:proof-technical}

In this appendix, we collect proofs of the technical lemmas stated in the main text.

\subsection{Technical results used in proof of Lemma~\ref{lem:rej-sampling}}

Lemma~\ref{lem:rej-sampling} relied on an auxiliary result about rejection sampling. We prove it below for completeness.

\subsubsection{Proof of Lemma~\ref{lem:tech-prob}} \label{sec:rej-samp-tech-lemma}
Note that by sampling $U_{t} \sim \mathsf{Unif}([0, 1])$ and $Y_{t} \sim \mathcal{P}(\cdot | x)$ mutually independently, we obtain
\begin{align}\label{eq:prob-success-sample}
	\Pr\bigg\{ U_t \leq \frac{\overline{\mathcal{S}}(Y_{t} | x)}{M\cdot \mathcal{P}(Y_{t} | x)} \bigg\} \nonumber &= \EE\left[  \mathbbm{1}_{U_t \leq \frac{\overline{\mathcal{S}}(Y_{t} | x)}{M\cdot \mathcal{P}(Y_{t} | x)} }  \right] 
	\\& =  \EE\left[ \EE\big[ \mathbbm{1}_{U_t \leq \frac{\overline{\mathcal{S}}(Y_{t} | x)}{M\cdot \mathcal{P}(Y_{t} | x)} } \;\vert\; Y_{t} \big]  \right] \nonumber
	\\ & = \EE\left[  \frac{\overline{\mathcal{S}}(Y_{t} | x)}{M\cdot \mathcal{P}(Y_{t} | x)}   \right] \nonumber
	\\ & =  \int_{\mathbb{Y}} \frac{\overline{\mathcal{S}}(y | x)}{M\cdot \mathcal{P}(y | x)} \cdot \mathcal{P}(y | x) \mathrm{d} y = \frac{p(x)}{M},
\end{align}
where in the last step we use the definition $p(x) = \int_{\mathbb{Y}} \overline{\mathcal{S}}(y | x) \mathrm{d} y$ from Eq.~\eqref{eq:pq}. Given $x \in \mathbb{X}$, let $T$ be the total number of iterations of the randomized algorithm $x \mapsto \textsc{rk}(x, N, M, y_0)$ and recall that $Y \gets \textsc{rk}(x, N, M, y_0)$ denotes the output. With Eq.~\eqref{eq:prob-success-sample} in hand, we now turn to evaluate $\Pr\{Y \in C \}$ for any measurable set $C \subseteq \mathcal{B}(\mathbb{Y})$. Applying the law of total probability, we obtain
\begin{align}\label{eq:total-prob-proof-master-lemma}
	\Pr\big\{ Y \in C \big\} = \Pr\big\{ Y \in C \; \vert \; T > N \big\} \cdot \Pr\big\{ T > N \big\} + \sum_{k=1}^{N} \Pr\big\{ \{Y \in C\} \; \cap \; \{ T = k\} \big\}.
\end{align}
On the one hand, when $k \leq N$, we have
\begin{align}\label{eq:distr-Y-RK}
	&\Pr\big\{ \{Y \in C\} \; \cap \; \{T = k\}  \big\} \nonumber \\
	&= \Pr \left\{ \{Y_{k} \in C\} \; \cap \; \Big\{ U_{k} \leq \frac{\overline{\mathcal{S}}(Y_{k} | x)}{M\cdot \mathcal{P}(Y_{k} | x)}  \Big\} \; \cap \; \Big\{ U_{t} > \frac{\overline{\mathcal{S}}(Y_{t} | x)}{M\cdot \mathcal{P}(Y_{t} | x)},\; \text{ for all }\; 1 \leq t \leq k-1  \Big\}  \right\} \nonumber
	\\ 
	&\overset{\1}{=} \prod_{t = 1}^{k-1} \Pr\left\{ U_{t} > \frac{\overline{\mathcal{S}}(Y_{t} | x)}{M\cdot \mathcal{P}(Y_{t} | x)} \right\} \cdot \Pr \left\{ \{Y_{k} \in C\} \; \cap \; \Big\{ U_{k} \leq \frac{\overline{\mathcal{S}}(Y_{k} | x)}{M\cdot \mathcal{P}(Y_{k} | x)}  \Big\} \right\}  \nonumber \\ 
	&\overset{\2}{=} \Big( 1-\frac{p(x)}{M} \Big)^{k-1} \cdot \int_{y \in C} \left( \int_{0}^{\frac{\overline{\mathcal{S}}(y | x)}{M\cdot \mathcal{P}(y | x)}}  1 \; \mathrm{d} u \right) \;  \mathcal{P}(y | x) \mathrm{d} y  \nonumber \\
	&= \Big( 1-\frac{p(x)}{M} \Big)^{k-1} \cdot \int_{y \in C} \frac{\overline{\mathcal{S}}(y | x)}{M} \mathrm{d}y.
\end{align}
In step $\1$, we use the mutual independence between the tuple of random variables $(U_{t},Y_{t})$ and $(U_{t'},Y_{t'})$ for $t\neq t'$, and in step $\2$ we use Eq.~\eqref{eq:prob-success-sample}. Summing the geometric series and letting $g(x) =  \Big( 1- \frac{p(x)}{M}\Big)^{N}$ for convenience, we have
\begin{subequations}
\begin{align}
\sum_{k = 1}^N \Pr\big\{ \{Y \in C\} \; \cap \; \{T = k\}  \big\} &= \frac{M}{p(x)} \cdot \left[ 1 - \left( 1 - \frac{p(x)}{M}\right)^N \right] \cdot \int_{y \in C} \frac{\overline{\mathcal{S}}(y | x)}{M} \mathrm{d}y \notag \\
&= (1 - g(x)) \cdot \int_{y \in C} \frac{\overline{\mathcal{S}}(y | x)}{p(x)} \mathrm{d}y. \label{eq:small-k-bound}
\end{align}

On the other hand, when $T > N$, we have $Y = y_{0}$, and so
\begin{align}\label{eq:condition-prob2-proof-master-lemma}
	\Pr\big\{ Y \in C \; \vert \; T>N \big\} \cdot \Pr\big\{ T > N \big\} = \mathbbm{1}_{y_{0} \in C} \cdot \Big( 1- \frac{p(x)}{M}\Big)^{N} =  \mathbbm{1}_{y_{0} \in C} \cdot g(x).
\end{align}
\end{subequations}
Substituting Eq.~\eqref{eq:small-k-bound} and Eq.~\eqref{eq:condition-prob2-proof-master-lemma} into Eq.~\eqref{eq:total-prob-proof-master-lemma}, we obtain
\begin{align*}
	\Pr\big\{ Y \in C \big\} =  \frac{\int_{y \in C} \overline{\mathcal{S}}(y | x) \mathrm{d}y }{p(x)} \cdot (1-g(x)) + \mathbbm{1}_{y_{0} \in C} \cdot g(x),
\end{align*}
as claimed.
\qed

\subsection{Technical results used in the proof of Proposition~\ref{prop:Laplace}}

We provide the proof of the integration by parts formula in Lemma~\ref{lemma:aux-pf-prop-laplace}, which holds under the assumptions in Eq.~\eqref{assump-target-laplace}.

\subsubsection{Proof of Lemma~\ref{lemma:aux-pf-prop-laplace}}\label{sec:pf-lemma-prop-laplace-integral}
We may write the given integral as
\begin{align}\label{eq1:pf-lemma-prop-laplace}
	\int_{\real} \frac{1}{2b_{k}}\exp( - |x_{k} - \theta_{k}| / b_{k}) \cdot v_{k}(y;x) \mathrm{d}x_{k} &= \frac{\exp( - \theta_{k} / b_{k} ) }{2b_{k}} \int_{-\infty}^{\theta_{k}} \exp( x_{k}/b_{k} ) \cdot v_{k}(y;x) \mathrm{d}x_{k}  \nonumber \\
	 & \quad +  \frac{ \exp(\theta_k / b_k  ) }{2b_{k}} \int_{\theta_{k}}^{+\infty} \exp(-x_k/b_k) \cdot v_{k}(y;x) \mathrm{d}x_k.
\end{align}
We handle the two integrals on the RHS separately. First, we have
\begin{align*}
	\int_{-\infty}^{\theta_{k}}  \frac{\exp(x_{k} / b_{k})}{b_{k}}  v_{k}(y;x) \mathrm{d}x_{k} &= \exp(x_{k} / b_{k}) v_{k}(y;x) \Big \vert_{-\infty}^{\theta_{k}} - \int_{-\infty}^{\theta_{k}} \exp(x_{k} / b_{k}) \cdot \frac{\partial v_{k}(y;x)}{\partial x_{k}} \mathrm{d}x_{k} \\
	& = \exp(x_{k} / b_{k}) \lim_{x_{k} \rightarrow \theta_k }v_{k}(y;x) - \int_{-\infty}^{\theta_{k}} \exp(x_{k} / b_{k})  \cdot \frac{\partial v_{k}(y;x)}{\partial x_{k}} \mathrm{d}x_{k},
\end{align*}
where in the last step we use assumption~\eqref{assump1-target-laplace}. Applying integration by parts to the second term and using assumption~\eqref{assump1-target-laplace} yields
\begin{align*}
	\int_{-\infty}^{\theta_{k}} \exp(x_{k} / b_{k}) \frac{\partial v_{k}(y;x)}{\partial x_{k}} \mathrm{d}x_{k} = b_{k}\exp(x_{k} / b_{k})  \cdot  \frac{\partial v_{k}(y;x)}{\partial x_{k}} \Big \vert_{x_{k} = \theta_k}  - b_{k}\int_{-\infty}^{\theta_{k}} \exp(x_{k} / b_{k}) \cdot \frac{\partial^2 v_{k}(y;x)}{\partial x_{k}^2} \mathrm{d}x_{k}.
\end{align*}
Putting the two pieces together yields
\begin{subequations} \label{eq:two-lou-integrals}
\begin{align}
	\int_{-\infty}^{\theta_{k}}  \frac{\exp(x_{k} / b_{k})}{b_{k}}  v_{k}(y;x) \mathrm{d}x_{k} &= \exp(x_{k} / b_{k}) \lim_{x_{k} \rightarrow \theta_k }v_{k}(y;x)
	- b_{k}\exp(x_{k} / b_{k})  \cdot  \frac{\partial v_{k}(y;x)}{\partial x_{k}} \Big \vert_{x_{k} = \theta_k}  \nonumber \\
	&\quad + b_{k}\int_{-\infty}^{\theta_{k}} \exp(x_{k} / b_{k})  \cdot \frac{\partial^2 v_{k}(y;x)}{\partial x_{k}^2} \mathrm{d}x_{k}.
\end{align}
To bound the second term on the RHS of Eq.~\eqref{eq1:pf-lemma-prop-laplace}, we proceed again via integration by parts
and use assumption~\eqref{assump2-target-laplace}. Carrying out the calculations exactly as above, we obtain
\begin{align}
	\int_{\theta_{k}}^{+\infty} \frac{ \exp(-x_{k} / b_{k}) }{b_{k}} v(y;x) \mathrm{d}x_k &= \exp(-x_{k} / b_{k}) \lim_{x_{k} \rightarrow \theta_k }v_{k}(y;x) +
	b_{k}\exp(-x_{k} / b_{k}) \cdot \frac{\partial v_{k}(y;x)}{\partial x_{k}} \Big \vert_{x_{k} = \theta_k} \nonumber \\
	& \quad +  b_{k} \int_{\theta_{k}}^{+\infty} \exp(-x_{k} / b_{k}) \cdot \frac{\partial^2 v_{k}(y;x)}{\partial x_{k}^2} \mathrm{d}x_{k}. 
\end{align}
\end{subequations}
Substituting Eq.~\eqref{eq:two-lou-integrals} into Eq.~\eqref{eq1:pf-lemma-prop-laplace}, we then have
\[
	\int_{\real} \frac{1}{2b_{k}}\exp( - |x_{k} - \theta_{k}| / b_{k} ) \cdot v_{k}(y;x) \mathrm{d}x_{k} = \lim_{x_{k} \rightarrow \theta_k }v_{k}(y;x) + \int_{\real} \frac{1}{2b_{k}} \exp( - |x_{k} - \theta_{k}| / b_{k} ) \cdot b_{k}^2 \frac{\partial^2 v_{k}(y;x)}{\partial x_{k}^2} \mathrm{d}x_{k},
\]
so that
\begin{align*}
	\int_{\real} \frac{1}{2b_{k}}\exp( - |x_{k} - \theta_{k}| /b_{k} ) \cdot \bigg( v_{k}(y;x) -  b_{k}^{2}\frac{\partial^{2} v_{k}(y;x) }{\partial x_{k}^{2}} \bigg) \mathrm{d} x_{k} = \lim_{x_{k} \rightarrow \theta_k }v_{k}(y;x) = v_{k}(y;x) \mid_{x_{k} = \theta_{k}}
\end{align*}
as claimed.
\qed

\subsection{Technical results used in the proof of Proposition~\ref{prop:Erlang}}
We provide a proof of the combinatorial formula in Lemma~\ref{lemma:combinatorial}.

\subsubsection{Proof of Lemma~\ref{lemma:combinatorial}}\label{sec:pf-lemma-combinatorial}
Multiplying both sides of Eq.~\eqref{eq:T2-Erlang-comb} by $(-1)^{\ell+1}$, we obtain the equivalent claim that for each $0 \leq \ell \leq k-1$, we have
\begin{align}\label{eq1:T2-Erlang-comb}
	\sum_{j= \ell}^{k-1} \binom{k}{j+1}  (-1)^{j+1} \binom{j}{\ell} = (-1)^{\ell+1}.
\end{align}
We now establish Eq.~\eqref{eq1:T2-Erlang-comb} by induction on $\ell$. 

\paragraph*{Base case:} For the base case $\ell=0$, we have
\begin{align*}
	\sum_{j= 0}^{k-1} \binom{k}{j+1}  (-1)^{j+1} \binom{j}{0} = \sum_{t = 1}^{k} \binom{k}{t}  (-1)^{t} &= \sum_{t = 0}^{k} \binom{k}{t}  (-1)^{t} - 1 = (1-1)^{k} - 1 = -1.
\end{align*}

\paragraph*{Induction step:} For our induction hypothesis, suppose that Eq.~\eqref{eq:T2-Erlang-comb} holds for some $\ell = m \geq 0$, i.e.,
\[
	(-1)^{m+1} = \sum_{j= m }^{k-1} \binom{k}{j+1}  (-1)^{j+1} \binom{j}{m} =: T,
\] 
and define
\[
	T' := \sum_{j= m + 1}^{k-1} \binom{k}{j+1}  (-1)^{j+1} \binom{j}{m+1}.
\]
We need to prove that $T' = (-1)^{m+2}$. Adding the terms $T$ and $T'$, we have
\begin{align*}
	T + T' &= \binom{k}{m+1}(-1)^{m+1} + \sum_{j= m + 1}^{k-1} \binom{k}{j+1}  (-1)^{j+1} \bigg[ \binom{j}{m+1} + \binom{j}{m} \bigg]  \\
	& =  \binom{k}{m+1}(-1)^{m+1} +  \binom{k}{m+1} \sum_{j= m + 1}^{k-1}  (-1)^{j+1}  \binom{k-m-1}{j - m},
\end{align*}
where in the last step we use the identity
\[
	\binom{k}{j+1} \bigg[ \binom{j}{m+1} + \binom{j}{m} \bigg] = \binom{k}{j+1} \binom{j+1}{m+1} = \binom{k}{m+1} \binom{k-m-1}{(j+1) - (m+1)}.
\]
Continuing, we have
\begin{align*}
	T + T' = \binom{k}{m+1} \sum_{j= m }^{k-1}  (-1)^{j+1}  \binom{k-m-1}{j - m} &\overset{\1}{=} \binom{k}{m+1} (-1)^{m+1} \sum_{t=0}^{k-1-m}  (-1)^{t}  \binom{k-m-1}{t} \\
	&  = \binom{k}{m+1} (-1)^{m+1} (1-1)^{k-1-m} = 0.
\end{align*}
where in step $\1$ we let $t = j-m$. Thus, we obtain $T' = -T = (-1) \cdot (-1)^{m+1}$, which concludes the induction step.
\qed

\subsection{Technical results used in proof of Theorem~\ref{thm:Unif-Gauss}}

We stated two lemmas in the main text. We provide their proofs below.

\subsubsection{Proof of Lemma~\ref{lemma:auxiliary-proof-Unif-Gauss}} \label{sec:proof-lemma-aux-unif-gauss} 
We prove each part in order. 
\paragraph*{Proof of part (a).} Using expression~\eqref{eq:closed-form-S-star-case1}, we obtain 
\begin{align}\label{ineq:S-star-lower-bd-case1}
	\mathcal{S}^{*}(y|x) &\geq \frac{1}{\sqrt{2\pi} \sigma} \bigg( 2\exp\left\{-\frac{|y-f(0.5)|^{2}}{4\sigma^2} -\frac{|y-f(-0.5)|^{2}}{4\sigma^2} \right\} - \exp\left\{-\frac{|y-f(\theta_{0})|^{2}}{2\sigma^2} \right\}  \bigg) \nonumber
	\\ &= \phi_{\sigma}\big(y-f(\theta_{0}) \big) \cdot \bigg( 2\exp\left\{ \frac{\Delta}{2\sigma^{2}} \cdot y + \frac{2f(\theta_{0})^{2} -f(0.5)^{2} - f(-0.5)^{2}}{4\sigma^2} \right\} - 1 \bigg),
\end{align}
where in the first step we use the numerical inequality $a+b \geq 2\sqrt{ab}$ for $a,b\geq 0$. Consequently, if $\Delta=0$ then
\begin{align*}
	\mathcal{S}^{*}(y|x) &\geq \phi_{\sigma}\big(y-f(\theta_{0}) \big) \cdot \bigg( 2\exp\left\{ \frac{2f(\theta_{0})^{2} -f(0.5)^{2} - f(-0.5)^{2}}{4\sigma^2} \right\} - 1 \bigg) 
	\\ & \overset{\1}{\geq} \phi_{\sigma}\big(y-f(\theta_{0}) \big) \cdot \bigg( 2\exp\left\{ -\frac{1}{50} \right\} - 1 \bigg) \geq 0,
\end{align*}
where in step $\1$ we used $\sigma \geq 5 \big( f(0.5) \vee f(-0.5) \big)$. Thus, if $\Delta = 0$ then $\mathcal{S}^{*}(y|x) \geq 0$ for all $y \in \real$. 

We then turn to consider the case $\Delta > 0$. From inequality~\eqref{ineq:S-star-lower-bd-case1}, we obtain $\mathcal{S}^{*}(y|x) \geq 0$ for all $y \geq y_{0}$. Using expression~\eqref{eq:closed-form-S-star-case1}, we obtain the bound
\begin{subequations}\label{ineq:bound-S-star-proof-case1}
\begin{align}
	\int_{y_{0}}^{+\infty} \mathcal{S}^{*}(y|x) \mathrm{d}y &= \int_{y_0}^{+\infty} \phi_{\sigma}\big(y-f(0.5)\big) + \phi_{\sigma}\big(y-f(-0.5)\big) - \phi_{\sigma}\big(y-f(\theta_0)\big)  \; \mathrm{d}y \nonumber
	\\ &=  1 - \int_{-\infty}^{y_0 - f(0.5)} \phi_{\sigma}(t) \;\mathrm{d}t + \int_{y_{0} - f(-0.5)}^{y_{0} - f(\theta_{0})} \phi_{\sigma}(t) \; \mathrm{d}t,\\
	\int_{-\infty}^{y_{0}} \big| \mathcal{S}^{*}(y|x) \big| \mathrm{d}y &\leq \int_{-\infty}^{y_{0}} \phi_{\sigma}\big(y-f(0.5)\big) +  \phi_{\sigma}\big(y-f(-0.5) \big) + \phi_{\sigma} \big(y-f(\theta_0) \big) \; \mathrm{d}y \nonumber \\ &\leq 3 \int_{-\infty}^{y_{0} + \alpha_{0} } \phi_{\sigma}(t) \; \mathrm{d}t.
\end{align}
\end{subequations}
Note that $|\Delta| \leq 4\alpha_{0}$, and for $\sigma \geq 5 \alpha_{0}$ and $\Delta>0$,
\begin{align*}
	y_{0} + \alpha_{0} &=  
	\frac{-\log(2) \sigma^{2}}{\Delta} + \frac{\alpha_{0} \cdot \Delta + f(0.5)^{2}/2 + f(-0.5)^{2}/2 - f(\theta_{0})^{2} - \log(2)\sigma^{2}  }{\Delta}
	\\ &\leq \frac{-\log(2) \sigma^{2}}{\Delta} + \frac{\alpha_{0} \cdot 4\alpha_{0} + \alpha_{0}^{2} - \log(2)\sigma^{2}  }{\Delta}
	\leq \frac{-\log(2) \sigma^{2}}{\Delta}.
\end{align*}
Using the bound in the display above, we obtain 
\begin{align*}
	\int_{-\infty}^{y_{0} + \alpha_{0}} \phi_{\sigma}(t) \mathrm{d}t \leq \int_{-\infty}^{0} \phi_{\sigma}(t) \mathrm{d}t \cdot \exp\left\{ -\frac{|y_{0}+\alpha_{0}|^{2}}{2\sigma^2} \right\} \leq \frac{1}{2} \exp\left\{-\frac{\log^{2}(2) \sigma^{2}}{2 \Delta^{2} } \right\}.
\end{align*}
Using the inequality in the display above yields
\begin{align*}
	&\int_{-\infty}^{y_0 - f(0.5)} \phi_{\sigma}(t) \;\mathrm{d}t \leq \int_{-\infty}^{y_{0} + \alpha_{0}} \phi_{\sigma}(t) \mathrm{d}t \leq \frac{1}{2} \exp\left\{-\frac{\log^{2}(2) \sigma^{2}}{2 \Delta^{2} } \right\},
	\\&
	\bigg| \int_{y_{0} - f(-0.5)}^{y_{0} - f(\theta_{0})} \phi_{\sigma}(t) \mathrm{d}t \bigg| \leq \int_{-\infty}^{y_{0} + \alpha_{0}} \phi_{\sigma}(t) \mathrm{d}t \leq \frac{1}{2} \exp\left\{-\frac{\log^{2}(2) \sigma^{2}}{2 \Delta^{2} } \right\}.
\end{align*}
Now, substituting the bounds in the display into inequality~\eqref{ineq:bound-S-star-proof-case1} yields the desired inequalities~\eqref{ineq1:S-star-bound-part-1} and~\eqref{ineq2:S-star-bound-part-1}.

We next consider the case $\Delta<0$. Using inequality~\eqref{ineq:S-star-lower-bd-case1}, we obtain $\mathcal{S}^{*}(y|x) \geq 0$ for all $y \leq y_{0}$. Using expression~\eqref{eq:closed-form-S-star-case1}, we obtain the bound 
\begin{subequations}\label{ineq:bound-S-star-proof-Delta-negative}
\begin{align}
	\int_{-\infty}^{y_{0}} \mathcal{S}^{*}(y|x) \mathrm{d}y &= \int_{-\infty}^{y_{0}} \phi_{\sigma}\big(y-f(0.5)\big) + \phi_{\sigma}\big(y-f(-0.5)\big) - \phi_{\sigma}\big(y-f(\theta_0)\big)  \; \mathrm{d}y \nonumber
	\\ &=  1 - \int_{y_{0}-f(0.5)}^{+\infty} \phi_{\sigma}(t) \;\mathrm{d}t + \int_{y_{0}-f(\theta_{0})}^{y_{0} - f(-0.5)} \phi_{\sigma}(t) \; \mathrm{d}t,\\
	\int_{y_{0}}^{+\infty} \big| \mathcal{S}^{*}(y|x) \big| \mathrm{d}y &\leq \int_{y_{0}}^{+\infty} \phi_{\sigma}\big(y-f(0.5)\big) +  \phi_{\sigma}\big(y-f(-0.5) \big) + \phi_{\sigma} \big(y-f(\theta_0) \big) \; \mathrm{d}y \nonumber \\ &\leq 3 \int_{y_{0} - \alpha_{0}}^{+\infty} \phi_{\sigma}(t) \; \mathrm{d}t.
\end{align}
\end{subequations}
Note that $|\Delta| \leq 4\alpha_{0}$, and for $\sigma \geq 5 \alpha_{0}$ and $\Delta<0$,
\begin{align*}
	y_{0} - \alpha_{0} &=  
	\frac{-\log(2) \sigma^{2}}{\Delta} + \frac{-\alpha_{0} \cdot \Delta + f(0.5)^{2}/2 + f(-0.5)^{2}/2 - f(\theta_{0})^{2} - \log(2)\sigma^{2}  }{\Delta}
	\\ &\geq \frac{-\log(2) \sigma^{2}}{\Delta} + \frac{\alpha_{0} \cdot 4\alpha_{0} +  \alpha_{0}^{2} - \log(2)\sigma^{2}  }{\Delta}
	\geq \frac{-\log(2) \sigma^{2}}{\Delta} =  \frac{\log(2) \sigma^{2}}{|\Delta|}.
\end{align*}
Using the bound in the display above, we obtain 
\begin{align*}
	\int_{y_{0}-\alpha_{0}}^{+\infty} \phi_{\sigma}(t) \mathrm{d}t \leq \int_{0}^{+\infty} \phi_{\sigma}(t) \mathrm{d}t \cdot \exp\left\{ -\frac{|y_{0}-\alpha_{0}|^{2}}{2\sigma^2} \right\} \leq \frac{1}{2} \exp\left\{-\frac{\log^{2}(2) \sigma^{2}}{2 \Delta^{2} } \right\}.
\end{align*}
Using the inequality in the display above yields
\begin{align*}
	&\int_{y_0 - f(0.5)}^{+\infty} \phi_{\sigma}(t) \;\mathrm{d}t \leq \int_{y_{0} - \alpha_{0}}^{+\infty} \phi_{\sigma}(t) \mathrm{d}t \leq \frac{1}{2} \exp\left\{-\frac{\log^{2}(2) \sigma^{2}}{2 \Delta^{2} } \right\},
	\\&
	\bigg| \int_{y_{0} - f(-0.5)}^{y_{0} - f(\theta_{0})} \phi_{\sigma}(t) \mathrm{d}t \bigg| \leq \int_{y_{0}-\alpha_{0}}^{+\infty} \phi_{\sigma}(t) \mathrm{d}t \leq \frac{1}{2} \exp\left\{-\frac{\log^{2}(2) \sigma^{2}}{2 \Delta^{2} } \right\}.
\end{align*}
Substituting the inequalities in the display above into inequality~\eqref{ineq:bound-S-star-proof-Delta-negative} proves the desired inequalities~\eqref{ineq1:S-star-bound-part-3} and~\eqref{ineq2:S-star-bound-part-3}.

\paragraph*{Proof of part (b).} We split the argument into two cases.\\
\underline{Case 1: $\theta_{0}-1/2 < x \leq 0$.} From Eq.~\eqref{S-star-uniform}, we obtain
\begin{align}\label{eq:S-star-expression-case2}
	\mathcal{S}^{*}(y|x) &= \phi_{\sigma}\big(y-f(0.5)\big) + \phi_{\sigma}\big( y -f(-0.5) \big) 
	- \phi_{\sigma}\big( y -f(\theta_{0}) \big) \nonumber \\ & \quad - \phi_{\sigma}\big( y-f(x+1/2)\big) \cdot \frac{y - f(x+1/2)}{\sigma^{2}} \cdot f'(x+1/2),
\end{align}
where $\phi_{\sigma}$ is the pdf for Gaussian random variable with zero mean and variance $\sigma$. In this case, we have $|y-f(x+1/2)| \leq \frac{\sigma^{2}}{5\alpha_{1}}$ since $\ell(x) \leq y \leq L(x)$. Using expression~\eqref{eq:S-star-expression-case2}, $|f'(x+1/2)| \leq \alpha_{1}$, and $|y-f(x+1/2)| \leq \frac{\sigma^{2}}{5\alpha_{1}}$, we obtain
\begin{align}\label{ineq0:S_star-positive-proof-case2}
	\mathcal{S}^{*}(y|x) 
	&\geq \phi_{\sigma}\big(y-f(0.5)\big) + \phi_{\sigma}\big( y -f(-0.5) \big) - \phi_{\sigma}\big( y -f(\theta_{0}) \big) - \phi_{\sigma}\big( y-f(x+1/2)\big) / 5 \nonumber
	\\
	&= \phi_{\sigma}\big( y - f(\theta_{0}) \big) \cdot \bigg(\exp\left\{\frac{2[f(0.5)-f(\theta_0)] \cdot y + f(\theta_0)^{2} - f(0.5)^{2}}{2\sigma^2} \right\} \nonumber \\
	& \qquad \qquad \qquad \qquad \quad + \exp\left\{\frac{2[f(-0.5)-f(\theta_0)] \cdot y + f(\theta_0)^{2} - f(-0.5)^{2}}{2\sigma^2} \right\}  \\ 
	& \qquad \qquad \qquad \qquad \quad - 1 - \exp\left\{\frac{2[f(x+1/2)-f(\theta_0)]\cdot y + f(\theta_0)^{2} - f(x+1/2)^{2}}{2\sigma^2} \right\} \cdot \frac{1}{5}\bigg). \nonumber
\end{align}
Note that we obtain for all $-1/2 \leq t\leq 1/2$ and $|y - f(x+1/2)| \leq \sigma^{2}/(5\alpha_{1})$,
\begin{subequations}
\begin{align}\label{ineq1:S_star-positive-proof-case2}
	\frac{2[f(t)-f(\theta_0)] \cdot y + f(\theta_0)^{2} - f(t)^{2}}{2\sigma^2} & \overset{\1}{\geq} \frac{-2|f(t)-f(\theta_0)| \cdot \big( |f(x+1/2)| + \frac{\sigma^2}{5\alpha_{1}} \big) - f(t)^{2}}{2\sigma^2} \nonumber\\
	& \overset{\2}{\geq}  \frac{-2\alpha_{1} \big(\alpha_0 + \frac{\sigma^2}{5\alpha_{1}}  \big) - \alpha_0^2 }{2\sigma^{2}}  \overset{\3}{\geq} -\frac{1}{5} - \frac{3}{50}.
\end{align}
Here, in step $\1$ we use $|y| \leq |f(x+1/2)| + \sigma^2/(5 \alpha_{1})$, in step $\2$, we use $|f(t) - f(\theta_0)| \leq \int_{t}^{\theta_0}|f'(t)| \mathrm{d}t \leq \alpha_{1} \big|t-\theta_0\big| \leq \alpha_{1}$ for $-1/2\leq t,\theta_{0} \leq 1/2$ and $|f(t)|,|f(x+1/2)| \leq \alpha_{0}$, and in step $\3$ we use $\sigma \geq 5 \big( \alpha_{0} \vee \alpha_{1} \big)$. Proceeding similarly as in inequality~\eqref{ineq1:S_star-positive-proof-case2}, we obtain for $-1/2 \leq t\leq 1/2$,
\begin{align}\label{ineq2:S_star-positive-proof-case2}
	\frac{2[f(t)-f(\theta_0)] \cdot y + f(\theta_0)^{2} - f(t)^{2}}{2\sigma^2} &\leq \frac{1}{5} + \frac{3}{50}.
\end{align}
\end{subequations}
Now applying inequality~\eqref{ineq1:S_star-positive-proof-case2} with $t = 0.5$ and $t = -0.5$, inequality~\eqref{ineq2:S_star-positive-proof-case2} with $t = x+1/2$, and substituting them into inequality~\eqref{ineq0:S_star-positive-proof-case2}, we obtain
\begin{align*}
	\mathcal{S}^{*}(y|x) &\geq\phi_{\sigma}\big(y-f(\theta_0)\big) \cdot \Big(2\exp\big( -1/5 - 3/50 \big) -1 - \exp\big( 1/5 + 3/50 \big)/5 \Big) \geq 0.
\end{align*}
We thus conclude that
\begin{align*}
	\mathcal{S}^{*}(y|x) \geq 0 \quad \text{ for all }\; \ell(x) \leq y \leq L(x), \quad \text{where} \quad \ell(x) = f(x+1/2) - \frac{\sigma^2}{5\alpha_{1}},\; L(x) = f(x+1/2) + \frac{\sigma^{2}}{5\alpha_{1}}.
\end{align*}
Next we turn to verify inequalities~\eqref{ineq1:S-star-bound-part-2} and~\eqref{ineq2:S-star-bound-part-2} for $\theta_{0}-1/2 <x \leq 0$. Note that
\[
	\int_{\ell(x)}^{L(x)} \phi_{\sigma}\big(y-f(x+1/2)\big) \cdot \frac{y-f(x+1/2)}{\sigma^2} \mathrm{d}y = \int_{-\frac{\sigma^2}{5\alpha_{1}}}^{\frac{\sigma^2}{5\alpha_{1}}} \phi_{\sigma}(t) \cdot \frac{t}{\sigma^{2}} \mathrm{d}t = 0.
\]
Using the equation in the display and expression~\eqref{eq:S-star-expression-case2}, we obtain
\begin{align*}
	&\int_{\ell(x)}^{L(x)} \mathcal{S}^{*}(y|x) \mathrm{d}y = 
	\int_{\ell(x)}^{L(x)} \phi_{\sigma}\big(y-f(0.5)\big) + \phi_{\sigma}\big(y-f(-0.5)\big) - \phi_{\sigma}\big(y-f(\theta_0) \big)  \mathrm{d}y 
	\\ &= 1 -  \int_{-\infty}^{\ell(x) - f(0.5)} \phi_{\sigma}(t) \mathrm{d}t - \int_{L(x)-f(0.5)}^{+\infty} \phi_{\sigma}(t) \mathrm{d}t + \int_{\ell(x) - f(-0.5)  }^{L(x) - f(-0.5)} \phi_{\sigma}(t) \mathrm{d}t - \int_{\ell(x) - f(\theta_0)}^{L(x) - f(\theta_0)} \phi_{\sigma}(t) \mathrm{d}t
\end{align*}
Using $\sigma \geq 5 \big( \alpha_{0} \vee \alpha_{1} \big)$, we obtain
\begin{align*}
	\ell(x) + \alpha_{0} &= -\frac{\sigma^2}{5\alpha_{1}} + f(x+1/2) + \alpha_{0} \leq -\frac{\sigma^2}{5\alpha_{1}} + 2\alpha_{0} \leq -\frac{\sigma^2}{10\alpha_{1}}, \\
	L(x) - \alpha_{0} &= \frac{\sigma^2}{5\alpha_{1}} + f(x+1/2) - \alpha_{0} \geq \frac{\sigma^2}{5\alpha_{1}} - 2 \alpha_{0} \geq \frac{\sigma^{2}}{10\alpha_{1}}.
\end{align*}
Using the bounds in the display above, we obtain
\begin{align*}
	\int_{-\infty}^{\ell(x) - f(0.5)} \phi_{\sigma}(t) \mathrm{d}t \leq \int_{-\infty}^{\ell(x) + \alpha_{0}} \phi_{\sigma}(t) \mathrm{d}t  \leq  \int_{-\infty}^{0} \phi_{\sigma}(t) \mathrm{d}t  \cdot \exp\left\{-\frac{|\ell(x) + \alpha_{0}|^{2}}{2\sigma^2} \right\} \leq \frac{1}{2} \exp\left\{-\frac{\sigma^{2}}{200 \alpha_{1}^{2} } \right\}, \\
	\int_{L(x)-f(0.5)}^{+\infty} \phi_{\sigma}(t) \mathrm{d}t \leq \int_{L(x) - \alpha_{0}}^{+\infty} \phi_{\sigma}(t) \mathrm{d}t \leq \int_{0}^{+\infty} \phi_{\sigma}(t) \mathrm{d}t \cdot \exp\left\{-\frac{|L(x) - \alpha_{0}|^{2}}{2\sigma^2} \right\} \leq \frac{1}{2} \exp\left\{-\frac{\sigma^{2}}{200 \alpha_{1}^{2}} \right\}.
\end{align*}
Proceeding similarly as in the display above, we obtain
\begin{align*}
	\bigg| \int_{\ell(x) - f(-0.5)  }^{L(x) - f(-0.5)} \phi_{\sigma}(t) \mathrm{d}t - \int_{\ell(x) - f(\theta_0)}^{L(x) - f(\theta_0)} \phi_{\sigma}(t) \mathrm{d}t \bigg| &\leq \int_{\ell(x) - f(-0.5) \vee f(\theta_0)}^{\ell(x)-f(-0.5) \wedge f(\theta_0)} \phi_{\sigma}(t) \mathrm{d}t + \int_{L(x) - f(-0.5) \vee f(\theta_0)}^{L(x)-f(-0.5) \wedge f(\theta_0)} \phi_{\sigma}(t) \mathrm{d}t 
	\\& \leq \int_{-\infty }^{\ell(x) + \alpha_{0}} \phi_{\sigma}(t) \mathrm{d}t + \int_{L(x) - \alpha_{0}}^{+\infty} \phi_{\sigma}(t) \mathrm{d}t 
	\\ &\leq \exp\left\{-\frac{\sigma^{2}}{200 \alpha_{1}^{2}} \right\}.
\end{align*}
Putting all pieces together yields
\[
	 1 - 2 \exp\left\{-\frac{\sigma^{2}}{200 \alpha_{1}^{2}} \right\} \leq \int_{\ell(x)}^{L(x)} \mathcal{S}^{*}(y|x) \mathrm{d}y \leq 1 + 2 \exp\left\{-\frac{\sigma^{2}}{200 \alpha_{1}^{2}} \right\}.
\]
This proves inequality~\eqref{ineq1:S-star-bound-part-2}. Towards to prove inequality~\eqref{ineq2:S-star-bound-part-2}, using expression~\eqref{eq:S-star-expression-case2} and applying the triangle inequality yields
\begin{align*}
	\int_{L(x)}^{+\infty} \big| \mathcal{S}^{*}(y|x) \big| \mathrm{d}y &\leq 
	\int_{L(x)}^{+\infty}  \bigg( \phi_{\sigma}\big(y-f(0.5) \big) + \phi_{\sigma}\big(y-f(-0.5) \big) + \phi_{\sigma}\big(y-f(\theta_0) \big) 
	\\& \qquad \qquad \qquad \qquad + \frac{\alpha_{1}|y-f(x+1/2)| \cdot \phi_{\sigma}\big(y-f(x+1/2) \big) }{\sigma^2} \bigg)  \; \mathrm{d}y 
	\\ & \overset{\1}{\leq} 3 \int_{\frac{\sigma^{2}}{10 \alpha_{1}}}^{+\infty} \phi_{\sigma}(t) \mathrm{d}t + \int_{\frac{\sigma^{2}}{10\alpha_{1}}}^{+\infty} \frac{\alpha_{1}}{\sigma^2} t \cdot \phi_{\sigma}(t) \mathrm{d}t 
	\overset{\2}{\leq} 2 \exp\left\{ -\frac{\sigma^{2}}{200\alpha_{1}^{2}} \right\}, 
\end{align*}
where in step $\1$ we use $L(x) - f(0.5) \vee f(-0.5) \vee f(\theta_0) \vee f(x+1/2) \geq L(x) - \alpha_{0} \geq \sigma^{2}/(10\alpha_{1})$, and in step $\2$ we use
\[
	\int_{\frac{\sigma^{2}}{10\alpha_{1}}}^{+\infty} \phi_{\sigma}(t) \mathrm{d}t \leq \frac{1}{2}\exp\left\{ -\frac{\sigma^2}{200\alpha_{1}^2 } \right\},\quad 
	\int_{\frac{\sigma^{2}}{10\alpha_{1}}}^{+\infty} \frac{\alpha_{1}}{\sigma^2} t \cdot \phi_{\sigma}(t) \mathrm{d}t = \frac{\alpha_{1}}{\sqrt{2\pi}\sigma} \exp\left\{ -\frac{\sigma^2}{200\alpha_{1}^2 } \right\} \leq \frac{1}{2}\exp\left\{ -\frac{\sigma^2}{200\alpha_{1}^2 } \right\}.
\]
Proceeding similarly yields the same bound 
\[
	\int_{-\infty}^{\ell(x)} \big| \mathcal{S}^{*}(y|x) \big| \mathrm{d}y \leq 
	3\int_{-\infty}^{-\frac{\sigma^{2}}{10\alpha_{1}}} \phi_{\sigma}(t) \mathrm{d}t +  \int_{-\infty}^{-\frac{\sigma^{2}}{10\alpha_{1}}} \frac{\alpha_{1}}{\sigma^2} |t| \cdot \phi_{\sigma}(t) \mathrm{d}t \leq 2 \exp\left\{ -\frac{\sigma^{2}}{200 \alpha_{1}^{2}} \right\}.
\]
Putting the pieces together proves inequality~\eqref{ineq2:S-star-bound-part-2}.

\noindent \underline{Case 2: $0< x < \theta_{0} + 1/2$.}
From Eq.~\eqref{S-star-uniform}, we obtain 
\begin{align}\label{eq:S-star-expression-case3}
	\mathcal{S}^{*}(y|x) = & \phi_{\sigma}\big( y - f(0.5)\big) + \phi_{\sigma}\big( y-f(-0.5)\big) - \phi_{\sigma}\big(y-f(\theta_{0})\big) \nonumber \\ & + \phi_{\sigma}\big( y-f(x-1/2)\big) \cdot \frac{y-f(x-1/2)}{\sigma^{2}} \cdot f'(x-1/2).
\end{align}
Note that $|y-f(x-1/2)| \leq \sigma^{2}/(5 \alpha_{1})$ since $\ell(x) \leq y \leq L(x)$. Using expression~\eqref{eq:S-star-expression-case3}, $|f'(x-1/2)| \leq \alpha_1$, and $|y-f(x-1/2)| \leq \sigma^{2}/(5 \alpha_{1})$, we obtain
\begin{align*}
	\mathcal{S}^{*}(y|x) \geq & \phi_{\sigma}\big( y - f(0.5)\big) + \phi_{\sigma}\big( y-f(-0.5)\big) - \phi_{\sigma}\big(y-f(\theta_{0})\big) - \phi_{\sigma}\big( y-f(x-1/2)\big) / 5 
	\\&= \phi_{\sigma}\big( y - f(\theta_{0}) \big) \cdot \bigg(\exp\left\{\frac{2[f(0.5)-f(\theta_0)] \cdot y + f(\theta_0)^{2} - f(0.5)^{2}}{2\sigma^2} \right\}  \\& \qquad \qquad \qquad \qquad \quad + \exp\left\{\frac{2[f(-0.5)-f(\theta_0)] \cdot y + f(\theta_0)^{2} - f(-0.5)^{2}}{2\sigma^2} \right\}  \\ &\qquad \qquad \qquad \qquad \quad - 1 - \exp\left\{\frac{2[f(x-1/2)-f(\theta_0)]\cdot y + f(\theta_0)^{2} - f(x-1/2)^{2}}{2\sigma^2} \right\} \cdot \frac{1}{5}\bigg). 
\end{align*}
Now applying inequality~\eqref{ineq1:S_star-positive-proof-case2} with $t = 0.5$ and $t = -0.5$, inequality~\eqref{ineq2:S_star-positive-proof-case2} with $t = x-1/2$, and  substituting them into the inequality in the display above, we obtain
\begin{align*}
	\mathcal{S}^{*}(y|x) &\geq\phi_{\sigma}\big(y-f(\theta_0)\big) \cdot \Big(2\exp\big( -1/5 - 3/50 \big) -1 - \exp\big( 1/5 + 3/50 \big)/5 \Big) \geq 0.
\end{align*}
We thus conclude that
\begin{align*}
	\mathcal{S}^{*}(y|x) \geq 0 \quad \text{ for all }\; \ell(x) \leq y \leq L(x), \quad \text{where} \quad \ell(x) = f(x-1/2) - \frac{\sigma^2}{5\alpha_{1}},\; L(x) = f(x-1/2) + \frac{\sigma^{2}}{5\alpha_{1}}.
\end{align*}
The proof of inequalities~\eqref{ineq1:S-star-bound-part-2} and~\eqref{ineq2:S-star-bound-part-2} for $ 0 <x < \theta_{0} + 1/2$ is identical to the proof for $\theta_{0}-1/2 < x\leq 0$; we omit the calculations for brevity.
\qed

\subsubsection{Proof of Lemma~\ref{lemma:M-30}} \label{sec:proof-lemma-M-30}
We split the proof into three cases.

\paragraph*{Case 1: $x\leq \theta_{0}-1/2$ or $x\geq \theta_{0}+1/2$.} In this case, using expression~\eqref{eq:closed-form-S-star-case1}, $\mathcal{P}(y|x) = \NORMAL(0,2\sigma^{2})$, and applying the triangle inequality,  we obtain 
\begin{align*}
	\frac{\mathcal{S}^{*}(y|x) \; \vee \;0}{\mathcal{P}(y|x)} \leq \sqrt{2} \exp\left\{\frac{y^{2}}{4\sigma^2} \right\}
	\bigg( \exp\left\{-\frac{|y-f(0.5)|^{2}}{2\sigma^2} \right\} + \exp\left\{-\frac{|y-f(-0.5)|^{2}}{2\sigma^2} \right\} + \exp\left\{-\frac{|y-f(\theta_{0})|^{2}}{2\sigma^2} \right\} \bigg).
\end{align*}
Note that we have
\begin{align*}
	\exp\left\{\frac{y^{2}}{4\sigma^2} \right\}\exp\left\{-\frac{|y-f(0.5)|^{2}}{2\sigma^2} \right\} = 
	\exp\left\{\frac{-|y-2f(0.5)|^{2} + 2f(0.5)^{2}}{4\sigma^{2}} \right\} \leq \exp\left\{\frac{f(0.5)^{2}}{2\sigma^{2}}\right\} \leq 2,
\end{align*}
where in the last step we use $\sigma \geq  |f(0.5)|$. Similarly, we obtain
\[
	\exp\left\{\frac{y^{2}}{4\sigma^2} \right\}\exp\left\{-\frac{|y-f(-0.5)|^{2}}{2\sigma^2} \right\} \; \vee \; \exp\left\{\frac{y^{2}}{4\sigma^2} \right\}\exp\left\{-\frac{|y-f(\theta_{0})|^{2}}{2\sigma^2} \right\} \leq 2.
\]
Putting the pieces together yields $\frac{\mathcal{S}^{*}(y|x) \; \vee \;0}{\mathcal{P}(y|x)} \leq \sqrt{2}(2+2+2) < 30$. 

\paragraph*{Case 2: $\theta_{0}-1/2 < x \leq 0$.} Using Eq.~\eqref{eq:S-star-expression-case2}, $\mathcal{P}(y|x) = \NORMAL(0,2\sigma^2)$, $|f'(x+1/2)| \leq \alpha_{1}$, and applying the triangle inequality, we obtain
\begin{align*}
	\frac{\mathcal{S}^{*}(y|x) \; \vee \; 0}{\mathcal{P}(y|x)} &\leq \sqrt{2} \exp\left\{\frac{y^2}{4\sigma^2} \right\} \bigg( \exp\left\{-\frac{|y-f(0.5)|^{2}}{2\sigma^2} \right\} + \exp\left\{-\frac{|y-f(-0.5)|^{2}}{2\sigma^2} \right\} + \exp\left\{-\frac{|y-f(\theta_{0})|^{2}}{2\sigma^2} \right\} 
	\\ & \hspace{5cm} + \frac{\alpha_{1}}{\sigma^2} \exp\left\{-\frac{|y-f(x+1/2)|^2}{2\sigma^2} \right\} \cdot \big|y-f(x+1/2)
	\big| \bigg).
\end{align*}
Note that
\begin{align*}
	\exp\left\{\frac{y^2}{4\sigma^2} \right\} \exp\left\{-\frac{|y-f(0.5)|^{2}}{2\sigma^2} \right\} = \exp\left\{\frac{-|y-2f(0.5)|^{2} + 2f(0.5)^{2}}{4\sigma^{2}} \right\} \leq \exp\left\{\frac{f(0.5)^{2}}{2\sigma^{2}} \right\} \leq 2,
\end{align*}
where in the last step we use $\sigma \geq |f(0.5)|$. Proceeding similarly as in the display above and using $\sigma \geq \alpha_{0}$ yields
\begin{align*}
&\exp\left\{\frac{y^2}{4\sigma^2} \right\} \exp\left\{-\frac{|y-f(-0.5)|^{2}}{2\sigma^2} \right\} \leq 2, \quad \exp\left\{\frac{y^2}{4\sigma^2} \right\} \exp\left\{-\frac{|y-f(\theta_{0})|^2}{2\sigma^2} \right\} \leq 2,
\\ &\text{and} \quad \exp\left\{\frac{y^2}{4\sigma^2} \right\}\exp\left\{-\frac{|y-f(x+1/2)|^{2}}{2\sigma^2} \right\} \leq 2 \exp \left\{ \frac{-|y-2f(x+1/2)|^{2} }{4\sigma^{2}} \right\}.
\end{align*}
Putting the pieces together yields 
\begin{align}\label{ineq:intermediate-bound-S-star}
	\frac{\mathcal{S}^{*}(y|x) \; \vee \; 0}{\mathcal{P}(y|x)}
	\leq \sqrt{2} \bigg( 6 + 2\cdot \frac{\alpha_{1}}{\sigma^2}  \exp \left\{ \frac{-|y-2f(x+1/2)|^{2} }{4\sigma^{2}} \right\} \big| y-f(x+1/2) \big| \bigg). 
\end{align}
Now note that
\begin{align*}
&\exp\left\{-\frac{|y-2f(x+1/2)|^{2}}{4\sigma^{2}} \right\} |y-f(x+1/2)| \\
&\qquad \leq \exp \left\{-\frac{|y-2f(x+1/2)|^{2}}{4\sigma^{2}} \right\} \Big( |y-2f(x+1/2)| + |f(x+1/2)| \Big) \\
& \qquad \leq \max_{t\geq 0}\; t e^{-\frac{t^2}{4\sigma^2}} + |f(x+1/2)| \overset{\1}{\leq} \sqrt{2e}\sigma + \alpha_{0},
\end{align*}
where in step $\1$ we use $|f(x+1/2)| \leq \alpha_{0}$. Substituting the inequality in the display into inequality~\eqref{ineq:intermediate-bound-S-star} yields
\begin{align}\label{ineq:S-star-bound-case2}
	\frac{\mathcal{S}^{*}(y|x) \; \vee \; 0}{\mathcal{P}(y|x)} \leq \sqrt{2} \bigg( 6 + 2\cdot \frac{\alpha_{1}}{\sigma^2} \big(\sqrt{2e}\sigma + \alpha_{0} \big) \bigg) \leq 30,
\end{align}
where in the last step we use $\sigma \geq 5 \max\{ \alpha_{0}, \alpha_{1}\}$. 

\paragraph*{Case 3: $0<x < \theta_{0}+1/2$.} We have the closed form expression of $\mathcal{S}^{*}(y|x)$ in Eq.~\eqref{eq:S-star-expression-case3}. Note that the expression of $\mathcal{S}^{*}(y|x)$ is similar to the expression~\eqref{eq:S-star-expression-case2}. Thus, proceeding similarly as in the proof of inequality~\eqref{ineq:S-star-bound-case2}, we obtain the same bound
\[
	\frac{\mathcal{S}^{*}(y|x) \; \vee \; 0}{\mathcal{P}(y|x)} \leq 30,\quad \text{ for all }\; 0<x < \theta_{0}+1/2, y\in \real.
\]
\qed

\end{document}